\theoremstyle{plain}
\newtheorem{theorem}{Theorem}[subsection]
\newtheorem{proposition}[theorem]{Proposition}
\newtheorem{lemma}[theorem]{Lemma}
\newtheorem{corollary}[theorem]{Corollary}
\theoremstyle{definition}
\newtheorem{definition}[theorem]{Definition}
\newtheorem{notation}[theorem]{Notation}
\newtheorem{construction}[theorem]{Construction}
\theoremstyle{remark}
\newtheorem{remark}[theorem]{Remark}
\newtheorem{example}[theorem]{Example}
\newtheorem{p}[theorem]{}
\numberwithin{equation}{theorem}
\newcommand{\mcal}[1]{\mathcal{#1}}
\newcommand{\mbb}[1]{\mathbb{#1}}
\newcommand{\mbf}[1]{\mathbf{#1}}
\newcommand{\mrm}[1]{\mathrm{#1}}
\newcommand{\mfk}[1]{\mathfrak{#1}}
\newcommand{\Ab}{\mathrm{Ab}}
\newcommand{\Add}{\mathrm{Add}}
\newcommand{\Alg}{\mathrm{Alg}}
\newcommand{\Ani}{\mathrm{Ani}}
\newcommand{\Assoc}{\mathrm{Assoc}}
\newcommand{\Bass}{\mathrm{Bass}}
\newcommand{\CAlg}{\mathrm{CAlg}}
\newcommand{\Cat}{\mathrm{Cat}}
\newcommand{\Comm}{\mathrm{Comm}}
\newcommand{\Fin}{\mathrm{Fin}}
\newcommand{\Gr}{\mathrm{Gr}}
\newcommand{\LMod}{\mathrm{LMod}}
\newcommand{\Mod}{\mathrm{Mod}}
\newcommand{\Op}{\mathrm{Op}}
\renewcommand{\Pr}{\mathrm{Pr}}
\newcommand{\Proj}{\mathrm{Proj}}
\newcommand{\PSh}{\mathrm{PSh}}
\newcommand{\Sel}{\mathrm{Sel}}
\newcommand{\Sch}{\mathrm{Sch}}
\newcommand{\Sh}{\mathrm{Sh}}
\newcommand{\Sm}{\mathrm{Sm}}
\newcommand{\Sp}{\mathrm{Sp}}
\newcommand{\Spec}{\mathrm{Spec}}
\newcommand{\St}{\mathrm{St}}
\newcommand{\Sym}{\mathrm{Sym}}
\newcommand{\Tel}{\mathrm{Tel}}
\newcommand{\Triv}{\mathrm{Triv}}
\newcommand{\V}{\mathcal{V}}
\newcommand{\et}{\mathrm{\acute{e}t}}
\newcommand{\ex}{\mathrm{ex}}
\newcommand{\fd}{\mathrm{fd}}
\newcommand{\grf}{\mathrm{grf}}
\newcommand{\id}{\mathrm{id}}
\newcommand{\lax}{\mathrm{lax}}
\newcommand{\op}{\mathrm{op}}
\newcommand{\pbf}{\mathrm{pbf}}
\newcommand{\pre}{\mathrm{pre}}
\newcommand{\sm}{\mathrm{sm}}
\newcommand{\univ}{\mathrm{univ}}
\newcommand{\OmegaP}{\Omega_{\mathbb{P}^1}}
\newcommand{\SigmaP}{\Sigma_{\mathbb{P}^1}}
\newcommand{\SpP}{\mathrm{Sp}_{\mathbb{P}^1}}
\newcommand{\TelP}{\mathrm{Tel}_{\mathbb{P}^1}}
\newcommand{\Pic}{{\mathcal{P}\mspace{-2.0mu}\mathrm{ic}}}
\newcommand{\Vect}{{\mathcal{V}\mspace{-2.0mu}\mathrm{ect}}}
\newcommand{\Fun}{\operatorname{Fun}}
\newcommand{\Map}{\operatorname{Map}}
\newcommand{\cofib}{\operatorname{cofib}}
\newcommand{\coker}{\operatorname{coker}}
\newcommand{\colim}{\operatorname*{colim}}
\newcommand{\cosk}{\operatorname{cosk}}
\newcommand{\fib}{\operatorname{fib}}
\newcommand{\intMap}{\underline{\Map}}
\title{Motivic spectra and universality of $K$-theory}
\author{Toni Annala}
\address{Department of Mathematics, University of Chicago, Eckhart Hall, 5734 S University Ave, Chicago, IL 60637, USA.}
\email{\href{mailto:tannala@uchicago.edu}{tannala@uchicago.edu}}
\author{Ryomei Iwasa}
\address{Department of Mathematical Sciences, University of Copenhagen, Universitetsparken 5, DK-2100 Copenhagen \O.}
\email{\href{mailto:ryomei@math.ku.dk}{ryomei@math.ku.dk}}
\begin{document}

\date{\today}

\begin{abstract}
We develop a theory of motivic spectra in a broad generality; in particular $\mbb{A}^1$-homotopy invariance is not assumed.
As an application, we prove that $K$-theory of schemes is a universal Zariski sheaf of spectra which is equipped with an action of the Picard stack and satisfies projective bundle formula.
\end{abstract}

\maketitle

\tableofcontents

\section{Introduction}\label{Int}

Algebraic $K$-theory is a spectrum-valued invariant of categories that is characterized by a universal property.
The point we would like to emphasize here is that, when restricted to schemes, $K$-theory has much richer structures, such as Adams operations and conjectural motivic filtrations, which lead to deep problems in algebraic geometry.
This paper was born out of the motivation to understand those further structures.
We develop a theory of motivic spectra without assuming $\mbb{A}^1$-homotopy invariance and apply it to $K$-theory.

\subsection{Universality of $K$-theory}

Our main result on $K$-theory is an algebraic analogue of Snaith's theorem for topological $K$-theory in \cite{Sn79,Sn83} and a non-$\mbb{A}^1$-localized refinement of the main theorem in \cite{GS,SO}.
To fix the notation, let $\St$ denote the $\infty$-topos of Zariski sheaves on smooth schemes ($\St$ stands for ``stacks'').
Let $\Pic$ denote the Picard stack which we regard as an $\mbb{E}_\infty$-monoid in $\St$.
Then its stabilization $\mbb{S}[\Pic]$ is an $\mbb{E}_\infty$-algebra in $\Sp(\St)$.
We say that an $\mbb{S}[\Pic]$-module $E$ in $\Sp(\St)$ \textit{satisfies projective bundle formula} if, for every $n\ge 1$ and every smooth scheme $X$, the map
\[
	\sum_{i=0}^n \beta^i \colon \bigoplus_{i=0}^n E(X) \to E(\mbb{P}^n_X)
\]
is an equivalence, where $\beta$ is the Bott element $1-[\mcal{O}(-1)]$.
By abstract reason, there exists a localization
\[
	L_\pbf \colon \Mod_{\mbb{S}[\Pic]}(\Sp(\St)) \to \Mod_{\mbb{S}[\Pic]}(\Sp(\St))
\]
whose essential image is spanned by $\mbb{S}[\Pic]$-modules which satisfy projective bundle formula.
Let $K$ denote the non-connective $K$-theory which we regard as an $\mbb{E}_\infty$-algebra in $\Sp(\St)$.
Note that we have a canonical morphism of $\mbb{E}_\infty$-algebras $\mbb{S}[\Pic]\to K$ and it factors through the localization $L_\pbf\mbb{S}[\Pic]$ since $K$-theory satisfies projective bundle formula.
Then the main theorem is stated as follows.

\begin{theorem}\label{thm1}
The canonical map
\[
	L_\pbf\mbb{S}[\Pic] \to K
\]
is an equivalence of $\mbb{E}_\infty$-algebras in $\Sp(\St)$.
\end{theorem}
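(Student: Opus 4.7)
\emph{Proof proposal.} The statement is a non-$\mbb{A}^1$-localized refinement of the algebraic Snaith-style theorems mentioned above, so the plan is to follow the same general strategy --- realizing $L_\pbf\mbb{S}[\Pic]$ as a Bott-inverted version of $\mbb{S}[\Pic]$ --- while replacing the $\mbb{A}^1$-homotopy framework with the more general theory of motivic spectra developed in the preceding sections. The canonical map $\mbb{S}[\Pic]\to K$ factors through $L_\pbf\mbb{S}[\Pic]$ since $K$-theory satisfies PBF by Quillen's projective bundle theorem, producing the map whose equivalence we must establish.

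The first step is to identify $L_\pbf\mbb{S}[\Pic]$ explicitly. The PBF condition, applied uniformly to all projective bundles $\mbb{P}^n_X$, rigidifies into a $\mbb{P}^1$-loop compatibility; consequently $L_\pbf$, restricted to the category of $\mbb{S}[\Pic]$-modules, should coincide with the motivic $\mbb{P}^1$-stabilization functor constructed earlier in the paper. This presents $L_\pbf\mbb{S}[\Pic]$ as a telescopic colimit inverting the Bott class $\beta=1-[\mcal{O}(-1)]$ in a $\mbb{P}^1$-stable sense.

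The second step is to match this presentation with $K$. On a smooth scheme $X$, the spectrum $K(X)$ is generated over $\mbb{S}[\Pic](X)$ by classes of vector bundles; the splitting principle expresses each such class as a sum of line bundles after base change along an iterated projective bundle. Combining this with PBF for $K$ on each stage of the flag bundle should exhibit $K(X)$ as exactly the Bott-telescopic colimit that computes $L_\pbf\mbb{S}[\Pic](X)$, yielding the desired equivalence of underlying $\mbb{S}[\Pic]$-modules.

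The main obstacle, I expect, is upgrading this module-level equivalence to one of $\mbb{E}_\infty$-algebras, and carrying out every step of the plan purely within the $\mbb{P}^1$-symmetric-monoidal framework of non-$\mbb{A}^1$-invariant motivic spectra. In particular, the matching of Bott classes, the splitting principle, and the Zariski descent used to globalize from affine charts must all be realized natively in this symmetric monoidal setting, so that no step appeals implicitly to $\mbb{A}^1$-invariance. The foundational sections of the paper are designed precisely to afford this package, which should allow the argument to promote a compatible module-level equivalence to the claimed equivalence of $\mbb{E}_\infty$-algebras in $\Sp(\St)$.
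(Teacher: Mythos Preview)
Your general direction---realize $L_\pbf\mbb{S}[\Pic]$ as a Bott-telescopic colimit and match it with $K$---is correct, and the paper indeed proves $L_{\mbb{P}^1}E\simeq\colim_n\SigmaP^{\infty-n}E$ for $Q(\Pic)$-modules (Corollary~\ref{cor:Per}). But your second step contains a genuine gap. The canonical map $\Sigma^\infty_+\Pic\to\Sigma^\infty K$ does \emph{not} carry the Bott element of $\mbb{S}[\Pic]$ to the Bott element of $K$ (the latter comes from the $Q(\Pic)$-action via $Q(\Pic)\to K$, not from the free $\mbb{S}[\Pic]$-module structure on $\Sigma^\infty K$). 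Hence the two telescopic towers are not compatible under the canonical map, and your proposed comparison---``$K(X)$ as exactly the Bott-telescopic colimit that computes $L_\pbf\mbb{S}[\Pic](X)$''---does not go through as stated. The paper flags this explicitly as a subtlety overlooked in earlier proofs for homotopy $K$-theory.

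The paper's fix is to introduce an auxiliary pullback $X:=\Sigma^\infty K\times_{K^\Bass}\Sigma^\infty_+\Pic$ and show that all four maps in the resulting square are $L_{\mbb{P}^1}$-equivalences; the map $X\to\Sigma^\infty K$ \emph{does} preserve Bott elements, so the tower comparison can be carried out there. The key input is not the splitting principle per se but rather the computation $E^{*,*}(\Vect_n)\simeq E^{*,*}[[c_1,\dotsc,c_n]]$ (Corollary~\ref{cor:Che2}), which yields the identification of additive maps $K\to\OmegaP^\infty M$ with $M^0(\Pic)$ (Theorem~\ref{thm:CoK}); this is what allows one to construct a section $\tilde{s}$ compatible with Bott elements and to control the higher limits of the towers. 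Finally, your concern about the $\mbb{E}_\infty$-upgrade is misplaced: the canonical map is a morphism of $\mbb{E}_\infty$-algebras from the outset (since $L_\pbf$ is a symmetric monoidal localization), so once it is an equivalence of underlying objects it is automatically an equivalence of $\mbb{E}_\infty$-algebras.
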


In the body of the paper, we discuss and prove the case over an arbitrary qcqs derived scheme.
Also, a universality of the Selmer $K$-theory is established.
The basic idea of the proof is to regard the projective bundle formula as $\mbb{P}^1$-periodicity and work in a category where $\mbb{P}^1$ is formally inverted.
This leads to our formulation of motivic spectra, which we explain next.

\subsection{Motivic spectra}

The crucial idea of the theory of motives is to invert the pointed projective line $\mbb{P}^1$, as Grothendieck first considered in his construction of the category of pure motives.
Voevodsky's stable motivic homotopy category (cf.\ \cite{Vo,MV}) is based on the same idea and has been studied extensively in the last decades, but it completely relies on $\mbb{A}^1$-homotopy invariance.
We would like to propose more general definition in this paper.
See \cite{Bin,BDO,KMSY} for other approaches to non-$\mbb{A}^1$-local theory of motives.

We define the \textit{$\infty$-category of motivic spectra} to be the formal inversion of $\mbb{P}^1$ in $\St_*$
\[
	\SpP := \St_*[(\mbb{P}^1)^{-1}],
\]
where $\mbb{P}^1$ is pointed by $\infty$.
More precisely, $\SpP$ is a universal presentably symmetric monoidal $\infty$-category together with a symmetric monoidal functor $\SigmaP^\infty\colon\St_*\to\SpP$ which carries $\mbb{P}^1$ to an invertible object.
More generally, for an $\infty$-category $\V$ presentably tensored over $\St$ (i.e., an $\St$-module object in $\Pr^L$), we define the \textit{$\infty$-category of motivic spectra in $\V$} by
\[
	\SpP(\V) := \V_*[(\mbb{P}^1)^{-1}] \simeq \SpP\otimes_\St\V,
\]
where the tensor product is taken in the $\infty$-category $\Pr^L$ of presentable $\infty$-categories.
One can think of this construction as an analogue of stabilization of $\infty$-categories, replacing the $\infty$-topos $\Ani$ of anima with $\St$ and the circle $S^1$ with $\mbb{P}^1$.

We warn that, contrary to the usual stabilization, the $\infty$-category $\SpP(\V)$ may not be equivalent to the sequential colimit in $\Pr^L$
\[
	\TelP(\V) := \colim(\V\xrightarrow{\mbb{P}^1\otimes-}\V\xrightarrow{\mbb{P}^1\otimes-}\V\xrightarrow{\mbb{P}^1\otimes-}\cdots).
\]
However, there is still a canonical functor $\SpP(\V)\to\TelP(\V)$ and it is conservative.
More serious problem is that the $\infty$-category $\SpP(\V)$ may not be stable.
To overcome this difficulty, we extract special type of motivic spectra.
We say that a motivic spectrum $E$ in $\V$ is \textit{fundamental} if the canonical map
\[
	\mbb{P}^1\otimes E \to S^1\otimes\mbb{G}_m\otimes E
\]
admits a retraction.
Let $\SpP(\V)^\fd$ denote the full subcategory of $\SpP(\V)$ spanned by fundamental motivic spectra.
Roughly speaking, a motivic spectrum is fundamental if and only if it satisfies Bass fundamental exact sequence, and then we employ the idea of Bass construction as in \cite{TT} to prove the following.

\begin{theorem}\label{thm2}
The adjunction
\[
	\Sigma^\infty \colon \SpP \rightleftarrows \SpP(\Sp) \colon \Omega^\infty
\]
restricts to an adjoint equivalence
\[
	\Sigma^\infty \colon \SpP^\fd \overset{\sim}{\rightleftarrows} \SpP(\Sp)^\fd \colon \Omega^\infty.
\]
\end{theorem}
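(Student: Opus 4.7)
The strategy is to first verify that both $\Sigma^\infty$ and $\Omega^\infty$ restrict to the fundamental subcategories, and then to check that the unit is an equivalence on $\SpP^\fd$ and the counit on $\SpP(\Sp)^\fd$. Conceptually, the fundamental splitting is a Bass-type datum which effectively makes $S^1$ act invertibly, so a fundamental object of $\SpP$ already carries the data of an honest object of $\SpP(\Sp)$.

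First, I would observe that $\Sigma^\infty$, being symmetric monoidal, and $\Omega^\infty$, being lax symmetric monoidal, both transport the canonical map $\mathbb{P}^1\otimes(-)\to S^1\otimes\mathbb{G}_m\otimes(-)$ and preserve its left inverses. Hence the adjunction restricts to $\SpP^\fd\rightleftarrows\SpP(\Sp)^\fd$.

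For the main equivalence, the core step is the construction of a spectrum structure on a fundamental $E\in\SpP$. Let $\Theta:=\mathbb{G}_m\otimes(\mathbb{P}^1)^{-1}\otimes(-)$, an endo-equivalence of $\SpP$ since $\mathbb{P}^1$ is already invertible there. Tensoring the fundamental splitting $S^1\otimes\mathbb{G}_m\otimes E\to\mathbb{P}^1\otimes E$ with $(\mathbb{P}^1)^{-1}$ yields a retraction $S^1\otimes\Theta E\to E$ of the canonical map $E\to S^1\otimes\Theta E$. Iterating, we obtain a tower $(E,\Theta E,\Theta^2 E,\ldots)$ whose adjacent $S^1$-structure maps are all split monomorphisms. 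Following the Bass construction of \cite{TT}, this tower assembles into a fundamental motivic spectrum of spectra $\widehat{E}\in\SpP(\Sp)$ with $\Omega^\infty\widehat{E}\simeq E$. By the universal property of $\Sigma^\infty$, we get $\widehat{E}\simeq\Sigma^\infty E$, and the unit $E\to\Omega^\infty\Sigma^\infty E$ is identified with the equivalence $E\simeq\Omega^\infty\widehat{E}$. The counit follows from the same Bass assembly applied to $\Omega^\infty G$ for a fundamental $G\in\SpP(\Sp)$: its output, uniquely characterized inside $\SpP(\Sp)$ by its $0$-th space, must agree with $G$ itself.

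The main obstacle is upgrading the retraction datum to a homotopy-coherent $\mathbb{N}$-indexed diagram and identifying its Bass assembly with $\Sigma^\infty E$. The splittings produced by the fundamental condition are a priori only pointwise; making the telescope $\infty$-categorically coherent, so that the resulting object genuinely lives in $\SpP(\Sp)$ and is functorial in $E$, is the technical heart of the argument and requires a careful adaptation of the Thomason--Trobaugh telescope construction to the $\infty$-categorical setting.
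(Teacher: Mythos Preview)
Your conceptual picture is right---the fundamental splitting is Bass-type data and the proof should exploit it to deloop $E$---but the execution has two genuine gaps that the paper's argument is specifically designed to avoid.

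\textbf{The tower is not a spectrum object.} Your sequence $(E,\Theta E,\Theta^2 E,\dotsc)$ with structure maps coming from the splitting does not define an object of $\SpP(\Sp)\simeq\Sp(\SpP)$. An object there is a sequence with \emph{equivalences} $E_i\simeq\Omega E_{i+1}$, whereas the fundamental condition only gives that $E$ is a \emph{retract} of $\Omega(\mbb{P}^1\otimes E^{\mbb{G}_m})$ (equivalently, of $\Sigma\Theta E$). In a stable category this would suffice, but $\SpP$ is not stable---indeed, the whole point of the theorem is to show that fundamental objects behave as if it were. A retract of a loop object in an unstable pointed category need not be a loop object, so your tower does not assemble into $\widehat{E}\in\SpP(\Sp)$ as claimed. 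The coherence problem you flag is secondary to this; even a single delooping step is not supplied by the splitting alone.

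\textbf{The unit/counit arguments are circular.} Even granting a construction of $\widehat{E}$ with $\Omega^\infty\widehat{E}\simeq E$, the universal property of $\Sigma^\infty$ only produces a map $\Sigma^\infty E\to\widehat{E}$; it does not force it to be an equivalence. Likewise, your counit step appeals to $G$ being ``uniquely characterized by its $0$-th space,'' which is exactly the full faithfulness of $\Omega^\infty$ on fundamental objects---part of what is to be proved.

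\textbf{How the paper proceeds instead.} The paper implements the Bass idea at the level of \emph{cohomology groups}, where the splitting is unproblematic: using the split fundamental exact sequence it extends $E^{p,q}(-)$ to all $(p,q)\in\mbb{Z}^2$ and checks this is a cohomology theory on $\SpP(\V)$ (Construction~\ref{cons:FSt}, Lemmas~\ref{lem:FSt} and~\ref{lem:FSt2}). Brown representability (this is why $\V$ is assumed compactly generated with $\mbb{P}^1$ compact) then produces a representing object $F$ for $E^{1,0}$, which is shown to be fundamental and to give a two-sided inverse to $\Omega$ on the homotopy category of $\SpP(\V)^\fd$. Finally, one identifies $\SpP(\Sp(\V))^\fd$ with the limit of the tower $\cdots\xrightarrow{\Omega}\SpP(\V)^\fd\xrightarrow{\Omega}\SpP(\V)^\fd$, from which the adjoint equivalence follows. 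The passage through cohomology theories and Brown representability is precisely what converts the non-canonical retraction into an honest delooping without having to solve a coherence problem by hand.
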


To move further on, we develop the theory of orientation for motivic spectra in parallel with the theory of complex orientation in topology.
We say that a motivic spectrum $E$ is \textit{orientable} if the map
\[
	[\mcal{O}(1)]\otimes\id_E \colon \mbb{P}^1\otimes E \to \Pic\otimes E
\]
admits a retraction.
We remark that if a motivic spectrum is orientable then it is fundamental.
Then we formulate projective bundle formula for oriented motivic spectra, develop a theory of Chern classes, and calculate the cohomology of the moduli stack $\Vect_n$ of rank $n$ vector bundles by adopting the argument in \cite{AI}.

\begin{theorem}\label{thm3}
Let $E$ be a homotopy commutative oriented motivic ring spectrum which satisfies projective bundle formula.
Then there is a natural ring isomorphism
\[
	E^{*,*}(\Vect_{n,S}) \simeq E^{*,*}(S)[[c_1,\dotsc,c_n]]
\]
for every qcqs scheme $S$.
\end{theorem}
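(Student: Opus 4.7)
My plan is to argue by induction on $n$, following the splitting-principle strategy of \cite{AI}. The base case $n = 1$ identifies $\Vect_{1,S} \simeq \Pic_S$, and I would present $\Pic_S$ as the colimit of the tower $\mbb{P}^1_S \hookrightarrow \mbb{P}^2_S \hookrightarrow \dotsm$ coming from the classifying maps of $\mcal{O}(1)$. Iterated projective bundle formula then gives $E^{*,*}(\mbb{P}^N_S) \simeq E^{*,*}(S)[c_1]/(c_1^{N+1})$, and passing to the limit yields $E^{*,*}(\Pic_S) \simeq E^{*,*}(S)[\![c_1]\!]$.

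For the inductive step, let $\mcal{E}$ denote the universal rank $n$ bundle on $\Vect_{n,S}$. I would build the complete flag bundle $\mathrm{Fl}(\mcal{E}) \to \Vect_{n,S}$ as a tower of projective bundles obtained by successively projectivizing the tautological quotients. This produces line bundles $L_1,\dotsc,L_n$ on $\mathrm{Fl}(\mcal{E})$ with $x_i := c_1(L_i)$, and iterated projective bundle formula exhibits $E^{*,*}(\mathrm{Fl}(\mcal{E}))$ as a free $E^{*,*}(\Vect_{n,S})$-module of rank $n!$ with basis $\{x_1^{a_1}\dotsm x_n^{a_n} : 0 \le a_i \le n-i\}$. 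The classifying map $\mathrm{Fl}(\mcal{E}) \to \Pic_S^n$ for $(L_1,\dotsc,L_n)$, together with the base case, identifies the induced ring map with $E^{*,*}(S)[\![x_1,\dotsc,x_n]\!] \to E^{*,*}(\mathrm{Fl}(\mcal{E}))$, and the tautological filtration of $\mcal{E}$ by the $L_i$ forces each $c_i(\mcal{E})$ to pull back to $\sigma_i(x_1,\dotsc,x_n)$.

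To conclude, I would analyze the canonical map $\phi\colon E^{*,*}(S)[\![c_1,\dotsc,c_n]\!] \to E^{*,*}(\Vect_{n,S})$. Post-composing $\phi$ with the injection into $E^{*,*}(\mathrm{Fl}(\mcal{E}))$ recovers the standard symmetric-function embedding, realizing $E^{*,*}(S)[\![x_1,\dotsc,x_n]\!]$ as a free module of rank $n!$ over $E^{*,*}(S)[\![c_1,\dotsc,c_n]\!]$. Since $E^{*,*}(\mathrm{Fl}(\mcal{E}))$ is simultaneously free of the same rank $n!$ over $E^{*,*}(\Vect_{n,S})$ via $\phi$-extended scalars, a rank-counting argument combined with the $S_n$-equivariance of the flag construction forces $\phi$ to be an isomorphism. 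The main obstacle I anticipate lies in carefully controlling the completions---both in the passage $\colim_N \mbb{P}^N_S \to \Pic_S$ and in the identification of $S_n$-invariants in $E^{*,*}(S)[\![x_1,\dotsc,x_n]\!]$---for which the behavior of projective bundle formula under colimits and limits, developed earlier in the paper, should be indispensable.
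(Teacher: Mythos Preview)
Your base case contains the essential gap: in the setting of this paper, where $\St$ is the Zariski $\infty$-topos without any $\mbb{A}^1$-localization, the Picard stack $\Pic_S$ is \emph{not} the colimit of the tower $\mbb{P}^1_S\hookrightarrow\mbb{P}^2_S\hookrightarrow\cdots$. That colimit is the ind-scheme $\mbb{P}^\infty_S=\Gr_{1,S}$, and the map $\mbb{P}^\infty_S\to\Pic_S$ classifying $\mcal{O}(1)$ has nontrivial fibers (the space of generating sections of a line bundle is an infinite affine space, not a point). The familiar equivalence $\mbb{P}^\infty\simeq B\mbb{G}_m\simeq\Pic$ is an $\mbb{A}^1$-local phenomenon and is precisely what is unavailable here. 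Consequently, the passage ``$\lim_N E^{*,*}(\mbb{P}^N_S)\simeq E^{*,*}(\Pic_S)$'' is unjustified; what you actually compute is $E^{*,*}(\mbb{P}^\infty_S)$, and bridging the gap to $E^{*,*}(\Pic_S)$ is the heart of the matter.

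The paper handles this via the Lifting Lemma (Lemma~\ref{lem:Lif2} and its source Lemma~\ref{lem:Lif}): one shows that, up to twisting by $\mcal{O}(k+1)$ on $\bar{\Delta}^k$, every globally generated bundle lifts along $\Gr_n\to\Vect_n^\flat$ after passing to a suitable coskeleton, and combines this with a $c_1$-completeness argument (Lemma~\ref{lem:Ori3}) to produce both a right and a left inverse to $L_\ex(\Gr_{n+}\otimes E)\to L_\ex(\Vect_{n+}\otimes E)$; see Proposition~\ref{prop:Pic} and Theorem~\ref{thm:Che}. Only after this comparison is established does the Grassmannian calculation (Corollary~\ref{cor:Che}) transfer to $\Vect_n$. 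Your inductive step has a parallel problem: the rank-$n!$ comparison gives you $E^{*,*}(\mathrm{Fl}(\mcal{E}))$ as a free module over $E^{*,*}(\Vect_{n,S})$, but you have no independent identification of $E^{*,*}(\mathrm{Fl}(\mcal{E}))$ with $E^{*,*}(S)[\![x_1,\dotsc,x_n]\!]$---the map from $E^{*,*}(\Pic_S^n)$ is again the $\Gr_1^n\to\Pic^n$ issue in disguise---so the rank count does not force $\phi$ to be an isomorphism.
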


\begin{remark}
All cohomology theories treated in \cite{AI} were assumed to have finite quasi-smooth transfers.
We have succeeded in removing this assumption by introducing the notion of oriented motivic spectra.
Although oriented motivic spectra are expected to admit transfers in good generality, we do not discuss this problem in this paper.
\end{remark}

Let us go back to $K$-theory.
We see that $K$-theory of schemes is represented by a motivic spectrum $K$, which is canonically oriented, satisfies projective bundle formula, and periodic, i.e., $\SigmaP K\simeq K$.
In particular, it has a unique infinite delooping as motivic spectra by Theorem \ref{thm2}, which recovers the non-connective $K$-theory.
Furthermore, we prove an equivalence of motivic spectra 
\[
	K \simeq \colim(\SigmaP^\infty\Omega^\infty K \xrightarrow{\beta}
		\SigmaP^{\infty-1}\Omega^\infty K \xrightarrow{\beta} \SigmaP^{\infty-2}\Omega^\infty K \xrightarrow{\beta} \cdots).
\]
Consequently, we get an equivalence
\[
	\Map(K,E) \simeq \lim_n\Map(\Omega^\infty K,\OmegaP^{\infty-n} E)
\]
for a motivic spectrum $E$, and each term in the limit is calculated by Theorem \ref{thm3} if $E$ satisfies projective bundle formula.
Then, by proceeding with calculation, we obtain Theorem \ref{thm1}.

\subsection{Organization of the paper}

Section \ref{IST} deals with formal inversion in a purely categorical setting.
In Section \ref{MSp}, we define motivic spectra and prove Theorem \ref{thm2} in a more general form.
In Section \ref{OPB}, we discuss orientations and projective bundle formula for motivic spectra.
In Section \ref{Vec}, we develop a theory of Chern classes for oriented motivic spectra and prove Theorem \ref{thm3}.
In Section \ref{MSn}, we prove Theorem \ref{thm1} and its variant for Selmer $K$-theory.
Appendix \ref{CPr} collects some categorical preliminaries.
Each section begins with a brief summary.

\subsection{Acknowledgement}

The formulation of Theorem \ref{thm1} is due to Dustin Clausen.
We would like to thank him for the essential remark that our previous work \cite{AI} may be useful in proving Theorem \ref{thm1}, and for many helpful discussions.
We are very grateful to Marc Hoyois for pointing out a mistake in the proof of \cite{AI} and helping us to correct it in this paper.
We also thank Lars Hesselholt, Markus Land, and Shuji Saito for helpful discussions.
The first author was supported by the Vilho, Yrj\"o and Kalle V\"ais\"al\"a Foundation of the Finnish Academy of Science and Letters.
The second author was supported by the European Union's Horizon 2020 research and innovation programme under the Marie Sk\l{}odowska-Curie grant agreement No.\ 896517.

\subsection{Developments since the first and second versions}

Since the first version of this paper in April 2022, there have been several developments of motivic spectra in the sense of this paper.
The two papers \cite{AHI,AHI2} written with Marc Hoyois can be considered as sequels.
When \cite{AHI} was announced in March 2023, we made minor changes to this paper accordingly, which is the second version.
In this third version, we have made some more minor changes, but the numbering has not been changed from the second version.

Here are some of these later developments that are relevant to the content of this paper.
First of all, notations: the category $\mrm{MS}_S$ of motivic spectra over a derived scheme $S$ in the sense of \cite{AHI} is exactly $\SpP(\Sp(\St_S^\ex))$ in the sense of this paper.
The category $\mrm{MS}_S$ of motivic spectra in the sense of \cite{AHI2} is essentially the same but uses the Nisnevich topology instead of the Zariski topology, i.e., $\SpP(\Sp(\St_S^{\mrm{Nis},\ex}))$.
We now have the feeling that the Nisnevich $\mrm{MS}$ is the fundamental playground for motivic stable homotopy theory; but all the results in this paper and \cite{AHI} hold Zariski locally.

The most notable technique made available since then is the \textit{projective bundle homotopy invariance} in $\mrm{MS}$, \cite[Theorem 4.1]{AHI}.
Using this, we proved an equivalence $\Gr_n\simeq\Vect_n$ in $\mrm{MS}$, \cite[Theorem 5.3]{AHI}, generalizing Theorem \ref{Che} below to the non-oriented case.
This makes part of Section \ref{Vec} obsolete.
Nevertheless, we would like to keep our original argument, as the proof provides a different perspective to study the pbf-localization of oriented theories.
To get the updated perspective, the reader could skim Section \ref{Vec} and read Section 4 and 5 of \cite{AHI} at the same time.

The other sections are still fresh as they are.
The following are some noteworthy results proved later based on the results here.
The fundamental stability (Theorem \ref{FSt}) combined with the projective bundle homotopy invariance gives an equivalence
\[
	\mrm{MS}_S = \SpP(\Sp(\St_S^\ex)) \simeq \SpP(\St_S^\ex)^\fd, 
\]
in other words, every object in $\mrm{MS}_S$ is fundamental; see \cite[Corollary 4.13]{AHI}.
Algebraic Conner–Floyd isomorphism (\cite[Theorem 8.11]{AHI}) is established using the universality of $K$-theory (Theorem \ref{thm:AlK}).
The universality of $K$-theory (Theorem \ref{thm:AlK}) also allows us to compute cohomology operations of $K$-theory as in Section 9 of \cite{AHI2}.

\newpage

\section{Formal inversion, spectra, and telescopes}\label{IST}

Let $\mcal{C}$ be a presentably symmetric monoidal $\infty$-category and $c$ an object in $\mcal{C}$.
Then we will see that there exists a presentably symmetric monoidal $\infty$-category $\mcal{C}[c^{-1}]$ which is obtained by formally inverting $c$ in $\mcal{C}$.
More generally, for an $\infty$-category $\mcal{D}$ presentably tensored over $\mcal{C}$, the \textit{formal inversion} $\mcal{D}[c^{-1}]$ is well-defined as an $\infty$-category presentably tensored over $\mcal{C}[c^{-1}]$.
The purpose of this section is to present basic tools for studying these $\infty$-categories.

Let us briefly recall localization of modules over commutative rings.
Let $R$ be a commutative ring, $r$ an element in $R$, and $M$ an $R$-module.
Then the localization $M[r^{-1}]$ is modeled by the sequential colimit
\[
	M[r^{-1}] \simeq \colim(M\xrightarrow{r}M\xrightarrow{r}M\xrightarrow{r}\cdots).
\]
One might wonder if there would be an analogue for formal inversion of $\infty$-categories.
However, it turns out that the analogous construction does not give a correct model in general.
We can consider the sequential colimit in $\Pr^L$
\[
	\Tel_c(\mcal{D}) := \colim(\mcal{D}\xrightarrow{c\otimes-}\mcal{D}\xrightarrow{c\otimes-}\mcal{D}\xrightarrow{c\otimes-}\cdots),
\]
but it is not equivalent to the formal inversion $\mcal{D}[c^{-1}]$ in general.
The point is that we need to incorporate more symmetricity in order to obtain a correct model.
This is achieved by what we call \textit{$c$-spectra}; in other literature, it is often referred to as symmetric $c$-spectra.
We will define the $\infty$-category $\Sp_c(\mcal{D})$ of $c$-spectra and prove that it is equivalent to the formal inversion $\mcal{D}[c^{-1}]$ completely in general (Proposition \ref{prop:cSp}).

We refer to the sequential colimit $\Tel_c(\mcal{D})$ as the $\infty$-category of $c$-telescopes.
$c$-telescopes are structurally simpler than $c$-spectra and play a complementary role for studying $c$-spectra.
The upshot is that there is a canonical conservative functor $\Sp_c(\mcal{D})\to\Tel_c(\mcal{D})$ and it is an equivalence under a certain symmetricity on $c$ (Proposition \ref{prop:CST}).
See \cite{Ho,Ro} for related works.

\subsection{Categorical conventions}\label{CCo}

We generally follow the notation in \cite{HTT,HA}.
See also \S\ref{Mod} for the theory of modules over commutative algebras.
The following is a glossary of terms that may require further explanations.

\begin{p}[Anima]
We adopt the term ``anima'' following \cite{CS} and let $\Ani$ denote the $\infty$-category of anima, which is the $\infty$-category of spaces in the sense of \cite{HTT}.
\end{p}

\begin{p}[$\infty$-category of $\infty$-categories]
Let $\Cat_\infty$ denote the $\infty$-category of possibly large $\infty$-categories and $\Cat_\infty^\sm$ denote its full subcategory spanned by small $\infty$-categories.
We suppose that $\Cat_\infty$ and $\Cat_\infty^\sm$ are equipped with the cartesian symmetric monoidal structures.
\end{p}

\begin{p}[Tensored $\infty$-category]
Let $\mcal{C}$ be a monoidal $\infty$-category.
Recall from \cite[4.2.1.19]{HA} that an $\infty$-category $\mcal{D}$ is \textit{left-tensored over $\mcal{C}$} if we are supplied with an $\mbb{LM}$-monoidal $\infty$-category $\mcal{D}^\otimes$, an equivalence of monoidal $\infty$-categories $\mcal{D}^\otimes_\mfk{a}\simeq\mcal{C}$, and an equivalence of $\infty$-categories $\mcal{D}^\otimes_\mfk{m}\simeq\mcal{D}$.
Note that an $\infty$-category left-tensored over $\mcal{C}$ is identified with a left $\mcal{C}$-module object in $\Cat_\infty$.

When $\mcal{C}$ underlies a symmetric monoidal $\infty$-category, we omit the prefix ``left'' from the notation, because left or right does not make any difference.
Actually, we can replace the operad $\mbb{LM}^\otimes$ by a simpler operad $\mbb{M}^\otimes$ to deal with this case, cf.\ \S\ref{Mod}.
\end{p}

\begin{p}[Presentably symmetric/tensored $\infty$-category]
Let $\Pr^L$ denote the $\infty$-category of presentable $\infty$-categories and colimit-preserving functors.
We suppose that $\Pr^L$ is equipped with the symmetric monoidal structure as in \cite[4.8.1.15]{HA}.
We refer to a commutative algebra object in $\Pr^L$ as a \textit{presentably symmetric monoidal $\infty$-category}.
Given a presentably symmetric monoidal $\infty$-category $\mcal{C}$, we refer to a $\mcal{C}$-module object in $\Pr^L$ as an \textit{$\infty$-category presentably tensored over $\mcal{C}$}, cf.\ \S\ref{PrM}.
\end{p}

\begin{p}[Linear functor]
Let $\mcal{D}$ and $\mcal{D}'$ be $\infty$-categories left-tensored over a monoidal category $\mcal{C}$.
Recall from \cite[4.6.2.7]{HA} that a \textit{(lax) $\mcal{C}$-linear functor} $\mcal{D}\to\mcal{D}'$ is a (lax) $\mbb{LM}$-monoidal functor $\mcal{D}^\otimes\to\mcal{D}^{\prime\otimes}$ which is the identity on $\mcal{C}$.
Note that a $\mcal{C}$-linear functor is identified with a morphism in $\LMod_\mcal{C}(\Cat)$. 
\end{p}

\begin{p}[Exponential object]
Let $\mcal{C}$ be a presentably symmetric monoidal $\infty$-category and $\mcal{D}$ an $\infty$-category presentably tensored over $\mcal{C}$.
For $X,Y\in\mcal{D}$ and $A\in\mcal{C}$, the \textit{exponential objects} $X^Y\in\mcal{C}$ and $X^A\in\mcal{D}$ are defined by the adjunctions
\[
	-\otimes Y \colon \mcal{C} \rightleftarrows \mcal{D} \colon (-)^Y \qquad
	A\otimes- \colon \mcal{D} \rightleftarrows \mcal{D} \colon (-)^A.
\]
We also write $\intMap(Y,X):=X^Y$ for $X,Y\in\mcal{D}$.
\end{p}

\begin{p}[Smashing localization]
Recall that a localization $L\colon\mcal{D}\to\mcal{D}$ is a functor of the form $L=G\circ F$ for some functor $F\colon\mcal{D}\to\mcal{D}'$ which admits a fully faithful right adjoint $G\colon\mcal{D}'\to\mcal{D}$.
Suppose that an $\infty$-category $\mcal{D}$ is tensored over a symmetric monoidal category $\mcal{C}$.
Then we say that a localization $L\colon\mcal{D}\to\mcal{D}$ is \textit{smashing} if it has the form $L=A\otimes -$ for some commutative algebra object $A$ in $\mcal{C}$.
\end{p}

\subsection{Formal inversion}\label{Inv}

\begin{p}
We fix a presentably symmetric monoidal $\infty$-category $\mcal{C}$ and an object $c$ in $\mcal{C}$ throughout this section.
We usually denote by $\mcal{D}$ an $\infty$-category presentably tensored over $\mcal{C}$.
\end{p}

\begin{proposition}\label{prop:Inv}
There exists a smashing localization
\[
	(-)[c^{-1}] \colon \Mod_\mcal{C}(\Pr^L) \to \Mod_\mcal{C}(\Pr^L)
\]
whose essential image is spanned by $\infty$-categories presentably tensored over $\mcal{C}$ on which $c$ acts as an equivalence.
\end{proposition}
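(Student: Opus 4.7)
The plan is to identify the essential image as a reflective subcategory and then upgrade the reflection to a smashing localization by exhibiting it as tensoring with an idempotent $\mcal{C}$-algebra.

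Let $\mcal{E}\subset\Mod_\mcal{C}(\Pr^L)$ be the full subcategory of $\mcal{C}$-modules $\mcal{D}$ on which $c\otimes-\colon\mcal{D}\to\mcal{D}$ is an equivalence. First I would verify that $\mcal{E}$ is closed under small limits: these are computed in $\Pr^L$ via the right-adjoint description, compatibly with the $\mcal{C}$-action, and if $c\otimes-$ is an equivalence on each $\mcal{D}_i$ with inverse $G_i$, then the $G_i$ assemble into an inverse of $c\otimes-$ on $\lim \mcal{D}_i$. Next I would invoke the adjoint functor theorem in the large-presentable setting of $\Mod_\mcal{C}(\Pr^L)$ to produce a left adjoint $L$ to the inclusion $\mcal{E}\hookrightarrow\Mod_\mcal{C}(\Pr^L)$.

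To upgrade $L$ to a smashing localization, set $\mcal{C}[c^{-1}]:=L(\mcal{C})$; the universal property of the reflection equips it with a canonical commutative $\mcal{C}$-algebra structure. The key observation is that $\mcal{E}$ is stable under tensoring with arbitrary objects of $\Mod_\mcal{C}(\Pr^L)$: given $\mcal{D}'\in\mcal{E}$ with inverse $G$ of $c\otimes-$, the functor $\id_\mcal{D}\otimes G$ provides an inverse to $c\otimes-$ on $\mcal{D}\otimes_\mcal{C}\mcal{D}'$ for any $\mcal{D}$. Combined with reflectivity, this forces the canonical map $\mcal{D}\otimes_\mcal{C}\mcal{C}[c^{-1}]\to L(\mcal{D})$ to be an equivalence for all $\mcal{D}$, i.e., $L\simeq \mcal{C}[c^{-1}]\otimes_\mcal{C}-$, which is smashing by definition.

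The main obstacle is the set-theoretic subtlety in applying the adjoint functor theorem inside the large $\infty$-category $\Mod_\mcal{C}(\Pr^L)$: one must verify that $\mcal{E}$ is accessible in an appropriate enlarged sense, which is a routine but delicate stratification argument by presentability cardinals. A more constructive route, which is the one taken in the remainder of the section, is to build $\mcal{C}[c^{-1}]$ directly as (a variant of) the $\infty$-category $\Sp_c(\mcal{C})$ of $c$-spectra, and then verify by hand that tensoring with this commutative $\mcal{C}$-algebra defines the desired smashing localization with essential image $\mcal{E}$.
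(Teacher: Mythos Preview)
Your overall strategy is close to the paper's, and your final paragraph correctly anticipates the constructive route via $c$-spectra. The paper's own proof is very brief: it cites Robalo for the existence of the reflection and then observes that the essential image is both an ideal \emph{and a co-ideal} of $\Mod_\mcal{C}(\Pr^L)$, invoking Lemma~\ref{lem:SmL} to conclude smashingness.

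There is, however, a genuine gap in your middle step. You verify that $\mcal{E}$ is an ideal and then assert that ``combined with reflectivity, this forces the canonical map $\mcal{D}\otimes_\mcal{C}\mcal{C}[c^{-1}]\to L(\mcal{D})$ to be an equivalence.'' This does not follow from the ideal condition alone. Unwinding, one needs that the unit $\mcal{C}\to\mcal{C}[c^{-1}]$ remains an $L$-equivalence after tensoring with an arbitrary $\mcal{D}$; equivalently, that $L$-equivalences are stable under $\mcal{D}\otimes_\mcal{C}-$. By the tensor--hom adjunction this is exactly the statement that $\intMap_\mcal{C}(\mcal{D},-)$ preserves $\mcal{E}$, i.e.\ that $\mcal{E}$ is a \emph{co-ideal}. (Relatedly, your claim that the reflection automatically makes $L(\mcal{C})$ a commutative algebra already presupposes that $L$ is symmetric monoidal, which again is the co-ideal condition.) Lemma~\ref{lem:SmL} makes precisely this point: smashing is equivalent to ideal \emph{plus} co-ideal, not ideal alone.

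The fix is a single line: if $c$ acts invertibly on $\mcal{D}'$, then it acts invertibly on $\intMap_\mcal{C}(\mcal{D},\mcal{D}')$ for any $\mcal{D}$, since the $c$-action on the internal hom is through the target. With that added, your argument becomes essentially the same as the paper's, modulo your use of the adjoint functor theorem (with its attendant size issues, which you flag) in place of the explicit $\Sp_c$ construction.
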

\begin{proof}
This is \cite[Proposition 2.9]{Ro}.
In what follows, we will construct a concrete model of the localization $(-)[c^{-1}]$, which independently proves its existence, cf.\ Proposition \ref{prop:cSp}.
Then the assertion that it is smashing is a formal consequence of the obvious fact that its essential image is both an ideal and a co-ideal of $\Mod_\mcal{C}(\Pr^L)$, cf. Lemma \ref{lem:SmL}.
\end{proof}

\begin{p}[Formal inversion]
For an $\infty$-category $\mcal{D}$ presentably tensored over $\mcal{C}$, we refer to $\mcal{D}[c^{-1}]$ as the \textit{formal inversion of $c$ in $\mcal{D}$}.
\end{p}

\begin{remark}
Since the localization $(-)[c^{-1}]$ is smashing, the unit map $u\colon\mcal{C}\to\mcal{C}[c^{-1}]$ exhibits $\mcal{C}[c^{-1}]$ as an idempotent object in $\Mod_\mcal{C}(\Pr^L)$, and thus $\mcal{C}[c^{-1}]$ admits a unique presentably symmetric monoidal structure for which $u$ is (uniquely) promoted to a symmetric monoidal functor.
Then the restriction of scalars along $u$ induces an equivalence
\[
	\Mod_{\mcal{C}[c^{-1}]}(\Pr^L) \xrightarrow{\sim} \Mod_\mcal{C}(\Pr^L)[c^{-1}].
\]
In particular, the formal inversion $\mcal{D}[c^{-1}]$ is presentably tensored over $\mcal{C}[c^{-1}]$ in a canonical way.
\end{remark}

\begin{lemma}\label{lem:Inv}
There is a natural $\mcal{C}$-linear equivalence
\[
	(\mcal{D}[c^{-1}])[d^{-1}] \simeq \mcal{D}[(c\otimes d)^{-1}]
\]
for every $\infty$-category $\mcal{D}$ presentably tensored over $\mcal{C}$ and for every $c,d\in\mcal{C}$.
\end{lemma}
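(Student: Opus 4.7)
The plan is to invoke the universal property of formal inversion from Proposition \ref{prop:Inv}, which characterizes $(-)[c^{-1}]$ as the smashing localization onto the full subcategory of $\Mod_\mcal{C}(\Pr^L)$ consisting of modules on which $c$ acts as an equivalence. Both $(\mcal{D}[c^{-1}])[d^{-1}]$ and $\mcal{D}[(c\otimes d)^{-1}]$ are thereby characterized as initial objects under $\mcal{D}$ in suitable full subcategories of $\Mod_\mcal{C}(\Pr^L)$, so the task reduces to showing that these two subcategories coincide; naturality in $\mcal{D}$ and $\mcal{C}$-linearity of the resulting equivalence will then be automatic from the construction through universal properties.

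Iterating the universal property, $(\mcal{D}[c^{-1}])[d^{-1}]$ is the universal $\mcal{C}$-module under $\mcal{D}$ on which both $c\otimes -$ and $d\otimes -$ are equivalences, while $\mcal{D}[(c\otimes d)^{-1}]$ is the universal one on which $(c\otimes d)\otimes -$ is an equivalence. Granted the equivalence of these two local classes of $\mcal{C}$-modules, the universal properties produce $\mcal{C}$-linear comparison functors in both directions, each uniquely extending the canonical map from $\mcal{D}$, and uniqueness of these extensions forces the two composites to be the respective identities.

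The forward implication is formal since $(c\otimes d)\otimes - \simeq (c\otimes -)\circ(d\otimes -)$ is a composite of equivalences as soon as both factors are. The main point, and the only nontrivial step, is the converse: if $(c\otimes d)\otimes -$ is an equivalence on a $\mcal{C}$-module $\mcal{E}$, why should $c\otimes -$ and $d\otimes -$ each be equivalences individually? I would argue this at the level of the presentably symmetric monoidal category $\mcal{C}[(c\otimes d)^{-1}]$, where $c\otimes d$ becomes tensor-invertible. If $e$ is an object of $\mcal{C}[(c\otimes d)^{-1}]$ satisfying $(c\otimes d)\otimes e \simeq \mathbf{1}$, then $d\otimes e$ is a right tensor inverse for $c$, and the symmetry of the monoidal structure upgrades this to a two-sided inverse, placing $c$ in the Picard group of $\mcal{C}[(c\otimes d)^{-1}]$. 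Consequently $c\otimes -$ acts as an equivalence on every $\mcal{C}[(c\otimes d)^{-1}]$-module, and in particular on $\mcal{D}[(c\otimes d)^{-1}]$; the symmetric argument handles $d$. This Picard-group observation is the main (and essentially the only) obstacle; once it is in hand, the rest is a purely formal manipulation of universal properties.
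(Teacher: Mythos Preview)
Your proposal is correct and follows precisely the route the paper takes: the paper's proof is the single sentence ``It is straightforward to check that both sides have the same universal property,'' and your argument is a fully spelled-out version of that sentence, including the only genuinely nontrivial point (that invertibility of $c\otimes d$ in $\mcal{C}[(c\otimes d)^{-1}]$ forces invertibility of $c$ and $d$ separately via the Picard-group observation).
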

\begin{proof}
It is straightforward to check that both sides have the same universal property.
\end{proof}

\subsection{$c$-spectra}\label{cSp}

\begin{construction}\label{cons:cSp}
Let $B\Sigma_\mbb{N}$ be the free commutative monoid in $\Ani$ with a single generator $e$.
We consider the lax symmetric monoidal functor
\[
	(-)^\Sigma := \Fun(B\Sigma_\mbb{N},-) \colon \Pr^L \to \Pr^L,
\]
which encodes the Day convolution (Construction \ref{cons:Day}), and apply it to a presentably symmetric monoidal $\infty$-category $\mcal{C}$ and an $\infty$-category $\mcal{D}$ presentably tensored over $\mcal{C}$.
Then $\mcal{C}^\Sigma$ is a presentably symmetric monoidal $\infty$-category and $\mcal{D}^\Sigma$ is presentably tensored over $\mcal{C}^\Sigma$.
We consider the following natural transformations:
\begin{itemize}[label={---},leftmargin=*]
\item Let $F\colon\id\to(-)^\Sigma$ be the natural transformation obtained as the left Kan extension along the morphism $*\to B\Sigma_\mbb{N}$ of commutative monoids.
\item Let $s_+\colon(-)^\Sigma\to(-)^\Sigma$ be the natural transformation obtained as the left Kan extension along the morphism $e\colon B\Sigma_\mbb{N}\to B\Sigma_\mbb{N}$ of $B\Sigma_\mbb{N}$-modules.
\end{itemize}
Then $F\colon\mcal{C}\to\mcal{C}^\Sigma$ is symmetric monoidal, $F\colon\mcal{D}\to\mcal{D}^\Sigma$ is $\mcal{C}$-linear, and $s_+\colon\mcal{D}^\Sigma\to\mcal{D}^\Sigma$ is $\mcal{C}^\Sigma$-linear.
Furthermore, we have canonical equivalences (Lemma \ref{lem:Day})
\[
	\mcal{D}^\Sigma \simeq \mcal{C}^\Sigma\otimes_\mcal{C}\mcal{D} \qquad
	(\mcal{D}\xrightarrow{F}\mcal{D}^\Sigma) \simeq (\mcal{C}\xrightarrow{F}\mcal{C}^\Sigma)\otimes_\mcal{C}\mcal{D} \qquad
	(\mcal{D}^\Sigma\xrightarrow{s_+}\mcal{D}^\Sigma) \simeq (\mcal{C}^\Sigma\xrightarrow{s_+}\mcal{C}^\Sigma)\otimes_\mcal{C}\mcal{D},
\]
where the tensor products are taken in $\Mod_\mcal{C}(\Pr^L)$.
We consider the adjunctions
\[
	F \colon \mcal{D} \rightleftarrows \mcal{D}^\Sigma \colon U \qquad
	s_+ \colon \mcal{D}^\Sigma \rightleftarrows \mcal{D}^\Sigma \colon s_-,
\]
where $U$ is the pre-composition by $*\to B\Sigma_\mbb{N}$ and $s_-$ is the pre-composition by $e\colon B\Sigma_\mbb{N}\to B\Sigma_\mbb{N}$.
For $n\ge 0$, we write $F_n:=(s_+)^{\circ n}\circ F$ and $U_n:=U\circ(s_-)^{\circ n}$. 
\end{construction}

\begin{remark}
We illustrate the previous construction in a more concrete way.
An object in $\mcal{D}^\Sigma$ is given by a sequence $Y=(Y_0,Y_1,\dotsc)$ in  $\mcal{D}$ with a $\Sigma_n$-action on $Y_n$.
For $X\in\mcal{C}^\Sigma$ and $Y\in\mcal{D}^\Sigma$, we have a formula
\[
	(X\otimes Y)_n = \bigoplus_{p+q=n} \Sigma_{p+q}\otimes_{\Sigma_p\times\Sigma_q}(X_p\otimes Y_q).
\]
The functor $U\colon\mcal{D}^\Sigma\to\mcal{D}$ carries $Y$ to $Y_0$ and the functor $F\colon\mcal{D}\to\mcal{D}^\Sigma$ carries $d$ to $(d,*,*,\cdots)$, where $*$ is an initial object of $\mcal{D}$.
For $Y\in\mcal{D}^\Sigma$, we have $s_-(Y)_n=Y_{n+1}$ with the restricted action of $\Sigma_n$ on $Y_{n+1}$ and 
\[
	s_+(Y)_n =
	\begin{cases}
		* &\text{if }n=0 \\
		\Sigma_n\otimes_{\Sigma_{n-1}}Y_{n-1} &\text{if }n>0.
	\end{cases}
\]
In other words, the functor $s_+\colon\mcal{D}^\Sigma\to\mcal{D}^\Sigma$ is the multiplication by $s_+(1)=(*,1,*,*,\dotsc)$.
\end{remark}

\begin{lemma}\label{lem:cSp}
\leavevmode
\begin{enumerate}
\item The functors $U_n\colon\mcal{D}^\Sigma\to\mcal{D}$ and $s_-\colon\mcal{D}^\Sigma\to\mcal{D}^\Sigma$ are $\mcal{C}$-linear.
\item The natural transformation $\id_\mcal{D}\to U\circ F$ is an equivalence.
\item The family of functors $\{U_n\colon\mcal{D}^\Sigma\to\mcal{D}\}_{n\ge 0}$ is conservative.
\end{enumerate}
\end{lemma}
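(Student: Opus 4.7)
The plan is to verify each claim by direct computation at the level of sequences, using the pointwise Day convolution formula
\[
	(X\otimes Y)_n = \bigoplus_{p+q=n} \Sigma_{p+q}\otimes_{\Sigma_p\times\Sigma_q}(X_p\otimes Y_q)
\]
recalled in the remark following Construction \ref{cons:cSp}, together with the explicit descriptions of $F$, $U$, $s_+$, and $s_-$.

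First I would dispose of (ii). The map $*\to B\Sigma_\mbb{N}$ picks out the connected component labeled by $n=0$, whose automorphism group is the trivial group $\Sigma_0$, so this map of anima is fully faithful. Left Kan extension along a fully faithful functor is itself fully faithful, equivalently, the unit $\id\to U\circ F$ is an equivalence.

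Next, for (i), the key input is that $F(c)$ is concentrated in degree zero, i.e., $F(c)_0=c$ and $F(c)_p=*$ for $p>0$. The Day formula then collapses to $(F(c)\otimes X)_n \simeq c\otimes X_n$, with $\Sigma_n$ acting on the second factor (the factor in $F(c)_0$ is acted on by $\Sigma_0$, which is trivial). Applying $U$ gives $U(F(c)\otimes X) = c\otimes X_0 = c\otimes U(X)$, and applying $s_-$ gives $s_-(F(c)\otimes X)_n = c\otimes X_{n+1} = c\otimes s_-(X)_n = (F(c)\otimes s_-(X))_n$; the $\Sigma_n$-actions match because the $\Sigma_{n+1}$-action on $c\otimes X_{n+1}$ restricts along $\Sigma_n\hookrightarrow \Sigma_{n+1}$ to the $\Sigma_n$-action on $X_{n+1}$ that defines $s_-(X)_n$. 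In each case the comparison map supplied by the lax structure is the canonical identification from the Day formula, hence an equivalence. This proves $\mcal{C}$-linearity of $U$ and of $s_-$, and composing yields $\mcal{C}$-linearity of every $U_n$.

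Finally (iii) reduces to the identification $U_n(X) = (s_-^n X)_0 = X_n$. A morphism $X\to Y$ in $\mcal{D}^\Sigma \simeq \Fun(B\Sigma_\mbb{N},\mcal{D}) \simeq \prod_{n\ge 0}\Fun(B\Sigma_n,\mcal{D})$ is an equivalence iff each component is, and a morphism in $\Fun(B\Sigma_n,\mcal{D})$ is an equivalence iff its underlying map in $\mcal{D}$ is, since equivalences in functor categories are detected objectwise. Hence $\{U_n\}_{n\ge 0}$ is jointly conservative. No single step here is genuinely hard; the only delicate point is tracking the symmetric group actions in (i), which is bookkeeping rather than substantive mathematics.
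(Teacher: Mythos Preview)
Your proof is correct and follows essentially the same approach as the paper's own proof, which is extremely terse: for (i) the paper merely says these functors are ``clearly lax $\mcal{C}$-linear, but then an easy inspection shows that they are actually $\mcal{C}$-linear,'' and declares (ii) and (iii) ``obvious.'' You have simply carried out that easy inspection in detail via the Day convolution formula, and made explicit the fully-faithfulness and objectwise-detection arguments behind (ii) and (iii).
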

\begin{proof}
For (i), note that these functors are clearly lax $\mcal{C}$-linear, but then an easy inspection shows that they are actually $\mcal{C}$-linear.
(ii) and (iii) are obvious.
\end{proof}

\begin{definition}[Lax $c$-spectrum]\label{def:cSp}
Let $S_c$ be the free commutative algebra in $\mcal{C}^\Sigma$ generated by $F_1(c)$.
For an $\infty$-category $\mcal{D}$ presentably tensored over $\mcal{C}$, we define
\[
	\Sp^\lax_c(\mcal{D}) := \Mod_{S_c}(\mcal{D}^\Sigma)
\]
and call it the \textit{$\infty$-category of lax $c$-spectra in $\mcal{D}$}.
Then $\Sp^\lax_c(\mcal{C})$ admits a presentably symmetric monoidal structure in a canonical way and $\Sp^\lax_c(\mcal{D})$ is presentably tensored over $\Sp^\lax_c(\mcal{C})$.
\end{definition}

\begin{p}[Adjunction]\label{p:cSp}
We consider the following adjunctions:
\begin{itemize}[label={---},leftmargin=*]
\item The adjunction $(F,U)$ together with $S_c\otimes-$ induces an adjunction
\[
	F_c := S_c\otimes F \colon \mcal{D} \rightleftarrows \Sp^\lax_c(\mcal{D}) \colon U.
\]
Then $F_c\colon\mcal{C}\to\Sp^\lax_c(\mcal{C})$ is symmetric monoidal and $F_c\colon\mcal{D}\to\Sp^\lax_c(\mcal{D})$ is $\mcal{C}$-linear.
\item The adjunction $(s_+,s_-)$ induces an adjunction 
\[
	s_+ \colon \Sp^\lax_c(\mcal{D}) \rightleftarrows \Sp^\lax_c(\mcal{D}) \colon s_-.
\]
Then $s_+\colon\Sp^\lax_c(\mcal{D})\to\Sp^\lax_c(\mcal{D})$ is $\Sp^\lax_c(\mcal{C})$-linear.
\end{itemize}
\end{p}

\begin{lemma}\label{lem:cSp2}
\leavevmode
\begin{enumerate}
\item We have $U_n(S_c)\simeq c^{\otimes n}$ for each $n\ge 0$.
\item The functors $U_n\colon\Sp^\lax_c(\mcal{D})\to\mcal{D}$ and $s_-\colon\Sp^\lax_c(\mcal{D})\to\Sp^\lax_c(\mcal{D})$ are $\mcal{C}$-linear.
\item The natural transformation $\id_\mcal{D}\to U\circ F_c$ is an equivalence.
\item The family of functors $\{U_n\colon\Sp^\lax_c(\mcal{D})\to\mcal{D}\}_{n\ge 0}$ is conservative.
\end{enumerate}
\end{lemma}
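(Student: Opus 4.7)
Assertion (i) is the key computation; (iii) follows quickly from it, while (ii) and (iv) reduce to the analogous statements for $\mcal{D}^\Sigma$ in Lemma \ref{lem:cSp}.

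For (i), I unpack $S_c = \bigoplus_{n \geq 0}\Sym^n(F_1(c))$, with $\Sym^n(X) = (X^{\otimes n})_{h\Sigma_n}$ understood relative to the Day convolution. Since $F_1(c) = s_+ F(c)$ is concentrated in degree $1$ with value $c$ and trivial $\Sigma_1$-action, iterated application of the formula
\[
    (X \otimes Y)_n = \bigoplus_{p+q=n} \Sigma_{p+q} \otimes_{\Sigma_p \times \Sigma_q} (X_p \otimes Y_q)
\]
shows that $F_1(c)^{\otimes n}$ is concentrated in degree $n$, where it identifies with the regular $\Sigma_n$-representation tensored with $c^{\otimes n}$ acted on diagonally (regular on $\Sigma_n$, by permutation on $c^{\otimes n}$). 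This diagonal action is free, so taking $\Sigma_n$-coinvariants yields $c^{\otimes n}$, giving $U_n(S_c) \simeq c^{\otimes n}$.

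For (iii), the same Day-convolution formula applied to $S_c$ and $F(X)$ (concentrated in degree $0$) collapses to $(S_c \otimes F(X))_n \simeq (S_c)_n \otimes X$, which by (i) is $c^{\otimes n} \otimes X$. Setting $n = 0$ gives $U F_c(X) \simeq X$, and tracing the construction identifies this with the unit of the adjunction. For (iv), the forgetful functor $\Sp^\lax_c(\mcal{D}) = \Mod_{S_c}(\mcal{D}^\Sigma) \to \mcal{D}^\Sigma$ is conservative by the standard fact on module categories over a commutative algebra, and composing with the conservative family $\{U_n\}$ on $\mcal{D}^\Sigma$ provided by Lemma \ref{lem:cSp}(iii) yields the desired conservativity.

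For (ii), the functor $U_n \colon \Sp^\lax_c(\mcal{D}) \to \mcal{D}$ factors as the forgetful $\Sp^\lax_c(\mcal{D}) \to \mcal{D}^\Sigma$, which is $\mcal{C}^\Sigma$-linear (hence $\mcal{C}$-linear via the symmetric monoidal $F \colon \mcal{C} \to \mcal{C}^\Sigma$), followed by the $\mcal{C}$-linear $U_n$ on $\mcal{D}^\Sigma$ of Lemma \ref{lem:cSp}(i). For $s_-$, the point is that on $\Sp^\lax_c(\mcal{D})$ it is the right adjoint of the $\mcal{C}^\Sigma$-linear functor $s_+$, so the $S_c$-module structure on $s_-(M)$ is the one induced on underlying $\mcal{D}^\Sigma$-objects via the lax $\mcal{C}^\Sigma$-linear structure of $s_- \colon \mcal{D}^\Sigma \to \mcal{D}^\Sigma$; in particular the forgetful functor commutes with $s_-$. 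Hence the $\mcal{C}$-linearity at the $\mcal{D}^\Sigma$ level from Lemma \ref{lem:cSp}(i) lifts to $\mcal{C}$-linearity on $\Sp^\lax_c(\mcal{D})$. The main obstacle is the computation in (i): one must correctly disentangle the two $\Sigma_n$-symmetries on $F_1(c)^{\otimes n}$---the intrinsic one coming from the functor-category structure of $\mcal{D}^\Sigma$ and the tensor-factor permutation used to form $\Sym^n$---and verify that the combined action on $\Sigma_n \otimes c^{\otimes n}$ is free with quotient $c^{\otimes n}$. Once this is in hand, the remaining parts are essentially bookkeeping.
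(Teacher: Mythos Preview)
Your proof is correct and follows essentially the same approach as the paper. The paper's argument is terser---it writes $U_n(S_c)\simeq U_n(\Sym^n(F_1(c)))\simeq(\Sigma_n\otimes c^{\otimes n})_{h\Sigma_n}$ with diagonal action and then declares (ii)--(iv) ``immediate from (i) and Lemma \ref{lem:cSp}''---but your unpacking of the Day convolution for (i), the factorization through the conservative forgetful functor $\Mod_{S_c}(\mcal{D}^\Sigma)\to\mcal{D}^\Sigma$ for (iv), and the reduction of $\mcal{C}$-linearity of $U_n$ and $s_-$ to the $\mcal{D}^\Sigma$ level for (ii) are exactly what that sentence abbreviates.
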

\begin{proof}
We have
\[
	U_n(S_c) \simeq U_n(\Sym^n(F_1(c))) \simeq (\Sigma_n\otimes c^{\otimes n})_{h\Sigma_n}.
\]
Here the $\Sigma_n$-action on $\Sigma_n\otimes c^{\otimes n}$ is the diagonal action, and thus the homotopy orbit is equivalent to $c^{\otimes n}$, which proves (i). 
The other assertions are immediate from (i) and Lemma \ref{lem:cSp}.
\end{proof}

\begin{construction}\label{cons:cSp2}
For each lax $c$-spectrum $E$ in $\mcal{D}$, we have natural equivalences
\[
	c\otimes s_+E \simeq s_+(c\otimes E) \simeq F_1(c)\otimes E.
\]
Hence, the multiplication by $F_1(c)$ yields a morphism of lax $c$-spectra
\[
	\sigma_E \colon s_+(c\otimes E) \to E.
\]
We write $\sigma^\#_E\colon E\to (s_-E)^c$ for the adjoint of $\sigma_E$.
\end{construction}

\begin{definition}[$c$-spectrum]\label{def:cSp2}
A \textit{$c$-spectrum in $\mcal{D}$} is a lax $c$-spectrum $E$ in $\mcal{D}$ such that the map $\sigma^\#_E\colon E\to(s_-E)^c$ is an equivalence.
Let $\Sp_c(\mcal{D})$ denote the full subcategory of $\Sp^\lax_c(\mcal{D})$ spanned by $c$-spectra.
\end{definition}

\begin{lemma}\label{lem:cSp3}
The $\infty$-category $\Sp_c(\mcal{D})$ is an accessible localization of $\Sp^\lax_c(\mcal{D})$ with respect to all the maps $\sigma_E\colon s_+(c\otimes E)\to E$ for $E\in\Sp^\lax_c(\mcal{D})$.
\end{lemma}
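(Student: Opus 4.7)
The plan is to identify the full subcategory of $c$-spectra as the category of $S$-local objects for a suitable small set of morphisms $S$, which by \cite[5.5.4.15]{HTT} automatically gives an accessible localization.

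First I would translate the defining condition of a $c$-spectrum into a locality condition. For every pair of lax $c$-spectra $X$ and $E$, the adjunctions $(s_+,s_-)$ and $(c\otimes-,(-)^c)$ provide a natural identification
\[
	\Map_{\Sp^\lax_c(\mcal{D})}(s_+(c\otimes X),E) \simeq \Map_{\Sp^\lax_c(\mcal{D})}(X,(s_-E)^c),
\]
and by naturality of adjunctions, the map on mapping spaces induced by $\sigma_X\colon s_+(c\otimes X)\to X$ is identified with the map induced by $\sigma_E^\#\colon E\to(s_-E)^c$. Consequently, $E$ is local with respect to every $\sigma_X$ if and only if $\sigma_E^\#$ is an equivalence, which is precisely the condition that $E$ is a $c$-spectrum.

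Second, I would verify that the class of maps $\{\sigma_X\}_{X\in\Sp^\lax_c(\mcal{D})}$ can be replaced by a small set without changing the local objects. The key observation is that the assignment $X\mapsto\sigma_X$ is natural and that the functors $s_+$, $c\otimes -$, and the identity all preserve colimits, so $\sigma$ preserves colimits in $X$. Since $\Sp^\lax_c(\mcal{D})$ is presentable, we may pick a regular cardinal $\kappa$ and a small set $\mcal{G}$ of $\kappa$-compact generators; then a lax $c$-spectrum $E$ is local with respect to $\{\sigma_X\}_{X\in\Sp^\lax_c(\mcal{D})}$ if and only if it is local with respect to the small set $S:=\{\sigma_X\}_{X\in\mcal{G}}$, because mapping spaces turn colimits in the source into limits.

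Applying \cite[5.5.4.15]{HTT} to the small set $S$ then yields an accessible localization of $\Sp^\lax_c(\mcal{D})$ whose local objects, by the first step, are exactly the $c$-spectra. I do not expect any real obstacle here beyond carefully tracking the adjunction identifications in the first step; the rest is a standard invocation of presentability.
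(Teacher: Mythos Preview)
Your proposal is correct and follows essentially the same approach as the paper's proof: both rely on the presentability of $\Sp^\lax_c(\mcal{D})$ together with the fact that $s_+$ and $c\otimes-$ preserve small colimits to reduce the class $\{\sigma_E\}_E$ to a small generating set, and then invoke the standard accessible localization machinery. Your version is more explicit than the paper's two-line proof in that you spell out why the $\sigma$-local objects are exactly the $c$-spectra via the adjunction identification, but this is precisely what the paper leaves to the reader.
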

\begin{proof}
Since $\Sp^\lax_c(\mcal{D})$ is presentable and $s_+$ and $c\otimes-$ preserve all small colimits, the class of maps $\{\sigma_E\}_E$ is indeed generated by a small set.
Then the assertion follows immediately.
\end{proof}

\begin{remark}
Let $L$ denote the localization $\Sp^\lax_c(\mcal{D})\to\Sp_c(\mcal{D})$.
Then it follows from \cite[2.2.1.9]{HA} (see also \cite[4.1.7.4]{HA}) that $\Sp_c(\mcal{C})$ admits a unique presentably symmetric monoidal structure for which $L$ is symmetric monoidal and that $\Sp_c(\mcal{D})$ is presentably tensored over $\Sp_c(\mcal{C})$ in a unique way so that $L$ is $\Sp^\lax_c(\mcal{C})$-linear. 
\end{remark}

\begin{lemma}\label{lem:cSp+}
There is a natural $\mcal{C}$-linear equivalence
\[
	\Sp_c(\mcal{D}) \simeq \Sp_c(\mcal{C})\otimes_\mcal{C}\mcal{D},
\]
where the tensor product is taken in $\Mod_\mcal{C}(\Pr^L)$.
\end{lemma}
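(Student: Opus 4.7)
The plan is to establish the equivalence in two stages, paralleling the two-step construction $\mcal{D} \rightsquigarrow \Sp^\lax_c(\mcal{D}) \rightsquigarrow \Sp_c(\mcal{D})$.

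For the first stage, Construction~\ref{cons:cSp} (via Lemma~\ref{lem:Day}) already provides a $\mcal{C}^\Sigma$-linear equivalence $\mcal{D}^\Sigma \simeq \mcal{C}^\Sigma \otimes_\mcal{C} \mcal{D}$. Combined with the standard base-change formula for modules over an algebra in $\Pr^L$—namely $\Mod_A(\mcal{N}) \simeq \Mod_A(\mcal{A}) \otimes_\mcal{A} \mcal{N}$ for $A \in \CAlg(\mcal{A})$ and $\mcal{N} \in \Mod_\mcal{A}(\Pr^L)$—applied with $\mcal{A} = \mcal{C}^\Sigma$, $A = S_c$, and $\mcal{N} = \mcal{D}^\Sigma$, this yields
\[
	\Sp^\lax_c(\mcal{D}) = \Mod_{S_c}(\mcal{D}^\Sigma) \simeq \Mod_{S_c}(\mcal{C}^\Sigma) \otimes_{\mcal{C}^\Sigma} \mcal{D}^\Sigma \simeq \Sp^\lax_c(\mcal{C}) \otimes_\mcal{C} \mcal{D},
\]
as $\Sp^\lax_c(\mcal{C})$-linear presentable $\infty$-categories.

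For the second stage, by the remark after Lemma~\ref{lem:cSp3} the functor $L_\mcal{C}\colon \Sp^\lax_c(\mcal{C}) \to \Sp_c(\mcal{C})$ is a symmetric monoidal localization in $\Pr^L$. I would invoke the general principle that tensoring such a localization with any $\Sp^\lax_c(\mcal{C})$-module in $\Pr^L$ produces again a localization, inverting the class of morphisms generated under the action by those inverted by $L_\mcal{C}$. Applying this to $\Sp^\lax_c(\mcal{D})$ and using the first stage, one obtains a localization
\[
	\Sp^\lax_c(\mcal{D}) \to \Sp_c(\mcal{C}) \otimes_\mcal{C} \mcal{D}
\]
at the class $W' = \{\sigma_{E'} \otimes \id_X : E' \in \Sp^\lax_c(\mcal{C}),\ X \in \mcal{D}\}$. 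The task then reduces to checking that the strongly saturated class generated by $W'$ coincides with $W = \{\sigma_E : E \in \Sp^\lax_c(\mcal{D})\}$, the class inverted by $L_\mcal{D}$.

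Because $s_+$ and $c \otimes -$ are $\Sp^\lax_c(\mcal{C})$-linear and colimit-preserving, naturality of $\sigma$ identifies $\sigma_{E' \otimes X}$ with $\sigma_{E'} \otimes \id_X$, so $W' \subseteq W$; conversely, $\Sp^\lax_c(\mcal{D})$ is generated under colimits by free objects $F_c(X) \simeq S_c \otimes_\mcal{C} X$ (for $X \in \mcal{D}$), and $\sigma_{F_c(X)} \simeq \sigma_{S_c} \otimes_\mcal{C} \id_X$ lies in $W'$, so by colimit-stability every $\sigma_E$ is in the saturation of $W'$. The two saturated classes therefore agree, and the desired equivalence follows. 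The main obstacle is the base-change principle for symmetric monoidal localizations in $\Pr^L$: while tensoring with a fixed module is a left adjoint and hence intuitively should preserve the reflective-coequalizer presentation of a localization, some care is needed to identify the inverted class on the source side, which is exactly what the matching $W \sim W'$ achieves.
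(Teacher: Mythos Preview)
Your approach is essentially the same as the paper's: first identify $\Sp^\lax_c(\mcal{D}) \simeq \Sp^\lax_c(\mcal{C}) \otimes_\mcal{C} \mcal{D}$ (via Lemma~\ref{lem:Day} and Lemma~\ref{lem:PrM}), then argue that tensoring the localization $\Sp^\lax_c(\mcal{C}) \to \Sp_c(\mcal{C})$ with $\mcal{D}$ inverts precisely the maps $\sigma_{E'} \otimes \id_d$, and finally match this with the class defining $\Sp_c(\mcal{D})$. The paper's proof is just a terse version of what you wrote.

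One small correction: your claim that $\Sp^\lax_c(\mcal{D})$ is generated under colimits by the objects $F_c(X)$ for $X \in \mcal{D}$ is not quite right; one also needs the shifts $s_+^n F_c(X)$ for $n \ge 0$ (cf.\ the proof of Lemma~\ref{lem:FPr2}). This does not affect your argument, however, since your class $W'$ already contains $\sigma_{E'} \otimes \id_X$ for \emph{all} $E' \in \Sp^\lax_c(\mcal{C})$, and the general fact that $\Sp^\lax_c(\mcal{C}) \otimes_\mcal{C} \mcal{D}$ is generated under colimits by simple tensors $E' \otimes X$ suffices to conclude that every $\sigma_E$ lies in the saturation of $W'$.
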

\begin{proof}
It follows from Lemma \ref{lem:cSp3} that $\Sp_c(\mcal{C})\otimes_\mcal{C}\mcal{D}$ is a localization of $\Sp^\lax_c(\mcal{C})\otimes_\mcal{C}\mcal{D}$ with respect to morphisms of the form $\sigma_E\otimes d$ for some $E\in\Sp^\lax_c(\mcal{C})$ and $d\in\mcal{D}$.
Then it is identified with $\Sp_c(\mcal{D})$ under the equivalence
\[
	\Sp^\lax_c(\mcal{D}) \simeq \Sp^\lax_c(\mcal{C})\otimes_\mcal{C}\mcal{D},
\]
which holds by definition and Lemma \ref{lem:PrM}.
\end{proof}

\begin{p}[Adjunction]\label{p:cSp2}
The adjunctions in \ref{p:cSp} derive the following adjunctions:
\[
	LF_c \colon \mcal{D} \rightleftarrows \Sp_c(\mcal{D}) \colon U \qquad
	Ls_+ \colon \Sp_c(\mcal{D}) \rightleftarrows \Sp_c(\mcal{D}) \colon s_-.
\]
Then $LF_c\colon\mcal{C}\to\Sp_c(\mcal{C})$ is symmetric monoidal, $LF_c\colon\mcal{D}\to\Sp_c(\mcal{D})$ is $\mcal{C}$-linear, and $Ls_+\colon\Sp_c(\mcal{D})\to\Sp_c(\mcal{D})$ is $\Sp_c(\mcal{C})$-linear.
\end{p}

\begin{lemma}\label{lem:cSp4}
There are natural equivalences
\[
	c\otimes E \simeq s_-E \qquad E^c \simeq Ls_+E
\]
for every $c$-spectrum $E$ in $\mcal{D}$, and $c$ acts as an equivalence on $\Sp_c(\mcal{D})$.
\end{lemma}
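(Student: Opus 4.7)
I would prove the three assertions in the order dictated by the argument: first establish that $c$ acts as an equivalence on $\Sp_c(\mcal{D})$, and then derive the two named equivalences from uniqueness of adjoints.

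The key input is Lemma \ref{lem:cSp3}: the localization $L\colon\Sp^\lax_c(\mcal{D})\to\Sp_c(\mcal{D})$ sends each structure map $\sigma_E\colon s_+(c\otimes E)\to E$ to an equivalence. If $E\in\Sp_c(\mcal{D})$ is viewed inside $\Sp^\lax_c(\mcal{D})$ via the fully faithful inclusion $i$ (so $Li(E)\simeq E$), then $L\sigma_{iE}$ produces a natural equivalence $Ls_+(c\otimes E)\simeq E$ in $\Sp_c(\mcal{D})$. Since $Ls_+$ is $\Sp_c(\mcal{C})$-linear by \ref{p:cSp2}, hence $\mcal{C}$-linear, this rewrites as $c\otimes Ls_+E\simeq E$. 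Combining the two natural equivalences shows that $c\otimes-$ and $Ls_+$ are mutually inverse endoequivalences of $\Sp_c(\mcal{D})$, which proves the third assertion.

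The remaining equivalences then follow formally from uniqueness of adjoints. The exponential $(-)^c$ is by definition right adjoint to $c\otimes-$ on $\Sp_c(\mcal{D})$; since $c\otimes-$ is now an equivalence, its right adjoint agrees with its inverse, which is $Ls_+$, so $E^c\simeq Ls_+E$. Similarly, $s_-$ is right adjoint to $Ls_+$ by \ref{p:cSp2}, and since $Ls_+$ is an equivalence, its right adjoint equals its inverse $c\otimes-$, yielding $c\otimes E\simeq s_-E$.

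The main obstacle is ensuring in the first step that the $\mcal{C}$-module structures and the functors $c\otimes-$ and $Ls_+$ are interpreted coherently across $\Sp^\lax_c(\mcal{D})$ and $\Sp_c(\mcal{D})$. Concretely, one needs that $L$ is $\mcal{C}$-linear (which follows from its $\Sp^\lax_c(\mcal{C})$-linearity recorded in the remark after Lemma \ref{lem:cSp3}), so that the $\mcal{C}$-action on $\Sp_c(\mcal{D})$ is obtained by localization from that on $\Sp^\lax_c(\mcal{D})$, and that $Ls_+$ on $\Sp_c(\mcal{D})$ genuinely coincides with the composite $L\circ s_+\circ i$. Once these compatibilities are in place, the computation $Ls_+(c\otimes E)\simeq E\simeq c\otimes Ls_+E$ goes through and the rest of the argument is purely formal.
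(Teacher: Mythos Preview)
Your argument is correct, but it takes the dual route to the paper's. The paper works entirely with the right adjoints: from the very definition of $c$-spectrum one has the natural equivalence $E\simeq (s_-E)^c$, and since $s_-$ is $\mcal{C}$-linear (Lemma \ref{lem:cSp2}) it commutes with $(-)^c$, giving also $E\simeq s_-(E^c)$. These two equivalences say directly that $s_-$ and $(-)^c$ are mutually inverse on $\Sp_c(\mcal{D})$, and everything else follows by uniqueness of adjoints. By contrast, you work with the left adjoints $c\otimes-$ and $Ls_+$, invoking the localization description from Lemma \ref{lem:cSp3} to get $Ls_+(c\otimes E)\simeq E$ and then the $\mcal{C}$-linearity of $Ls_+$ to swap the order. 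Your path is slightly longer because it forces you to check the compatibilities you flag in your last paragraph (that $L$ is $\mcal{C}$-linear, that $s_+$ preserves $L$-equivalences so $Ls_+$ on $\Sp_c$ really is $L\circ s_+\circ i$), whereas the paper's route avoids these by staying on the right-adjoint side where no localization intervenes. Both arguments are sound; the paper's is just the more economical of the two.
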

\begin{proof}
By definition, we have natural equivalences 
\[
	E \simeq (s_-E)^c \simeq s_-(E^c)
\]
for every $c$-spectrum $E$ in $\mcal{D}$.
Hence, $s_-$ and $(-)^c$ are inverse of each other, from which the assertion follows immediately.
\end{proof}

\begin{proposition}\label{prop:cSp}
There is a natural $\mcal{C}$-linear equivalence
\[
	\mcal{D}[c^{-1}] \simeq \Sp_c(\mcal{D})
\]
for every $\infty$-category $\mcal{D}$ presentably tensored over $\mcal{C}$.
\end{proposition}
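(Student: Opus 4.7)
By Lemma \ref{lem:cSp+} we have $\Sp_c(\mcal{D}) \simeq \Sp_c(\mcal{C}) \otimes_\mcal{C} \mcal{D}$, and since $(-)[c^{-1}]$ is smashing (Proposition \ref{prop:Inv}) we also have $\mcal{D}[c^{-1}] \simeq \mcal{C}[c^{-1}] \otimes_\mcal{C} \mcal{D}$. Both functors distribute over $-\otimes_\mcal{C}\mcal{D}$, so it suffices to treat the case $\mcal{D}=\mcal{C}$, i.e.\ produce a $\mcal{C}$-linear symmetric monoidal equivalence $\mcal{C}[c^{-1}] \simeq \Sp_c(\mcal{C})$. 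By Lemma \ref{lem:cSp4}, $c$ acts as an equivalence on $\Sp_c(\mcal{C})$, so the universal property of $(-)[c^{-1}]$ from Proposition \ref{prop:Inv} yields a canonical $\mcal{C}$-linear symmetric monoidal functor $\alpha \colon \mcal{C}[c^{-1}] \to \Sp_c(\mcal{C})$ through which the unit $LF_c$ factors.

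To prove $\alpha$ is an equivalence, I plan to verify that $\Sp_c(\mcal{C})$ is itself an idempotent commutative $\mcal{C}$-algebra in $\Pr^L$ whose corresponding reflective subcategory of $\Mod_\mcal{C}(\Pr^L)$ coincides with that of $\mcal{C}[c^{-1}]$, namely the subcategory of $\mcal{C}$-modules on which $c$ acts invertibly; uniqueness of such smashing factors (Lemma \ref{lem:SmL}) then forces $\alpha$ to be an equivalence. Using Lemma \ref{lem:cSp+} to identify $\Sp_c(\mcal{C})\otimes_\mcal{C}-$ with $\Sp_c(-)$, this essential image is $\{\mcal{E} : \mcal{E} \xrightarrow{\sim} \Sp_c(\mcal{E})\}$. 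The inclusion ``$\mcal{E} \simeq \Sp_c(\mcal{E})$ implies $c$ is invertible on $\mcal{E}$'' is immediate from Lemma \ref{lem:cSp4}. Moreover, idempotency of $\Sp_c(\mcal{C})$ is, by Lemma \ref{lem:cSp+}, the statement $\Sp_c(\Sp_c(\mcal{C}))\simeq\Sp_c(\mcal{C})$, which is a special case of the reverse inclusion. Hence everything hinges on the following \emph{main claim}: if $c$ acts invertibly on $\mcal{E}$, then the unit $LF_c \colon \mcal{E} \to \Sp_c(\mcal{E})$ is an equivalence.

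For this main step, the key observation is that when $c$ is invertible, $F_c(X) \in \Sp^\lax_c(\mcal{E})$ is already a $c$-spectrum, so $LF_c$ agrees with $F_c$ on $\mcal{E}$. Indeed, Lemma \ref{lem:cSp2}(i) gives $U_n F_c(X) \simeq c^{\otimes n}\otimes X$, and invertibility of $c$ (making $(-)^c \simeq c^{-1}\otimes-$) reduces the condition that $\sigma^\#_{F_c(X)}\colon F_c(X)\to(s_- F_c(X))^c$ be an equivalence to the tautological identification $c^{\otimes n}\otimes X \simeq c^{-1}\otimes c^{\otimes(n+1)}\otimes X$, which one checks is compatible with the $\Sigma_n$-equivariance coming from the free commutative algebra $S_c$. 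Full faithfulness of $LF_c$ then follows from Lemma \ref{lem:cSp2}(iii). For essential surjectivity, any $c$-spectrum $E$ satisfies $U_n E \simeq c^{\otimes n}\otimes U_0 E$ by iterating $E\simeq(s_- E)^c$ under invertibility of $c$, so the counit $F_c(U_0 E)\to E$ is a componentwise equivalence and thus an equivalence by conservativity (Lemma \ref{lem:cSp2}(iv)). The main obstacle is precisely the verification of this compatibility: the identifications must be carried out not merely on underlying objects but as equivalences of $S_c$-modules in $\mcal{E}^\Sigma = \Fun(B\Sigma_\mbb{N}, \mcal{E})$, respecting the full $\Sigma_n$-equivariant structure at every level.
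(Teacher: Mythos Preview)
Your approach is essentially the paper's: reduce to showing that $\Sp_c$ is an idempotent monad on $\Mod_\mcal{C}(\Pr^L)$ with essential image the $\mcal{C}$-modules on which $c$ is invertible, via (i) Lemma~\ref{lem:cSp4} and (ii) the claim that $LF_c\colon\mcal{E}\to\Sp_c(\mcal{E})$ is an equivalence whenever $c$ is invertible on $\mcal{E}$.

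Your ``main obstacle'' is a phantom. The map $\sigma^\#_E\colon E\to(s_-E)^c$ is, by Construction~\ref{cons:cSp2}, already a morphism in $\Sp^\lax_c(\mcal{E})=\Mod_{S_c}(\mcal{E}^\Sigma)$; it is built from the $S_c$-module structure and the adjunctions, so there is nothing to verify about $\Sigma_n$-equivariance or $S_c$-linearity. The only question is whether this \emph{given} morphism is an equivalence, and that can be tested on underlying objects via the conservative family $\{U_n\}$ of Lemma~\ref{lem:cSp2}(iv). The paper streamlines this step: since $c$ is invertible on $\mcal{E}$ (hence on $\mcal{E}^\Sigma$ and $\Sp^\lax_c(\mcal{E})$), one may equivalently check that the adjoint map $c\otimes E\to s_-E$ is an equivalence; for $E=F_c(d)=S_c\otimes F(d)$ this reduces to $c\otimes S_c\to s_-S_c$, whose image under $U_n$ is the associator $c\otimes c^{\otimes n}\xrightarrow{\sim} c^{\otimes(n+1)}$.

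For essential surjectivity you should not try to identify the counit ``componentwise'' directly; instead, your own computation $U_nE\simeq c^{\otimes n}\otimes U_0E$ (natural in $E$) already shows that $U_0$ is conservative on $\Sp_c(\mcal{E})$. Combined with the unit being an equivalence (your full faithfulness step), the triangle identity $U_0\epsilon\circ\eta_{U_0}=\id$ forces $U_0\epsilon$ and hence $\epsilon$ to be an equivalence. This is exactly how the paper concludes.
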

\begin{proof}
It suffices to show the following:
\begin{enumerate}
\item $c$ acts as an equivalence on $\Sp_c(\mcal{D})$.
\item If $c$ acts as an equivalence on $\mcal{D}$, then $LF_c\colon\mcal{D}\to\Sp_c(\mcal{D})$ is an equivalence.
\end{enumerate}
Indeed, $\Sp_c$ is a monad in $\Mod_\mcal{C}(\Pr^L)$ by Lemma \ref{lem:cSp+} and (i) and (ii) imply that $\Sp_c$ is an idempotent monad; then it is readily identified with the formal inversion $(-)[c^{-1}]$ by referring to (i) and (ii) again.

We have seen (i) in Lemma \ref{lem:cSp4}.
To show (ii), assume that $c$ acts as an equivalence on $\mcal{D}$.
Note that this assumption implies that a lax $c$-spectrum $E$ is a $c$-spectrum if and only if the canonical map $c\otimes E\to s_-E$ is an equivalence, where the tensor product is taken in $\Sp^\lax_c(\mcal{D})$.

We first prove that $U\colon\Sp_c(\mcal{D})\to\mcal{D}$ is conservative.
For a $c$-spectrum $E$, we have natural equivalences
\[
	U_nE \simeq U_{n-1}(s_-E) \simeq U_{n-1}(c\otimes E) \simeq c\otimes (U_{n-1}E),
\]
where the tensor product $c\otimes E$ is calculated in $\Sp^\lax_c(\mcal{D})$ and thus the last equivalence holds by Lemma \ref{lem:cSp2} (ii).
Since $\{U_n\}_{n\ge 0}$ is conservative by Lemma \ref{lem:cSp2} (iv), we conclude that $U=U_0$ is conservative.

It remains to show that $\id\simeq U\circ LF_c$.
By Lemma \ref{lem:cSp2} (iii), it suffices to show that $F_c\simeq LF_c$, that is, $F_c(d)=S_c\otimes F(d)$ is a $c$-spectrum for each $d\in\mcal{D}$.
We need to show that the canonical map $c\otimes (S_c\otimes F(d))\to s_-(S_c\otimes F(d))$ is an equivalence, and it is reduced to showing that the canonical map $c\otimes S_c\to s_-S_c$ is an equivalence.
This follows from Lemma \ref{lem:cSp2} (i) and the conservativity of $\{U_n\}_{n\ge 0}$.
\end{proof}

\subsection{$c$-telescopes}\label{cTe}

We develop the theory of $c$-telescopes in a parallel way with the theory of $c$-spectra.

\begin{construction}\label{cons:cTe}
Here is a parallel construction with Construction \ref{cons:cSp}.
We regard $\mbb{N}$ as a commutative monoid in $\Ani$ and consider the lax symmetric monoidal functor
\[
	(-)^\mbb{N} := \Fun(\mbb{N},-) \colon \Pr^L \to \Pr^L,
\]
which encodes the Day convolution, and apply it to a presentably symmetric monoidal $\infty$-category $\mcal{C}$ and an $\infty$-category $\mcal{D}$ presentably tensored over $\mcal{C}$.
Then $\mcal{C}^\mbb{N}$ is a presentably symmetric monoidal $\infty$-category and $\mcal{D}^\mbb{N}$ is presentably tensored over $\mcal{C}^\mbb{N}$.
We consider the following natural transformations:
\begin{itemize}[label={---},leftmargin=*]
\item Let $G\colon\id\to(-)^\mbb{N}$ be the natural transformation obtained as the left Kan extension along the morphism $*\to\mbb{N}$ of commutative monoids.
\item Let $s_+\colon(-)^\mbb{N}\to(-)^\mbb{N}$ be the natural transformation obtained as the left Kan extension along the morphism $+1\colon\mbb{N}\to\mbb{N}$ of $\mbb{N}$-modules.
\end{itemize}
Then $G\colon\mcal{C}\to\mcal{C}^\mbb{N}$ is symmetric monoidal, $G\colon\mcal{D}\to\mcal{D}^\mbb{N}$ is $\mcal{C}$-linear, and $s_+\colon\mcal{D}^\mbb{N}\to\mcal{D}^\mbb{N}$ is $\mcal{C}^\mbb{N}$-linear.
We consider the adjunctions
\[
	G \colon \mcal{D} \rightleftarrows \mcal{D}^\mbb{N} \colon U \qquad
	s_+ \colon \mcal{D}^\mbb{N} \rightleftarrows \mcal{D}^\mbb{N} \colon s_-,
\]
where $U$ is the pre-composition by $*\to\mbb{N}$ and $s_-$ is the pre-composition by $+1\colon\mbb{N}\to\mbb{N}$.
For $n\ge 0$, we write $G_n:=(s_+)^{\circ n}\circ G$ and $U_n:=U\circ(s_-)^{\circ n}$.
\end{construction}

\begin{definition}[Lax $c$-telescope]\label{def:cTe}
Let $S_c$ be the free $\mbb{E}_1$-algebra in $\mcal{C}^\mbb{N}$ generated by $G_1(c)$.
For an $\infty$-category $\mcal{D}$ presentably tensored over $\mcal{C}$, we define
\[
	\Tel_c^\lax(\mcal{D}) := \LMod_{S_c}(\mcal{D}^\mbb{N})
\]
and call it the \textit{$\infty$-category of lax $c$-telescopes in $\mcal{D}$}.
Then $\Tel_c^\lax(\mcal{D})$ is presentably tensored over $\mcal{C}^\mbb{N}$ in a canonical way.
\end{definition}

\begin{lemma}\label{lem:cTe}
Consider the $\mbb{N}$-indexed diagram
\[
	\mcal{D} \xrightarrow{c\otimes} \mcal{D} \xrightarrow{c\otimes} \mcal{D} \xrightarrow{c\otimes} \cdots 
\]
and let $p\colon\mcal{E}\to\mbb{N}$ be the cocartesian fibration which classifies this diagram.
Then there is a natural $\mcal{C}^\mbb{N}$-linear equivalence
\[
	\Tel^\lax_c(\mcal{D}) \simeq \Fun_{\mbb{N}}(\mbb{N},\mcal{E}),
\]
where the right hand side is tensored over $\mcal{C}^\mbb{N}$ as in Construction \ref{cons:CoM2}.
\end{lemma}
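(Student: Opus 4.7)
The plan is to identify both sides concretely with the $\infty$-category of sequences $(M_n)_{n\ge 0}$ in $\mcal{D}$ equipped with bonding maps $c\otimes M_n\to M_{n+1}$, and then check that the resulting equivalence is $\mcal{C}^\mbb{N}$-linear.

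First, for $\Tel^\lax_c(\mcal{D})=\LMod_{S_c}(\mcal{D}^\mbb{N})$, I would use that $S_c$ is the free $\mbb{E}_1$-algebra in $\mcal{C}^\mbb{N}$ on $G_1(c)$. By the universal property of free algebras, the monad $S_c\otimes-$ on $\mcal{D}^\mbb{N}$ is the free monad on the endofunctor $G_1(c)\otimes-$, so an $S_c$-module structure on $M\in\mcal{D}^\mbb{N}$ is equivalent to the datum of a single morphism $G_1(c)\otimes M\to M$. Unwinding the Day convolution formula together with the fact that $G_1(c)_n=c$ for $n=1$ and initial otherwise, we get $(G_1(c)\otimes M)_n\simeq c\otimes M_{n-1}$ for $n\ge 1$, so such a map is precisely a family of bonding maps $c\otimes M_n\to M_{n+1}$.

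Next, for $\Fun_\mbb{N}(\mbb{N},\mcal{E})$, I would invoke the standard description of sections of a cocartesian fibration. A functor $s\colon\mbb{N}\to\mcal{E}$ over $\mbb{N}$ amounts to an object $M_n:=s(n)\in\mcal{E}_n\simeq\mcal{D}$ for each $n$, together with, for each edge $n\to n+1$, a morphism $M_n\to M_{n+1}$ in $\mcal{E}$ lying over it. The cocartesian property of $p$ translates such a morphism into a morphism $c\otimes M_n\to M_{n+1}$ in the fiber $\mcal{D}$; since $\mbb{N}$ is a poset, no further coherence data is needed. Thus the two sides describe the same data.

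Having matched the data, I would construct a concrete comparison functor $\Phi\colon\Tel^\lax_c(\mcal{D})\to\Fun_\mbb{N}(\mbb{N},\mcal{E})$ sending an $S_c$-module $(M,\alpha)$ to the section $n\mapsto M_n$ with transport supplied by the bonding maps extracted from $\alpha$, and verify that it induces equivalences on mapping anima via the above descriptions. The $\mcal{C}^\mbb{N}$-linearity is then manifest: the Day convolution action on the left and the fiberwise action on the right (Construction \ref{cons:CoM2}) both amount pointwise to $(X\cdot M)_n=X_n\otimes M_n$ with the induced bonding maps. The main obstacle is ensuring full $\infty$-categorical functoriality—capturing all higher coherences—and my preferred route is to bypass explicit diagram chases by appealing directly to the universal properties of free modules and of cocartesian fibrations.
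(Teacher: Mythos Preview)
Your approach differs from the paper's and has a genuine gap at the key step.

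The paper argues via monadicity: the forgetful functor $\Fun_\mbb{N}(\mbb{N},\mcal{E})\to\mcal{D}^\mbb{N}$ (recording the values of a section) is conservative and preserves the relevant colimits, so by the Barr--Beck--Lurie theorem (\cite[4.7.3.5]{HA}) it exhibits $\Fun_\mbb{N}(\mbb{N},\mcal{E})$ as monadic over $\mcal{D}^\mbb{N}$. Since $\LMod_{S_c}(\mcal{D}^\mbb{N})\to\mcal{D}^\mbb{N}$ is tautologically monadic with monad $S_c\otimes-$, it remains only to identify the two monads, which is a short computation of the left adjoint. All higher coherence is absorbed into the monadicity theorem.

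Your central claim---that an $S_c$-module structure on $M$ is \emph{equivalent} to the bare datum of a map $G_1(c)\otimes M\to M$---is precisely what needs proving, and ``the universal property of free algebras'' does not supply it. That universal property governs algebra maps out of $S_c$, not module categories over $S_c$. One can try to bridge the two via endomorphism objects (a left module structure on $M$ is an algebra map $S_c\to\intMap(M,M)$, hence a single map $G_1(c)\to\intMap(M,M)$), but this only matches the \emph{spaces of structures} over each fixed $M$; promoting it to an equivalence of $\infty$-categories, functorial in morphisms, still requires a monadicity argument or an equivalent device. In effect, the ``obstacle'' you flag at the end is the whole proof, and the clean way past it is exactly the paper's route.

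Finally, your description of the $\mcal{C}^\mbb{N}$-action on the left is incorrect: the Day convolution gives $(X\otimes M)_n=\coprod_{p+q=n}X_p\otimes M_q$, not $X_n\otimes M_n$, so the linearity does not follow from the formula you wrote.
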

\begin{proof}
We see that the canonical functor $\Fun_{\mbb{N}}(\mbb{N},\mcal{E})\to\mcal{D}^\mbb{N}$ exhibits $\Fun_{\mbb{N}}(\mbb{N},\mcal{E})$ as monadic over $\mcal{D}^\mbb{N}$ by using \cite[4.7.3.5]{HA}.
Hence, it suffices to show that the resulting monad is equivalent to $S_c$ as monads, but this is straightforward to check.
\end{proof}

\begin{p}[Adjunction]\label{p:cTe}
We consider the following adjunctions:
\begin{itemize}[label={---},leftmargin=*]
\item The adjunction $(G,U)$ together with $S_c\otimes-$ induces an adjunction
\[
	G_c:=S_c\otimes G \colon \mcal{D} \rightleftarrows \Tel^\lax_c(\mcal{D}) \colon U.
\]
Then $G_c$ is $\mcal{C}$-linear.
\item The adjunction $(s_+,s_-)$ induces an adjunction 
\[
	s_+ \colon \Tel^\lax_c(\mcal{D}) \rightleftarrows \Tel^\lax_c(\mcal{D}) \colon s_-.
\]
Then $s_+$ is $\mcal{C}^\mbb{N}$-linear.
\end{itemize}
\end{p}

\begin{construction}\label{cons:cTe2}
For each $d\in\mcal{D}$, we have a natural equivalence
\[
	U(s_-G_c(d)) \simeq c\otimes d
\]
and by adjunction we obtain a morphism of lax $c$-telescopes
\[
	\sigma_d \colon s_+(c\otimes G_c(d)) \to G_c(d).
\]
As its dual, we obtain a map $\sigma^\#_E\colon U(E)\to U(s_-E)^c$ for each lax $c$-telescope $E$ in $\mcal{D}$.
\end{construction}

\begin{remark}
Construction \ref{cons:cSp2} and Construction \ref{cons:cTe2} represent the major difference between $c$-spectra and $c$-telescopes.
Unlike lax $c$-spectra, there is no obvious way to construct a natural morphism $s_+(c\otimes E)\to E$ that extends $\sigma_d$ for a lax $c$-telescope $E$.
\end{remark}

\begin{definition}[$c$-telescope]\label{def:cTe2}
A \textit{$c$-telescope in $\mcal{D}$} is a lax $c$-telescope $E$ in $\mcal{D}$ such that the map $\sigma^\#_E\colon U_nE\to U_{n+1}E^c$ is an equivalence for every $n\ge 0$.
Let $\Tel_c(\mcal{D})$ denote the full subcategory of $\Tel^\lax_c(\mcal{D})$ spanned by $c$-telescopes.
\end{definition}

\begin{lemma}\label{lem:cTe2}
The $\infty$-category $\Tel_c(\mcal{D})$ is an accessible localization of $\Tel^\lax_c(\mcal{D})$ with respect to all the maps $\sigma_d\colon s_+^{n+1}(c\otimes G_c(d))\to s_+^nG_c(d)$ for $d\in\mcal{D}$ and $n\ge 0$.
\end{lemma}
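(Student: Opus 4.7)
The plan is to follow the template of Lemma \ref{lem:cSp3}, the only difference being that Definition \ref{def:cTe2} demands the compatibility map be an equivalence at every level $n\ge 0$, so one must localize $\Tel^\lax_c(\mcal{D})$ against the iterated family $\{s_+^n\sigma_d\}_{d\in\mcal{D},\,n\ge 0}$ rather than just at the base maps. Presentability of $\Tel^\lax_c(\mcal{D})$, together with the fact that $G_c$, $s_+$, and $c\otimes-$ all preserve small colimits, allows one to restrict $d$ to a small set of generators of $\mcal{D}$ without changing the class of local objects; hence the resulting class of morphisms is generated by a small set and the localization is accessible, with local objects forming a reflective subcategory.

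To identify the local objects with $c$-telescopes, I would unwind the relevant mapping spaces using the three adjunctions $s_+\dashv s_-$, $G_c\dashv U$, and $c\otimes-\dashv(-)^c$, together with the $\mcal{C}$-linear identification $c\otimes G_c(d)\simeq G_c(c\otimes d)$:
\[
\Map(s_+^nG_c(d),E)\simeq\Map(d,U_nE),\qquad
\Map(s_+^{n+1}(c\otimes G_c(d)),E)\simeq\Map(d,(U_{n+1}E)^c).
\]
By Yoneda, a lax $c$-telescope $E$ is $\{s_+^n\sigma_d\}$-local if and only if the natural map in $\mcal{D}$ induced by $\sigma_d$ on these representables, namely $U_nE\to(U_{n+1}E)^c$, is an equivalence for every $n\ge 0$.

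The principal obstacle is to verify that this induced map coincides with the iterate $\sigma^\#_{s_-^nE}\colon U_nE\to U_{n+1}E^c$ from Construction \ref{cons:cTe2}. This is a matter of tracing through the triangle identities for the three adjunctions and invoking the fact that, since $G_c$ is $\mcal{C}$-linear, its right adjoint $U$ commutes with the exponential $(-)^c$, yielding $U((s_-^{n+1}E)^c)\simeq(U_{n+1}E)^c$. Once this identification is in place, the locality condition matches verbatim the defining condition of a $c$-telescope, so $\Tel_c(\mcal{D})$ is exactly the essential image of the localization, proving the lemma.
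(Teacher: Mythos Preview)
Your proposal is correct and takes essentially the same approach as the paper, which simply records that the proof is parallel to Lemma~\ref{lem:cSp3}. You have merely made explicit the adjunction calculus identifying the $\{s_+^n\sigma_d\}$-local objects with $c$-telescopes, which the paper leaves to the reader.
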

\begin{proof}
It is proved in the same way as Lemma \ref{lem:cSp3}.
\end{proof}

\begin{remark}
Let $L$ denote the localization $\Tel^\lax_c(\mcal{D})\to\Tel_c(\mcal{D})$.
Then $\Tel_c(\mcal{D})$ is presentably tensored over $\mcal{C}^\mbb{N}$ in a unique way so that $L$ is $\mcal{C}^\mbb{N}$-linear.
\end{remark}

\begin{lemma}\label{lem:cTe+}
There is a natural $\mcal{C}$-linear equivalence
\[
	\Tel_c(\mcal{D}) \simeq \Tel_c(\mcal{C})\otimes_\mcal{C}\mcal{D},
\]
where the tensor product is taken in $\Mod_\mcal{C}(\Pr^L)$.
\end{lemma}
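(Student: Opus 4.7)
The plan is to follow the proof of Lemma \ref{lem:cSp+} essentially verbatim, with $\Tel$ replacing $\Sp$ throughout.

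First I would establish the lax-level identification $\Tel^\lax_c(\mcal{D}) \simeq \Tel^\lax_c(\mcal{C}) \otimes_\mcal{C} \mcal{D}$. By Definition \ref{def:cTe}, $\Tel^\lax_c(-) = \LMod_{S_c}((-)^\mbb{N})$. The Day convolution construction (Lemma \ref{lem:Day}, already used in Construction \ref{cons:cTe}) supplies an equivalence $\mcal{D}^\mbb{N} \simeq \mcal{C}^\mbb{N} \otimes_\mcal{C} \mcal{D}$, and since $S_c$ is defined in $\mcal{C}^\mbb{N}$, the base-change compatibility of module categories in $\Pr^L$ (Lemma \ref{lem:PrM}) then gives the required lax-level equivalence.

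Next, by Lemma \ref{lem:cTe2}, $\Tel_c(\mcal{C})$ is the accessible localization of $\Tel^\lax_c(\mcal{C})$ at the set of maps $\sigma_{d'}\colon s_+^{n+1}(c \otimes G_c(d')) \to s_+^n G_c(d')$ for $d' \in \mcal{C}$ and $n \ge 0$. Tensoring this localization with $\mcal{D}$ in $\Mod_\mcal{C}(\Pr^L)$ exhibits $\Tel_c(\mcal{C}) \otimes_\mcal{C} \mcal{D}$ as the localization of $\Tel^\lax_c(\mcal{C}) \otimes_\mcal{C} \mcal{D} \simeq \Tel^\lax_c(\mcal{D})$ at the class $\{\sigma_{d'} \otimes d\}_{d' \in \mcal{C},\, d \in \mcal{D}}$. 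Using the $\mcal{C}$-linearity of $G_c$ and $s_+$ recorded in \ref{p:cTe}, the map $\sigma_{d'} \otimes d$ is identified with $\sigma_{d' \otimes d}$ under the lax-level equivalence. Taking $d'$ to be the unit of $\mcal{C}$ already recovers every generator $\sigma_d$ for $d \in \mcal{D}$, so the two localizations cut out the same full subcategory; by Lemma \ref{lem:cTe2} this subcategory is precisely $\Tel_c(\mcal{D})$.

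The only subtlety, parallel to the $c$-spectrum case, is verifying that the classes of maps inverted on either side match under the lax identification. This reduces to the $\mcal{C}$-linearity statements already recorded in \ref{p:cTe}, so I do not expect any serious obstacle beyond careful bookkeeping.
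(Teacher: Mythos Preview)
Your proposal is correct and matches the paper's approach exactly: the paper's own proof simply reads ``It is proved in the same way as Lemma \ref{lem:cSp+},'' and you have faithfully transcribed that argument to the telescope setting. One minor bookkeeping point: Lemma \ref{lem:PrM} as stated is for $\Mod_A$ with $A$ commutative, whereas here $S_c$ is only an $\mbb{E}_1$-algebra and you need the $\LMod$ analogue, but the underlying reference \cite[4.8.4.6]{HA} covers that case as well.
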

\begin{proof}
It is proved in the same way as Lemma \ref{lem:cSp+}.
\end{proof}

\begin{p}[Adjunction]\label{p:cTe2}
The adjunctions in \ref{p:cTe} derive the following adjunctions:
\[
	LG_c \colon \mcal{D} \rightleftarrows \Tel_c(\mcal{D}) \colon U \qquad
	Ls_+ \colon \Tel_c(\mcal{D}) \rightleftarrows \Tel_c(\mcal{D}) \colon s_-.
\]
Then $LG_c$ is $\mcal{C}$-linear and $Ls_+$ is $\mcal{C}^\mbb{N}$-linear.
\end{p}

\begin{lemma}\label{lem:cTe3}
There is a natural $\mcal{C}$-linear equivalence 
\[
	 \Tel_c(\mcal{D}) \simeq \colim(\mcal{D} \xrightarrow{c\otimes} \mcal{D} \xrightarrow{c\otimes} \mcal{D} \xrightarrow{c\otimes} \cdots),
\]
where the colimit is taken in $\Mod_\mcal{C}(\Pr^L)$.
\end{lemma}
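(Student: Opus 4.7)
The plan is to identify $\Tel_c(\mcal{D})$ with the $\infty$-category of cartesian sections of the bifibration already featured in Lemma \ref{lem:cTe}, and then to use the duality $\Pr^L\simeq(\Pr^R)^{\op}$ to turn the associated limit into the desired colimit.

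First, by Lemma \ref{lem:cTe} we have $\Tel_c^\lax(\mcal{D})\simeq\Fun_\mbb{N}(\mbb{N},\mcal{E})$, where $p\colon\mcal{E}\to\mbb{N}$ is the cocartesian fibration classifying the diagram $\mcal{D}\xrightarrow{c\otimes}\mcal{D}\xrightarrow{c\otimes}\cdots$. Because every transition functor $c\otimes-$ admits a right adjoint $(-)^c$, the fibration $p$ is in fact a bifibration; its cartesian classification is the diagram $F^R\colon\mbb{N}^{\op}\to\Pr^R$ sending each edge $n+1\to n$ to $(-)^c$.

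Next I would show that, under the equivalence of Lemma \ref{lem:cTe}, $c$-telescopes correspond precisely to cartesian sections of $p$. A section $\sigma$ encodes a lax $c$-telescope $E$ with transition morphisms $c\otimes U_nE\to U_{n+1}E$; by the adjunction $(c\otimes-,(-)^c)$, these correspond to the morphisms $\sigma_E^\#\colon U_nE\to(U_{n+1}E)^c$ of Construction \ref{cons:cTe2}. Moreover, a morphism in $\mcal{E}$ lying over $n\to n+1$ is $p$-cartesian exactly when its adjoint presentation exhibits its source as the image of its target under $(-)^c$; hence demanding $\sigma_E^\#$ to be an equivalence for every $n$ is equivalent to $\sigma$ being a cartesian section. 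Therefore $\Tel_c(\mcal{D})\simeq\Fun^{\mrm{cart}}_\mbb{N}(\mbb{N},\mcal{E})$.

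To finish, I would invoke the standard identification of cartesian sections with a limit: $\Fun^{\mrm{cart}}_\mbb{N}(\mbb{N},\mcal{E})\simeq\lim_{\mbb{N}^{\op}}F^R$, computed equivalently in $\Cat_\infty$ or in $\Pr^R$. The duality $\Pr^L\simeq(\Pr^R)^{\op}$ then identifies this limit with $\colim_\mbb{N}^{\Pr^L}(\mcal{D}\xrightarrow{c\otimes}\cdots)$. To upgrade from $\Pr^L$ to $\Mod_\mcal{C}(\Pr^L)$, one reduces to $\mcal{D}=\mcal{C}$ via Lemma \ref{lem:cTe+} and then uses that the forgetful functor $\Mod_\mcal{C}(\Pr^L)\to\Pr^L$ preserves sequential colimits. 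The hard part will be the bookkeeping in the third paragraph, where the $c$-telescope condition formulated via $\sigma_E^\#$ must be matched carefully with cartesianness of the associated section edges under the straightening of Lemma \ref{lem:cTe}.
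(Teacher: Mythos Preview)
Your proposal is correct and follows essentially the same route as the paper: the paper's proof simply says that, by Lemma~\ref{lem:cTe}, it suffices to check that a lax $c$-telescope is a $c$-telescope precisely when the corresponding section $\mbb{N}\to\mcal{E}$ is cartesian, leaving the remaining identifications (cartesian sections $=$ limit in $\Pr^R$ $=$ colimit in $\Pr^L$) implicit. Your write-up spells out these implicit steps and additionally handles the $\mcal{C}$-linearity via Lemma~\ref{lem:cTe+}, which the paper does not address explicitly.
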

\begin{proof}
By Lemma \ref{lem:cTe}, it suffices to show that a lax $c$-telescope $E$ is a $c$-spectrum if and only if the corresponding section $E\colon\mbb{N}\to\mcal{E}$ is cartesian, but this is a simple paraphrase.
\end{proof}

\begin{corollary}\label{cor:cTe}
The adjonction
\[
	Ls_+ \colon \Tel_c(\mcal{D}) \rightleftarrows \Tel_c(\mcal{D}) \colon s_-
\]
is an adjoint equivalence.
\end{corollary}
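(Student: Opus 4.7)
The plan is to deduce the result from Lemma \ref{lem:cTe3} together with the standard fact that sequential colimits in $\Pr^L$ invert their transition maps. By that lemma, there is a $\mcal{C}$-linear equivalence
\[
\Tel_c(\mcal{D}) \simeq \colim\bigl(\mcal{D}\xrightarrow{c\otimes-}\mcal{D}\xrightarrow{c\otimes-}\mcal{D}\xrightarrow{c\otimes-}\cdots\bigr)
\]
in $\Mod_\mcal{C}(\Pr^L)$. For any such sequential colimit, the morphism of $\mbb{N}$-diagrams given pointwise by the transition maps is a reindexing along $+1\colon\mbb{N}\to\mbb{N}$, and hence induces an equivalence on colimits. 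Applied here, this shows that the endofunctor $c\otimes-\colon\Tel_c(\mcal{D})\to\Tel_c(\mcal{D})$ is an equivalence.

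The next step is to identify $s_-$ with $c\otimes-$ under the equivalence of Lemma \ref{lem:cTe3}. Tracing through the proof of that lemma, $\Tel_c(\mcal{D})$ is identified with the full subcategory of cartesian sections of the cocartesian fibration $p\colon\mcal{E}\to\mbb{N}$ classifying the defining diagram, and the equivalence with the sequential colimit is induced by passage from cartesian sections to their image in the colimit. Under this identification, $s_-$, which by construction is precomposition with $+1\colon\mbb{N}\to\mbb{N}$, sends a cartesian section $E$ to its reindexing; since for a cartesian section we have $E_1\simeq c\otimes E_0$, this matches exactly the reindexing endomorphism of the $\mbb{N}$-diagram discussed above, and hence corresponds to $c\otimes-$ on the colimit.

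Combining the two steps, $s_-$ is an equivalence. Since $Ls_+$ is already its left adjoint by \ref{p:cTe2}, the adjunction $(Ls_+,s_-)$ is automatically an adjoint equivalence, with $Ls_+$ corresponding to $c^{-1}\otimes-$ on the colimit. The main subtlety I anticipate is the identification in the second step: one must carefully unpack the monadic descent argument behind Lemma \ref{lem:cTe3} to check that the functor $s_-$, intrinsically defined on $\LMod_{S_c}(\mcal{D}^\mbb{N})$, really does go over to the naive shift endomorphism of the sequential colimit. Because both are induced by the same endomorphism $+1$ of the indexing category $\mbb{N}$, the compatibility should be natural once the proof of Lemma \ref{lem:cTe3} is made explicit.
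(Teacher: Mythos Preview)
Your proposal is correct and follows essentially the same route as the paper: both arguments invoke Lemma~\ref{lem:cTe3} to pass to the sequential colimit in $\Pr^L$, use cofinality of $+1\colon\mbb{N}\to\mbb{N}$ to obtain an autoequivalence, and identify that autoequivalence with $s_-$ (which is by definition precomposition with $+1$). One small caution: your claim that a cartesian section satisfies $E_1\simeq c\otimes E_0$ is stated in the wrong direction---cartesian sections of the cocartesian fibration classified by $c\otimes-$ satisfy $E_0\simeq E_1^c$, not $E_1\simeq c\otimes E_0$---so the detour through identifying $s_-$ with $c\otimes-$ is not cleanly justified as written, but it is also unnecessary, since the identification of $s_-$ with the reindexing equivalence already suffices.
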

\begin{proof}
Let $X\colon\mbb{N}\to\Pr^L$ denote the diagram $\mcal{D}\xrightarrow{c\otimes}\mcal{D}\xrightarrow{c\otimes}\cdots$.
Since the functor $+1\colon\mbb{N}\to\mbb{N}$ is cofinal, it induces an equivalence $\colim X\xrightarrow{\sim}\colim(X\circ (+1))$, but this functor is identified with $s_-$ under the equivalence in Lemma \ref{lem:cTe3}.
\end{proof}

\subsection{Formal properties of spectra and telescopes}\label{FPr}

\begin{lemma}\label{lem:FPr2}
Let $\mcal{D}$ be an $\infty$-category presentably tensored over $\mcal{C}$.
Then $\Sp_c(\mcal{D})$ is generated under colimits by objects the form $Ls_+^nLF_c(d)$ for $d\in\mcal{D}$ and $n\ge 0$.
\end{lemma}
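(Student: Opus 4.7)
The plan is to first establish generation in $\Sp^\lax_c(\mcal{D})$ and then transport along the reflective localization $L\colon\Sp^\lax_c(\mcal{D})\to\Sp_c(\mcal{D})$ of Lemma \ref{lem:cSp3}.

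First I would show that $\mcal{D}^\Sigma\simeq\prod_{n\ge 0}\Fun(B\Sigma_n,\mcal{D})$ is generated under colimits by the family $\{F_n(d)=s_+^n F(d):n\ge 0,\ d\in\mcal{D}\}$. The evaluation functors $\{U_n\colon\mcal{D}^\Sigma\to\mcal{D}\}_{n\ge 0}$ are jointly conservative by Lemma \ref{lem:cSp}(iii) and admit the $F_n$ as left adjoints, so the claim follows from standard Barr--Beck--type reasoning (applied factor by factor: each $\Fun(B\Sigma_n,\mcal{D})$ is monadic over $\mcal{D}$ via the forgetful functor, whose free objects are $\Sigma_n\otimes d$). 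Next, since $\Sp^\lax_c(\mcal{D})=\Mod_{S_c}(\mcal{D}^\Sigma)$, the bar resolution expresses every $S_c$-module as a geometric realization of free modules $S_c\otimes X$ with $X\in\mcal{D}^\Sigma$. Combined with the previous step and the $\mcal{C}^\Sigma$-linearity of $s_+$ from Construction \ref{cons:cSp}, which gives $S_c\otimes F_n(d)\simeq s_+^n(S_c\otimes F(d))=s_+^n F_c(d)$, this shows that $\Sp^\lax_c(\mcal{D})$ is generated under colimits by $\{s_+^n F_c(d):n\ge 0,\ d\in\mcal{D}\}$.

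Since $L$ preserves colimits and is essentially surjective (being a reflective localization), $\Sp_c(\mcal{D})$ is generated under colimits by the objects $L(s_+^n F_c(d))$. To match the statement of the lemma I would then identify $L(s_+^n F_c(d))$ with $(Ls_+)^n LF_c(d)$ by induction on $n$; the inductive step reduces to showing that the canonical natural transformation $L\circ s_+\to (Ls_+)\circ L$ of functors $\Sp^\lax_c(\mcal{D})\to\Sp_c(\mcal{D})$ is an equivalence, equivalently that $s_+\colon\Sp^\lax_c(\mcal{D})\to\Sp^\lax_c(\mcal{D})$ preserves $L$-equivalences. By the adjunction $s_+\dashv s_-$ this amounts to $s_-$ preserving $c$-spectra, which is precisely what the existence of the adjunction $Ls_+\dashv s_-$ on $\Sp_c(\mcal{D})$ in \ref{p:cSp2} asserts.

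I expect the main subtle point to be this last identification: one must carefully distinguish the endofunctor $s_+$ on $\Sp^\lax_c(\mcal{D})$ from the induced endofunctor $Ls_+$ on $\Sp_c(\mcal{D})$, and verify that they intertwine correctly with $L$. Once this is in place, the rest is a routine combination of the bar resolution and joint conservativity of the $U_n$.
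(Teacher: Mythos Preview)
Your proposal is correct and follows essentially the same strategy as the paper: reduce via the localization $L$ to generation of $\Sp^\lax_c(\mcal{D})$, then use that module categories are generated by free modules (once for $S_c$-modules, once for $\mcal{D}^\Sigma=\Fun(B\Sigma_\mbb{N},\mcal{D})$). The paper is terser---it simply cites ``modules are generated by free modules'' twice and does not spell out the identification $L(s_+^n F_c(d))\simeq (Ls_+)^n LF_c(d)$---whereas you give this last step explicit attention via the observation that $s_-$ preserves $c$-spectra; both arguments are the same in substance.
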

\begin{proof}
Since $\Sp_c(\mcal{D})$ is a localization of $\Sp_c^\lax(\mcal{D})$, we are reduced to showing that $\Sp^\lax_c(\mcal{D})$ is generated under colimits by objects of the form $s_+^nF_c(d)$ for $d\in\mcal{D}$ and $n\ge 0$.
Since $\Sp^\lax_c(\mcal{D})=\Mod_{S_c}(\mcal{D}^\Sigma)$ is generated under colimits by free $S_c$-modules (cf.\ the proof of \cite[5.3.2.12]{HA}), we are reduced to showing that $\mcal{D}^\Sigma$ is generated under colimits by $F_n(d)$ for $d\in\mcal{D}$ and $n\ge 0$.
This is true by the same reason, that is, modules are generated under colimits by free modules.
\end{proof}

\begin{lemma}\label{lem:FPr3}
Let $\mcal{D}$ be an $\infty$-category presentably tensored over $\mcal{C}$.
Assume that $c$ is compact in $\mcal{D}$, i.e., $(-)^c\colon\mcal{D}\to\mcal{D}$ preserves filtered colimits.
Then the functor $U_0\colon\Sp_c(\mcal{D})\to\mcal{D}$ preserves filtered colimits.
\end{lemma}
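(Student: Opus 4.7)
The plan is to show that a filtered colimit of $c$-spectra, when formed in the ambient $\infty$-category $\Sp^\lax_c(\mcal{D})$ of lax $c$-spectra, remains a $c$-spectrum; the lemma will then follow because $U_0\colon\Sp^\lax_c(\mcal{D})\to\mcal{D}$ itself preserves filtered colimits.

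First I would check that, more generally, each $U_n\colon\Sp^\lax_c(\mcal{D})\to\mcal{D}$ preserves sifted colimits. Using the presentation $\Sp^\lax_c(\mcal{D})=\Mod_{S_c}(\mcal{D}^\Sigma)$, this factors as the forgetful functor $\Mod_{S_c}(\mcal{D}^\Sigma)\to\mcal{D}^\Sigma$, which preserves sifted colimits, followed by evaluation at level $n$ in $\mcal{D}^\Sigma=\Fun(B\Sigma_\mbb{N},\mcal{D})$, which preserves all colimits since these are computed pointwise. I would also record the identity $U_n\circ s_-=U_{n+1}$, which is immediate from the definitions.

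Next, given a filtered diagram $\{E_\alpha\}$ in $\Sp_c(\mcal{D})$, let $E'\in\Sp^\lax_c(\mcal{D})$ denote its colimit formed in $\Sp^\lax_c(\mcal{D})$. I would argue that $E'$ is itself a $c$-spectrum by verifying that $U_n(\sigma^\#_{E'})$ is an equivalence for every $n$, which suffices by the conservativity of $\{U_n\}$ (Lemma \ref{lem:cSp2}(iv)). The $\mcal{C}$-linearity of $U_n$ (Lemma \ref{lem:cSp2}(ii)) together with $U_n\circ s_-=U_{n+1}$ identifies this map with the natural comparison
\[
    U_n(E')\longrightarrow (U_{n+1}(E'))^c.
\]
By the first step, the source is $\colim U_n(E_\alpha)$ and $U_{n+1}(E')\simeq\colim U_{n+1}(E_\alpha)$; the compactness hypothesis on $c$ then lets us commute $(-)^c$ past the filtered colimit to rewrite the target as $\colim(U_{n+1}(E_\alpha))^c$. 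Under these identifications the map becomes the filtered colimit of the equivalences $U_n(E_\alpha)\to (U_{n+1}(E_\alpha))^c$, valid because each $E_\alpha$ is a $c$-spectrum.

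Consequently $E'$ already lies in $\Sp_c(\mcal{D})$, and so it also realizes the colimit of $\{E_\alpha\}$ inside $\Sp_c(\mcal{D})$. Applying $U_0$ and invoking the first step once more yields $U_0(\colim E_\alpha)\simeq\colim U_0(E_\alpha)$, as required. The only real subtlety will be the second step: the compactness of $c$ enters precisely to make the $c$-spectrum condition stable under filtered colimits in $\Sp^\lax_c(\mcal{D})$, whereas the remaining steps are purely formal consequences of the constructions set up in \S\ref{cSp}.
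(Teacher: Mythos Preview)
Your proposal is correct and follows essentially the same strategy as the paper: reduce to showing that the inclusion $\Sp_c(\mcal{D})\hookrightarrow\Sp^\lax_c(\mcal{D})$ preserves filtered colimits, then use that $U_0$ on the larger category already does. The paper verifies the $c$-spectrum condition on $\colim E_\alpha$ directly---observing that $s_-$ and $(-)^c$ preserve filtered colimits on $\Sp^\lax_c(\mcal{D})$, so $\sigma^\#$ on the colimit is a filtered colimit of equivalences---rather than level-by-level via the conservative family $\{U_n\}$, but this is only a cosmetic difference.
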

\begin{proof}
Since $U_0\colon\Sp^\lax_c(\mcal{D})\to\mcal{D}$ preserves filtered colimits (in fact all small colimits), it suffices to show that the forgetful functor $\Sp_c(\mcal{D})\to\Sp^\lax_c(\mcal{D})$ preserves filtered colimits.
We have to show that, if $\{E_i\}$ is a filtered family of $c$-spectra in $\mcal{D}$, then the colimit $E:=\colim E_i$ taken in $\Sp^\lax_c(\mcal{D})$ is a $c$-spectrum, i.e., $E\to(s_-E)^c$ is an equivalence.
This is true since $s_-$ and $(-)^c$ preserve filtered colimits.
\end{proof}

\begin{corollary}\label{cor:FPr}
Let $\mcal{D}$ be an $\infty$-category presentably tensored over $\mcal{C}$.
Assume that $\mcal{D}$ is compactly generated and that $c$ is compact in $\mcal{D}$.
Then $\Sp_c(\mcal{D})$ is compactly generated.
\end{corollary}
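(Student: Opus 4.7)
The plan is to produce an explicit set of compact generators of $\Sp_c(\mcal{D})$ by applying the left adjoint $Ls_+^n LF_c$ to compact generators of $\mcal{D}$. Concretely, fix a set $\{d_\alpha\}_{\alpha}$ of compact generators of $\mcal{D}$ (whose existence is the assumption) and consider the family
\[
\mcal{G} := \{\,Ls_+^n LF_c(d_\alpha) \mid \alpha,\ n\ge 0\,\} \subset \Sp_c(\mcal{D}).
\]
I will show separately that $\mcal{G}$ generates $\Sp_c(\mcal{D})$ under colimits and that each member of $\mcal{G}$ is compact.

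For generation, Lemma \ref{lem:FPr2} asserts that the full collection $\{Ls_+^nLF_c(d) : d\in\mcal{D},\ n\ge 0\}$ generates $\Sp_c(\mcal{D})$ under colimits. Because the functor $Ls_+^n LF_c$ is a left adjoint by \ref{p:cSp2} and therefore preserves colimits, restricting the input to the generating family $\{d_\alpha\}$ of $\mcal{D}$ gives a family whose closure under colimits is still all of $\Sp_c(\mcal{D})$, namely $\mcal{G}$.

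For compactness, the adjunctions $LF_c\dashv U$ and $Ls_+\dashv s_-$ from \ref{p:cSp2} yield the natural equivalence
\[
\Map_{\Sp_c(\mcal{D})}(Ls_+^n LF_c(d_\alpha), E) \simeq \Map_{\mcal{D}}(d_\alpha, U_n E),
\]
with $U_n = U\circ s_-^{\circ n}$. Since $d_\alpha$ is compact in $\mcal{D}$, it is enough to check that $U_n\colon\Sp_c(\mcal{D})\to\mcal{D}$ preserves filtered colimits. The functor $U_0=U$ does so by Lemma \ref{lem:FPr3}, using exactly the hypothesis that $c$ is compact in $\mcal{D}$. Moreover, Lemma \ref{lem:cSp4} identifies $s_-$ on $\Sp_c(\mcal{D})$ with $c\otimes-$, which preserves \emph{all} small colimits because it has the right adjoint $(-)^c$; in particular $s_-^{\circ n}$ preserves filtered colimits. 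Composing, $U_n$ preserves filtered colimits, so every object of $\mcal{G}$ is compact and the proof is complete. No serious obstacle arises: the corollary is essentially a formal consequence of assembling Lemma \ref{lem:FPr2}, Lemma \ref{lem:FPr3}, and the equivalence $s_-\simeq c\otimes-$ on $\Sp_c(\mcal{D})$ from Lemma \ref{lem:cSp4}.
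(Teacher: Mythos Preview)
Your proof is correct and follows essentially the same approach as the paper: generation comes from Lemma \ref{lem:FPr2}, and compactness of the generators reduces via the adjunctions to $U_n$ preserving filtered colimits, which is Lemma \ref{lem:FPr3} combined with the fact that $s_-\simeq c\otimes-$ is an equivalence on $\Sp_c(\mcal{D})$. The paper compresses this into the observation that $LF_c$ preserves compact objects (formally from Lemma \ref{lem:FPr3}), leaving implicit that $Ls_+$ is an equivalence and hence also preserves compacts; your version simply unpacks the same argument in full.
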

\begin{proof}
By Lemma \ref{lem:FPr2}, it suffices to show that $LF_c\colon\mcal{D}\to\Sp_c(\mcal{D})$ preserves compact objects, which formally follows from Lemma \ref{lem:FPr3}.
\end{proof}

\begin{lemma}\label{lem:FPr4}
Let $L\colon\mcal{D}\to\mcal{D}'$ be a $\mcal{C}$-linear localization between $\infty$-categories presentably tensored over $\mcal{C}$.
Then the induced functor $\Sp_c(\mcal{D})\to\Sp_c(\mcal{D}')$ is an $\Sp_c(\mcal{C})$-linear localization with respect to all the maps of the form $Ls_+^nLF_c(f)$ for an $L$-equivalence $f$ and $n\ge 0$.
\end{lemma}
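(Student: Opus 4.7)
The plan is to reduce the statement to a general fact about tensor products in $\Mod_\mcal{C}(\Pr^L)$ preserving presentable localizations, and then to refine the resulting generating set using the description of $\Sp_c$ from \S\ref{cSp}.

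First, by Lemma \ref{lem:cSp+}, the construction $\Sp_c$ agrees with $\Sp_c(\mcal{C})\otimes_\mcal{C}-$ as a functor $\Mod_\mcal{C}(\Pr^L)\to\Mod_{\Sp_c(\mcal{C})}(\Pr^L)$. Consequently $\Sp_c(L)$ is identified with $\id_{\Sp_c(\mcal{C})}\otimes_\mcal{C}L$, which is automatically $\Sp_c(\mcal{C})$-linear. It therefore suffices to show that this tensored functor is a localization whose inverted morphisms form exactly the strongly saturated class generated by the morphisms in the statement.

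Second, I would invoke (or establish in Appendix \ref{CPr}) the following general principle: tensoring a $\mcal{C}$-linear presentable localization $L\colon\mcal{D}\to\mcal{D}'$ with any $\mcal{E}\in\Mod_\mcal{C}(\Pr^L)$ yields a presentable localization $\mcal{E}\otimes_\mcal{C}L\colon\mcal{E}\otimes_\mcal{C}\mcal{D}\to\mcal{E}\otimes_\mcal{C}\mcal{D}'$ whose strongly saturated class of equivalences is generated by morphisms $X\otimes f$ for $X\in\mcal{E}$ and $f$ an $L$-equivalence in $\mcal{D}$. This is formal: writing $\mcal{D}'\simeq\mcal{D}[W^{-1}]$ as a presentable localization in $\Mod_\mcal{C}(\Pr^L)$ at the class $W$ of $L$-equivalences and using that $\mcal{E}\otimes_\mcal{C}-$ preserves colimits (hence sends the pushout presenting this localization to a pushout of the same shape), we obtain $\mcal{E}\otimes_\mcal{C}\mcal{D}'\simeq(\mcal{E}\otimes_\mcal{C}\mcal{D})[W'^{-1}]$ with $W'$ generated by $\mcal{E}\otimes_\mcal{C}W$. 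Apply this with $\mcal{E}=\Sp_c(\mcal{C})$.

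Third, refine the generating set produced in Step~2. By Lemma \ref{lem:FPr2} applied to $\mcal{C}$ in place of $\mcal{D}$, the $\infty$-category $\Sp_c(\mcal{C})$ is generated under colimits by the objects $Ls_+^nLF_c(d)$ for $d\in\mcal{C}$ and $n\ge 0$, and the operation $X\mapsto X\otimes LF_c(f)$ preserves colimits. Hence it suffices to consider generators of the form $Ls_+^nLF_c(d)\otimes LF_c(f)$. Using the $\Sp_c(\mcal{C})$-linearity of $Ls_+$ and the symmetric monoidality of $LF_c$ from \ref{p:cSp2}, one computes
\[
    Ls_+^nLF_c(d)\otimes LF_c(f)\simeq Ls_+^n\bigl(LF_c(d)\otimes LF_c(f)\bigr)\simeq Ls_+^nLF_c(d\otimes f).
\]
Since $d\otimes f$ remains an $L$-equivalence (as $L$ is $\mcal{C}$-linear), every such morphism is of the form $Ls_+^nLF_c(g)$ for some $L$-equivalence $g$; conversely, the morphisms $Ls_+^nLF_c(f)$ in the statement arise in this way by taking $d=1_\mcal{C}$. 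The two strongly saturated classes thus agree, finishing the proof.

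The main obstacle is the general principle invoked in Step~2: one must carefully justify that the relative tensor product in $\Mod_\mcal{C}(\Pr^L)$ interacts correctly with presentable localizations, preserving an explicit description of the inverted morphisms. This is the only nontrivial categorical input; once it is in place, the rest is a routine manipulation with the adjunctions and monoidal structures introduced in \S\ref{cSp}.
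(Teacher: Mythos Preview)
Your proposal is correct and follows essentially the same route as the paper: both use Lemma~\ref{lem:cSp+} to rewrite $\Sp_c(L)$ as $\id_{\Sp_c(\mcal{C})}\otimes_\mcal{C}L$, invoke the formal fact that tensoring in $\Mod_\mcal{C}(\Pr^L)$ preserves presentable localizations with an explicit description of the generators (the paper simply cites the proof of \cite[4.8.1.15]{HA} for this), and use Lemma~\ref{lem:FPr2} to cut the generating class down to the stated form. Your Step~3 spells out the reduction via generators of $\Sp_c(\mcal{C})$ more carefully than the paper does; the only caveat is that your pushout justification in Step~2 is a bit informal---the cleanest argument is the one in \cite[4.8.1.15]{HA}, showing directly that the tensored right adjoint stays fully faithful and identifying its essential image.
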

\begin{proof}
By Lemma \ref{lem:cSp+} and Lemma \ref{lem:FPr2}, we only have to show that the induced functor
\[
	L\otimes\id \colon \mcal{D}\otimes_\mcal{C}\Sp_c(\mcal{C}) \to \mcal{D}'\otimes_\mcal{C}\Sp_c(\mcal{C}) 
\]
is a localization with respect to morphisms of the form $f\otimes E$ for some $L$-equivalence $f$ in $\mcal{D}$ and $E\in\Sp_c(\mcal{C})$.
This holds formally, cf.\ the proof of \cite[4.8.1.15]{HA}.
\end{proof}

\begin{remark}
Lemma \ref{lem:FPr2}, Lemma \ref{lem:FPr3}, Corollary \ref{cor:FPr}, and Lemma \ref{lem:FPr4} have evident analogues for telescopes, which are proved in the same way.
\end{remark}

\subsection{Comparison of spectra and telescopes}\label{CST}

\begin{construction}\label{cons:CST}
Let $\bar{F}\colon(-)^\mbb{N}\to(-)^\Sigma$ be the natural transformation between endofunctors on $\Pr^L$ obtained as the left Kan extension along the canonical morphism $\mbb{N}\to B\Sigma_\mbb{N}$ of $\mbb{E}_1$-monoids.
Then $\bar{F}\colon\mcal{C}^\mbb{N}\to\mcal{C}^\Sigma$ is monoidal and $\bar{F}\colon\mcal{D}^\mbb{N}\to\mcal{D}^\Sigma$ is $\mcal{C}^\mbb{N}$-linear for an $\infty$-category $\mcal{D}$ presentably tensored over $\mcal{C}$.
We consider the adjunction
\[
	\bar{F} \colon \mcal{D}^\mbb{N} \rightleftarrows \mcal{D}^\Sigma \colon \bar{U},
\]
where $\bar{U}$ is the pre-composition by $\mbb{N}\to B\Sigma_\mbb{N}$.
Note that $\bar{F}$ commutes with $s_+$ and $\bar{U}$ commutes with $s_-$.

Since the functor $\bar{U}\colon\mcal{C}^\Sigma\to\mcal{C}^\mbb{N}$ is lax monoidal, it carries $\mbb{E}_1$-algebras to $\mbb{E}_1$-algebras.
In particular, $\bar{U}(S_c)$ is an $\mbb{E}_1$-algebra and it is identified with $S_c$ in $\mcal{C}^\mbb{N}$.
It follows that we obtain an induced adjunction
\[
	\bar{F}_c:=S_c\otimes_{\bar{F}(S_c)}\bar{F} \colon \Tel^\lax_c(\mcal{D}) \rightleftarrows \Sp^\lax_c(\mcal{D}) \colon \bar{U},
\]
and $\bar{F}_c$ is $\mcal{C}^\mbb{N}$-linear.
We see that a lax $c$-spectrum $E$ is a $c$-spectrum if and only if $\bar{U}(E)$ is a $c$-telescope.
Therefore, we obtain an induced adjunction
\[
	L\bar{F}_c \colon \Tel_c(\mcal{D}) \rightleftarrows \Sp_c(\mcal{D}) \colon \bar{U},
\]
and $L\bar{F}_c$ is $\mcal{C}^\mbb{N}$-linear.
\end{construction}

\begin{lemma}\label{lem:CST}
The functor $\bar{U}\colon\Sp_c(\mcal{D})\to\Tel_c(\mcal{D})$ is conservative.
\end{lemma}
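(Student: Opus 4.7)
The plan is to reduce conservativity of $\bar{U}$ to the conservativity of the level-wise evaluation functors $\{U_n\}_{n\ge 0}$ on both the spectra and telescopes sides. The key observation is a strict compatibility between $\bar{U}$ and $U_n$ that follows directly from the definitions as precomposition functors.

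First, I would verify the elementary identity $U_n^{\Tel}\circ\bar{U}\simeq U_n^{\Sp}$ as functors $\mcal{D}^\Sigma\to\mcal{D}$. This is immediate from unwinding Construction \ref{cons:cSp} and Construction \ref{cons:cTe}: the functor $\bar{U}$ is precomposition with the morphism $\mbb{N}\to B\Sigma_\mbb{N}$ of $\mbb{E}_1$-monoids sending the generator of $\mbb{N}$ to $e$, and under this map, $*\to\mbb{N}\mapsto *\to B\Sigma_\mbb{N}$ and $(+1)\colon\mbb{N}\to\mbb{N}$ is intertwined with $e\colon B\Sigma_\mbb{N}\to B\Sigma_\mbb{N}$. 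Hence $U^{\Tel}\circ\bar{U}\simeq U^{\Sp}$ and $s_-^{\Tel}\circ\bar{U}\simeq\bar{U}\circ s_-^{\Sp}$, and the identity for $U_n$ follows by iteration. This compatibility lifts unchanged to the module categories, giving $U_n^{\Tel}\circ\bar{U}\simeq U_n^{\Sp}$ as functors $\Sp^\lax_c(\mcal{D})\to\mcal{D}$, using that $\bar{U}(S_c)\simeq S_c$ (as pointed out in Construction \ref{cons:CST}).

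Next, I would invoke the conservativity of $\{U_n\}_{n\ge 0}$ on both sides. On the spectra side this is Lemma \ref{lem:cSp2}(iv). On the telescopes side, the analogous statement (the evident telescope analogue alluded to in the remark following Lemma \ref{lem:FPr4}) follows most cleanly from Lemma \ref{lem:cTe}: under the equivalence $\Tel^\lax_c(\mcal{D})\simeq\Fun_\mbb{N}(\mbb{N},\mcal{E})$, a morphism of sections is an equivalence precisely when it is a level-wise equivalence, and level-wise evaluation is precisely $U_n$.

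With these tools in hand, the proof is short. Given $f\colon E\to E'$ in $\Sp_c(\mcal{D})$ with $\bar{U}(f)$ an equivalence in $\Tel_c(\mcal{D})$: since both $\bar{U}(E)$ and $\bar{U}(E')$ are $c$-telescopes (Construction \ref{cons:CST}) and $\Tel_c(\mcal{D})\hookrightarrow\Tel^\lax_c(\mcal{D})$ is fully faithful, $\bar{U}(f)$ is already an equivalence in $\Tel^\lax_c(\mcal{D})$. The telescope-side conservativity then shows $U_n^{\Tel}\bar{U}(f)$ is an equivalence in $\mcal{D}$ for every $n$, which by the compatibility means $U_n^{\Sp}(f)$ is an equivalence for every $n$. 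By Lemma \ref{lem:cSp2}(iv), $f$ is an equivalence in $\Sp^\lax_c(\mcal{D})$, hence in the full subcategory $\Sp_c(\mcal{D})$. There is no substantive obstacle; the only point requiring a moment's care is confirming that equivalence-in-the-localization reduces to equivalence-in-the-lax-category when both endpoints are already local, which holds here on both sides.
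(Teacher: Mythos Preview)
Your proof is correct and follows essentially the same approach as the paper: the key point is the factorization $U_n^{\Sp}\simeq U_n^{\Tel}\circ\bar{U}$ together with the conservativity of $\{U_n^{\Sp}\}_{n\ge 0}$ from Lemma \ref{lem:cSp2}(iv). One minor simplification: you do not need telescope-side conservativity at all, since once $\bar{U}(f)$ is an equivalence, applying any functor (in particular $U_n^{\Tel}$) yields an equivalence; the paper's one-line proof exploits exactly this shortcut.
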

\begin{proof}
It follows from the conservativity of $\{U_n\colon\Sp_c(\mcal{D})\to\mcal{D}\}_{n\ge 0}$, cf.\ Lemma \ref{lem:cSp2}.
\end{proof}

\begin{proposition}\label{prop:CST}
Assume that the cyclic permutation on $c^{\otimes 3}$ is homotopic to the identity.
Then the functor
\[
	\bar{U} \colon \Sp_c(\mcal{D}) \to \Tel_c(\mcal{D})
\]
is an equivalence for every $\infty$-category $\mcal{D}$ presentably tensored over $\mcal{C}$.
\end{proposition}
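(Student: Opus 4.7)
The plan is to combine the conservativity of $\bar{U}$ from Lemma~\ref{lem:CST} with a direct verification of the adjunction unit $\eta\colon\id_{\Tel_c(\mcal{D})}\to\bar{U}\circ L\bar{F}_c$ on a generating set. Since a conservative right adjoint is an equivalence precisely when its unit is an equivalence (the triangle identity then forces the counit to be inverted by $\bar{U}$, and conservativity finishes), it suffices to show $\eta$ is an equivalence. Both adjoint functors preserve colimits: $L\bar{F}_c$ as a left adjoint, and $\bar{U}$ because it is induced by precomposition along the map of commutative monoids $\mbb{N}\to B\Sigma_\mbb{N}$, which preserves all small limits and colimits. Moreover $L\bar{F}_c$ commutes with $s_+$ and $\bar{U}$ commutes with $s_-$ by construction, so the locus where $\eta$ is an equivalence is closed under colimits and the shifts $s_\pm$.

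By the telescope analogue of Lemma~\ref{lem:FPr2}, $\Tel_c(\mcal{D})$ is generated under colimits by the objects $Ls_+^n LG_c(d)$ for $d\in\mcal{D}$ and $n\ge 0$, so the verification reduces to the generators $LG_c(d)$. On such a generator, I would first identify $L\bar{F}_c(LG_c(d))\simeq LF_c(d)$: this follows because $F\simeq\bar{F}\circ G$ as left Kan extensions along the composite $*\to\mbb{N}\to B\Sigma_\mbb{N}$ of commutative monoids, and because $\bar{F}$ is strongly monoidal (coming from Day convolution along a monoid morphism), so that $\bar{F}_c\circ G_c\simeq F_c$ before localization. The unit at $LG_c(d)$ therefore becomes the canonical comparison map $LG_c(d)\to\bar{U}(LF_c(d))$, and the claim is that this is an equivalence.

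The main obstacle—and the only place where the cyclic hypothesis enters—is the comparison of the two localizations applied to the respective free objects. By Lemma~\ref{lem:cSp2}(i), both $U_n F_c(d)$ and $U_n G_c(d)$ are equal to $c^{\otimes n}\otimes d$, but the former carries a $\Sigma_n$-action that the latter does not. Passing to a $c$-spectrum requires a ``spectrification'' whose underlying sequence is computed by a colimit indexed by a diagram built out of the symmetric groups, whereas passing to a $c$-telescope is computed by a simple $\mbb{N}$-indexed sequential colimit of iterated exponentials by $c$. The cyclic permutation hypothesis on $c^{\otimes 3}$ is exactly what is needed, via a Hovey-style cofinality argument, to produce a cofinal functor from the $\mbb{N}$-indexed stabilization diagram into the more complicated $\Sigma$-indexed one: the coset permutations that distinguish the two diagrams are built, up to coherent homotopy, out of iterated cyclic permutations on triples of $c$'s, which are trivialized by hypothesis. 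Granting this cofinality, the two underlying sequences agree, the comparison map $LG_c(d)\to\bar{U}(LF_c(d))$ is an equivalence, and the proposition follows by the reduction of the previous paragraph. The cofinality step is the technical heart and is the $\infty$-categorical incarnation of the classical Hovey argument for symmetric spectra.
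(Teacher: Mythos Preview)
Your strategy is different from the paper's, and it has a real gap in the reduction step.  You claim that $\bar{U}\colon\Sp_c(\mcal{D})\to\Tel_c(\mcal{D})$ preserves all small colimits because it ``is induced by precomposition''.  That justification is valid only for $\bar{U}\colon\mcal{D}^\Sigma\to\mcal{D}^\mbb{N}$ and hence for $\bar{U}^\lax\colon\Sp_c^\lax(\mcal{D})\to\Tel_c^\lax(\mcal{D})$.  After localizing, colimits in $\Sp_c(\mcal{D})$ are computed as $L(\colim^\lax)$, and for $\bar{U}$ to commute with them you would need $\bar{U}\circ L_{sp}\simeq L_{tel}\circ\bar{U}$, equivalently that $\bar{U}$ sends $L_{sp}$-equivalences to $L_{tel}$-equivalences, or dually that $\bar{F}_c$ sends $c$-telescopes to $c$-spectra.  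Neither of these is clear a priori (and note $\bar{U}$ does \emph{not} commute with $s_+$: already at the level of $\mcal{D}^\Sigma$ one has $(\bar{U}s_+Y)_n\simeq\coprod_n Y_{n-1}$, not $Y_{n-1}$).  Without colimit preservation of $\bar{U}L\bar{F}_c$, checking the unit on the generators $LG_c(d)$ does not propagate to all of $\Tel_c(\mcal{D})$, so the reduction breaks down.  Your final cofinality step is also only gestured at; making the ``Hovey-style'' comparison of the two spectrifications precise in the $\infty$-categorical setting is nontrivial.

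The paper takes a much shorter route that sidesteps both problems: rather than comparing the constructions directly, it shows that $\Tel_c(\mcal{D})$ satisfies the \emph{same universal property} as $\Sp_c(\mcal{D})\simeq\mcal{D}[c^{-1}]$, namely that (i) $c$ acts invertibly on $\Tel_c(\mcal{D})$ and (ii) if $c$ already acts invertibly on $\mcal{D}$ then $\mcal{D}\to\Tel_c(\mcal{D})$ is an equivalence.  Point (ii) is immediate from the sequential-colimit description (Lemma~\ref{lem:cTe3}).  Point (i) is exactly where the cyclic hypothesis enters: the action of $c$ on $\Tel_c(\mcal{D})\simeq\colim(\mcal{D}\xrightarrow{c\otimes-}\mcal{D}\xrightarrow{c\otimes-}\cdots)$ is invertible as soon as the cyclic permutation on $c^{\otimes 3}$ is homotopic to the identity, by the standard argument recorded in \cite[Proposition~C.3]{BNT}.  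Once both (i) and (ii) hold, $\Tel_c$ is an idempotent monad on $\Mod_\mcal{C}(\Pr^L)$ with the same essential image as $\Sp_c$, so $\bar{U}$ is forced to be an equivalence.  This approach uses the cyclic hypothesis in a single clean place and never needs to analyze the spectrification functors $L$ explicitly.
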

\begin{proof}
It suffices to show that $\Tel_c(\mcal{D})$ has the same universal property with $\Sp_c(\mcal{D})$, that is:
\begin{enumerate}
\item $c$ acts as an equivalence on $\Tel_c(\mcal{D})$.
\item If $c$ acts as an equivalence on $\mcal{D}$, then $LF_c\colon\mcal{D}\to\Tel_c(\mcal{D})$ is an equivalence.
\end{enumerate}
(i) follows from Lemma \ref{lem:cTe3} and \cite[Proposition C.3]{BNT} thanks to the cyclic triviality.
(ii) is immediate from Lemma \ref{lem:cTe3}.
\end{proof}

\newpage

\section{Motivic spectra and fundamental stability}\label{MSp}

Recall that the $\infty$-category $\Sp$ of spectra is the formal inversion of $S^1$ in $\Ani_*$ and, more generally, that the stabilization $\Sp(\mcal{C})$ of a presentable $\infty$-category $\mcal{C}$ is the formal inversion of $S^1$ in $\mcal{C}_*$.
We develop the theory of motivic spectra in parallel, replacing the $\infty$-topos $\Ani$ with the $\infty$-topos $\St$ of Zariski sheaves on smooth schemes and $S^1$ with the projective line $\mbb{P}^1$.

We define the $\infty$-category $\SpP$ of motivic spectra to be the formal inversion of $\mbb{P}^1$ in $\St_*$ or, equivalently, the $\infty$-category of $\mbb{P}^1$-spectra in $\St_*$ in the sense of Definition \ref{def:cSp2};
\[
	\SpP := \SpP(\St_*) \simeq \St_*[(\mbb{P}^1)^{-1}].
\]
More generally, for an $\infty$-category $\V$ presentably tensored over $\St$, the $\infty$-category $\SpP(\V)$ of motivic spectra in $\V$ is well-defined to be the formal inversion of $\mbb{P}^1$ in $\V_*$.
The basic issue is that it is not clear if the $\infty$-category $\SpP(\V)$ is stable.
We define the notion of \textit{fundamental motivic spectra} and establish a stability for them (Theorem \ref{thm:FSt}).
Roughly speaking, a motivic spectrum is fundamental if and only if it satisfies Bass fundamental exact sequence, and then we employ the idea of Bass construction to prove the stability.

\subsection{Algebro-geometric conventions}\label{AGC}

We refer to \cite{SAG} for the theory of derived schemes.

\begin{p}
For a derived qcqs scheme $S$, let $\Sm_S$ denote the $\infty$-category of qcqs smooth derived $S$-schemes.
We suppose that $\Sm_S$ is endowed with the Zariski topology by default.
In the case $S=\Spec(\mbb{Z})$, the prefix/subscript $S$ is omitted from the notation, and the same applies below.
\end{p}

\begin{p}[Stack]
We refer to a sheaf of anima on $\Sm_S$ as an \textit{$S$-stack}.
Let $\St_S$ denote the $\infty$-topos of sheaves of anima on $\Sm_S$.
We endow $\St_S$ with the cartesian symmetric monoidal structure.
Then $\St_S$ is a presentably symmetric monoidal $\infty$-category.
\end{p}

\begin{p}[Projective line]
We suppose that the projective line $\mbb{P}^1$ is pointed by $\infty$.
We write
\[
	\SigmaP := \mbb{P}^1\otimes- \qquad
	\OmegaP := (-)^{\mbb{P}^1}
\]
for the operations on $\infty$-categories presentably tensored over $\St_*$.
\end{p}

\begin{p}[Moduli stack of vector bundles]
For a non-negative integer $n$, let $\Vect_n$ denote the moduli stack of vector bundles of rank $n$, which yields an $S$-stack for each qcqs derived scheme $S$.
Since the moduli stack $\Vect_n$ is left Kan extended from smooth schemes, the base change functor $\St\to\St_S$ carries $\Vect_n$ to $\Vect_n$.
For $n=1$, we write $\Pic:=\Vect_1$, which is the Picard stack.
We often regard $\Pic$ as an $\mbb{E}_\infty$-monoid in $\St_S$ with respect to tensor products of line bundles.
\end{p}

\begin{p}[Grassmannian]
For non-negative integers $n$ and $N$, the $n$-th grassmannian $\Gr_n(\mcal{O}^N)$ of $\mcal{O}^N$ classifies all quotients $\mcal{O}^N\twoheadrightarrow\mcal{E}$, where $\mcal{E}$ is a vector bundle of rank $n$.
The projection $\mcal{O}^{N+1}\to\mcal{O}^N$ discarding the last factor induces an immersion $\Gr_n(\mcal{O}^N)\to\Gr_n(\mcal{O}^{N+1})$.
We write $\Gr_n:=\colim_N\Gr_n(\mcal{O}^N)$ and regard it as an ind-scheme or stack.
We write $\mbb{P}^\infty:=\Gr_1$, which is the infinite projective space.
\end{p}

\subsection{Definition of motivic spectra}\label{DMS}

\begin{p}\label{p:DMS}
Let $\V$ be an $\infty$-category presentably tensored over $\St$ throughout.
We assume that $\V$ is compactly generated and that $\mbb{P}^1$ is compact in $\V$, i.e., $(-)^{\mbb{P}^1}\colon\V\to\V$ preserves filtered colimits.
We say that $\V$ is \textit{multiplicative} if $\V$ is a presentably symmetric monoidal $\infty$-category together with a symmetric monoidal left adjoint $\St\to\V$, which we denote by $(-)_\V$.
For a qcqs derived scheme $S$, we say that $\V$ is \textit{defined over $S$} if $\V$ is presentably tensored over $\St_S$.
\end{p}

\begin{remark}
The assumption that $\V$ is compactly generated and that $\mbb{P}^1$ is compact is required only for the validity of Brown representability theorem, which we use only in the proof of Theorem \ref{thm:FSt}.
We can remove those assumptions unless Theorem \ref{thm:FSt} is involved.
\end{remark}

\begin{definition}[Motivic spectrum]\label{def:DMS}
We define a \textit{motivic spectrum in $\V$} to be a $\mbb{P}^1$-spectrum in $\V_*$ in the sense of Definition \ref{def:cSp2}.
Accordingly, the presentably symmetric monoidal $\infty$-category
\[
	\SpP := \SpP(\St_*)
\]
is defined and $\SpP(\V):=\SpP(\V_*)$ is defined as an $\infty$-category presentably tensored over $\SpP$.
\end{definition}

\begin{remark}
By Proposition \ref{prop:cSp}, we have canonical equivalences
\[
	\SpP(\V) \simeq \SpP\otimes_\St\V \simeq \V_*[(\mbb{P}^1)^{-1}],
\]
where the tensor product is taken in $\Mod_\St(\Pr^L)$.
Moreover, $\SpP(\V)$ is compactly generated by Corollary \ref{cor:FPr}.
\end{remark}

\begin{remark}
Suppose that $\V$ is multiplicative.
Then $\SpP(\V)$ admits a unique presentably symmetric monoidal structure for which the evident functors $\V\to\SpP(\V)$ and $\SpP\to\SpP(\V)$ are symmetric monoidal.
In this case, we refer to a homotopy associative (co)algebra object in $\SpP(\V)$ as a \textit{motivic (co)ring spectrum in $\V$} and refer to an $\mbb{E}_k$-(co)algebra object as a \textit{motivic $\mbb{E}_k$-(co)ring spectrum in $\V$}.
\end{remark}

\begin{example}
For a qcqs derived $S$-scheme, we can take $\St_S$ as $\V$.
Then it is multiplicative and the presentably symmetric monoidal $\infty$-category
\[
	\SpP(S) := \SpP(\St_S)
\]
is defined.
We refer to a motivic spectrum in $\St_S$ as a \textit{motivic spectrum over $S$}.
In general, if $\V$ is defined over $S$, then $\SpP(\V)$ is presentably tensored over $\SpP(S)$.
\end{example}

\begin{notation}[Infinite suspension]
We write
\[
	\SigmaP^\infty \colon \V_* \rightleftarrows \SpP(\V) \colon \OmegaP^\infty \qquad
	s_+ \colon \SpP(\V) \rightleftarrows \SpP(\V) \colon s_-
\]
for the adjunctions in \ref{p:cSp2} ($s_+$ denotes the derived sift $Ls_+$ for simplicity).
For each integer $n$, we set
\[
	\SigmaP^{\infty-n} := (s_+)^{\circ n}\circ\SigmaP^\infty \qquad
	\OmegaP^{\infty-n} := \OmegaP^\infty\circ (s_-)^{\circ n}.
\]
Note that we have natural equivalences $\SigmaP\simeq s_-$ and $\OmegaP\simeq s_+$ as endofunctors of $\SpP(\V)$, cf.\ Lemma \ref{lem:cSp4}.
\end{notation}

\begin{notation}[Cohomology]\label{not:DMS}
Let $E,R$ be motivic spectra in $\V$ and $p,q,n$ integers with $2q-p\ge 0$.
We write
\[
	E(R) := \Map(R,E) \qquad
	E^{p,q}(R) := \pi_{2q-p}\Map(R,\SigmaP^qE) \qquad
	E^n(R) := E^{2n,n}(R).
\]
For an object $X$ in $\V$, we write $E(X):=E(\SigmaP^\infty X_+)$, and when $X$ is pointed, $\tilde{E}(X):=E(\SigmaP^\infty X)$.
\end{notation}

\begin{remark}
Let $E$ be a motivic spectrum in $\V$.
By definition, we have an isomorphism
\[
	E^{p,q}(\mbb{P}^1\otimes R) \simeq E^{p-2,q-1}(R)
\]
for every motivic spectrum $R$ in $\V$ and for integers $p,q$ with $2q-p\ge 0$.
This can be referred to as the \textit{$\mbb{P}^1$-suspension isomorphism}.
\end{remark}

\begin{p}[Change of coefficients]
Let $F\colon\V\to\V'$ be an $\St$-linear left adjoint between $\infty$-categories presentably tensored over $\St$.
Then we have an induced adjunction
\[
	F^* \colon \SpP(\V) \rightleftarrows \SpP(\V') \colon F_*,
\]
and $F^*$ is $\SpP$-linear.
If $\V$ and $\V'$ are multiplicative and $F\colon\V\to\V'$ is symmetric monoidal, then the induced left adjoint $F^*\colon\SpP(\V)\to\SpP(\V')$ is symmetric monoidal.
\end{p}

\begin{p}[Relation to $\mbb{A}^1$-local theory]
Let $S$ be a qcqs derived scheme.
Let $\St^{\mrm{Nis},\mbb{A}^1}_S$ be the full subcategory of $\St_S$ spanned by $\mbb{A}^1$-local Nisnevich sheaves.
Then Voevodsky's stable motivic homotopy category $\mrm{SH}(S)$ is defined as
\[
	\mrm{SH}(S) = \SpP(\St^{\mrm{Nis},\mbb{A}^1}_S).
\]
In particular, it is a localization of $\SpP(S)$ with respect to Nisnevich descent and $\mbb{A}^1$-homotopy invariance.
\end{p}

\subsection{Fundamental motivic spectra}\label{FMS}

\begin{p}
Consider the standard Zariski distinguished square
\[
\xymatrix{
	\mbb{G}_m \ar[r] \ar[d] & \mbb{A}^1 \ar[d] \\
	\mbb{A}^1 \ar[r] & \mbb{P}^1,
}
\]
which we regard as a square of pointed schemes, and let $\partial\colon\mbb{P}^1\to S^1\otimes\mbb{G}_m$ denote the boundary map in $\St_*$ (note that $S^1\otimes\mbb{G}_m\simeq*\sqcup_{\mbb{G}_m}*$).
\end{p}

\begin{definition}[Fundamental motivic spectrum]\label{def:FMS}
We say that a motivic spectrum $E$ in $\V$ is \textit{fundamental} if the map
\[
	\partial = \partial\otimes\id_E \colon \mbb{P}^1\otimes E \to S^1\otimes\mbb{G}_m\otimes E
\]
admits a retraction.
\end{definition}

\begin{lemma}\label{lem:FMS}
Let $E$ be a motivic spectrum in $\V$.
Then the following are equivalent:
\begin{enumerate}
\item $E$ is fundamental.
\item There exists an element $\nu\in E^{1,1}(\mbb{G}_m\otimes E)$ which lifts the identity $\id_E\in E^0(E)$ via the map
\[
	E^{1,1}(\mbb{G}_m\otimes E) \xrightarrow{\partial^*} E^{2,1}(\mbb{P}^1\otimes E) \simeq E^0(E).
\]
\item The map $\partial^*\colon E^{S^1\otimes\mbb{G}_m}\to E^{\mbb{P}^1}$ admits a section.
\item The motivic spectrum $\intMap(E,E)$ over $\Spec(\mbb{Z})$ is fundamental.
\end{enumerate}
Suppose that $\V$ is multiplicative and that $E$ is a motivic ring spectrum in $\V$, then these are further equivalent to the following:
\begin{enumerate}[resume]
\item The map $\partial\colon\mbb{P}^1\otimes E\to S^1\otimes\mbb{G}_m\otimes E$ admits a retraction as a morphism of left (or right) $E$-modules.
\item There exists an element $\nu\in\tilde{E}^{1,1}(\mbb{G}_{m,\V})$ which lifts the unit $\eta\in E^0(\mbf{1}_\V)$ via the map
\[
	\tilde{E}^{1,1}(\mbb{G}_{m,\V}) \xrightarrow{\partial^*} \tilde{E}^{2,1}(\mbb{P}^1_\V) \simeq E^0(\mbf{1}_\V).
\]
\end{enumerate}
\end{lemma}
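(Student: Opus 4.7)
The plan is to prove (i)$\iff$(ii), (i)$\iff$(iii), and (i)$\iff$(iv) in general, and then (i)$\iff$(v)$\iff$(vi) under the ring hypothesis. The key formal fact I will use throughout is that $\mbb{P}^1$ is invertible in $\SpP$, so $\SigmaP$ and $\OmegaP$ are mutually inverse, $\SpP$-linear autoequivalences of $\SpP(\V)$; in particular one has a natural equivalence $(\mbb{P}^1\otimes X)^A\simeq\mbb{P}^1\otimes X^A$ for every $A\in\SpP$ and $X\in\SpP(\V)$.

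The equivalences (i)$\iff$(ii) and (i)$\iff$(iii) are purely formal. For (i)$\iff$(ii), I unwind the notation: $E^{1,1}(\mbb{G}_m\otimes E)=\pi_0\Map(S^1\otimes\mbb{G}_m\otimes E,\mbb{P}^1\otimes E)$, the $\mbb{P}^1$-suspension isomorphism identifies $E^{2,1}(\mbb{P}^1\otimes E)\simeq E^0(E)$ sending $\id_E$ to $\id_{\mbb{P}^1\otimes E}$, and the map $\partial^*$ is precomposition with $\partial\otimes\id_E$, so lifting $\id_E$ amounts to left-inverting $\partial\otimes\id_E$. For (i)$\iff$(iii), combining the adjunctions $(A\otimes-)\dashv(-)^A$ with the invertibility equivalence yields a natural bijection
\[
\Map(S^1\otimes\mbb{G}_m\otimes E,\mbb{P}^1\otimes E)\simeq\Map(E^{\mbb{P}^1},E^{S^1\otimes\mbb{G}_m})
\]
sending $\id_{\mbb{P}^1\otimes E}$ to $\id_{E^{\mbb{P}^1}}$ and intertwining precomposition by $\partial\otimes\id_E$ with postcomposition by $\partial^*$.

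For (i)$\iff$(iv), set $R:=\intMap(E,E)\in\SpP$, which is always an $\mbb{E}_1$-algebra under composition, with unit $\eta\colon\mbf{1}\to R$ corresponding to $\id_E$ and evaluation $\mrm{ev}\colon R\otimes E\to E$ satisfying $\mrm{ev}\circ(\eta\otimes\id_E)=\id_E$. Given (iv), a left inverse $\tilde r$ of $\partial\otimes\id_R$ in $\SpP$ restricts to one for $\partial\otimes\id_E$ in $\SpP(\V)$ via the composite $(\id_{\mbb{P}^1}\otimes\mrm{ev})\circ(\tilde r\otimes\id_E)\circ(\id_{S^1\otimes\mbb{G}_m}\otimes\eta\otimes\id_E)$, using the unit axiom for $\mrm{ev}$. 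Conversely, given a left inverse $r$, the adjunction $-\otimes E\dashv\intMap(E,-)$ and the equivalence $(\mbb{P}^1\otimes E)^E\simeq\mbb{P}^1\otimes R$ transpose $r$ to a map $\hat r\colon S^1\otimes\mbb{G}_m\to\mbb{P}^1\otimes R$ in $\SpP$; by naturality of the adjunction unit, the identity $r\circ(\partial\otimes\id_E)=\id_{\mbb{P}^1\otimes E}$ translates to $\hat r\circ\partial=\mbb{P}^1\otimes\eta$. Promoting $\hat r$ to $\tilde r:=(\id_{\mbb{P}^1}\otimes\mrm{mult})\circ(\hat r\otimes\id_R)$ and applying the unit axiom for $R$ then gives $\tilde r\circ(\partial\otimes\id_R)=\id$.

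In the ring case, (v)$\Rightarrow$(i) is trivial. Since $\mbb{P}^1\otimes E$ and $S^1\otimes\mbb{G}_m\otimes E$ are free left $E$-modules, left $E$-linear maps between them correspond bijectively to maps $\bar r\colon S^1\otimes\mbb{G}_m\to\mbb{P}^1\otimes E$ in $\SpP(\V)$, and the $E$-linear extension of $\bar r$ is a left inverse of $\partial\otimes\id_E$ if and only if $\bar r\circ\partial=\mbb{P}^1\otimes\eta$, which by the same identifications as in (ii) is precisely (vi). Given (i), the required $\bar r$ is the composite $r\circ(\id_{S^1\otimes\mbb{G}_m}\otimes\eta)$, closing the loop; the right-module case is symmetric. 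The main technical obstacle is the homotopy-coherent bookkeeping in the converse direction of (i)$\iff$(iv): identifying $\hat r\circ\partial$ with $\mbb{P}^1\otimes\eta$ requires a careful tracing of the adjunction unit together with the canonical equivalence coming from $\mbb{P}^1$-invertibility. Nothing is conceptually deep beyond naturality and the unit axiom for $R$.
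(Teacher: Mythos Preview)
Your proof is correct and follows essentially the same strategy as the paper's. The identifications for (i)$\Leftrightarrow$(ii) and (i)$\Leftrightarrow$(iii) are identical to the paper's (the paper writes the chain via $s_-$ and $s_+$, you invoke $\mbb{P}^1$-invertibility directly, but these are the same thing), and your treatment of the ring case (v)$\Leftrightarrow$(vi) and (i)$\Rightarrow$(vi) matches the paper's (vi)$\Rightarrow$(v) and (iii)$\Rightarrow$(vi) up to trivial rearrangement.

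The only substantive difference is in (i)$\Leftrightarrow$(iv). The paper argues more structurally: it observes that condition (vi) for the ring spectrum $F=\intMap(E,E)$ is \emph{literally} condition (ii) for $E$ (since $\tilde{F}^{1,1}(\mbb{G}_m)\simeq E^{1,1}(\mbb{G}_m\otimes E)$ by adjunction), so once (i)$\Leftrightarrow$(vi) is established in the ring case, (i)$\Leftrightarrow$(iv) drops out for free. You instead construct the left inverses explicitly via the evaluation and multiplication maps of $R=\intMap(E,E)$. Your argument is really the same reduction unpacked by hand: the transpose $\hat r\colon S^1\otimes\mbb{G}_m\to\mbb{P}^1\otimes R$ with $\hat r\circ\partial=\mbb{P}^1\otimes\eta$ is exactly the datum (vi) for $R$, and your $\tilde r$ is the free-module extension giving (v) for $R$. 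The paper's packaging is shorter and makes the logical dependence on the ring case explicit; yours is more self-contained and avoids the forward reference, at the cost of a bit more diagram-chasing. Either is fine.
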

\begin{proof}
By definition, we have
\[
	\pi_0\Map(S^1\otimes\mbb{G}_m\otimes E,\mbb{P}^1\otimes E) = E^{1,1}(\mbb{G}_m\otimes E)
\]
and this identification furnishes a one-to-one correspondence between retractions of $\partial$ and lifts of the identity $\id_E\in E^0(E)$.
This proves (i)$\Leftrightarrow$(ii).
Next note that we have equivalences
\[
\begin{split}
	\Map(S^1\otimes\mbb{G}_m\otimes E,\mbb{P}^1\otimes E)
	&\simeq \Map(E,s_-E^{S^1\otimes\mbb{G}_m}) \\
	&\simeq \Map(s_+E,E^{S^1\otimes\mbb{G}_m}) \\
	&\simeq \Map(E^{\mbb{P}^1},E^{S^1\otimes\mbb{G}_m}).
\end{split}
\]
This equivalence furnishes a one-to-one correspondence between retractions of $\partial$ and sections of $\partial^*$.
This proves (i)$\Leftrightarrow$(iii).

Note that $F:=\intMap(E,E)$ is a motivic ring spectrum over $\Spec(\mbb{Z})$ in a canonical way.
Then the condition (vi) for $F$ is identified with the condition (ii) for $E$.
Assuming (i)$\Leftrightarrow$(vi) for the moment, we see that $E$ is fundamental if and only if $F$ is fundamental, i.e., (i)$\Leftrightarrow$(iv).

Suppose that $\V$ is multiplicative and that $E$ is a motivic ring spectrum in $\V$.
Then the implications (v)$\Rightarrow$(i) and (iii)$\Rightarrow$(vi) are obvious, and thus it remains to show that (vi)$\Rightarrow$(v).
Suppose that we are given a lift $\nu\in\tilde{E}^{1,1}(\mbb{G}_{m,\V})$ as in (v).
Then $\nu$ is regarded as a map $\SigmaP^\infty(S^1\otimes\mbb{G}_m)\to\mbb{P}^1\otimes E$.
By taking the adjunction, we obtain a morphism $S^1\otimes\mbb{G}_m\otimes E\to\mbb{P}^1\otimes E$ of left (or right) $E$-modules, and it gives a retraction of the canonical map $\partial$.
This completes the proof.
\end{proof}

\begin{corollary}\label{cor:FMS}
Suppose that $\V$ is multiplicative.
Let $E$ be a fundamental motivic ring spectrum in $\V$.
Then every left or right $E$-module in $\SpP(\V)$ is fundamental.
\end{corollary}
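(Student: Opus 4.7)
The plan is to bootstrap the $E$-linear left inverse provided by Lemma \ref{lem:FMS}(v) to every $E$-module via a relative tensor product argument. By Lemma \ref{lem:FMS}(v), fundamentality of $E$ produces a left inverse
\[
	r\colon S^1\otimes\mbb{G}_m\otimes E \to \mbb{P}^1\otimes E
\]
of $\partial\otimes\id_E$ that is a morphism of (say) right $E$-modules, where the right $E$-action on both sides comes from the right tensor factor. I will treat the case of left $E$-modules; the case of right $E$-modules is symmetric.

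Given a left $E$-module $M$ in $\SpP(\V)$, I apply the relative tensor product functor
\[
	-\otimes_E M \colon \LMod_E(\SpP(\V)) \to \SpP(\V)
\]
(or rather its analogue for right modules applied to $r$ using $M$ as a left module) to $r$. For any $X\in\SpP(\V)$, the canonical equivalence
\[
	(X\otimes E)\otimes_E M \simeq X\otimes M
\]
identifies the image of $r$ with a map $r_M\colon S^1\otimes\mbb{G}_m\otimes M \to \mbb{P}^1\otimes M$, and similarly identifies $\partial\otimes\id_E$ with $\partial\otimes\id_M$ after tensoring over $E$. Functoriality of $-\otimes_E M$ then gives
\[
	r_M\circ(\partial\otimes\id_M) \simeq \bigl(r\circ(\partial\otimes\id_E)\bigr)\otimes_E\id_M \simeq \id_{\mbb{P}^1\otimes M},
\]
so $r_M$ is a left inverse of $\partial\otimes\id_M$, witnessing that $M$ is fundamental.

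The only genuine point to verify is the natural equivalence $(X\otimes E)\otimes_E M \simeq X\otimes M$ together with the identification of $\partial\otimes_E\id_M$ with $\partial\otimes\id_M$; this is the standard ``projection formula'' for the symmetric monoidal $\infty$-category $\SpP(\V)$ equipped with the algebra $E$, and will be the main (but still routine) point of the argument. Everything else is formal from functoriality of the relative tensor product.
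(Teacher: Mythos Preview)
Your approach is exactly the paper's: the proof there reads in full ``This is immediate from the condition (v) in Lemma~\ref{lem:FMS} for $E$,'' and you have spelled out what ``immediate'' means.

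One technical wrinkle: by the paper's conventions a \emph{motivic ring spectrum} is only a homotopy associative algebra object, so the relative tensor product functor $-\otimes_E M$ is not available in general (the bar construction needs at least an $\mbb{E}_1$-structure). The fix is painless and does not change your idea. Given the right-$E$-linear retraction $r\colon S^1\otimes\mbb{G}_m\otimes E\to\mbb{P}^1\otimes E$ from (v), define $r_M$ directly as the composite
\[
S^1\otimes\mbb{G}_m\otimes M \xrightarrow{\ \eta\ } S^1\otimes\mbb{G}_m\otimes E\otimes M \xrightarrow{\ r\otimes\id_M\ } \mbb{P}^1\otimes E\otimes M \xrightarrow{\ a\ } \mbb{P}^1\otimes M,
\]
where $\eta$ is induced by the unit $\mbf{1}\to E$ and $a$ by the action $E\otimes M\to M$. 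Then $r_M\circ(\partial\otimes\id_M)$ reduces, via $r\circ(\partial\otimes\id_E)\simeq\id$, to the composite $\mbb{P}^1\otimes M\to\mbb{P}^1\otimes E\otimes M\to\mbb{P}^1\otimes M$, which is the identity by the unit axiom for $M$. This only uses the module structure in the homotopy category, which is all that is assumed.
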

\begin{proof}
This is immediate from the condition (v) in Lemma \ref{lem:FMS} for $E$.
\end{proof}

\begin{corollary}\label{cor:FMS2}
Let $F\colon\V\to\V'$ be an $\St$-linear left adjoint between $\infty$-categories presentably tensored over $\St$.
Then the induced functors
\[
	F^* \colon \SpP(\V) \to \SpP(\V') \qquad
	F_* \colon \SpP(\V') \to \SpP(\V) 
\]
preserve fundamental motivic spectra.
\end{corollary}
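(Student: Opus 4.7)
The plan is to handle the two functors separately, using two different characterizations of fundamentality from Lemma \ref{lem:FMS}: for the left adjoint $F^*$ the direct criterion (i), and for the right adjoint $F_*$ the dual criterion (iii).

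For $F^*$, the key observation is that although $F$ is only assumed $\St$-linear at the level of $\V$, the induced functor $F^*\colon\SpP(\V)\to\SpP(\V')$ is automatically $\SpP$-linear, essentially because the formation of motivic spectra is a smashing localization in $\Mod_\St(\Pr^L)$, as built up in \S\ref{IST} and invoked in \S\ref{DMS}. In particular $F^*$ commutes with the action of $\St_*$, hence with the operations $\mbb{P}^1\otimes-$ and $(S^1\otimes\mbb{G}_m)\otimes-$. Since the boundary map $\partial\colon\mbb{P}^1\to S^1\otimes\mbb{G}_m$ already lives in $\St_*$, the morphism $F^*(\partial\otimes\id_E)$ is naturally identified with $\partial\otimes\id_{F^*E}$. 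Any functor preserves one-sided inverses, so a left inverse of the former is carried to a left inverse of the latter, showing that $F^*E$ is fundamental.

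For $F_*$, I will apply criterion (iii) of Lemma \ref{lem:FMS}, which reformulates fundamentality as the existence of a right inverse to $\partial^*\colon E^{S^1\otimes\mbb{G}_m}\to E^{\mbb{P}^1}$. The key input is the natural equivalence $F_*(Y^A)\simeq(F_*Y)^A$ for $A\in\St_*$ and $Y\in\SpP(\V')$, a routine consequence of $\St_*$-linearity of $F^*$ via the chain of adjunctions
\[
\Map(X,F_*(Y^A))\simeq\Map(F^*(A\otimes X),Y)\simeq\Map(A\otimes F^*X,Y)\simeq\Map(X,(F_*Y)^A).
\]
Applying this with $A=\mbb{P}^1$ and $A=S^1\otimes\mbb{G}_m$, the map $\partial^*\colon(F_*E)^{S^1\otimes\mbb{G}_m}\to(F_*E)^{\mbb{P}^1}$ is identified with $F_*$ of the corresponding map for $E$. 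Since the latter admits a right inverse by fundamentality of $E$, so does the former, proving that $F_*E$ is fundamental.

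I do not foresee a substantive obstacle; both parts reduce to tracking the $\St$-module structure through the adjoint pair. The only mild point to verify carefully is that the $\St$-linearity hypothesis on $F$ genuinely upgrades to $\SpP$-linearity of $F^*$ (and, equivalently, to the cotensor-preserving property of $F_*$ used above), but this is immediate from the categorical formalism of \S\ref{IST}.
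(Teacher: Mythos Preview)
Your proposal is correct and follows essentially the same approach as the paper: use criterion (i) of Lemma~\ref{lem:FMS} for $F^*$ (via its $\SpP$-linearity) and criterion (iii) for $F_*$ (via the fact that the right adjoint of an $\St_*$-linear functor preserves exponentials by $\St_*$). The paper's proof is terser but relies on exactly the same two observations.
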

\begin{proof}
Note that $F^*$ preserves fundamental motivic spectra by definition and that $F_*$ preserves the condition (iii) in Lemma \ref{lem:FMS}.
\end{proof}

\begin{p}[Fundamental exact sequence]
Let $E$ be a fundamental motivic spectrum in $\V$.
Then, by Lemma \ref{lem:FMS}, we have a split exact sequence
\[
	0 \to E^{p+1,q+1}(\mbb{A}^1\otimes R)^{\times 2} \to E^{p+1,q+1}(\mbb{G}_m\otimes R) \xrightarrow{\partial^*} E^{p,q}(R) \to 0 
\]
for every motivic spectrum $R$ in $\V$ and for integers $p,q$ with $2q-p\ge 0$.
We refer to this sequence as the \textit{fundamental exact sequence}.
Warn that a priori this is an exact sequence of pointed sets when $2q-p=0$.
However, the next lemma says that this is canonically promoted to an exact sequence of abelian groups.
\end{p}

\begin{lemma}\label{lem:FMS2}
Let $E$ be a fundamental motivic spectrum in $\V$, $R$ a motivic spectrum in $\V$, and $q$ a non-negative integer.
Then $E^{2q-1,q}(R)$ is an abelian group and $E^{2q,q}(R)$ admits a natural abelian group structure which makes the fundamental exact sequence an exact sequence of abelian groups.
\end{lemma}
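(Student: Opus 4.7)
The plan is to prove the two assertions in sequence: first that $E^{2q-1,q}(R)$ is abelian, where a group structure already exists, and then to bootstrap from this to construct the group structure on $E^{2q,q}(R)$. Throughout the argument uses the fundamental exact sequence already stated above, combined with the fact that the Mayer--Vietoris connecting maps $\pi_n\to\pi_{n-1}$ associated to the Zariski pushout square are group homomorphisms as soon as $n\geq 2$.

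First I treat the case of $E^{2q-1,q}(R)=\pi_1\Map(R,\SigmaP^q E)$. Apply the fundamental exact sequence with $p=2q-1$:
\[
	0\to E^{2q,q+1}(\mbb{A}^1\otimes R)^{\times 2}\to E^{2q,q+1}(\mbb{G}_m\otimes R)\xrightarrow{\partial^*}E^{2q-1,q}(R)\to 0.
\]
The first two terms are $\pi_2$ of mapping anima, hence abelian groups, and the first map is a difference of pullbacks valued in an abelian target, therefore a homomorphism. The connecting map $\partial^*$ goes from $\pi_2$ to $\pi_1$ and is a group homomorphism, and the fundamentality of $E$ makes it surjective. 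Consequently $E^{2q-1,q}(R)$ is identified with the cokernel of a homomorphism of abelian groups, and so is itself abelian.

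Next, for the second assertion, the fundamental exact sequence with $p=2q$ takes the form
\[
	0\to E^{2q+1,q+1}(\mbb{A}^1\otimes R)^{\times 2}\to E^{2q+1,q+1}(\mbb{G}_m\otimes R)\xrightarrow{\partial^*}E^{2q,q}(R)\to 0.
\]
Each of the first two terms is of the form $E^{2q'-1,q'}(Y)$ with $q'=q+1$, so by what has just been shown they are abelian groups. The first map is once more a difference of pullbacks with abelian target, hence a homomorphism with image a subgroup. I equip $E^{2q,q}(R)$ with the abelian group structure transferred from the quotient $E^{2q+1,q+1}(\mbb{G}_m\otimes R)/\mathrm{image}$ via the bijection furnished by $\partial^*$. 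This makes the fundamental exact sequence a short exact sequence of abelian groups. The structure is manifestly natural in $R$, and is independent of the chosen fundamental splitting $\nu$, since the quotient description depends only on the underlying Zariski square, with $\nu$ intervening only through the surjectivity of $\partial^*$.

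The delicate point will be verifying carefully that the Mayer--Vietoris connecting map really is a group homomorphism in the degree range being used; this is exactly what makes the bootstrap work, and once the first part provides the abelian structures on the $\pi_1$'s appearing in the second part, the short exact sequence of pointed sets promotes canonically to one of abelian groups.
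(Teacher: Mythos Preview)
Your approach is essentially the same as the paper's: use the fundamental exact sequence first at $p=2q-1$ to see that $E^{2q-1,q}(R)$, being a quotient of a $\pi_2$, is abelian; then use the sequence at $p=2q$, whose first two terms are now known to be abelian, to transport an abelian group structure onto $E^{2q,q}(R)$.

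There is, however, one point you assert without justification. You write that you equip $E^{2q,q}(R)$ with the quotient structure ``via the bijection furnished by $\partial^*$''. But knowing only that the sequence is exact as pointed sets tells you that the fiber of $\partial^*$ over the \emph{basepoint} equals the image of the first map; it does not tell you that \emph{every} fiber of $\partial^*$ is a coset of that image. Without this, $\partial^*$ need not descend to a bijection from the quotient group. Your closing paragraph misidentifies the delicate point: the issue is not whether $\partial^*\colon\pi_1\to\pi_0$ is a group homomorphism (there is no group structure on the target yet), but whether its fibers are cosets. The paper fills exactly this gap by invoking the general fact about cartesian squares of pointed anima: if $\partial x=\partial y$ in $\pi_0(A)$ for $x,y\in\pi_1(D)$, then there exist $\beta\in\pi_1(B)$, $\gamma\in\pi_1(C)$ with $f(\beta)\cdot x = y\cdot g(\gamma)$. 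Once $\pi_1(D)$ is abelian this says precisely that $x$ and $y$ differ by an element of the image, so the fibers are cosets and the quotient description goes through. You should add this observation to make the argument complete.
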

\begin{proof}
By definition, $E^{p,q}(R)$ is a group for $2q-p\ge 1$ and abelian for $2q-p\ge 2$.
The fundamental exact sequence implies that $E^{p,q}(R)$ is abelian for $2q-p=1$ as well.
In general, if we have a cartesian square
\[
\xymatrix{
	A \ar[r] \ar[d] & B \ar[d]^f \\
	C \ar[r]_g & D
}
\]
of pointed anima, then we have an exact sequence
\[
	\pi_1(B)\times\pi_1(C) \xrightarrow{f\cdot g} \pi_1(D) \xrightarrow{\partial} \pi_0(A).
\]
Moreover, if $x,y\in\pi_1(D)$ with $\partial x=\partial y$, then there exist $\beta\in B$ and $\gamma\in C$ such that $f(\beta)\cdot x=y\cdot g(\gamma)$.
We apply this observation to the fundamental exact sequence.
Then we see that $E^{2q,q}(R)$ inherits an abelian group structure from $E^{2q-1,q}(\mbb{G}_m\otimes R)$.
\end{proof}

\subsection{Fundamental stability}\label{FSt}

\begin{construction}\label{cons:FSt}
Let $E$ be a fundamental motivic spectrum in $\V$ and $R$ a motivic spectrum in $\V$.
We define abelian groups $E^{p,q}(R)$ for all integers $p,q$ as follows:
For $2q-p\ge 0$, it is as defined in Notation \ref{not:DMS} and Lemma \ref{lem:FMS2} and, for $2q-p<0$, it is defined by induction by the formula
\[
	E^{p,q}(R) := \coker(E^{p+1,q+1}(\mbb{A}^1\otimes R)^{\oplus 2} \to E^{p+1,q+1}(\mbb{G}_m\otimes R)).
\]
For $2q-p\ge 1$, we have a natural isomorphism
\[
	\delta \colon E^{p,q}(R) \xrightarrow{\sim} E^{p+1,q}(\Sigma R)
\]
and we extend it to all integers $p,q$ by induction by the commutative diagram
\[
\xymatrix{
	E^{p+1,q+1}(\mbb{A}^1\otimes R)^{\oplus 2} \ar[r] \ar[d]^\delta_\simeq &
		E^{p+1,q+1}(\mbb{G}_m\otimes R) \ar[r] \ar[d]^\delta_\simeq & E^{p,q}(R) \ar[r] \ar@{.>}[d]^\delta_\simeq & 0 \\
	E^{p+2,q+1}(\mbb{A}^1\otimes\Sigma R)^{\oplus 2} \ar[r] &
		E^{p+2,q+1}(\mbb{G}_m\otimes\Sigma R) \ar[r] & E^{p+1,q}(\Sigma R) \ar[r] & 0.
}
\]
Note that this construction is natural in $R$.
\end{construction}

\begin{lemma}\label{lem:FSt}
Let $E$ be a fundamental motivic spectrum in $\V$ and $R$ a motivic spectrum in $\V$.
Then the exact sequence
\[
	0 \to E^{p+1,q+1}(\mbb{A}^1\otimes R)^{\oplus 2} \to E^{p+1,q+1}(\mbb{G}_m\otimes R) \to E^{p,q}(R) \to 0
\]
is naturally split for all integers $p,q$.
\end{lemma}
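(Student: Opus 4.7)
The plan is to derive the splitness uniformly from a single fiber sequence of mapping spectra, using the fundamental property of $E$ (via Lemma \ref{lem:FMS}(iii)) as the only input. The case $2q - p \ge 0$ is already the fundamental exact sequence discussed just before the lemma, so the heart of the matter is $2q - p < 0$, where $E^{p,q}(R)$ is defined only by a cokernel in Construction \ref{cons:FSt} and both left-exactness and a splitting require a genuine argument.

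First I would promote the mapping anima $\Map(R, \SigmaP^q E)$ to an honest spectrum. Lemma \ref{lem:FMS}(iii) gives a section of $\partial^* \colon E^{S^1 \otimes \mbb{G}_m} \to E^{\mbb{P}^1}$ in $\SpP(\V)$. Applying $\Map(R,-)$ yields a section of
\[
\partial^* \colon \Omega\Map(\mbb{G}_m \otimes R, \SigmaP E) \simeq \Map(S^1 \otimes \mbb{G}_m \otimes R, \SigmaP E) \to \Map(\mbb{P}^1 \otimes R, \SigmaP E) \simeq \Map(R, E),
\]
which exhibits $\Map(R, E)$ as a retract of a loop space. Iterating this construction $n$ times, with $E$ replaced by $\SigmaP^n E$ and $R$ by $\mbb{G}_m^{\otimes n} \otimes R$, exhibits $\Map(R, E)$ as a retract of an $n$-fold loop space $\Omega^n \Map(\mbb{G}_m^{\otimes n} \otimes R, \SigmaP^n E)$ for every $n \ge 0$. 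Consequently $\Map(R, \SigmaP^q E)$ is an infinite loop space, and in particular $\pi_k \Map(R, \SigmaP^q E)$ is well-defined for every $k \in \mbb{Z}$.

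With this spectrum structure secured, I would apply $\Map(- \otimes R, \SigmaP^{q+1} E)$ to the Zariski cofiber sequence $\mbb{G}_m \to \mbb{A}^1 \vee \mbb{A}^1 \to \mbb{P}^1$ of pointed stacks. The resulting Puppe-type fiber sequence induces a long exact sequence of homotopy groups whose connecting maps are all of the form $\partial^*$, and hence admit sections by the fundamental property of $E$. Therefore the long exact sequence decomposes into a family of split short exact sequences, one for each $k \in \mbb{Z}$, which after translating back via $\pi_n \Map(X, \SigmaP^{q+1}E) = E^{2(q+1)-n, q+1}(X)$ reads
\[
0 \to E^{p+1, q+1}(\mbb{A}^1 \otimes R)^{\oplus 2} \to E^{p+1, q+1}(\mbb{G}_m \otimes R) \to \pi_{2q-p} \Map(R, \SigmaP^q E) \to 0
\]
for every value of $2q - p$. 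For $2q - p \ge 0$ the rightmost term is $E^{p,q}(R)$ by Notation \ref{not:DMS}; for $2q - p < 0$ it is the cokernel of the first map, which matches the inductive definition of $E^{p,q}(R)$ in Construction \ref{cons:FSt}.

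The main obstacle is the first step, the construction of a spectrum-level delooping of $\Map(R, \SigmaP^q E)$. Without fundamentality, $\pi_k$ for $k < 0$ has no a priori meaning and the long exact sequence cannot be extended below the $\pi_0$-level; once the spectrum structure is secured, the splitness is a purely formal consequence of the fact that a long exact sequence whose connecting maps are split surjective decomposes into split short exact sequences.
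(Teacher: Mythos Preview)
Your delooping step has a gap. Exhibiting $\Map(R,\SigmaP^qE)$ as a retract of $\Omega^n\Map(\mbb{G}_m^{\otimes n}\otimes R,\SigmaP^{q+n}E)$ for every $n$ shows only that it is an infinite loop space, hence $\Omega^\infty$ of a \emph{connective} spectrum, whose negative homotopy groups vanish. To obtain nontrivial $\pi_k$ for $k<0$ you need genuine deloopings $X_n$ with $X_{n-1}\simeq\Omega X_n$, and the retractions you produce do not supply these: $\Map(R,E)$ is a \emph{proper} retract of $\Omega\Map(\mbb{G}_m\otimes R,\SigmaP E)$, not equivalent to it, and there is no evident way to carve out the correct $X_1$ inside $\Map(\mbb{G}_m\otimes R,\SigmaP E)$ without already knowing the answer. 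Producing the right non-connective deloopings is precisely the content of Theorem~\ref{thm:FSt}, which in the paper's logical order \emph{depends} on the present lemma (via Lemma~\ref{lem:FSt2} and Brown representability), so appealing to it here would be circular.

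Even granting a spectrum structure, you would still have to check that its negative $\pi_k$ agree with the groups defined inductively in Construction~\ref{cons:FSt}; that verification is a downward induction on $2q-p$ and is exactly the work the lemma demands. The paper avoids the detour through spectra and runs this induction directly: the section $\nu$ of $\partial^*\colon E^{S^1\otimes\mbb{G}_m}\to E^{\mbb{P}^1}$ from Lemma~\ref{lem:FMS}(iii) yields splittings $\nu\colon E^{p,q}(R)\to E^{p+1,q+1}(\mbb{G}_m\otimes R)$ for $2q-p\ge 0$ that are \emph{natural in $R$}. Applying $\nu$ at level $2q-p+1$ with $R$ replaced by $\mbb{A}^1\otimes R$ and by $\mbb{G}_m\otimes R$ gives a commuting ladder between the defining cokernel sequences at levels $2q-p$ and $2q-p+1$; the induced map on cokernels is the desired splitting one step further down.
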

\begin{proof}
Choose a section $\nu$ of the canonical map $\partial^*\colon E^{S^1\otimes\mbb{G}_m}\to E^{\mbb{P}^1}$.
Then it induces a split
\[
	\nu \colon E^{p,q}(R) \to E^{p+1,q+1}(\mbb{G}_m\otimes R)
\]
for $2q-p\ge 0$, which is natural in $R$, and we extend it to all integers $p,q$ by induction by the commutative diagram
\[
\xymatrix{
	E^{p+1,q+1}(\mbb{A}^1\otimes R)^{\oplus 2} \ar[r] \ar[d]^\nu &
		E^{p+1,q+1}(\mbb{G}_m\otimes R) \ar[r] \ar[d]^\nu & E^{p,q}(R) \ar[r] \ar@{.>}[d]^\nu & 0 \\
	E^{p+2,q+2}(\mbb{G}_m\otimes\mbb{A}^1\otimes R)^{\oplus 2} \ar[r] &
		E^{p+2,q+2}(\mbb{G}_m\otimes\mbb{G}_m\otimes R) \ar[r] & E^{p+1,q+1}(\mbb{G}_m\otimes R) \ar[r] & 0.
}
\]
Then it gives a desired split.
\end{proof}

\begin{lemma}\label{lem:FSt2}
Let $E$ be a fundamental motivic spectrum in $\V$.
Then, for each integer $q$, the family of functors $\{E^{p,q}\colon\SpP(\V)^\op\to\Ab\}_p$ together with isomorphisms $\delta\colon E^{p,q}\xrightarrow{\sim}E^{p+1,q}\circ\Sigma$ is a cohomology theory on $\SpP(\V)$ in the sense of \cite[1.4.1.6]{HA}.
\end{lemma}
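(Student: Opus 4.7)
The plan is to verify the two cohomology-theory axioms---additivity (sending coproducts to products) and exactness on cofiber sequences, with suspension isomorphisms supplied by $\delta$ from Construction~\ref{cons:FSt}---by induction on the degree $n:=2q-p$, starting in the representable range where both axioms are automatic, and propagating through the Bass-type exact sequence to low and negative degrees.

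In the representable range $n\ge 2$, both axioms are formal: the functor $\Map(-,\SigmaP^q E)\colon\SpP(\V)^\op\to\Ani_*$ sends coproducts in $\SpP(\V)$ to products of pointed anima and cofiber sequences to fiber sequences, so applying $\pi_n$ yields additivity and the long exact sequence
\[
	\cdots\to E^{p-1,q}(X)\xrightarrow{\partial}E^{p,q}(Z)\to E^{p,q}(Y)\to E^{p,q}(X)\to\cdots,
\]
the boundary $\partial$ being obtained by combining $\delta$ with pullback along the cofiber boundary $Z\to\Sigma X$.

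For $n\in\{0,1\}$, Lemma~\ref{lem:FMS2} upgrades $E^{p,q}$ to abelian-group-valued functors, the structures being inherited from higher-$n$ terms via the Bass surjection $\partial^*$. Combined with naturality of $\partial^*$ in $R$ and the axioms already verified at those higher-$n$ terms, the long exact sequence from the representable range extends into these two degrees as a sequence of abelian groups, and additivity descends similarly. For $n<0$ we induct downward on $n$ using the cokernel presentation. Since $\mbb{A}^1\otimes-$ and $\mbb{G}_m\otimes-$ preserve cofiber sequences and coproducts (as colimit-preserving endofunctors of $\SpP(\V)$), the induction hypothesis at degree $n+1$ applied to $\mbb{A}^1\otimes(X\to Y\to Z)$ and $\mbb{G}_m\otimes(X\to Y\to Z)$ yields long exact sequences that assemble into a commutative ladder whose rows are the Bass short exact sequences
\[
	0\to E^{p+1,q+1}(\mbb{A}^1\otimes -)^{\oplus 2}\to E^{p+1,q+1}(\mbb{G}_m\otimes -)\to E^{p,q}(-)\to 0.
\]
The snake lemma then produces the long exact sequence at degree $n$, and additivity descends because cokernels commute with products.

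The main obstacle lies in this snake-lemma step: one must verify that the connecting homomorphisms in the long exact sequences for $W\otimes(X\to Y\to Z)$ with $W\in\{\mbb{A}^1,\mbb{G}_m\}$ are compatible with the horizontal Bass surjections $\partial^*$, so that the squares in the inductive ladder commute on the nose. This compatibility reduces to naturality of $\partial^*$ in $R$, together with naturality of the splitting furnished by Lemma~\ref{lem:FSt}; both are formal consequences of the definitions, but they must be tracked carefully through the iteratively-defined $\delta$ of Construction~\ref{cons:FSt}.
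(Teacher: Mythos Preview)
Your overall plan matches the paper's: induct on $n=2q-p$ and use the Bass presentation to propagate the axioms from the representable range downward. Where the write-up goes astray is in the induction step for exactness. The snake lemma does not deliver what you claim: given short exact rows $0\to A_i\to B_i\to C_i\to 0$ for $i\in\{1,2,3\}$ with the $A$- and $B$-columns exact at the middle term, it does \emph{not} follow that $C_1\to C_2\to C_3$ is exact at $C_2$. A direct diagram chase stalls because pushing an obstruction class through the ladder requires exactness one step further along the $\mbb{A}^1$- or $\mbb{G}_m$-column, i.e.\ at degree $n'=n$, which is exactly what you are trying to establish.

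The fix is to use the splitting of Lemma~\ref{lem:FSt} as the main engine rather than as a side remark about compatibility of connecting maps. Because that splitting is \emph{natural in $R$}, it exhibits $E^{p,q}(-)$ as a natural retract of $E^{p+1,q+1}(\mbb{G}_m\otimes-)$. Both axioms are then inherited immediately: a natural retract of a three-term exact sequence is exact, and a natural retract of a product-preserving functor is product-preserving. This is precisely the paper's argument, and the paper flags the point explicitly: ``the splitting is important here; otherwise the induction step may not work.'' Your additivity argument via ``cokernels commute with products'' happens to be valid in $\Ab$, but the retract argument handles both axioms uniformly and sidesteps the gap above.
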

\begin{proof}
We have to verify the following two properties:
\begin{enumerate}
\item $E^{p,q}$ preserves products.
\item For a cofiber sequence $A\to B\to C$ in $\SpP(\V)$, the induced sequence
\[
	E^{p,q}(C) \to E^{p,q}(B) \to E^{p,q}(A) 
\]
is exact.
\end{enumerate}
This is clear for $2q-p\ge 0$, and the general case follows by induction by the split exact sequence in Lemma \ref{lem:FSt}.
We remark that the splitting is important here; otherwise the induction step may not work.
\end{proof}

\begin{notation}
Let $\SpP(\V)^\fd$ denote the full subcategory of $\SpP(\V)$ spanned by fundamental motivic spectra in $\V$.
Note that $\SpP(\Sp(\V))^\fd$ makes sense by replacing $\V$ by its stabilization $\Sp(\V)$.
\end{notation}

\begin{theorem}\label{thm:FSt}
The adjunction
\[
	\Sigma^\infty \colon \SpP(\V) \rightleftarrows \SpP(\Sp(\V)) \colon \Omega^\infty
\]
restricts to an adjoint equivalence
\[
	\Sigma^\infty \colon \SpP(\V)^\fd \overset{\sim}{\rightleftarrows} \SpP(\Sp(\V))^\fd \colon \Omega^\infty.
\]
\end{theorem}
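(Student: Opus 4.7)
The plan is to proceed in three parts, separating preservation, the unit, and the counit.

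First I would verify that the stated adjunction restricts to the subcategories of fundamental objects. Since both $\Sigma^\infty$ and $\Omega^\infty$ are induced by the $\St$-linear adjunction $\Sigma^\infty \colon \V \rightleftarrows \Sp(\V) \colon \Omega^\infty$ between compactly generated presentable $\infty$-categories, Corollary \ref{cor:FMS2} applies to each and shows that both preserve fundamental motivic spectra. It then suffices to verify that the restricted unit and counit are equivalences on $\SpP(\V)^\fd$ and $\SpP(\Sp(\V))^\fd$ respectively.

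Next I would handle the unit via Brown representability. For $E \in \SpP(\V)^\fd$, the extended bigraded cohomology theory $\{E^{p,q}\}_{p,q \in \mbb{Z}}$ from Construction \ref{cons:FSt} is, for each fixed $q$, a cohomology theory on $\SpP(\V)$ by Lemma \ref{lem:FSt2}. Pulling back along $\Sigma^\infty$ yields a cohomology theory on $\SpP(\Sp(\V))$, which is stable and compactly generated by Corollary \ref{cor:FPr} applied to the compactly generated $\Sp(\V)$. By Brown representability, this theory is represented by an object $\tilde E \in \SpP(\Sp(\V))^\fd$, and tracing through Construction \ref{cons:FSt} identifies $\Omega^\infty \tilde E \simeq E$; the universal property then forces $\tilde E \simeq \Sigma^\infty E$, proving the unit is an equivalence.

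For the counit, I would observe that applying $\Omega^\infty$ to the counit on $F \in \SpP(\Sp(\V))^\fd$ yields, by the triangle identity, the unit on $\Omega^\infty F \in \SpP(\V)^\fd$, which is an equivalence by the previous step. It remains to show that $\Omega^\infty$ is conservative on fundamentals. The joint conservativity of the level functors $\{\OmegaP^{\infty-n}\}_{n \geq 0}$ from Lemma \ref{lem:cSp2}(iv) reduces the problem to the level-wise claim that a map in $\Sp(\V)$ which becomes an equivalence after $\Omega^\infty \colon \Sp(\V) \to \V$ is already an equivalence, and this is accessible via the Bass-style fundamental exact sequence, which provides access to negative homotopy groups through iterated application of the split map coming from fundamentality.

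The hardest step will be the unit equivalence, specifically identifying the Brown-representable object $\tilde E$ with $\Sigma^\infty E$. This requires verifying that the Bass-style negative extension of $E^{*,*}$, transferred to the stable category $\SpP(\Sp(\V))$, coincides with the natural bigraded cohomology carried by $\Sigma^\infty E$ in negative bidegrees. The technical heart is to show that the fundamental exact sequence for $E$ encodes precisely the negative-degree data needed to reconstruct the underlying motivic spectrum of $\Sigma^\infty E$ as $E$ itself, a Bass-construction-style verification in the spirit of Thomason--Trobaugh.
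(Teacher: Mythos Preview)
Your unit argument has a circularity problem. You construct $\tilde E\in\SpP(\Sp(\V))$ by Brown representability and claim $\Omega^\infty\tilde E\simeq E$, then say ``the universal property forces $\tilde E\simeq\Sigma^\infty E$''. But the adjunction only gives you a comparison map $\Sigma^\infty E\to\tilde E$, and to check it is an equivalence you must test it on generators of $\SpP(\Sp(\V))$, which include desuspensions $\Sigma^{-n}\Sigma^\infty X$. Evaluating the left side on such a generator gives $\Map(X,\Omega^\infty\Sigma^n\Sigma^\infty E)$, and you have no control over this without already knowing the unit is an equivalence. (Also, ``pulling back along $\Sigma^\infty$'' is directionally wrong: $\Sigma^\infty$ goes from $\SpP(\V)$ to $\SpP(\Sp(\V))$, so precomposing with it moves theories the other way.) Your counit step then relies on conservativity of $\Omega^\infty$ on fundamentals, but the sketch you give (``Bass-style access to negative homotopy'') is exactly the hard content and is not separated from the unit step.

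The paper avoids this circularity by not attacking the unit and counit directly. Instead it proves the \emph{single} loop functor $\Omega\colon\SpP(\V)^\fd\to\SpP(\V)^\fd$ is an equivalence: Brown representability is applied on $\SpP(\V)$ itself to show that $E\mapsto E^{1,0}$ (the functor represented by the degree-shifted Bass extension) is a homotopy inverse to $\Omega^\fd$, and one checks the new object is again fundamental. Once $\Omega^\fd$ is invertible, one shows that $F\in\SpP(\Sp(\V))$ is fundamental if and only if every $\Omega^{\infty-i}F$ is fundamental, by assembling the splittings $\nu_i$ into a single $\nu$ for $F$; this identifies $\SpP(\Sp(\V))^\fd$ with the inverse limit of $\SpP(\V)^\fd$ along $\Omega$, which is trivially $\SpP(\V)^\fd$ since all transition maps are equivalences. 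The Brown representability is thus used one loop at a time inside $\SpP(\V)$, never to produce an object of $\SpP(\Sp(\V))$, and this is what makes the argument go through.
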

\begin{proof}
We first prove that the functor
\[
	\Omega^\fd := \Omega\vert_{\SpP(\V)^\fd} \colon \SpP(\V)^\fd \to \SpP(\V)^\fd
\]
is an equivalence.
Note that $\Omega^\fd$ is an equivalence if and only if it induces an equivalence on homotopy categories.
It follows from Lemma \ref{lem:FSt2} and the Brown representability (\cite[1.4.1.10]{HA}) that, for each fundamental motivic spectrum $E$ in $\V$, the functor $E^{1,0}\colon\SpP(\V)^\op\to\Ab$ is representable.
We claim that the assignment $E\mapsto E^{1,0}$ gives a homotopy inverse of $\Omega^\fd$.
Let us first verify that $F:=E^{1,0}$ is fundamental.
For this, note that we have a commutative diagram
\[
\xymatrix{
	E^{p+2,q+1}(\mbb{A}^1\otimes\mbb{P}^1\otimes R)^{\oplus 2} \ar[r] \ar[d]^\nu &
		E^{p+2,q+1}(\mbb{G}_m\otimes\mbb{P}^1\otimes R) \ar[r] \ar[d]^\nu & F^{p,q}(\mbb{P}^1\otimes R) \ar[r] \ar@{.>}[d]^\nu & 0 \\
	E^{p+1,q}(\mbb{A}^1\otimes\mbb{G}_m\otimes R)^{\oplus 2} \ar[r] &
		E^{p+1,q}(\mbb{G}_m\otimes\mbb{G}_m\otimes R) \ar[r] & F^{p-1,q}(\mbb{G}_m\otimes R) \ar[r] & 0
}
\]
for integers $p,q$ with $2q-p\ge 0$, where $\nu$ is a natural section supplied by the fundamentality of $E$.
Hence, we obtain a section $\nu\colon F^{\mbb{P}^1}\to F^{S^1\otimes\mbb{G}_m}$, which shows that $F$ is fundamental.
The assignment $E\mapsto E^{1,0}$ is clearly a section of $\Omega^\fd$.
To show that it is a retraction, we have to show that there is a natural equivalence $(\Omega E)^{1,0}\simeq E$ for a fundamental motivic spectrum $E$, and this follows from the commutative diagram
\[
\xymatrix{
	(\Omega E)^{2,1}(\mbb{A}^1\otimes R)^{\oplus 2} \ar[r] \ar[d]^\simeq & 
		(\Omega E)^{2,1}(\mbb{G}_m\otimes R) \ar[r] \ar[d]^\simeq & (\Omega E)^{1,0}(R) \ar[r] \ar@{.>}[d]^\simeq & 0 \\
	E^{1,1}(\mbb{A}^1\otimes R)^{\oplus 2} \ar[r] & 
		E^{1,1}(\mbb{G}_m\otimes R) \ar[r] & E^{0,0}(R) \ar[r] & 0,
}
\]
where each row is exact and every map is natural in $R$.

Next we show that the canonical functor
\[
	\SpP(\Sp(\V))^\fd \to \lim(\cdots\xrightarrow{\Omega}\SpP(\V)^\fd\xrightarrow{\Omega}\SpP(\V)^\fd\xrightarrow{\Omega}\SpP(\V)^\fd).
\]
is an equivalence.
In other words, a motivic spectrum $E$ in $\Sp(\V)$ is fundamental if and only if $\Omega^{\infty-i}E$ is fundamental for every $i$ as a motivic spectrum in $\V$.
The ``only if'' part is obvious.
To show the ``if'' part, suppose we are given a motivic spectrum $E$ in $\Sp(\V)$ such that $E^{(i)}:=\Omega^{\infty-i}E$ is fundamental for every $i$.
We have seen that $E^{(i)}\simeq (E^{(0)})^{i,0}$ for each $i$ and that a section $\nu_0\colon(E^{(0)})^{\mbb{P}^1}\to(E^{(0)})^{S^1\otimes\mbb{G}_m}$ induces a section $\nu_i\colon(E^{(i)})^{\mbb{P}^1}\to(E^{(i)})^{S^1\otimes\mbb{G}_m}$.
Then we have $\Omega\nu_i=\nu_{i-1}$ for each $i\ge 1$.
Therefore, we obtain a morphism
\[
	\nu \colon E^{\mbb{P}^1}\to E^{S^1\otimes\mbb{G}_m}
\]
in $\Sp(\SpP(\V))\simeq \SpP(\Sp(\V))$ such that $\partial^*\circ\nu$ is an equivalence.
By replacing $\nu$ if necessary, we conclude that $E$ is a fundamental motivic spectrum in $\Sp(\V)$.

By Corollary \ref{cor:FMS2}, the adjunction $(\Sigma^\infty,\Omega^\infty)$ induces an adjunction
\[
	\Sigma^\infty \colon \SpP(\V_S)^\fd \rightleftarrows \SpP(\Sp(\V))^\fd \colon \Omega^\infty.
\]
The above argument proves that the right adjoint $\Omega^\infty$ is an equivalence, and therefore we obtain a desired adjoint equivalence.
\end{proof}

\begin{remark}
By Theorem \ref{thm:FSt}, we can naturally regard a fundamental motivic spectrum $E$ in $\V$ as a motivic spectrum in $\Sp(\V)$.
In particular, the cohomology groups $E^{p,q}(R)$ are well-defined for all integers $p,q$ and they coincide with the groups defined in Construction \ref{cons:FSt}.
\end{remark}

\begin{corollary}\label{cor:FSt}
Suppose that $\V$ is multiplicative.
Let $E$ be a fundamental motivic ring spectrum in $\V$ and $R$ a motivic coring spectrum in $\V$.
Then
\[
	E^{*,*}(R) := \bigoplus_{p,q}E^{p,q}(R)
\]
forms a graded ring.
\end{corollary}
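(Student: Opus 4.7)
The plan is to use Theorem \ref{thm:FSt} to lift the situation into a stable setting, where the cup product is defined by the usual recipe from the coring structure on $R$ and the ring structure on $E$. By Theorem \ref{thm:FSt} and the subsequent remark, we may regard $E$ as a motivic spectrum in $\Sp(\V)$ via the fully faithful embedding $\Sigma^\infty$, and the cohomology groups $E^{p,q}(R)$ are then computed as $\pi_{2q-p}\Map_{\SpP(\Sp(\V))}(R,\SigmaP^qE)$ for all integers $p,q$. Since $\V$ is multiplicative, so is $\Sp(\V)$, and the functor $\Sigma^\infty\colon\V\to\Sp(\V)$ is symmetric monoidal, hence $E$ remains a motivic ring spectrum and $R$ a motivic coring spectrum after this passage; in particular, we have a multiplication $\mu\colon E\otimes E\to E$, a unit $\eta\colon\mbf{1}\to E$, a comultiplication $\Delta\colon R\to R\otimes R$, and a counit $\varepsilon\colon R\to\mbf{1}$ in $\SpP(\Sp(\V))$.

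Given classes $\alpha\in E^{p,q}(R)$ and $\beta\in E^{p',q'}(R)$, represented in the stable setting by maps $\alpha\colon R\to\SigmaP^qE$ and $\beta\colon R\to\SigmaP^{q'}E$ (with the appropriate $S^1$-shifts absorbed into the homotopy groups of the mapping spectra), I would define $\alpha\cup\beta\in E^{p+p',q+q'}(R)$ as the composite
\[
	R \xrightarrow{\Delta} R\otimes R \xrightarrow{\alpha\otimes\beta} \SigmaP^qE\otimes\SigmaP^{q'}E \xrightarrow{\sim} \SigmaP^{q+q'}(E\otimes E) \xrightarrow{\SigmaP^{q+q'}\mu} \SigmaP^{q+q'}E,
\]
together with the unit class $1\in E^{0,0}(R)$ given by $\eta\circ\varepsilon\colon R\to\mbf{1}\to E$. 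Associativity of $\cup$ then follows from the homotopy coassociativity of $\Delta$ together with the homotopy associativity of $\mu$ by a standard diagram chase; distributivity over addition is automatic from biadditivity of $\otimes$ in the stable category $\SpP(\Sp(\V))$; and the unit axiom follows from the counit and unit axioms for $R$ and $E$ respectively.

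The one point requiring verification is that this stably defined product really induces the claimed structure on the groups $E^{p,q}(R)$ of Construction \ref{cons:FSt}, in particular for $2q-p<0$. This, however, is exactly the content of the remark following Theorem \ref{thm:FSt}: the inductive cokernel formula coincides with $\pi_{2q-p}$ of the stable mapping spectrum after passing to $\SpP(\Sp(\V))$. Thus no ambiguity arises, and $E^{*,*}(R)=\bigoplus_{p,q}E^{p,q}(R)$ inherits the structure of a graded ring. The main conceptual obstacle—making sense of products in degrees where the unstable cohomology groups are defined only as cokernels—is precisely what is circumvented by the fundamental stability theorem.
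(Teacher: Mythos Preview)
Your proposal is correct and follows exactly the approach the paper intends: the corollary is stated without proof as an immediate consequence of Theorem~\ref{thm:FSt} and the remark after it, and you have spelled out that argument by passing via the symmetric monoidal left adjoint $\Sigma^\infty$ to the stable category $\SpP(\Sp(\V))$, where the standard cup product from the (co)ring structures is available and agrees with the groups of Construction~\ref{cons:FSt}. One minor notational point: in $\Map_{\SpP(\Sp(\V))}(R,\SigmaP^qE)$ the source should strictly be $\Sigma^\infty R$, but this is harmless since $\Sigma^\infty$ is symmetric monoidal and carries the coring structure along.
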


\begin{corollary}\label{cor:FSt2}
Suppose that $\V$ is multiplicative.
Let $E$ be a fundamental motivic $\mbb{E}_1$-ring spectrum.
Then the $\infty$-category $\LMod_E(\SpP(\V))$ is stable.
\end{corollary}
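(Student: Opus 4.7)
The plan is to transport the module category to the stable $\infty$-category $\SpP(\Sp(\V))$ via Theorem \ref{thm:FSt}, and then invoke the general fact that modules over an $\mbb{E}_1$-algebra in a stable presentably symmetric monoidal $\infty$-category are again stable.

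First I would observe that $\Sp(\V)$ is stable presentably symmetric monoidal (as $\V$ is multiplicative), hence $\SpP(\Sp(\V))$ is stable — the formal inversion of an object in a stable presentable $\infty$-category remains stable, for instance because $\mbb{P}^1$-spectra form a reflective subcategory of the stable $\Mod_{S_{\mbb{P}^1}}(\Sp(\V)^\Sigma)$ closed under the shift $\Sigma$ (using that $\Sigma$ commutes with $s_-$ and with $(-)^{\mbb{P}^1}$ up to natural equivalence). The symmetric monoidal stabilization $\V\to\Sp(\V)$ induces a symmetric monoidal left adjoint $F^*\colon\SpP(\V)\to\SpP(\Sp(\V))$; set $\tilde{E}:=F^*(E)$, which by Corollary \ref{cor:FMS2} is a fundamental $\mbb{E}_1$-ring spectrum in $\SpP(\Sp(\V))$.

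Next I would identify $\LMod_E(\SpP(\V))$ with $\LMod_{\tilde{E}}(\SpP(\Sp(\V)))$. By Corollary \ref{cor:FMS}, every left $E$-module in $\SpP(\V)$ has underlying object in $\SpP(\V)^\fd$, and the monad $E\otimes-$ restricts to $\SpP(\V)^\fd$ since for any $M$ the object $E\otimes M$ is a free $E$-module, hence fundamental; the analogous statement holds for $\tilde{E}\otimes-$ on $\SpP(\Sp(\V))^\fd$. The adjoint equivalence $\Sigma^\infty\colon\SpP(\V)^\fd\simeq\SpP(\Sp(\V))^\fd$ from Theorem \ref{thm:FSt} is the restriction of the symmetric monoidal functor $F^*$, so it intertwines these two monads and yields an equivalence
\[
    \LMod_E(\SpP(\V)^\fd) \simeq \LMod_{\tilde{E}}(\SpP(\Sp(\V))^\fd).
\]
Each side coincides with the corresponding full module category in the ambient $\infty$-category, by the fundamentality of all modules.

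Finally, since $\SpP(\Sp(\V))$ is stable presentably symmetric monoidal and $\tilde{E}$ is an $\mbb{E}_1$-algebra in it, $\LMod_{\tilde{E}}(\SpP(\Sp(\V)))$ is stable by the standard fact on modules over associative algebras in stable monoidal categories; stability then transfers across the chain of equivalences to $\LMod_E(\SpP(\V))$. The main subtlety lies in the middle step: $\SpP(\V)^\fd$ need not be a unital monoidal subcategory of $\SpP(\V)$ (the unit $\mbf{1}_{\SpP(\V)}$ need not itself be fundamental), so one cannot directly speak of $\mbb{E}_1$-algebras there. However, all that is needed is that the endofunctor $E\otimes-$ carries a monad structure that restricts to $\SpP(\V)^\fd$ and that $\Sigma^\infty$ carries it to $\tilde{E}\otimes-$; both are immediate consequences of Corollary \ref{cor:FMS} together with the symmetric monoidality of $F^*$.
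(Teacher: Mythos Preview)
Your proposal is correct and follows essentially the same route as the paper: use Corollary~\ref{cor:FMS} to see that every $E$-module is fundamental, then transport across the equivalence $\SpP(\V)^\fd\simeq\SpP(\Sp(\V))^\fd$ of Theorem~\ref{thm:FSt} to land in the stable $\infty$-category $\SpP(\Sp(\V))$, where module categories are automatically stable. The paper phrases the middle step more tersely, simply asserting that $\SpP(\V)^\fd$ is an ideal and hence ``inherits a symmetric monoidal structure'' so that $\LMod_E(\SpP(\V)^\fd)$ makes sense; your reformulation in terms of the monad $E\otimes-$ restricting to $\SpP(\V)^\fd$ is a cleaner way to handle the point you flag about the unit, and is exactly what underlies the paper's sentence.
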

\begin{proof}
Since $\SpP(\V)^\fd$ is an ideal of $\SpP(\V)$, it inherits a symmetric monoidal structure and the induced functor
\[
	\LMod_E(\SpP(\V)^\fd) \to \LMod_E(\SpP(\V))
\]
is fully faithful.
Furthermore, it is essentially surjective, since every $E$-module in $\SpP(\V)$ is fundamental by Corollary \ref{cor:FMS}.
Hence, we have equivalences
\[
	\LMod_E(\SpP(\V)) \simeq \LMod_E(\SpP(\V)^\fd) \simeq \LMod_E(\SpP(\Sp(\V))^\fd) \simeq \LMod_E(\SpP(\Sp(\V))),
\]
where the second equivalence is by Theorem \ref{thm:FSt}.
Note that the last $\infty$-category is stable.
\end{proof}

\newpage

\section{Orientations and projective bundle formula}\label{OPB}

In this section, we develop a theory of orientation for motivic spectra.
We say that a motivic spectrum $E$ is orientable if the map
\[
	[\mcal{O}(1)]\otimes\id_E \colon \mbb{P}^1\otimes E\to \Pic\otimes E
\]
admits a retraction.
Note that if a motivic spectrum is orientable then it is fundamental.
We formulate projective bundle formula for oriented motivic spectra and relate it to \textit{elementary blowup excision}, which is an excision condition with respect to the blowup square
\[
\xymatrix{
	\mbb{P}^{n-1} \ar[r] \ar[d] & Q \ar[d] \\
	\{0\} \ar[r] & \mbb{A}^n.
}
\]
More precisely, we prove that an oriented motivic spectrum satisfies projective bundle formula if and only if it satisfies elementary blowup excision (Lemma \ref{lem:EBE2}).
This is convenient because elementary blowup excision is formulated without orientation nor $\mbb{P}^1$-spectrum structure.

\subsection{Orientation}\label{Ori}

\begin{p}
Let $\V$ be an $\infty$-category presentably tensored over $\St$ as before, cf.\ \ref{p:DMS}.
We assume that $\V$ is compactly generated and that $\mbb{P}^1$ is compact in $\V$. 
\end{p}

\begin{definition}[Orientation]\label{def:Ori}
Let $E$ be a motivic spectrum in $\V$.
We say that $E$ is \textit{orientable} if the map
\[
	[\mcal{O}(1)]\otimes\id_E \colon \mbb{P}^1\otimes E \to \Pic\otimes E
\]
admits a retraction.
When we choose such a retraction, we call it an \textit{orientation} of $E$.
An \textit{oriented motivic spectrum in $\V$} is a motivic spectrum in $\V$ equipped with an orientation.
\end{definition}

\begin{remark}
Let $E$ be an oriented motivic spectrum in $\V$.
Then the orientation is identified with a morphism in $\St_*$
\[
	c_1 \colon \Pic \to \OmegaP^\infty \intMap(E,\SigmaP E).
\]
In particular, for each line bundle $\mcal{L}$ on a stack $X$, we obtain a map
\[
	c_1(\mcal{L}) \colon E \to (\SigmaP E)^{X_+},
\]
which we call the \textit{first Chern class} of $\mcal{L}$.
When $E$ is defined over a qcqs derived scheme $S$, the first Chern class of a line bundle on an $S$-stack is well-defined.
Since the map $c_1$ is pointed, $c_1(\mcal{O})$ is the zero map.
The first Chern class $c_1(\mcal{O}(1))$ of the line bundle $\mcal{O}(1)$ on $\mbb{P}^1$ is identified with the canonical equivalence $E\xrightarrow{\sim}\SigmaP E^{\mbb{P}^1}$.
Note that $c_1$ is identified with the first Chern class of the universal line bundle on $\Pic$.
\end{remark}

\begin{lemma}\label{lem:Ori}
Let $E$ be a motivic spectrum in $\V$.
Then the following are equivalent:
\begin{enumerate}
\item $E$ is orientable.
\item There exists an element $c\in E^1(\Pic\otimes E)$ which lifts the identity $\id_E\in E^0(E)$ via the map
\[
	E^1(\Pic\otimes E) \xrightarrow{[\mcal{O}(1)]^*} E^1(\mbb{P}^1\otimes E) \simeq E^0(E).
\]
\item The map $E^\Pic\to E^{\mbb{P}^1}$ admits a section.
\item The motivic spectrum $\intMap(E,E)$ over $\Spec(\mbb{Z})$ is orientable.
\end{enumerate}
Suppose that $\V$ is multiplicative and that $E$ is a motivic ring spectrum in $\V$, then these are further equivalent to the following:
\begin{enumerate}[resume]
\item The map $\mbb{P}^1\otimes E\to\Pic\otimes E$ admits a retraction as a morphism of left (or right) $E$-modules.
\item There exists an element $c\in\tilde{E}^1(\Pic_\V)$ which lifts the unit $\eta\in E^0(\mbf{1}_\V)$ via the map
\[
	\tilde{E}^1(\Pic_\V) \xrightarrow{[\mcal{O}(1)]^*} \tilde{E}^1(\mbb{P}^1_\V) \simeq E^0(\mbf{1}_\V).
\]
\end{enumerate}
\end{lemma}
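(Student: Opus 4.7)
The plan is to mirror the proof of Lemma \ref{lem:FMS} almost verbatim, replacing the boundary map $\partial \colon \mbb{P}^1 \to S^1 \otimes \mbb{G}_m$ with the classifying map $[\mcal{O}(1)] \colon \mbb{P}^1 \to \Pic$. All six equivalences are formal manipulations with adjunctions, so nothing new is needed beyond bookkeeping.

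For (i)$\Leftrightarrow$(ii), I would unwind the definition: $\pi_0 \Map(\Pic \otimes E, \mbb{P}^1 \otimes E) = E^1(\Pic \otimes E)$, and under this identification, postcomposing a map $\Pic \otimes E \to \mbb{P}^1 \otimes E$ with $[\mcal{O}(1)] \otimes \id_E$ corresponds to applying $[\mcal{O}(1)]^*$. Hence left inverses of $[\mcal{O}(1)] \otimes \id_E$ are in bijection with lifts of $\id_E \in E^0(E) \simeq E^1(\mbb{P}^1 \otimes E)$. For (i)$\Leftrightarrow$(iii), I would use the adjunction chain
\[
\begin{split}
    \Map(\Pic \otimes E, \mbb{P}^1 \otimes E)
    &\simeq \Map(E, s_-(E^\Pic)) \\
    &\simeq \Map(s_+ E, E^\Pic) \\
    &\simeq \Map(E^{\mbb{P}^1}, E^\Pic),
\end{split}
\]
using $\Pic \otimes - \dashv (-)^\Pic$, the identification $\mbb{P}^1 \otimes - \simeq s_-$ on $\SpP(\V)$ and $s_+ \dashv s_-$, and finally $s_+ E \simeq E^{\mbb{P}^1}$. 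One then checks that this identifies left inverses of $[\mcal{O}(1)] \otimes \id_E$ with right inverses of $[\mcal{O}(1)]^* \colon E^\Pic \to E^{\mbb{P}^1}$.

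For (i)$\Leftrightarrow$(iv), I would set $F := \intMap(E,E)$, which is canonically a motivic ring spectrum over $\Spec(\mbb{Z})$. Observing that condition (vi) for $F$ unpacks to condition (ii) for $E$, one deduces (i) for $E$ $\Leftrightarrow$ (ii) for $E$ $\Leftrightarrow$ (vi) for $F$ $\Leftrightarrow$ (i) for $F$, granted the equivalence (i)$\Leftrightarrow$(vi) in the multiplicative case. So the ring case must be proved first (and independently) to avoid circularity. There, (v)$\Rightarrow$(i) is trivial and (iii)$\Rightarrow$(vi) is immediate by evaluating the right inverse at the unit $\eta \in E^0(\mbf{1}_\V)$, yielding a class in $\tilde E^1(\Pic_\V)$ lifting $\eta$. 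For (vi)$\Rightarrow$(v), a class $c \in \tilde E^1(\Pic_\V)$ corresponds to a map $\SigmaP^\infty \Pic_\V \to \mbb{P}^1 \otimes \mbf{1}_{\SpP(\V)}$; smashing with $E$ and composing with the multiplication $E \otimes E \to E$ produces a left inverse in the category of left (or right) $E$-modules.

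The main technical obstacle, as in Lemma \ref{lem:FMS}, is to organize the equivalences so that no step is circular; in particular, the equivalence (i)$\Leftrightarrow$(iv) relies on having first established (i)$\Leftrightarrow$(vi) in the ring-spectrum setting, applied to $\intMap(E,E)$. A minor bookkeeping point is that the adjunction $\mbb{P}^1 \otimes - \dashv (-)^{\mbb{P}^1}$ needs to be interpreted on $\SpP(\V)$, where both functors are equivalences, so that $(\mbb{P}^1 \otimes E)^\Pic$ can be rewritten as needed in the chain of equivalences for (i)$\Leftrightarrow$(iii). Beyond these, the proof is a purely formal adaptation of the fundamental-spectrum argument.
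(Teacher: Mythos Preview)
Your proposal is correct and matches the paper's approach exactly: the paper's proof is simply ``This is proved in the same way as Lemma~\ref{lem:FMS}'', and you have faithfully carried out that translation. One small slip: in your (vi)$\Rightarrow$(v), the class $c\in\tilde{E}^1(\Pic_\V)$ corresponds to a map $\SigmaP^\infty\Pic_\V\to\mbb{P}^1\otimes E$ (not $\mbb{P}^1\otimes\mbf{1}_{\SpP(\V)}$), after which the free-module adjunction directly yields the $E$-linear left inverse---but this does not affect the argument.
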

\begin{proof}
This is proved in the same way as Lemma \ref{lem:FMS}.
\end{proof}

\begin{remark}
Suppose that $\V$ is multiplicative.
Let $E$ be an orientable motivic ring spectrum in $\V$.
Then we can choose an orientation of $E$ as a morphism of right $E$-modules
\[
	c_1 \colon \Pic\otimes E\to\mbb{P}^1\otimes E.
\]
We call such an orientation a \textit{linear orientation}.
In other words, if $E$ is an orientable motivic ring spectrum, then it always has a linear orientation.
Note that a linear orientation is determined by a map
\[
	c_1 \colon \Pic \to \OmegaP^{\infty-1}E
\]
and that, for a line bundle $\mcal{L}$ on a stack $X$, the first Chern class $c_1(\mcal{L})\colon E\to(\SigmaP E)^{X_+}$ is reconstructed by the left multiplication by $c_1(\mcal{L})\in E^1(X)$.
All orientations we choose for orientable motivic ring spectra will be linear orientations and will be referred to simply as orientations unless there is a possibility of confusion.
\end{remark}

\begin{remark}
Let $E$ be an orientable motivic ring spectrum.
Then every left $E$-module $M$ in $\SpP(\V)$ is orientable and a linear orientation $c_1$ of $E$ induces an orientation of $M$ by
\[
	c_1\otimes\id_M \colon \Pic\otimes E\otimes_EM \to \mbb{P}^1\otimes E\otimes_EM.
\]
Furthermore, every orientation $c_1$ of an orientable motivic spectrum $F$ in $\V$ arises in this way;
indeed $c_1$ is induced from a linear orientation of the motivic ring spectrum $\intMap(F,F)$ over $\Spec(\mbb{Z})$.
\end{remark}

\begin{lemma}\label{lem:Ori2}
An orientable motivic spectrum in $\V$ is fundamental.
\end{lemma}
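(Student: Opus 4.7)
The plan is to exhibit a factorization in $\St_*$ of the first Chern class map as
\[
[\mcal{O}(1)] \colon \mbb{P}^1 \xrightarrow{\partial} S^1\otimes\mbb{G}_m \xrightarrow{\iota} \Pic
\]
for some morphism $\iota$. Granting this, given an orientation of $E$---that is, a left inverse $r$ of $[\mcal{O}(1)]\otimes\id_E = (\iota\otimes\id_E)\circ(\partial\otimes\id_E)$---the composite $r\circ(\iota\otimes\id_E)$ is a left inverse of $\partial\otimes\id_E$, so $E$ is fundamental by Definition \ref{def:FMS}. The heart of the argument is therefore the construction of $\iota$.

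To produce $\iota$, I would use that $\mcal{O}(1)$ is trivial on each of the two standard affine charts $\mbb{A}^1\hookrightarrow\mbb{P}^1$ appearing in the Zariski distinguished square defining $\partial$. Using the pushout presentation $\mbb{P}^1\simeq\mbb{A}^1\sqcup_{\mbb{G}_m}\mbb{A}^1$ in $\St_*$, the pointed map $[\mcal{O}(1)]\colon\mbb{P}^1\to\Pic$ corresponds, via the universal property, to the two trivializations (both of which are the base point of $\Pic$) together with their comparison on the overlap $\mbb{G}_m$, the latter being the tautological character $\mbb{G}_m\to\Omega\Pic\simeq\mbb{G}_m$. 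Since the two trivializations already factor through $*$, the very same data define a map out of the pointed pushout $*\sqcup_{\mbb{G}_m}*=S^1\otimes\mbb{G}_m$; this is the map $\iota$, and by construction $\iota\circ\partial\simeq[\mcal{O}(1)]$.

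The only nonformal point is the identification $\Omega\Pic\simeq\mbb{G}_m$, so that the tautological character indeed assembles into the required $1$-simplex of $\Pic$ classifying $\mcal{O}(1)$ relative to its chosen trivializations; this is a consequence of $\Pic$ being the stack of line bundles, whose automorphism sheaf is $\mbb{G}_m$. Once this is in hand, the construction of $\iota$ and the homotopy $\iota\circ\partial\simeq[\mcal{O}(1)]$ are formal manipulations of universal properties of pushouts in the pointed $\infty$-topos $\St_*$, and the deduction of fundamentality from orientability is the one-line argument above.
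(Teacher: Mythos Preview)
Your proposal is correct and follows exactly the same approach as the paper: the paper's proof is the single sentence ``Indeed, the map $[\mcal{O}(1)]\colon\mbb{P}^1\to\Pic$ factors through the canonical map $\partial\colon\mbb{P}^1\to S^1\otimes\mbb{G}_m$,'' and you have simply supplied the justification for this factorization via the pushout description of $\mbb{P}^1$ and the triviality of $\mcal{O}(1)$ on each affine chart.
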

\begin{proof}
Indeed, the map $[\mcal{O}(1)]\colon\mbb{P}^1\to\Pic$ factors through the canonical map $\partial\colon\mbb{P}^1\to S^1\otimes\mbb{G}_m$.
\end{proof}

\begin{lemma}\label{lem:Ori3}
Let $S$ be a qcqs derived scheme, $E$ be an oriented motivic ring spectrum over $S$, and $X\in\Sm_S$.
Suppose we are given line bundles $\mcal{L}_1,\dotsc,\mcal{L}_n,\mcal{L}_1',\dotsc,\mcal{L}_n'$ on $X$ and an open covering $\{U_1,\dotsc,U_n\}$ of $X$ such that $\mcal{L}_i\vert_{U_i}\simeq\mcal{L}_i'\vert_{U_i}$.
Then
\[
	\prod_{i=1}^n(c_1(\mcal{L}_i)-c_1(\mcal{L}_i')) = 0
\]
in $E^*(X)$.
In particular:
\begin{enumerate}[label=\upshape{(\roman*)}]
\item $c_1(\mcal{L})$ is nilpotent in $E^*(X)$ for every line bundle $\mcal{L}$ on $X$.
\item $c_1(\mcal{O}(1))^{n+1}=0$ in $E^*(\mbb{P}^n)$.
\end{enumerate}
\end{lemma}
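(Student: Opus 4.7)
The plan is as follows: each factor $\alpha_i:=c_1(\mcal{L}_i)-c_1(\mcal{L}_i')$ restricts to zero on $U_i$, hence lifts to a relative cohomology class on $X/U_i:=\cofib(\SigmaP^\infty U_{i+}\to\SigmaP^\infty X_+)$. Taking the external product of these lifts over $E$ and pulling back along the diagonal will recover $\prod_i\alpha_i$, but the resulting map will factor through the smash product $X/U_1\wedge\cdots\wedge X/U_n$, which receives a null map from $X_+$ since the $U_i$ cover $X$.

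In detail, the first Chern class is a pointed morphism $c_1\colon\Pic\to\OmegaP^{\infty-1}E$, and by its naturality in the line bundle the hypothesis $\mcal{L}_i|_{U_i}\simeq\mcal{L}_i'|_{U_i}$ forces $\alpha_i|_{U_i}=0$ in $E^1(U_i)$. The cofiber sequence $\SigmaP^\infty U_{i+}\to\SigmaP^\infty X_+\to X/U_i$ therefore produces a lift $\tilde\alpha_i\in\tilde E^1(X/U_i)$ of $\alpha_i$. The external product of these lifts over $E$ lives in $\tilde E^n(X/U_1\wedge\cdots\wedge X/U_n)$, and by the standard description of the ring structure on $E^*(X)$ as diagonal pullback of the external product, its image under the composite
\[
\SigmaP^\infty X_+\xrightarrow{\Delta_+}(\SigmaP^\infty X_+)^{\wedge n}\longrightarrow X/U_1\wedge\cdots\wedge X/U_n
\]
coincides with $\alpha_1\cdots\alpha_n$ in $E^n(X)$. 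It therefore suffices to show that this composite is null in $\SpP(\V)$; but the covering hypothesis implies that for every point $x\in X$ some $i$ satisfies $x\in U_i$, so the $i$-th coordinate of $\Delta(x)=(x,\dotsc,x)$ lies in $U_i$ and $\Delta(x)$ is sent to the basepoint of $X/U_1\wedge\cdots\wedge X/U_n$. Hence already the underlying morphism of pointed stacks is constant at the basepoint, which yields $\prod_i\alpha_i=0$.

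Parts (i) and (ii) are specializations of the main statement. For (i), apply it with $\mcal{L}_i=\mcal{L}$ and $\mcal{L}_i'=\mcal{O}$: since $c_1$ is pointed we have $c_1(\mcal{O})=0$, and any line bundle on $X\in\Sm_S$ trivializes on some finite Zariski cover because $X$ is qcqs. For (ii), the standard affine cover $\{U_0,\dotsc,U_n\}$ of $\mbb{P}^n$ has $n+1$ members, on each of which $\mcal{O}(1)$ is trivial.

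The main technical point I expect to verify carefully is the identification of $\alpha_1\cdots\alpha_n$ with the diagonal pullback of the external product of the lifts $\tilde\alpha_i$. This is a formal consequence of the ring structure on $E$ together with the naturality of the cofiber sequences $U_{i+}\to X_+\to X/U_i$, but it has to be unwound once to ensure the identification is canonical; once this is done, the null-homotopy step is purely geometric and immediate from the covering hypothesis.
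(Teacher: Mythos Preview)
Your proof is correct and follows essentially the same approach as the paper: lift each $\alpha_i$ to the relative group $\tilde E^1(X/U_i)=E^1(X,U_i)$, multiply, and observe that the product lands in $E^n(X,\bigcup_i U_i)=E^n(X,X)=0$. The one step phrased informally is the pointwise justification of the null-homotopy; the precise version is that the diagonal $\Delta\colon X\to X^n$ factors scheme-theoretically through $W:=\bigcup_i p_i^{-1}(U_i)$ because $\Delta^{-1}(W)=\bigcup_i U_i=X$, and hence $X_+\to (X^n)_+/W_+\simeq\bigwedge_i X/U_i$ is null.
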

\begin{proof}
Since $\gamma_i:=c_1(\mcal{L}_i)-c_1(\mcal{L}_i')$ is sent to zero in $E^1(U_i)$ for each $i$, it lifts to
\[
	\tilde{\gamma}_i \in E^1(X,U_i) := \pi_0\fib(\SigmaP E(X)\to \SigmaP E(U_i)).
\]
Therefore, $\prod\gamma_i$ lifts to
\[
	\prod_{i=1}^n\tilde{\gamma}_i \in E^n(X,\bigcup U_i) = E^n(X,X) = 0
\]
and we conclude $\prod\gamma_i=0$.
\end{proof}

\begin{p}[Relation to pbf-local sheaves with transfers]
Let $S$ be a qcqs derived scheme.
Let $\Sh^\mrm{tr}_\pbf(\Sch_S)$ be the $\infty$-category of pbf-local sheaves with transfers in the sense of \cite{AI}.
Then there is a canonical symmetric monoidal left adjoint $\St_{S*}\to\Sh^\mrm{tr}_\pbf(\Sch_S)$, which carries $\mbb{P}^1$ to an invertible object.
Therefore, we obtain a commutative diagram
\[
\xymatrix{
	\SpP(\Sh^\mrm{tr}_\pbf(\Sch_S)) \ar[r] \ar[d]^{\OmegaP^\infty}_\simeq & \SpP(S) \ar[d]^{\OmegaP^\infty} \\
	\Sh^\mrm{tr}_\pbf(\Sch_S) \ar[r] & \St_S.
}
\]
The first Chern class of the universal line bundle on $\Pic$ is defined for pbf-local sheaves with transfers as in the proof of \cite[Lemma 3.2]{AI}, which gives an orientation of the associated motivic spectra in the sense of Definition \ref{def:Ori}.
\end{p}

\subsection{Projective bundle formula}\label{PBF}

\begin{definition}[Projective bundle formula]\label{def:PBF}
Let $E$ be an oriented motivic spectrum in $\V$.
We say that $E$ \textit{satisfies projective bundle formula} if the map
\[
	\sum_{i=1}^nc_1(\mcal{O}(1))^i \colon \bigoplus_{i=1}^n\SigmaP^{-i}E \to E^{\mbb{P}^n}
\]
is an equivalence for every $n\ge 1$.
\end{definition}

\begin{remark}
An oriented motivic spectrum $E$ in $\V$ satisfies projective bundle formula if and only if the map
\[
	\sum_{i=1}^nc_1(\mcal{O}(1))^i \colon \bigoplus_{i=1}^nE^{p-2i,q-i}(X) \to E^{p,q}(\mbb{P}^n\otimes X_+)
\]
is an equivalence of spectra for every $p,q,n$ and $X\in\V$.
It is because that $\SpP(\V)$ is generated under colimits by $\SigmaP^{\infty-i}X_+$ for $X\in\V$ and $i\ge 0$, cf.\ Lemma \ref{lem:FPr2}.
\end{remark}

\begin{lemma}\label{lem:PBF}
Suppose that $\V$ is defined over a qcqs derived scheme $S$.
Let $\mcal{E}$ be a vector bundle of rank $r$ on an $S$-stack $X$ and $E$ an oriented motivic spectrum in $\V$ which satisfies projective bundle formula.
Then the map
\[
	\sum_{i=0}^{r-1}c_1(\mcal{O}(1))^i \colon \bigoplus_{i=0}^{r-1}\SigmaP^{-i}E^{X_+} \to E^{\mbb{P}(\mcal{E})_+}
\]
is an equivalence.
\end{lemma}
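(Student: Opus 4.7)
The plan is to reduce to the case of a trivial bundle via Zariski descent, and to verify that case directly from the projective bundle formula hypothesis for $\mbb{P}^{r-1}$.

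I would first dispose of the trivial case $\mcal{E} = \mcal{O}_X^r$. There is a canonical identification $\mbb{P}(\mcal{E}) \simeq X \times \mbb{P}^{r-1}$ under which the tautological $\mcal{O}(1)$ on $\mbb{P}(\mcal{E})$ is the pullback of $\mcal{O}(1)$ on $\mbb{P}^{r-1}$ along the second projection; by naturality of the first Chern class, the same holds for $c_1(\mcal{O}(1))$. Together with the identification $(X\times\mbb{P}^{r-1})_+ \simeq X_+\wedge\mbb{P}^{r-1}_+$ in $\St_{S*}$ and the adjunction $-\wedge X_+ \dashv (-)^{X_+}$, this exhibits the comparison map of the lemma as $(-)^{X_+}$ applied to
\[
    \sum_{i=0}^{r-1}c_1(\mcal{O}(1))^i : \bigoplus_{i=0}^{r-1}\SigmaP^{-i}E \to E^{\mbb{P}^{r-1}_+}.
\]
Splitting $E^{\mbb{P}^{r-1}_+}\simeq E\oplus E^{\mbb{P}^{r-1}}$ via the basepoint of $\mbb{P}^{r-1}$, the $i=0$ summand of this map realizes the identity on the $E$ factor, while the summands $i\ge 1$ are precisely the projective bundle formula equivalence of Definition \ref{def:PBF}. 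Hence the displayed map is an equivalence, and since $(-)^{X_+}$ is a right adjoint it preserves equivalences, settling the trivial case.

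For a general bundle $\mcal{E}$ on an $S$-stack $X$, I would use Zariski descent to reduce to the trivial case. Because $\V$ is presentably tensored over the $\infty$-topos $\St_S$ of Zariski sheaves on $\Sm_S$, any mapping object of the form $E^{Y_+}$ (for $Y$ an $S$-stack) converts Zariski hypercovers of $Y$ into limit diagrams in $\SpP(\V)$, since $\SigmaP^\infty$ preserves colimits and $\Map(-,E)$ converts them to limits. The target $E^{\mbb{P}(\mcal{E})_+}$ has this form, and since $\mbb{P}(\mcal{E}) \simeq X\times_{\Vect_r}\mbb{P}(\mcal{E}_{\mrm{univ}})$ restricts compatibly to any cover of $X$ (with $\mcal{O}(1)$ restricting to the corresponding tautological line bundles), the comparison map assembles correctly along the descent diagram. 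Choosing a cover on which $\mcal{E}$ trivializes --- or, when $X$ is not itself a smooth scheme, first writing $X$ as a colimit of smooth schemes $T\to X$ in $\St_S$ and then covering each such $T$ Zariski-locally to trivialize $f^*\mcal{E}$ --- reduces the problem termwise to the trivial case already handled.

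The main obstacle I foresee is the rigorous bookkeeping of the descent step for a general $S$-stack $X$. One has to manage a possibly doubly indexed limit (over smooth-scheme points of $X$, then over trivializing Zariski covers), verify that both sides of the comparison map and $\mcal{O}(1)$ restrict compatibly along it, and identify the resulting limit with the original source and target. These compatibilities are formal consequences of the universal characterizations of $\mbb{P}(-)$ and of the orientation $c_1$, but spelling them out carefully is where the real work of the argument lies.
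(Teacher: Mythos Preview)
Your approach is correct and matches the paper's proof exactly: reduce first to representable $X$ by a limit argument, then to trivial $\mcal{E}$ by Zariski descent, where the claim is immediate from the definition of the projective bundle formula. The paper dispatches all of this in a single sentence, while you spell out the trivial case via the splitting $\mbb{P}^{r-1}_+\simeq S^0\vee\mbb{P}^{r-1}$ and flag the descent bookkeeping more explicitly.
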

\begin{proof}
We reduce to the case $X$ is representable by a limit argument and reduce to the case $\mcal{E}$ is trivial by Zariski descent.
Then it is immediate from the definition.
\end{proof}

\begin{corollary}\label{cor:PBF}
Suppose that $\V$ is multiplicative.
Let $E$ be an oriented motivic ring spectrum in $\V$ and $R$ a motivic coring spectrum in $\V$.
Assume that $E$ satisfies projective bundle formula and that $\xi:=c_1(\mcal{O}(1))$ is in the center of the ring $E^{*,*}(\mbb{P}^n_+\otimes R)$.
Then we have a ring isomorphism
\[
	E^{*,*}(\mbb{P}^n_+\otimes R) \simeq E^{*,*}(R)[\xi]/\xi^{n+1}.
\]
\end{corollary}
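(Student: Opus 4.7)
The plan is to use the projective bundle formula (Lemma~\ref{lem:PBF}) to obtain an additive decomposition of $E^{*,*}(\mbb{P}^n_+\otimes R)$, and then upgrade this to a ring isomorphism by invoking the centrality of $\xi$ together with the relation $\xi^{n+1}=0$ supplied by Lemma~\ref{lem:Ori3}(ii).

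First I would apply Lemma~\ref{lem:PBF} to the trivial bundle $\mcal{E}=\mcal{O}^{n+1}$ of rank $n+1$ over the base (so $\mbb{P}(\mcal{E})=\mbb{P}^n$ and $E^{S_+}\simeq E$) to obtain an equivalence
\[
	\bigoplus_{i=0}^n \SigmaP^{-i} E \xrightarrow{\sim} E^{\mbb{P}^n_+}
\]
in $\SpP(\V)$ whose $i$-th component is multiplication by $c_1(\mcal{O}(1))^i$. Applying $\pi_{2q-p}\Map(R,\SigmaP^q(-))$ and assembling over all bidegrees $(p,q)$ then yields an isomorphism of bigraded $E^{*,*}(R)$-modules
\[
	\bigoplus_{i=0}^n E^{*-2i,*-i}(R) \xrightarrow{\sim} E^{*,*}(\mbb{P}^n_+\otimes R), \qquad (a_i)\mapsto\sum_{i=0}^n a_i\cdot\xi^i,
\]
where $\xi$ denotes the pullback of $c_1(\mcal{O}(1))\in E^1(\mbb{P}^n_+)$ along the morphism $\mbb{P}^n_+\otimes R\to\mbb{P}^n_+$ induced by the coring counit $R\to\mbf{1}$.

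Next, using the hypothesis that $\xi$ is central, I would define a graded ring homomorphism
\[
	\phi\colon E^{*,*}(R)[\xi]/\xi^{n+1} \to E^{*,*}(\mbb{P}^n_+\otimes R)
\]
by sending the formal variable $\xi$ to the pulled-back Chern class and $E^{*,*}(R)$ into $E^{*,*}(\mbb{P}^n_+\otimes R)$ via the structure map $\mbb{P}^n_+\otimes R\to R$ induced by $\mbb{P}^n_+\to\mbf{1}$. Well-definedness of $\phi$ requires two checks: that $\xi$ commutes with the image of $E^{*,*}(R)$, which is immediate from centrality; and that $\xi^{n+1}$ vanishes in the target, which follows from Lemma~\ref{lem:Ori3}(ii)---giving $\xi^{n+1}=0$ already in $E^{*,*}(\mbb{P}^n_+)$---transported along the ring map $E^{*,*}(\mbb{P}^n_+)\to E^{*,*}(\mbb{P}^n_+\otimes R)$. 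By construction $\phi$ is $E^{*,*}(R)$-linear and carries the free basis $1,\xi,\dotsc,\xi^n$ of the source over $E^{*,*}(R)$ onto the basis of the target produced in the first step, so $\phi$ is an isomorphism.

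There is no real obstacle: the substantive content is already packaged in Lemma~\ref{lem:PBF}, and the centrality hypothesis enters only to ensure that the source of $\phi$ is an honest polynomial ring rather than merely a bimodule, so that $\phi$ makes sense as a ring map in the first place.
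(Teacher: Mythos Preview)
Your proof is correct and follows essentially the same approach as the paper, which simply cites Lemma~\ref{lem:Ori3} and Lemma~\ref{lem:PBF}. You have spelled out the standard details of how the additive isomorphism from Lemma~\ref{lem:PBF} and the relation $\xi^{n+1}=0$ from Lemma~\ref{lem:Ori3}(ii) combine with the centrality hypothesis to give the ring isomorphism.
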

\begin{proof}
This follows from Lemma \ref{lem:Ori3} and Lemma \ref{lem:PBF}.
\end{proof}

\subsection{Elementary blowup excision}\label{EBE}

\begin{definition}[Elementary blowup excision]\label{def:EBE}
Let $F$ be a presheaf on $\Sm$.
We say that $F$ \textit{satisfies elementary blowup excision} if $F$ carries the blowup square
\[
\xymatrix{
	\mbb{P}^{n-1}_X \ar[r] \ar[d] & Q \ar[d] \\
	\{0\}_X \ar[r] & \mbb{A}^n_X
}
\]
to a cartesian square for every $X\in\Sm$ and $n\ge 1$.
\end{definition}

\begin{notation}
Let $\St^\ex$ denote the full subcategory of $\St$ spanned by sheaves satisfying the elementary blowup excision.
For an $\infty$-category $\V$ presentably tensored over $\St$, we write $\V^\ex:=\V\otimes_\St\St^\ex$.
Then $\V^\ex$ is an accessible localization of $\V$.
We say that an object in $\V$ \textit{satisfies elementary blowup excision} if it belong to $\V^\ex$.
Note that $\SpP(\V)^\ex\simeq\SpP(\V^\ex)$.
\end{notation}

\begin{remark}
Note that the inclusion $\V^\ex\to\V$ preserves filtered colimits, and thus the localization $\V\to\V^\ex$ preserves compact objects.
In particular, if $\V$ is compactly generated and $\mbb{P}^1$ is compact in $\V$, then the same holds for $\V^\ex$.
\end{remark}

\begin{lemma}\label{lem:EBE}
Let $E$ be a motivic spectrum in $\V$.
Then the following are equivalent:
\begin{enumerate}
\item $E$ satisfies elementary blowup excision.
\item $\OmegaP^{\infty-i}E$ satisfies elementary blowup excision for every $i\ge 0$.
\item For every $n$, the square
\[
\xymatrix{
	E^{\mbb{A}^n_+} \ar[r] \ar[d] & E^{Q_+} \ar[d] \\
	E \ar[r] & E^{\mbb{P}^{n-1}_+}
}
\]
is cartesian.
\end{enumerate}
\end{lemma}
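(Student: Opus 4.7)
The plan is to exploit the levelwise structure of $\SpP(\V)$, the compatibility of the functors $\OmegaP^{\infty-i}$ with cotensors by pointed stacks, and a base change argument that reduces the condition ``$F$ satisfies elementary blowup excision'' (which tests blowup squares over every $X\in\Sm$) to its specialization at $X=\Spec(\mbb{Z})$.

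For (i)$\Leftrightarrow$(ii), I use the identity $\SpP(\V)^\ex\simeq\SpP(\V^\ex)$ recorded in the notation, together with the fact that the fully faithful right adjoint $\SpP(\V^\ex)\hookrightarrow\SpP(\V)$ induced from the inclusion $\V^\ex\hookrightarrow\V$ is computed levelwise. Hence $E\in\SpP(\V)^\ex$ if and only if each level $U_nE\in\V_*$ lies in $\V^\ex_*$. Since $U_n\simeq\OmegaP^{\infty-n}$ as functors $\SpP(\V)\to\V_*$, this is precisely condition (ii).

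For (ii)$\Leftrightarrow$(iii), two ingredients matter. First, the family $\{\OmegaP^{\infty-i}\}_{i\ge 0}$ is conservative (Lemma \ref{lem:cSp2}) and each member preserves limits, being a right adjoint. Second, since $\SigmaP^{\infty-i}$ is $\St$-linear, its right adjoint $\OmegaP^{\infty-i}$ commutes with cotensors by pointed stacks: $\OmegaP^{\infty-i}(E^{X_+})\simeq(\OmegaP^{\infty-i}E)^{X_+}$ for $X\in\St$. Applying $\OmegaP^{\infty-i}$ to the square in (iii) therefore produces the analogous square formed from $\OmegaP^{\infty-i}E$ over $\Spec(\mbb{Z})$, and by conservativity the square in (iii) is cartesian in $\SpP(\V)$ if and only if each level square is cartesian in $\V_*$. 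The last link is to identify these $\Spec(\mbb{Z})$-level squares with the full elementary blowup excision condition for $\OmegaP^{\infty-i}E$. This follows by base change: the blowup square over $X$ is the product with $X$ of the square over $\Spec(\mbb{Z})$, and the identification $(-)^{(\mbb{A}^n_X)_+}\simeq((-)^{X_+})^{\mbb{A}^n_+}$, together with the fact that $(-)^{X_+}$ preserves limits, propagates cartesianness at $\Spec(\mbb{Z})$ to cartesianness over every $X\in\Sm$.

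The only delicate point is bookkeeping: verifying the identifications $U_n\simeq\OmegaP^{\infty-n}$, the $\St$-linearity-based commutation of $\OmegaP^{\infty-i}$ with cotensors, and the base change reduction for the blowup square. Once these compatibilities are in place, the three conditions are tautologically equivalent.
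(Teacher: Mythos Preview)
Your argument is correct and arrives at the same conclusion, but the organization differs from the paper's. The paper proves (i)$\Leftrightarrow$(iii) and (i)$\Leftrightarrow$(ii) separately, both by describing $\SpP(\V)^\ex$ as a localization with respect to an explicit class of maps. Writing $\chi_n$ for the comparison map $Q\oplus_{\mbb{P}^{n-1}}\{0\}\to\mbb{A}^n$, the localization is generated by $R\otimes\chi_n$ for all $R\in\SpP(\V)$; tensor--cotensor adjunction then gives (iii). For (i)$\Leftrightarrow$(ii) the paper invokes Lemma~\ref{lem:FPr2} to rewrite the generating maps as $\SigmaP^{\infty-i}(\chi_n\otimes X_+)$, whence adjunction yields (ii). In particular, the paper never separates out a base-change step: the passage from the blowup square over $\Spec(\mbb{Z})$ to that over arbitrary $X$ is absorbed into letting $R$ (respectively $X$) range freely.

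Your route is more structural: (i)$\Leftrightarrow$(ii) via the levelwise description of the embedding $\SpP(\V^\ex)\hookrightarrow\SpP(\V)$, then (ii)$\Leftrightarrow$(iii) via conservativity of the $\OmegaP^{\infty-i}$ and the observation that $(-)^{X_+}$ applied to the $\Spec(\mbb{Z})$-square yields the square over $X$. One point deserves a sentence more: your assertion that the right adjoint $\SpP(\V^\ex)\hookrightarrow\SpP(\V)$ is ``computed levelwise'' and hence has essential image tested on the $U_n$ is true, but it is not quite automatic, since the inclusion $\V^\ex_*\hookrightarrow\V_*$ is only lax $\St_*$-linear. The cleanest justification is exactly Lemma~\ref{lem:FPr4} (which is what the paper uses); alternatively one checks directly in the $c$-spectrum model that the inclusion commutes with $s_-$ and $(-)^{\mbb{P}^1}$, and that the $S_{\mbb{P}^1}$-action on a lax spectrum with $L_\ex$-local levels factors through the localized tensor product.
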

\begin{proof}
Let $\chi_n$ denote the canonical map $Q\oplus_{\mbb{P}^{n-1}}\{0\}\to\mbb{A}^n$ in $\St$.
Then $\SpP(\V)^\ex$ is a localization of $\SpP(\V)$ with respect to the maps $R\otimes\chi_n$ for all $n$ and motivic spectra $R$ in $\V$; from which the equivalence (i)$\Leftrightarrow$(iii) follows.
Since $\SpP(\V)$ is generated under colimits by $\SigmaP^{\infty-i}X_+$ for $X\in\V$ and $i\ge 0$ (Lemma \ref{lem:FPr2}), we see that $\SpP(\V)^\ex$ is a localization of $\SpP(\V)$ with respect to the maps $\SigmaP^{\infty-i}(\chi_n\otimes X_+)$ for all $i,n$ and $X\in\V$; from which the equivalence (i)$\Leftrightarrow$(ii) follows.
\end{proof}

\begin{lemma}\label{lem:EBE2}
Let $E$ be an oriented motivic spectrum in $\V$.
Then $E$ satisfies projective bundle formula if and only if $E$ satisfies elementary blowup excision.
\end{lemma}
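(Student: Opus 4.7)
The plan is to prove, by induction on $n \geq 1$, that PBF for $\mathbb{P}^n$ and EBE at level $n$ are equivalent, assuming PBF and EBE hold at levels $k < n$. The base case $n=1$ is immediate: PBF for $\mathbb{P}^1$ is the defining equivalence $E \xrightarrow{\sim} \OmegaP E$, while EBE at $n=1$ is trivial since $\mathrm{Bl}_0\mathbb{A}^1 = \mathbb{A}^1$ and $\mathbb{P}^0 = \ast$.

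The induction step rests on a geometric identification: the blowup $Q := \mathrm{Bl}_0\mathbb{A}^n$ coincides with $\mathbb{P}^n \setminus \{0\}$ as a scheme (both are total spaces of $\mathcal{O}_{\mathbb{P}^{n-1}}(-1)$), and under this identification the exceptional divisor in $Q$ becomes the hyperplane at infinity $\mathbb{P}^{n-1}_\infty \subset \mathbb{P}^n$, while the open $\mathbb{A}^n \setminus \{0\} \subset Q$ (complement of the exceptional divisor) is identified with $\mathbb{A}^n \cap (\mathbb{P}^n \setminus 0)$. This yields a Zariski pushout $\mathbb{P}^n \simeq \mathbb{A}^n \sqcup_{\mathbb{A}^n \setminus 0} Q$ in $\St$; after pointing and collapsing $\mathbb{P}^{n-1}_\infty \subset Q$ (which does not touch $\mathbb{A}^n$ or $\mathbb{A}^n \setminus 0$) one obtains a pushout in $\St_*$
\[
  \mathbb{P}^n/\mathbb{P}^{n-1}_\infty \;\simeq\; \mathbb{A}^n_+ \sqcup_{(\mathbb{A}^n \setminus 0)_+} Q/\mathbb{P}^{n-1}_\infty,
\]
hence a Mayer--Vietoris pullback square after applying $E^{(-)}$. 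On the one hand, under the inductive hypothesis, PBF at level $n$ is equivalent --- via the cofiber sequence $\mathbb{P}^{n-1}_+ \to \mathbb{P}^n_+ \to \mathbb{P}^n/\mathbb{P}^{n-1}_\infty$ together with Lemma \ref{lem:Ori3} (which forces the map $E^{\mathbb{P}^n_+} \to E^{\mathbb{P}^{n-1}_+}$ to be the projection in the Chern-class basis) --- to the identification $E^{\mathbb{P}^n/\mathbb{P}^{n-1}_\infty} \simeq \SigmaP^{-n}E$ via the top Chern class $c_1(\mathcal{O}(1))^n$. On the other hand, Lemma \ref{lem:EBE}(iii), together with taking fibers along the structure maps, shows that EBE at level $n$ is equivalent to the statement that the blowup $Q \to \mathbb{A}^n$ induces an equivalence $E^{Q/\mathbb{P}^{n-1}_\infty} \simeq E^{\mathbb{A}^n/0}$. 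Plugging both reformulations into the Mayer--Vietoris pullback square --- and observing that the remaining terms $E^{\mathbb{A}^n_+}$ and $E^{(\mathbb{A}^n \setminus 0)_+}$ appear unconditionally on both sides --- reduces each direction of the desired equivalence to a single coherent comparison.

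The main obstacle is the careful bookkeeping of the two different natural morphisms carried by the scheme $Q$: the open embedding $Q = \mathbb{P}^n \setminus 0 \hookrightarrow \mathbb{P}^n$ (used in the Zariski cover) and the blowup $Q \to \mathbb{A}^n$ (used in the EBE square). They agree on $\mathbb{A}^n \setminus 0$ but differ on $\mathbb{P}^{n-1}_\infty$, and tracking them through every term of the Mayer--Vietoris pullback --- so as to align the Chern-class decomposition of $E^{\mathbb{P}^n/\mathbb{P}^{n-1}_\infty}$ supplied by PBF with the pullback decomposition supplied by EBE --- is the crux. The orientation of $E$ enters essentially here: it supplies the Chern classes needed even to state PBF, and it activates Lemma \ref{lem:Ori3}, which controls the vanishing of high Chern powers that makes the comparison work.
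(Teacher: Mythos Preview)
Your geometric identification $\mathrm{Bl}_0\mathbb{A}^n \cong \mathbb{P}^n \setminus \{0\}$ and the resulting Mayer--Vietoris square
\[
E^{\mathbb{P}^n/\mathbb{P}^{n-1}_\infty} \;\simeq\; E^{\mathbb{A}^n_+} \times_{E^{(\mathbb{A}^n\setminus 0)_+}} E^{Q/D}
\]
are both correct, but the step ``reduces each direction \ldots\ to a single coherent comparison'' is where the argument collapses. After you substitute PBF (giving $E^{\mathbb{P}^n/\mathbb{P}^{n-1}}\simeq\SigmaP^{-n}E$) or EBE (giving $E^{Q/D}\simeq E^{\mathbb{A}^n/0}$) into the square, the corners $E^{\mathbb{A}^n_+}$ and $E^{(\mathbb{A}^n\setminus 0)_+}$ remain, and without $\mathbb{A}^1$-invariance nothing controls them. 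For instance, assuming EBE, you would need
\[
E^{\mathbb{A}^n_+} \times_{E^{(\mathbb{A}^n\setminus 0)_+}} E^{\mathbb{A}^n/0} \;\simeq\; \SigmaP^{-n}E,
\]
and there is no reason for this to hold for a general oriented $E$. The assertion that the uncontrolled terms ``appear unconditionally on both sides'' has no content: there is only one square, and those terms do not cancel. You correctly diagnose that the two maps out of $Q$ (the open inclusion into $\mathbb{P}^n$ versus the blowup to $\mathbb{A}^n$) are the crux, but your square only sees the first map, so it cannot by itself compare PBF with the EBE square, which uses the second.

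The paper avoids this by working instead with the blowup $Q$ of $\mathbb{P}^n$ at the origin; by Zariski descent this encodes the same EBE condition. The point is that this $Q$ is a $\mathbb{P}^1$-bundle $q\colon Q\to\mathbb{P}^{n-1}$, and the $n=1$ case of PBF (which holds for every oriented $E$, being built into the definition) yields an \emph{unconditional} equivalence $c_1(\mathcal{O}_q(1))\cdot q^*\colon \SigmaP^{-1}E^{\mathbb{P}^{n-1}_+}\xrightarrow{\sim}E^{(Q,D)}$. Lemma~\ref{lem:Ori3} then gives the relation $c_1(\mathcal{O}_q(1))^2 = c_1(\mathcal{O}_q(1))\cdot q^*c_1(\mathcal{O}_{\mathbb{P}^{n-1}}(1))$ (since $\mathcal{O}_q(1)\simeq\pi^*\mathcal{O}_{\mathbb{P}^n}(1)$ is trivial near $D$ and agrees with $q^*\mathcal{O}_{\mathbb{P}^{n-1}}(1)$ away from $D$), which makes the square
\[
\xymatrix@C+2pc{
\SigmaP^{-1}E^{\mathbb{P}^{n-1}_+} \ar[r]^-{\simeq} & E^{(Q,D)} \\
\bigoplus_{i=0}^{n-1}\SigmaP^{-i-1}E \ar[u]^{\sum c_1^i} \ar[r]_-{\sum c_1^{i+1}} & E^{(\mathbb{P}^n,0)} \ar[u]_{\pi^*}
}
\]
commute. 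Now the top is always an equivalence, the left is PBF at level $n-1$, the bottom is PBF at level $n$, and the right is EBE at level $n$; no uncontrolled affine terms appear, and the equivalence follows (by induction on $n$ for the direction EBE $\Rightarrow$ PBF). Your use of Lemma~\ref{lem:Ori3} was on the right track, but applied to $\mathbb{P}^{n-1}_\infty\subset\mathbb{P}^n$ rather than to the line bundles on the $\mathbb{P}^1$-bundle $Q\to\mathbb{P}^{n-1}$, which is where it actually does the work.
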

\begin{proof}
Consider the blowup square
\[
\xymatrix{
	D \ar[r] \ar[d] & Q \ar[d]^\pi \\
	\{0\} \ar[r] & \mbb{P}^n.
}
\]
There is a canonical projection $q\colon Q\to\mbb{P}^{n-1}$ which makes $Q$ a $\mbb{P}^1$-bundle.
The associated twisting sheaf $\mcal{O}_q(1)$ is isomorphic to $\pi^*\mcal{O}_{\mbb{P}^n}(1)$.
Since $\mcal{O}_q(1)$ is trivial in a neighborhood of $D$ and isomorphic to $q^*\mcal{O}_{\mbb{P}^{n-1}}(1)$ outside $D$, we have
\[
	c_1(\mcal{O}_q(1))\cdot q^*c_1(\mcal{O}_{\mbb{P}^{n-1}}(1)) = c_1(\mcal{O}_q(1))^2
\]
in $\intMap(E,E)^*(Q)$ by Lemma \ref{lem:Ori3}.
It follows that the diagram
\[
\xymatrix@C+2.5pc{
	\SigmaP^{-1}E(\mbb{P}^{n-1}_+\otimes R) \ar[r]^-{c_1(\mcal{O}_q(1))\cdot q^*}_-\simeq & E(Q_+\otimes R,D_+\otimes R) \\
	\bigoplus_{i=0}^{n-1}\SigmaP^{-i-1}E(R) \ar[u]^{\sum_{i=0}^{n-1}c_1(\mcal{O}_{\mbb{P}^{n-1}}(1))^i}
		\ar[r]^-{\sum_{i=0}^{n-1}c_1(\mcal{O}_{\mbb{P}^n}(1))^{i+1}} & E(\mbb{P}^n\otimes R) \ar[u]^{\pi^*}
}
\]
is commutative.
The top horizontal map is an equivalence in general.
Then the assertion is immediate from this diagram.
\end{proof}

\begin{lemma}\label{lem:EBE3}
Suppose that $\V$ is multiplicative.
Let $E$ be a homotopy commutative motivic ring spectrum in $\V$ and $R$ a homotopy cocommutative motivic coring spectrum in $\V$.
Assume that $E$ is orientable and satisfies elementary blowup excision.
Then, for $x\in E^{p,q}(R)$ and $y\in E^{p',q'}(R)$, we have
\[
	xy = (-1)^{pp'}yx.
\]
in $E^{p+p',q+q'}(R)$.
\end{lemma}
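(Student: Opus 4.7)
The plan is to derive graded commutativity from the homotopy commutativity of $E$ and the cocommutativity of $R$, reducing the sign in $xy = (-1)^{pp'}yx$ to the action of the canonical symmetry of bigraded spheres on $E$-cohomology, and then to pin down that symmetry using the orientation.

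First, I would represent $x\in E^{p,q}(R)$ and $y\in E^{p',q'}(R)$ as maps $\bar x\colon R\to S^{p,q}\otimes E$ and $\bar y\colon R\to S^{p',q'}\otimes E$ in $\SpP(\V)$, and write both $xy$ and $yx$ as composites of $\bar x\wedge\bar y$ (respectively $\bar y\wedge\bar x$) with $\Delta_R$ on the source and $\mu_E$ on the target. The homotopy commutativity $\mu_E\circ\tau_{E,E}\simeq\mu_E$ and cocommutativity $\tau_{R,R}\circ\Delta_R\simeq\Delta_R$ let me absorb the swap symmetries on $E\otimes E$ and $R\otimes R$, leaving only the symmetry on the bigraded-sphere factors: $yx=\tau^{*}\cdot xy$, where $\tau\colon S^{p,q}\otimes S^{p',q'}\to S^{p',q'}\otimes S^{p,q}$ is the canonical symmetry isomorphism.

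Next, I would compute $\tau$ as an element of $E^{0,0}(\mbf{1}_\V)$. Decomposing $\mbb{P}^1\simeq S^1\otimes\mbb{G}_m$ gives $S^{p,q}\simeq(S^1)^{\otimes(p-q)}\otimes\mbb{G}_m^{\otimes q}$ (allowing stable inverses). Expanding $\tau$ as a shuffle of the elementary symmetries $\tau_{S^1,S^1}=-1$, $\tau_{S^1,\mbb{G}_m}=-1$ (the latter because $\mbb{G}_m$ has topological degree one), and $\tau_{\mbb{G}_m,\mbb{G}_m}$, a straightforward combinatorial bookkeeping yields
\[
\tau=(-1)^{pp'-qq'}\cdot\epsilon^{qq'}\in E^{0,0}(\mbf{1}_\V),
\]
where $\epsilon$ denotes the image of the involution $\tau_{\mbb{G}_m,\mbb{G}_m}$.

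The main obstacle is to show $\epsilon=-1$ when $E$ is orientable. By Lemma~\ref{lem:EBE2}, $E$ satisfies projective bundle formula, and Corollary~\ref{cor:PBF} yields $E^{*,*}(\mbb{P}^\infty\times\mbb{P}^\infty)\simeq E^{*,*}[[\xi_1,\xi_2]]$, where $\xi_i$ are the pullbacks of $c_1(\mcal{O}(1))$. The tensor-product map $\mu\colon\mbb{P}^\infty\times\mbb{P}^\infty\to\mbb{P}^\infty$ of line bundles is symmetric under the swap (by the symmetric monoidal structure of $\otimes$), so it pulls $c_1(\mcal{O}(1))$ back to a symmetric power series $F(\xi_1,\xi_2)=F(\xi_2,\xi_1)$. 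Restricting to $\mbb{P}^1\times\mbb{P}^1$ (where $\xi_i^2=0$ by Lemma~\ref{lem:Ori3}) and comparing this symmetry against the formula of the previous paragraph applied to the special case $x=y=c_1(\mcal{O}(1))$ (where $p=p'=2$ and $q=q'=1$, forcing $\tau=-\epsilon$) extracts the identity $\epsilon=-1$. Substituting this back into the formula for $\tau$ gives $\tau=(-1)^{pp'-qq'+qq'}=(-1)^{pp'}$, which is the claimed graded commutativity.
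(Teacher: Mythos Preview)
Your overall strategy---reduce $xy$ versus $yx$ to the action of a sphere-swap automorphism and then pin that automorphism down using the orientation---is the same as the paper's, but the execution breaks at a crucial point: you invoke a decomposition $\mbb{P}^1\simeq S^1\otimes\mbb{G}_m$ that is \emph{not available} in this paper's framework. No $\mbb{A}^1$-homotopy invariance is imposed here; the boundary map $\partial\colon\mbb{P}^1\to S^1\otimes\mbb{G}_m$ of~\S\ref{FMS} is not an equivalence in $\St_*$, and even after smashing with an orientable $E$ one only knows that $\partial\otimes\id_E$ admits a \emph{left inverse} (Definition~\ref{def:FMS}, Lemma~\ref{lem:Ori2}), never that it is invertible. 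Consequently your bigraded sphere does not factor as $(S^1)^{\otimes(p-q)}\otimes\mbb{G}_m^{\otimes q}$, the element $\epsilon:=\tau_{\mbb{G}_m,\mbb{G}_m}$ has no a~priori relation to the $\mbb{P}^1$-swap, and the shuffle count $(-1)^{pp'-qq'}\epsilon^{qq'}$ is unjustified.

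The paper sidesteps this entirely by working with the $\mbb{P}^1$-swap directly. If $\tau\in E^0(\mbf{1}_\V)\simeq E^2(\mbb{P}^1\otimes\mbb{P}^1)$ denotes the class of the permutation on $\mbb{P}^1\otimes\mbb{P}^1$, then the general symmetric-monoidal bookkeeping (using only $S^1$-loops and $\mbb{P}^1$-twists, which is all the bigrading involves) gives $xy=(-1)^{pp'}\tau^{qq'}yx$. To see $\tau=1$ one observes $c_1(\mcal{O}(1))^2=\tau\cdot c_1(\mcal{O}(1))^2$ in $E^2(\mbb{P}^2)$, and the projective bundle formula (equivalent to elementary blowup excision by Lemma~\ref{lem:EBE2}) makes $c_1(\mcal{O}(1))^2$ a free generator there, forcing $\tau=1$. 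Your formal-group-law endgame is morally the same idea---both detect the swap through $c_1$ squared---but it should be applied to the $\mbb{P}^1$-swap itself, and for that the computation in $E^*(\mbb{P}^2)$ is both simpler and valid in the present generality.
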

\begin{proof}
Choose an orientation of $E$.
Then it satisfies projective bundle formula by Lemma \ref{lem:EBE2}.
Let $\tau\in E^0(\mbf{1})\simeq E^2(\mbb{P}^1\otimes\mbb{P}^1)$ be the class of the permutation $\mbb{P}^1\otimes\mbb{P}^1\to\mbb{P}^1\otimes\mbb{P}^1$.
Then, for $x\in E^{p,q}(R)$ and $y\in E^{p',q'}(R)$, we have in general
\[
	xy = (-1)^{pp'}\tau^{qq'}yx.
\]
Hence, it suffices to show that $\tau=1$.
Note that we have
\[
	c_1(\mcal{O}(1))^2 = \tau c_1(\mcal{O}(1))^2
\]
in $E^2(\mbb{P}^2)$.
Then the projective bundle formula implies $\tau=1$.
\end{proof}

\begin{remark}
In particular, if $E$ and $R$ are homotopy (co)commutative, then the centrality assumption in Corollary \ref{cor:PBF} is always satisfied.
\end{remark}

\newpage

\section{Cohomology of the moduli stack of vector bundles}\label{Vec}

The goal of this section is to calculate the cohomology of the moduli stack $\Vect_n$ of rank $n$ vector bundles.
Let $E$ be a homotopy commutative oriented motivic ring spectrum which satisfies projective bundle formula.
Then we prove a ring isomorphism 
\[
	E^{*,*}(\Vect_{n,X}) \simeq E^{*,*}(X)[[c_1,\dotsc,c_n]]
\]
for every stack $X$ (Corollary \ref{cor:Che2}).
This is a refinement of the main theorem in \cite{AI}, which assumes the existence of transfers.
The proof is mostly parallel to that of \cite{AI}.\footnote{A gap has been found in the proof of \cite{AI}, but it is corrected in this section.}
The main new aspect is Construction \ref{cons:Lif}, which works thanks to Lemma \ref{lem:Ori3}.

Along the way, we develop a theory of Chern classes and formal group laws and establish their standard properties such as splitting principle (Lemma \ref{lem:Che}).
Those facts have been well known for special types of cohomology theory as already proved in \cite{SAG6}, but our results generalize them completely in a way that only depends on projective bundle formula.

\subsection{Pretheory}\label{PTh}

In order to prove the main results of this section, it is sometimes necessary to work on $\mbb{P}^1$-spectra in presheaves.
In this subsection, we introduce some auxiliary notions related to this.
These notions are used only in this section and are discussed with the minimum generality necessary for our purpose.

\begin{notation}
We consider the $\infty$-category $\PSh(\Sm;\Sp_{\ge 0})$ of presheaves of connective spectra on $\Sm$ and let
\[
	\SpP^\pre := \SpP(\PSh(\Sm;\Sp_{\ge 0})).
\]
Then $\SpP^\fd$ is a full subcategory of $\SpP^\pre$ as
\[
	\SpP^\fd \simeq \SpP(\Sh(\Sm;\Sp_{\ge 0}))^\fd \hookrightarrow \SpP^\pre,
\]
where the equivalence is by Theorem \ref{thm:FSt}.
The coefficients were made connective spectra for technical reasons for later use.
\end{notation}

\begin{notation}
We write
\[
	\OmegaP^\infty \colon \SpP^\pre \to \PSh(\Sm).
\]
for the right adjoint that extends $\OmegaP^\infty$ on $\SpP^\fd$.
For $E\in\SpP^\pre$ and $X\in\PSh(\Sm)$, we set
\[
	E(X) := \Map(X,\OmegaP^\infty E) \qquad
	E^{p,q}(X) := \pi_{2q-p}\Map(X,\OmegaP^\infty E).
\]
These are compatible with Notations \ref{not:DMS} when applied to $E\in\SpP^\fd$.
\end{notation}

\begin{definition}\label{def:PTh}
Let $E$ be an oriented motivic ring spectrum and $M$ a left $E$-module in $\SpP^\pre$.
We say that $M$ \textit{satisfies projective bundle formula} if the map
\[
	\sum_{i=1}^n c_1(\mcal{O}(1))^i \colon \bigoplus_{i=1}^n \SigmaP^{-i}M \to M^{\mbb{P}^n}
\]
is an equivalence for every $n\ge 1$.
\end{definition}

\begin{remark}\label{rem:PTh}
Let $E$ be an oriented motivic $\mbb{E}_1$-ring spectrum.
Then we define $\LMod^\pbf_E(\SpP^\pre)$ to be the full subcategory of $\LMod_E(\SpP^\pre)$ spanned left $E$-modules which satisfy projective bundle formula.
Then $\LMod^\pbf_E(\SpP^\pre)$ is an accessible localization of $\LMod_E(\SpP^\pre)$ and we denote the localization functor by
\[
	L_\pbf \colon \LMod_E(\SpP^\pre) \to \LMod^\pbf_E(\SpP^\pre).
\]
It follows from Lemma \ref{lem:EBE2} that the $\infty$-category $\LMod_E(\SpP^\ex)$ is identified with the full subcategory of $\LMod^\pbf_E(\SpP^\pre)$ spanned by $E$-modules which satisfy Zariski descent.
In particular, the composite localization
\[
	\LMod_E(\SpP^\pre) \to \LMod^\pbf_E(\SpP^\pre) \to \LMod_E(\SpP^\ex)
\]
does not depend on the module structure and is induced from the functor $\SpP^\pre\to\SpP^\ex$, which is the localization with respect to Zariski descent and elementary blowup excision followed by the infinite delooping.
\end{remark}

\subsection{Lifting lemma}\label{Lif}

\begin{p}[Globally generated vector bundle]
For $n\ge 1$, we define $\Vect_n^\flat$ to be the subpresheaf of $\Vect_n$ spanned by globally generated vector bundles of rank $n$.
We write $\Pic^\flat:=\Vect_1^\flat$.
Then $\Pic^\flat$ inherits the $\mbb{E}_\infty$-monoid structure from $\Pic$ and $\Vect^\flat$ is a module over $\Pic^\flat$.
Note that the inclusion
\[
	\Vect_n^\flat \to \Vect_n
\]
is a Zariski local equivalence.
\end{p}

\begin{notation}
For $n\ge 0$, we set
\[
	\bar{\Delta}^n := \Proj\biggl(\frac{\mbb{Z}[U,T_0,\dotsc,T_n]}{U-\sum_{i=0}^nT_i}\biggr).
\]
The assignment $n\mapsto\bar{\Delta}^n$ forms a semi-cosimplicial scheme $\bar{\Delta}^\bullet$ in a standard way.
Note that, for each $l\ge 1$, the twisting sheaves $\mcal{O}(l)$ on $\bar{\Delta}^*$ define a point of the semi-simplicial presheaf $\Pic^\flat(\bar{\Delta}^\bullet)$.
\end{notation}

\begin{lemma}\label{lem:Lif}
Let $k,n$ be non-negative integers.
Then the diagram in $\PSh(\Sm)$
\[
\xymatrix@C+1pc{
	& \lvert \cosk_k(\Gr_n^{\bar{\Delta}^\bullet}) \rvert \ar[d] \\
	\Vect_n^\flat \ar[r]^-{\mcal{O}(k+1)\otimes p^*} \ar@{.>}@/^1pc/[ru] & \lvert \cosk_k(\Vect_n^{\flat\bar{\Delta}^\bullet}) \rvert
}
\]
admits a lift as indicated, where $p^*$ denotes the pullback along the obvious projection.
\end{lemma}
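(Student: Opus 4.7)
The plan is to unpack the universal property of the $k$-coskeleton and then produce the lift by an explicit construction of compatible presentations of the twisted bundle.

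First, by the universal property of $\cosk_k$, providing the dotted map in the diagram amounts to producing, for each $0 \le l \le k$, a natural transformation of presheaves $\Vect_n^\flat \to \Gr_n^{\bar{\Delta}^l}$ that lifts the assignment $\mcal{E} \mapsto \mcal{O}(k+1) \otimes p^*\mcal{E}$, compatibly with the semi-cosimplicial face maps $d^i : \bar{\Delta}^{l-1} \to \bar{\Delta}^l$ for $0 \le i \le l$.

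Next, I reformulate the lift at level $l$ in terms of sections. A map $X \to \Gr_n^{\bar{\Delta}^l}$ is, via $\Gr_n = \colim_N \Gr_n(\mcal{O}^N)$, a surjection $\mcal{O}^N_{X \times \bar{\Delta}^l} \twoheadrightarrow \mcal{O}(k+1) \otimes p^*\mcal{E}$, considered modulo the stabilization $\Gr_n(\mcal{O}^N) \hookrightarrow \Gr_n(\mcal{O}^{N+1})$ appending a zero coordinate. Given global generators $s_1,\dotsc,s_M$ of $\mcal{E}$ and a basis of $V_l := H^0(\bar{\Delta}^l,\mcal{O}(k+1))$ (the degree-$(k+1)$ forms in $T_0,\dotsc,T_l$), the sections $v\cdot s_j$ generate $\mcal{O}(k+1)\otimes p^*\mcal{E}$ and so yield the desired surjection $\mcal{O}^{M\cdot\dim V_l}\twoheadrightarrow\mcal{O}(k+1)\otimes p^*\mcal{E}$. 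Each face map $d^i$ induces a surjective restriction $r_i:V_l\twoheadrightarrow V_{l-1}$, and the pullback of the tautological surjection on $\bar{\Delta}^l$ factors through the tautological surjection on $\bar{\Delta}^{l-1}$ via $r_i\otimes\id_M$; by ordering a basis of $V_l$ so that $\ker(r_i)$ is spanned by its tail elements, the pullback matches the prescribed surjection on $\bar{\Delta}^{l-1}$ after the append-zero stabilization in $\Gr_n$.

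The main obstacle is that the kernels $\ker(r_0),\dotsc,\ker(r_l)$ inside $V_l$ are generally distinct, so no single ordered basis places all of them at the tail simultaneously. I plan to circumvent this by working with a redundant indexing set—for instance, labeling sections by all degree-$(k+1)$ monomials in an auxiliary fixed variable set $\{T_0,\dotsc,T_k\}$, zeroing out those whose variables are absent from $\bar{\Delta}^l$—and by exploiting the filtered colimit structure $\Gr_n = \colim_N \Gr_n(\mcal{O}^N)$ to absorb the reshuffling caused by each face pullback into trailing zeros after a suitable reordering. The most delicate point is carrying out this combinatorial bookkeeping while preserving naturality in the pair $(X,\mcal{E})$, which is the heart of the construction.
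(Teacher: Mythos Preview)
Your proposal has a genuine gap. You correctly identify that the heart of the matter is producing compatible maps $\Vect_n^\flat \to \Gr_n^{\bar{\Delta}^l}$ for $l\le k$, but your construction via a chosen finite generating set $s_1,\dotsc,s_M$ of $\mcal{E}$ is not natural in $(X,\mcal{E})$: different bundles require different $M$, and even for fixed $M$ there is no canonical choice. You flag this yourself as ``the most delicate point,'' but the redundant-indexing idea you sketch addresses only the face-map alignment, not the naturality; it still presupposes a chosen presentation of $\mcal{E}$. Without resolving this, you do not actually have a map out of $\Vect_n^\flat$, only out of something like $\Gr_n$ (bundles \emph{with} a presentation).

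The paper's argument avoids this entirely by taking an obstruction-theoretic route rather than an explicit one. Following \cite[Lemma~3.3(A)]{AI}, the lifting problem is reduced to a boundary-filling statement: for any $X\in\Sm$, globally generated $\mcal{E}$ of rank $n$, and $1\le m\le k$, every surjection $\mcal{O}^{\oplus\infty}_{\partial\bar{\Delta}^m_X}\twoheadrightarrow\mcal{E}(k+1)\vert_{\partial\bar{\Delta}^m_X}$ extends to a surjection $\mcal{O}^{\oplus\infty}_{\bar{\Delta}^m_X}\twoheadrightarrow\mcal{E}(k+1)$. The key input is that the kernel of the restriction $\mcal{E}(k+1)\to\mcal{E}(k+1)\vert_{\partial\bar{\Delta}^m_X}$ is $\mcal{E}(k-m)$, which is globally generated precisely because $m\le k$; this is what the twist by $\mcal{O}(k+1)$ buys. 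Combined with the observation that $p_*\mcal{E}(k+1)\to p_*i_*\mcal{E}(k+1)$ splits $\mcal{O}_X$-linearly, any boundary datum lifts and can then be completed to a surjection. This cell-by-cell filling sidesteps both your naturality and your face-map bookkeeping problems at once.
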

\begin{proof}
This is a refinement of \cite[Lemma 3.3 (A)]{AI}.
As in the proof there, it is reduced to the following lifting property.
Let $\mcal{E}$ a globally generated vector bundle of rank $n$ on $X\in\Sm$.
Suppose that we are given a surjection $\alpha\colon\mcal{O}^{\oplus\infty}_{\partial\bar{\Delta}^m_X}\to\mcal{E}(k+1)\vert_{\partial\bar{\Delta}^m_X}$ for $1\le m\le k$.
Then we would like to show that it lifts to a surjection $\alpha'\colon\mcal{O}^{\oplus\infty}_{\bar{\Delta}^m_X}\to\mcal{E}(k+1)$.

Since the fiber of $\mcal{E}(k+1)\to\mcal{E}(k+1)\vert_{\partial\bar{\Delta}^m_X}$ is $\mcal{E}(k-m)$ and it is globally generated, if we could find any lift $\alpha'\colon\mcal{O}^{\oplus\infty}_{\bar{\Delta}^m_X}\to\mcal{E}(k+1)$ of $\alpha$ then we can add extra sections to ensure that $\alpha'$ is surjective.
Let $p$ be the projection $\bar{\Delta}^m_X\to X$ and $i$ the inclusion $\partial\bar{\Delta}^m_X\to\bar{\Delta}^m_X$.
Then the map
\[
	p_*\mcal{E}(k+1) = \mcal{E}\otimes(\mcal{O}_X[T_1,\dotsc,T_m])_{(k+1)} \to
	p_*i_*\mcal{E}(k+1) = \mcal{E}\otimes(\mcal{O}_X[T_1,\dotsc,T_m]/T_1\dotsc T_m)_{(k+1)}
\]
has a section as $\mcal{O}_X$-modules, and thus it remains surjective after taking the global section functor $\Gamma(X,-)$.
This proves the existence of $\alpha'$ and finishes the proof.
\end{proof}

\begin{construction}\label{cons:Lif}
Let $E$ be an oriented motivic ring spectrum and $M$ a left $E$-module in $\SpP^\pre$.
The first Chern class $\xi:=c_1(\mcal{O}(1))\in E^1(\bar{\Delta}^\bullet_+)$ is well-defined and we have a morphism of semi-simplicial objects in $\SpP^\pre$
\[
	\xi^n \colon \SigmaP^{-n}M \to M^{\bar{\Delta}^\bullet_+},
\]
where the left hand side is a constant semi-simplicial object.
By Lemma \ref{lem:Ori3}, the map factors through the $n$-th b\^ete truncation
\[
\xymatrix{
	\SigmaP^{-n}M \ar[r]^{\xi^n} \ar[d] & M^{\bar{\Delta}^\bullet_+} \\
	\sigma_{\ge n}(\SigmaP^{-n}M). \ar@{.>}[ru] &
}
\]
By taking the coproduct with respect to all $n\ge 0$, we obtain a map
\[
	\sum_{n=0}^\bullet \xi^n \colon \bigoplus_{n=0}^\bullet \SigmaP^{-n}M \to M^{\bar{\Delta}^\bullet_+},
\]
which is a level-wise equivalence if $M$ satisfies projective bundle formula.
We set
\[
	M^{\bar{\Delta}^\bullet_+}/M^{\bar{\Delta}^\bullet_{\infty+}}
	:= \cofib\biggl( \bigoplus_{n=1}^\bullet \SigmaP^{-n}M \xrightarrow{\sum_{n=1}^\bullet \xi^n} M^{\bar{\Delta}^\bullet_+} \biggr).
\]
Then the canonical map
\[
	M \to M^{\bar{\Delta}^\bullet_+}/M^{\bar{\Delta}^\bullet_{\infty+}}
\]
is a level-wise equivalence if $M$ satisfies projective bundle formula.
\end{construction}

\begin{p}[Truncation]
For each $k\ge 0$, the truncation $(-)_{\le k}\colon\Sp_{\ge 0}\to(\Sp_{\ge 0})_{\le k}$ is symmetric monoidal.
We set
\[
	(\SpP^\pre)_{\le k} := \SpP(\PSh(\Sm;(\Sp_{\ge 0})_{\le k})).
\]
Then the associated localization
\[
	(-)_{\le k} \colon \SpP^\pre \to (\SpP^\pre)_{\le k}
\]
is just the component-wise $k$-truncation.
Note that if a left $E$-module $M$ in $\SpP^\pre$ satisfies projective bundle formula then so does $M_{\le k}$.
\end{p}

\begin{lemma}\label{lem:Lif2}
Let $E$ be an oriented motivic $\mbb{E}_1$-ring spectrum.
Then, for every $n,k\ge 0$, the canonical map
\[
	(L_\pbf(\Gr_{n+}\otimes^\pre E))_{\le k} \to (L_\pbf(\Vect_{n+}^\flat\otimes^\pre E))_{\le k}
\]
admits a section, where $\otimes^\pre$ denotes the tensor product in $\SpP^\pre$.
\end{lemma}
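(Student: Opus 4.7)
The plan is to construct the right inverse directly from the lift furnished by Lemma \ref{lem:Lif}, and then to verify the inverse property by transporting everything through Construction \ref{cons:Lif}. Write $F := L_\pbf((-)_+ \otimes^\pre E)$ for the colimit-preserving functor $\PSh(\Sm) \to \LMod_E^\pbf(\SpP^\pre)$, so that $M := F(\Gr_n)$ and $N := F(\Vect_n^\flat)$, and let $f \colon M \to N$ denote the canonical map; the goal is to produce $s \colon N_{\le k} \to M_{\le k}$ with $f_{\le k} \circ s = \id$. First I will apply Lemma \ref{lem:Lif} to obtain a map $\phi \colon \Vect_n^\flat \to |\cosk_k(\Gr_n^{\bar{\Delta}^\bullet})|$ in $\PSh(\Sm)$ lifting $\mcal{O}(k+1) \otimes p^*$. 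Applying $F$, which preserves colimits and hence realizations, and then the $k$-truncation, I get a morphism
\[
F(\phi)_{\le k} \colon N_{\le k} \to F(|\cosk_k(\Gr_n^{\bar{\Delta}^\bullet})|)_{\le k}.
\]

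Next I will identify the target canonically with $M_{\le k}$. Applying Construction \ref{cons:Lif} to $M$ (which satisfies projective bundle formula) yields a level-wise equivalence of semi-simplicial objects $M \simeq M^{\bar{\Delta}^\bullet_+}/M^{\bar{\Delta}^\bullet_{\infty+}}$. Under the canonical comparison $F(\Gr_n^{\bar{\Delta}^m}) \to M^{\bar{\Delta}^m_+}$ (adjoint to the universal class $\Gr_n \to \OmegaP^\infty M$), the semi-simplicial $E$-module $F(\Gr_n^{\bar{\Delta}^\bullet})$, after quotienting by the image of the Chern class maps $\sum_{i\ge 1}\xi^i$ appearing in Construction \ref{cons:Lif}, becomes canonically equivalent to the constant semi-simplicial object at $M$. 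Taking realization and then $k$-truncation, and using the standard fact that $|\cosk_k Y_\bullet|_{\le k}$ only sees $Y_\bullet$ in degrees $\le k$, gives the desired identification and thus the candidate right inverse $s$.

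Finally, to verify $f_{\le k} \circ s = \id$, I will use that $\phi$ lifts $\mcal{O}(k+1) \otimes p^*$, so the composite $f \circ F(\phi)$ is naturally identified with $F$ applied to the map $\mcal{E} \mapsto \mcal{O}(k+1) \otimes p^*\mcal{E}$. Applying Construction \ref{cons:Lif} this time to $N$ (which also satisfies projective bundle formula, being the image of $\Vect_n^\flat$ under $F$) identifies this map with the canonical level-wise equivalence $N \simeq N^{\bar{\Delta}^\bullet_+}/N^{\bar{\Delta}^\bullet_{\infty+}}$, whose realization and $k$-truncation give the identity on $N_{\le k}$. The main obstacle will be the identification in the previous paragraph: matching the Chern class filtration of Construction \ref{cons:Lif} with the $\cosk_k$ structure on the Grassmannian side, and carefully juggling the commutation of $F$, $L_\pbf$, realization, $\cosk_k$, and $(-)_{\le k}$; once this identification is in place, the right-inverse property is essentially automatic from the functoriality of Construction \ref{cons:Lif} applied to both $M$ and $N$.
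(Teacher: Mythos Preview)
Your overall strategy—combine the lift of Lemma~\ref{lem:Lif} with the retraction of Construction~\ref{cons:Lif} and then truncate—is exactly the paper's, but you have the variance backwards, and this creates a genuine gap at your identification step.

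You apply the colimit-preserving functor $F = L_\pbf((-)_+\otimes^\pre E)$ to $|\cosk_k(\Gr_n^{\bar\Delta^\bullet})|$ and then try to identify $F(|\cosk_k(\Gr_n^{\bar\Delta^\bullet})|)_{\le k}$ with $M_{\le k}$ via Construction~\ref{cons:Lif}. But Construction~\ref{cons:Lif} describes $M^{\bar\Delta^\bullet_+}$, not $F(\Gr_n^{\bar\Delta^\bullet})$: the comparison map $F(\Gr_n^{\bar\Delta^m}) \to M^{\bar\Delta^m_+}$ you invoke is an assembly map $L_\pbf((\Gr_n^{\bar\Delta^m})_+\otimes^\pre E) \to (L_\pbf(\Gr_{n+}\otimes^\pre E))^{\bar\Delta^m_+}$, and there is no reason for it to be an equivalence. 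In particular the Chern-class maps $\xi^i$ of Construction~\ref{cons:Lif} land in $M^{\bar\Delta^\bullet_+}$, not in $F(\Gr_n^{\bar\Delta^\bullet})$, so ``quotienting $F(\Gr_n^{\bar\Delta^\bullet})$ by their image'' does not produce the constant object $M$.

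The paper avoids this by working contravariantly in $\PSh(\Sm)$: one composes
\[
\Vect_n^\flat \xrightarrow{\ \phi\ } |\cosk_k(\Gr_n^{\bar\Delta^\bullet})| \longrightarrow |\cosk_k(\OmegaP^\infty M)^{\bar\Delta^\bullet_+}| \longrightarrow \OmegaP^\infty(M)_{\le k},
\]
where the second arrow comes from the canonical map $\Gr_n \to \OmegaP^\infty M$ and the third from the semi-simplicial retraction $M^{\bar\Delta^\bullet_+}\to M$ of Construction~\ref{cons:Lif} (applied after $\OmegaP^\infty$, which commutes with $(-)^{\bar\Delta^m_+}$). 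Only then does one pass through the adjunction $(\SigmaP^\infty(-)_+\otimes^\pre E,\ \OmegaP^\infty)$ and the universal property of $L_\pbf$ and $(-)_{\le k}$ to obtain the desired map in $\SpP^\pre$. Your verification step has the same issue and is also too vague: the paper checks that the resulting endomorphism of $N_{\le k}$ is the identity by observing that an inverse to the retraction $N^{\bar\Delta^k_+}\to N$ is given by restriction to a point of $\bar\Delta^k$ away from $\bar\Delta^k_\infty$, where $\mcal{O}(k+1)$ is trivial, so that $\mcal{O}(k+1)\otimes p^*$ becomes the canonical map.
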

\begin{proof}
In the proof, we write $\otimes:=\otimes^\pre$ for simplicity.
By Construction \ref{cons:Lif}, for each left $E$-module $M$ which satisfies projective bundle formula, we have a morphism of semi-simplicial objects
\[
	M^{\bar{\Delta}^\bullet_+} \to M^{\bar{\Delta}^\bullet_+}/M^{\bar{\Delta}^\bullet_{\infty+}} \simeq M.
\]
Combining it with Lemma \ref{lem:Lif}, we obtain a diagram in $\PSh(\Sm)$
\[
\xymatrix{
	& & \lvert\cosk_k(\OmegaP^\infty(L_\pbf(\Gr_{n+}\otimes E))^{\bar{\Delta}^\bullet_+})\rvert \ar[d] \ar[r] &
		\OmegaP^\infty(L_\pbf(\Gr_{n+}\otimes E))_{\le k} \ar[d] \\
	\Vect^\flat_n \ar[rr]^-{\mcal{O}(k+1)\otimes p^*} \ar@/^1pc/[rru] & & 
		\lvert\cosk_k(\OmegaP^\infty(L_\pbf(\Vect^\flat_{n+}\otimes E))^{\bar{\Delta}^\bullet_+})\rvert \ar[r] &
			 \OmegaP^\infty(L_\pbf(\Vect^\flat_{n+}\otimes E))_{\le k}.
}
\]
By taking the adjunctions, we obtain a diagram in $\SpP^\pre$
\[
\xymatrix{
	& (L_\pbf(\Gr_{n+}\otimes E))_{\le k} \ar[d] \\
	(L_\pbf(\Vect^\flat_{n+}\otimes E))_{\le k} \ar[r] \ar@/^1pc/[ru] &
		(L_\pbf(\Vect^\flat_{n+}\otimes E))_{\le k}.
}
\]
Here we used the fact that truncation preserves projective bundle formula.
Hence, it remains to show that the bottom map is homotopic to the identity.
By construction, it suffices to show that the composite
\[
	\Vect^\flat_n
		\xrightarrow{\mcal{O}(k+1)\otimes p^*} \OmegaP^\infty(L_\pbf(\Vect^\flat_{n+}\otimes E))^{\bar{\Delta}^k_+}
		\to \frac{\OmegaP^\infty(L_\pbf(\Vect^\flat_{n+}\otimes E))^{\bar{\Delta}^k_+}}
			{\OmegaP^\infty(L_\pbf(\Vect^\flat_{n+}\otimes E))^{\bar{\Delta}^k_{\infty+}}}
		\xleftarrow{\sim} \OmegaP^\infty L_\pbf(\Vect^\flat_{n+}\otimes E)
\]
is homotopic to the canonical map.
An inverse of the last equivalence is given by the pullback along the inclusion $i\colon\{*\}\to\bar{\Delta}^k$ for some point $*\in\bar{\Delta}^k$ not meeting $\bar{\Delta}^k_\infty$.
Since $i^*\mcal{O}(k+1)$ is trivial, the assertion follows.
This completes the proof.
\end{proof}

\begin{remark}\label{rem:Lif}
The following variant of Lemma \ref{lem:Lif2} will be used later.
Let $S$ be a small set of morphisms in $\LMod_E(\SpP^\pre)$ such that $S$-local objects are stable under truncations and contains all $L_\pbf$-local objects and that $S$-equivalences are stable under $X\otimes^\pre-$ for every $X\in\SpP^\pre$.
Then the proof of Lemma \ref{lem:Lif2} goes through and the canonical map
\[
	(L_S(\Gr_{n+}\otimes^\pre E))_{\le k} \to (L_S(\Vect_{n+}^\flat\otimes^\pre E))_{\le k}
\]
admits a section, where $L_S$ denotes the Bousfield localization with respect to $S$.
\end{remark}

\subsection{Cohomology of the Picard stack}\label{Pic}

\begin{p}
Let $\V$ be an $\infty$-category presentably tensored over $\St$ as before, cf.\ \ref{p:DMS}.
We assume that $\V$ is compactly generated and that $\mbb{P}^1$ is compact in $\V$. 
Let $L_\ex$ denote the localization $\SpP(\V)\to\SpP(\V^\ex)$.
\end{p}

\begin{proposition}\label{prop:Pic}
Let $E$ be an orientable motivic spectrum in $\V$.
Then the canonical map
\[
	L_\ex(\mbb{P}^\infty_+\otimes E) \to L_\ex(\Pic_+\otimes E)
\]
is an equivalence, where the tensor products are taken in $\SpP(\V)$.
\end{proposition}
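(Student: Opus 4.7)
The plan is to apply Lemma \ref{lem:Lif2} with $n = 1$ after several reductions.

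Since $L_\ex$ is smashing (the $L_\ex$-local objects form a tensor-ideal closed under colimits), the retraction exhibiting orientability of $E$ survives $L_\ex$. Replacing $E$ by $L_\ex E$, I may assume $E$ satisfies elementary blowup excision, and Lemma \ref{lem:EBE2} then gives that $E$ satisfies projective bundle formula. Because $\Pic^\flat \hookrightarrow \Pic$ is a Zariski local equivalence, $\Pic$ and $\Pic^\flat$ agree as Zariski sheaves, so the goal reduces to showing that the forgetful map $\mbb{P}^\infty_+ \otimes E \to \Pic^\flat_+ \otimes E$ is an equivalence in $\SpP(\V)^\ex$. To deploy the ring-spectrum-valued Lemma \ref{lem:Lif2}, I would pass to the universal orientable motivic ring spectrum $R := \intMap(E, E)$ over $\Spec\mbb{Z}$ (orientable by Lemma \ref{lem:Ori}(iv), with $E$ naturally a left $R$-module), prove the statement for $R$, and tensor down over $R$ to recover it for $E$.

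With these reductions, the variant of Lemma \ref{lem:Lif2} from Remark \ref{rem:Lif} applied with $S$ the class of morphisms inverted by Zariski descent combined with elementary blowup excision provides, for each $k \ge 0$, a right inverse $s_k$ of the canonical map $(\mbb{P}^\infty_+ \otimes E)_{\le k} \to (\Pic^\flat_+ \otimes E)_{\le k}$ in the $L_\ex$-localized $\infty$-category. The hypotheses of Remark \ref{rem:Lif} are verified as follows: $L_\ex$-local objects are closed under truncations (the descent and excision conditions are level-wise on connective presheaves), contain every $L_\pbf$-local object (by Lemma \ref{lem:EBE2} combined with the reduction that $E$-modules here are orientable), and $L_\ex$-equivalences are stable under $\otimes^\pre X$ (smashing).

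The main obstacle will be to upgrade each right inverse $s_k$ to a genuine equivalence. My plan is to trace through the construction of $s_k$ via Lemma \ref{lem:Lif} and Construction \ref{cons:Lif}: the composite of the forgetful map with $s_k$ classifies, after passing through the quotient $M^{\bar{\Delta}^\bullet_+}/M^{\bar{\Delta}^\bullet_{\infty+}} \simeq M$ (valid since $M$ satisfies projective bundle formula), an operation which sends the tautological class on $\mbb{P}^\infty$ to its $\mcal{O}(k+1) \otimes p^*$-twist and then attempts to return. By Lemma \ref{lem:Ori3}, the inserted $\mcal{O}(k+1)$-twist becomes a nilpotent element of bounded order in the cohomology of $\bar{\Delta}^\bullet$, and the quotient by $M^{\bar{\Delta}^\bullet_{\infty+}}$ precisely cancels the contributions of positive powers of $c_1(\mcal{O}(\ell))$ for $\ell \ge 1$, so at truncation level $k$ only the tautological identity contribution survives. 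This identifies the composite with the identity at each $k$, showing $s_k$ is also a left inverse. Finally, these compatible equivalences at each truncation level assemble via Postnikov completeness of connective-valued $\mbb{P}^1$-spectra into the desired global equivalence in $\SpP(\V)^\ex$.
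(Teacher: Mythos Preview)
Your reduction to the ring spectrum $R = \intMap(E,E)$ is fine and matches the paper. However, there are two genuine gaps.

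First, a technical one: you cannot verify the hypotheses of Remark~\ref{rem:Lif} for your choice of $S$. You claim that $L_\ex$-local objects are closed under truncations because ``the descent and excision conditions are level-wise on connective presheaves,'' but Zariski descent is a \emph{limit} condition, and truncation does not commute with limits. The truncation of a Zariski sheaf is generally not a Zariski sheaf. This is precisely why the paper works with $L_\pbf$ in $\SpP^\pre$ (presheaves) rather than with $L_\ex$: projective bundle formula \emph{is} preserved by truncation, since $(-)^{\mbb{P}^n}$ and the finite sum $\bigoplus_{i=0}^n \SigmaP^{-i}$ both commute with levelwise truncation.

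Second, and more fundamentally: your argument that the right inverse $s_k$ is also a left inverse does not work. What you describe---tracing through the $\mcal{O}(k+1)\otimes p^*$ twist and the quotient by $M^{\bar{\Delta}^\bullet_{\infty+}}$---is exactly the argument in the proof of Lemma~\ref{lem:Lif2} showing that the composite $\Pic^\flat \xrightarrow{s_k} \mbb{P}^\infty \to \Pic^\flat$ is the identity, i.e.\ that $s_k$ is a \emph{right} inverse. To show $s_k$ is a left inverse you would need the composite $\mbb{P}^\infty \to \Pic^\flat \xrightarrow{s_k} \mbb{P}^\infty$ to be the identity, but the lift in Lemma~\ref{lem:Lif} is built from non-canonical choices of surjections onto $\mcal{O}(k+1)\otimes p^*\mcal{E}$, and there is no reason these choices restrict to the canonical inclusion on $\mbb{P}^\infty \subset \Pic^\flat$. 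The phrase ``precisely cancels the contributions'' is doing no work here.

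The paper instead constructs a left inverse \emph{directly}, and this is the missing idea. Set $E' := L_\pbf(\mbb{P}^\infty_+ \otimes^\pre E)$. Since $E'(X)$ for $X \in \Sm$ has nilpotent $c_1$ by Lemma~\ref{lem:Ori3}, and $E'(\Pic^\flat)$ is a limit of such, $E'(\Pic^\flat)$ is $c_1$-complete. One can therefore form the map $\prod_{i\ge 0} c_1^i \colon \prod_i \SigmaP^{-i} E'(*) \to E'(\Pic^\flat)$, and the projective bundle formula identifies the source with $E'(\mbb{P}^\infty)$. This furnishes a map $\Pic^\flat \to \OmegaP^\infty E'$ lifting the canonical map on $\mbb{P}^\infty$, which by adjunction is the desired left inverse. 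Combined with the right inverse from Lemma~\ref{lem:Lif2}, this gives the equivalence.
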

\begin{proof}
By replacing $E$ by $\intMap(E,E)$ and by choosing the orientation, we may assume that $E$ is an oriented motivic $\mbb{E}_1$-ring spectrum over $\Spec(\mbb{Z})$.
A little stronger, we prove that the canonical map
\[
	L_\pbf(\mbb{P}^\infty_+\otimes^\pre E) \to L_\pbf(\Pic^\flat_+\otimes^\pre E)
\]
is an equivalence in $\SpP^\pre$.
Thanks to Lemma \ref{lem:Lif2}, it suffices to show that the map admits a retraction.
We set $E':=L_\pbf(\mbb{P}^\infty\otimes^\pre E)$.
We claim that the map
\[
	\prod_{i=0}^\infty c_1^i \colon E'(\mbb{P}^\infty) \to E'(\Pic^\flat)
\]
is well-defined, i.e., $E'(\Pic^\flat)$ is a $c_1$-complete $E(\Pic^\flat)$-module.
Note that $E'(\Pic^\flat)$ is a limit of $E'(X)$ with $X\in\Sm$ and that $c_1$ is nilpotent in $E'(X)$ by Lemma \ref{lem:Ori3}.
Since a limit of complete modules is complete, the claim follows.
Now we have a commutative diagram 
\[
\xymatrix{
	\prod_{i=1}^\infty \SigmaP^{-i}E'(\Spec(\mbb{Z})) \ar[rd]^\simeq \ar[d]_{\prod_{i=1}^\infty c_1^i} & \\
	E'(\Pic^\flat) \ar[r] & E'(\mbb{P}^\infty)
}
\]
and the diagonal arrow is an equivalence by the projective bundle formula.
In particular, the canonical map $\mbb{P}^\infty\to\OmegaP^\infty E'$ lifts to a map $\Pic^\flat\to\OmegaP^\infty E'$, which gives a desired retraction.
\end{proof}

\begin{corollary}\label{cor:Pic}
Suppose that $\V$ is multiplicative.
Let $E$ be a homotopy commutative motivic ring spectrum in $\V$.
Assume that $E$ is oriented and satisfies projective bundle formula.
Then we have a ring isomorphism
\[
	E^{*,*}(\Pic_+\otimes R) \simeq E^{*,*}(R)[[c_1]]
\]
for every homotopy cocommutative motivic coring spectrum $R$ in $\V$.
\end{corollary}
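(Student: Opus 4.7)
The plan is to combine Proposition \ref{prop:Pic} with the finite-dimensional projective bundle formula and pass to the limit.

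First I would observe that, because $E$ is oriented and satisfies projective bundle formula, Lemma \ref{lem:EBE2} gives that $E$ satisfies elementary blowup excision; hence $E$ is $L_\ex$-local, so $\Map(-, E)$ factors through $L_\ex$. Applying Proposition \ref{prop:Pic} to $E' = E \otimes R^\vee$ (or equivalently mapping into $E$ out of both sides after tensoring with $R$), I get a natural equivalence
\[
	E^{*,*}(\Pic_+ \otimes R) \simeq E^{*,*}(\mbb{P}^\infty_+ \otimes R),
\]
where $c_1 \in E^1(\Pic)$ corresponds to $c_1(\mcal{O}(1)) \in E^1(\mbb{P}^\infty)$.

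Next, writing $\mbb{P}^\infty = \colim_n \mbb{P}^n$ as a stack, the functor $\SigmaP^\infty(-)_+$ preserves colimits, so $\Map(-, \SigmaP^q E)$ sends this colimit to a limit. Thus, bidegree by bidegree,
\[
	E^{p,q}(\mbb{P}^\infty_+ \otimes R) \simeq \lim_n E^{p,q}(\mbb{P}^n_+ \otimes R),
\]
with vanishing $\lim^1$ since the transition maps are surjective (each term $E^{p-2i,q-i}(R) \cdot \xi^i$ that appears for $\mbb{P}^{n+1}$ with $i \le n$ already appears for $\mbb{P}^n$). By the remark after Lemma \ref{lem:EBE3}, the centrality hypothesis of Corollary \ref{cor:PBF} is automatic for homotopy commutative $E$ and homotopy cocommutative $R$, so Corollary \ref{cor:PBF} gives a ring isomorphism
\[
	E^{*,*}(\mbb{P}^n_+ \otimes R) \simeq E^{*,*}(R)[\xi]/\xi^{n+1}
\]
with $\xi = c_1(\mcal{O}(1))$ of bidegree $(2,1)$, and the restriction maps are the evident quotient maps. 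Taking the limit in $n$ yields the graded power series ring $E^{*,*}(R)[[c_1]]$, as desired.

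The only step that requires any care is confirming the multiplicativity of the identification in Proposition \ref{prop:Pic}: the canonical map $\mbb{P}^\infty \to \Pic$ is a map of $\mbb{E}_\infty$-monoids (up to the Zariski-local equivalence $\Pic^\flat \simeq \Pic$), so after smashing with $E$ and localizing at $L_\ex$ it respects the ring structure on both sides; combined with the ring isomorphism above for each $\mbb{P}^n$, this makes the final isomorphism a ring isomorphism. No genuine obstacle arises since every ingredient has already been established earlier in the section.
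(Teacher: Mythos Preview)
Your proof is correct and matches the paper's one-line approach (combine Proposition~\ref{prop:Pic} with Corollary~\ref{cor:PBF} and pass to the limit over $\mbb{P}^n$), supplying the details the paper omits. Two small refinements are worth noting. First, the passage from Proposition~\ref{prop:Pic} to $E^{*,*}(\Pic_+\otimes R)\simeq E^{*,*}(\mbb{P}^\infty_+\otimes R)$ does not go through a literal ``$E\otimes R^\vee$'' (which is unavailable for general $R$): rather, Proposition~\ref{prop:Pic} says $\mbb{P}^\infty_+\otimes E\to\Pic_+\otimes E$ is an $L_\ex$-equivalence, and since $\intMap(R,E)$ is $L_\ex$-local one obtains $\Map(\Pic_+\otimes E\otimes R,E)\simeq\Map(\mbb{P}^\infty_+\otimes E\otimes R,E)$; the desired equivalence then follows because $\Map(X,E)$ is a natural retract of $\Map(E\otimes X,E)$ via the unit and multiplication of $E$. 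Second, multiplicativity holds simply because $\mbb{P}^\infty_+\to\Pic_+$ is automatically a map of cocommutative coalgebras (any map of pointed stacks is, via the diagonal), so no $\mbb{E}_\infty$-monoid structure on $\mbb{P}^\infty$ is needed.
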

\begin{proof}
This follows from Corollary \ref{cor:PBF} and Proposition \ref{prop:Pic}.
\end{proof}

\subsection{Chern classes and formal group laws}\label{Che}

\begin{definition}[Higher Chern class]\label{def:Che}
Let $E$ be a motivic ring spectrum over a qcqs derived scheme $S$.
Assume that $E$ is (linearly) oriented and satisfies projective bundle formula.
Let $\mcal{E}$ be a vector bundle of rank $r\ge 1$ on an $S$-stack $X$.
For $1\le i\le r$, we define the \textit{$i$-th Chern class} $c_i(\mcal{E})\in E^i(X)$ to be the unique element which satisfies the formula
\[
	\sum_{i=0}^r (-1)^i c_1(\mcal{O}(1))^i \cdot p^*c_{r-i}(\mcal{E}) = 0
\]
in $E^r(\mbb{P}(\mcal{E}))$ with the convention $c_0(\mcal{E})=1$, cf.\ Lemma \ref{lem:PBF}.
We write $c(\mcal{E}):=\sum_{i=0}^rc_i(\mcal{E})t^i$ and call it the \textit{total Chern class}.
\end{definition}

\begin{p}[Formal group law]
Let $E$ be a homotopy commutative motivic ring spectrum over $S$.
Assume that $E$ is oriented and satisfies projective bundle formula.
Let $m\colon\Pic\times\Pic\to\Pic$ be the map classifying the tensor products of line bundles.
Consider the induced map
\[
	m^* \colon E(\Pic_S) \to E(\Pic_S\times\Pic_S)
\]
and let $f_\univ$ be the image of the universal first Chern class $c_1$ in
\[
	E^*(\Pic_S\times\Pic_S) \simeq E^*(S)[[x,y]],
\] 
where the isomorphism is by Corollary \ref{cor:Pic}.
Then $f_\univ$ is a formal group law over $E^*(S)$.
Since first Chern classes on qcqs derived schemes are nilpotent by Lemma \ref{lem:Ori3}, for every pair of line bundles $\mcal{L},\mcal{L}'$ on $X\in\Sm_S$, we have
\[
	c_1(\mcal{L}\otimes\mcal{L}') = f_\univ(c_1(\mcal{L}),c_1(\mcal{L}'))
\]
in $E^*(X)$.
\end{p}

\begin{lemma}\label{lem:Che}
Let $E$ be a homotopy commutative motivic ring spectrum over a qcqs derived scheme $S$.
Assume that $E$ is oriented and satisfies projective bundle formula.
Let $\mcal{E}$ be a vector bundle of rank $r$ on $X\in\Sm_S$.
Then:
\begin{enumerate}[label=\upshape{(\roman*)}]
\item $c_i(\mcal{E})$ is nilpotent in $E^*(X)$ for every $i\ge 1$.
\item If $\mcal{E}$ admits a filtration
\[
	0 = \mcal{E}_0 \subset \mcal{E}_1 \subset \cdots \subset \mcal{E}_r=\mcal{E}
\]
such that $\mcal{L}_i=\mcal{E}_i/\mcal{E}_{i-1}$ is a line bundle for $1\le i\le r$, then 
\[
	c(\mcal{E}) = \prod_{i=1}^r(1+c_1(\mcal{L}_i)t)
\]
in $E^*(X)[t]$.
\item If we have a fiber sequence
\[
	\mcal{E}' \to \mcal{E} \to \mcal{E}''
\]
of vector bundles on $X$, then $c(\mcal{E})=c(\mcal{E}')\cdot c(\mcal{E}'')$ in $E^*(X)[t]$.
\end{enumerate} 
\end{lemma}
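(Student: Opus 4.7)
The plan is to reduce (i) and (iii) to (ii) via the splitting principle on flag bundles, with (ii) being the main technical step that I would attack by a direct calculation in $E^*(\mbb{P}(\mcal{E}))$ using Lemma \ref{lem:Ori3}. For (ii), I would work on $p\colon\mbb{P}(\mcal{E})\to X$. By the projective bundle formula, $E^*(\mbb{P}(\mcal{E}))$ is a free $E^*(X)$-module with basis $1,\xi,\dotsc,\xi^{r-1}$ where $\xi=c_1(\mcal{O}(1))$, and the Chern classes are characterized by the defining relation $\sum_{i=0}^r(-1)^i\xi^ip^*c_{r-i}(\mcal{E})=0$. Setting $\ell_j:=p^*c_1(\mcal{L}_j)$, the splitting formula (ii) is equivalent to the polynomial identity
\[
	\prod_{j=1}^r(\xi-\ell_j)=0
\]
in $E^r(\mbb{P}(\mcal{E}))$. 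To establish this I would invoke Lemma \ref{lem:Ori3}: Zariski-locally on $X$ the filtration $\mcal{E}_\bullet$ splits as $\bigoplus\mcal{L}_j$, whereupon $\mbb{P}(\mcal{E})$ over a trivializing chart $V\subseteq X$ is covered by the $r$ standard opens of $\mbb{P}^{r-1}$, on the $j$-th of which $\mcal{O}(1)\simeq p^*\mcal{L}_j$; Lemma \ref{lem:Ori3} then yields $\prod(\xi-\ell_j)=0$ over $\mbb{P}(\mcal{E})|_V$, and Zariski descent of the motivic ring spectrum $E$ assembles these local vanishings into the required global identity. Once it holds, comparing coefficients against the defining relation reads off $c_i(\mcal{E})=e_i(c_1(\mcal{L}_1),\dotsc,c_1(\mcal{L}_r))$, which is (ii).

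For (iii) I would use the splitting principle. Form the partial flag bundle $\pi\colon Y\to X$ parameterizing complete flags of $\mcal{E}$ that refine the subbundle $\mcal{E}'\subset\mcal{E}$; concretely this is an iterated projective bundle, first built on $\mcal{E}'$ and then on $\mcal{E}''$. On $Y$, $\pi^*\mcal{E}$ carries a complete filtration by line bundles whose first $r':=\mathrm{rank}(\mcal{E}')$ associated graded pieces filter $\pi^*\mcal{E}'$ and whose remaining pieces filter $\pi^*\mcal{E}''$; the iterated projective bundle formula makes $E^*(Y)$ a free $E^*(X)$-module, so $\pi^*$ is split injective. Applying (ii) to all three bundles yields $c(\pi^*\mcal{E})=c(\pi^*\mcal{E}')\cdot c(\pi^*\mcal{E}'')$ on $Y$, which descends to $X$ by injectivity. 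For (i), apply (ii) to the pullback of $\mcal{E}$ to its full flag bundle $\pi\colon Y\to X$: $\pi^*c_i(\mcal{E})$ equals the $i$-th elementary symmetric polynomial in first Chern classes of line bundles on $Y$, each of which is nilpotent in $E^*(Y)$ by Lemma \ref{lem:Ori3}(i); hence $\pi^*c_i(\mcal{E})$ is nilpotent, and split injectivity of $\pi^*$ forces $c_i(\mcal{E})$ to be nilpotent as well.

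The hard part will be the descent step in (ii). Lemma \ref{lem:Ori3} produces the vanishing $\prod(\xi-\ell_j)=0$ only over opens of $X$ where $\mcal{E}_\bullet$ splits, and the local isomorphisms $\mcal{O}(1)\simeq p^*\mcal{L}_j$ on the standard opens of $\mbb{P}^{r-1}$ do not piece together over all of $\mbb{P}(\mcal{E})$ in general. To promote these local vanishings to a global identity in $E^r(\mbb{P}(\mcal{E}))$, I expect one must re-run the relative cohomology argument underlying Lemma \ref{lem:Ori3} uniformly over a trivializing Zariski cover of $X$, and then use Zariski descent of $E$ to control the resulting gluing data; this is the one step in the argument that is not a routine verification.
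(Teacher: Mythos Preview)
Your overall plan—reducing (i) and (iii) to (ii) via flag bundles and the split injectivity of iterated projective-bundle pullbacks—matches the paper exactly. The issue is entirely in the proof of (ii).

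The descent step you flag is a genuine gap, not merely a non-routine verification. Zariski descent for $E$ holds at the level of spectra, but the restriction map $E^{*}(X)\to\prod_\alpha E^{*}(V_\alpha)$ need not be injective on homotopy groups; equivalently, after applying the projective bundle formula the vanishing of $\prod_j(\xi-\ell_j)$ amounts to equalities $c_i(\mcal{E})=e_i(\ell_1,\dotsc,\ell_r)$ in $E^{*}(X)$, and knowing these on every member of a finite affine cover does not force them globally. Your proposed fix (re-running the relative-cohomology argument ``uniformly'' and ``controlling the gluing data'') does not indicate how to kill the Mayer--Vietoris obstruction, and I do not see that it can be made to work along these lines.

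The paper sidesteps this entirely. It first reduces (ii) to the case where $\mcal{E}$ is \emph{globally} a direct sum $\bigoplus_{i=1}^r\mcal{L}_i$, via a splitting trick (citing \cite[Lemma~4.4]{AI}). In the split case the inclusions $\mcal{L}_i\hookrightarrow\mcal{E}$ induce global sections $s_i$ of $\mcal{L}_i^{-1}(1)$ on $\mbb{P}(\mcal{E})$ whose common vanishing locus is empty; the complements $\{s_i\neq 0\}$ then form a single open cover of $\mbb{P}(\mcal{E})$, and Lemma~\ref{lem:Ori3} applies once, globally, to give $\prod_i c_1(\mcal{L}_i^{-1}(1))=0$. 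A short formal-group-law manipulation converts this into $\prod_i(\xi-\ell_i)=0$. Incidentally, your direct application of Lemma~\ref{lem:Ori3} to the pairs $(\mcal{O}(1),p^*\mcal{L}_i)$ on that same cover would also work in the split case and avoids the formal-group-law step; the essential point is that a \emph{global} cover of $\mbb{P}(\mcal{E})$ with $\mcal{O}(1)|_{U_i}\simeq p^*\mcal{L}_i|_{U_i}$ exists only after the bundle has been split.
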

\begin{proof}
By taking the pullback of $\mcal{E}$ to the derived scheme representing full flags of $\mcal{E}$, (iii) is reduced to (ii).
Similarly, (i) follows from (ii) and the fact that first Chern classes are nilpotent.
To prove (ii), we are reduced to the case $\mcal{E}=\bigoplus_{i=1}^r\mcal{L}_i$ by the splitting trick as in \cite[Lemma 4.4]{AI}.
Consider the universal quotient $\mcal{E}\to\mcal{O}(1)$ on $\mbb{P}(\mcal{E})$.
The induced map $\mcal{L}_i\to\mcal{O}(1)$ gives a global section $s_i$ of $\mcal{L}_i^{-1}(1)$, and let $D_i\subset\mbb{P}(\mcal{E})$ be the derived vanishing locus of $s_i$.
Then the intersection of all $D_i$ with $1\le i\le r$ is empty, and thus we get $\prod_ic_1(\mcal{L}_i^{-1}(1))=0$ by Lemma \ref{lem:Ori3}.
By the formal group law, we have
\[
	c_1(\mcal{O}(1)) = c_1(\mcal{L}_i\otimes\mcal{L}_i^{-1}(1))
					 = c_1(\mcal{L}_i) + c_1(\mcal{L}_i^{-1}(1)) + \sum_{p,q\ge 1}a_{pq}c_1(\mcal{L}_i)^pc_1(\mcal{L}_i^{-1}(1))^q
\]
for some $a_{pq}\in E^*(S)$.
Therefore, we have $\prod_i(c_1(\mcal{O}(1))-c_1(\mcal{L}_i))=0$, which implies the desired formula.
\end{proof}

\begin{lemma}\label{lem:Che2}
Suppose that $\V$ is defined over a qcqs derived scheme $S$.
Let $E$ be a homotopy commutative oriented motivic ring spectrum over $S$.
Let $M$ be an $E$-module in $\SpP(\V)$ which satisfies projective bundle formula.
Let $\mcal{E}$ be a vector bundle of rank $r$ on an $S$-stack $X$ and $\mcal{Q}$ the universal quotient vector bundle of $\mcal{E}$ on the grassmannian $\Gr_n(\mcal{E})$.
Then the map
\[
	\sum_\alpha c(\mcal{Q})^\alpha \colon \bigoplus_\alpha \SigmaP^{-\lVert\alpha\rVert}M^{X_+} \to M^{\Gr_n(\mcal{E})_+}
\]
is an equivalence, where $\alpha$ runs over all $n$-tuples of non-negative integers with $|\alpha|\le r-n$.
For an $n$-tuple $\alpha=(\alpha_1,\dotsc,\alpha_n)$ of non-negative integers, we write $|\alpha|:=\sum\alpha_i$, $\lVert\alpha\rVert:=\sum i\alpha_i$ and $c^\alpha:=\prod c_i^{\alpha_i}$.
\end{lemma}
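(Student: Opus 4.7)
The first move is a standard reduction: both sides of the map are natural in the pair $(X,\mcal{E})$ and convert colimits of stacks into limits of motivic spectra. Writing an arbitrary $S$-stack as a colimit of smooth affine $S$-schemes, and then using Zariski descent on $X$ to trivialize $\mcal{E}$, one reduces to the case where $X \in \Sm_S$ is affine and $\mcal{E} = \mcal{O}_X^r$. Then $\Gr_n(\mcal{E}) \simeq X \times_S \Gr_n(\mcal{O}_S^r)$ and $M^{\Gr_n(\mcal{E})_+} \simeq (M^{X_+})^{\Gr_n(\mcal{O}_S^r)_+}$. Since the cotensor $M^{X_+}$ is again an $E$-module in $\SpP(\V)$ satisfying projective bundle formula, we may replace $M$ by $M^{X_+}$ and reduce to the case $X = S$, $\mcal{E} = \mcal{O}_S^r$.

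Next I argue by induction on $n$. The base case $n=1$ is exactly Lemma \ref{lem:PBF} applied to the trivial rank-$r$ bundle on $S$. For the inductive step, introduce the partial flag $S$-scheme
\[
Y := \mrm{Fl}_{n-1,n}(\mcal{O}_S^r)
\]
parametrizing chains of quotients $\mcal{O}_S^r \twoheadrightarrow \mcal{Q}_n \twoheadrightarrow \mcal{Q}_{n-1}$ of ranks $n$ and $n-1$. The two forgetful projections $\pi_1\colon Y \to \Gr_n(\mcal{O}_S^r)$ and $\pi_2\colon Y \to \Gr_{n-1}(\mcal{O}_S^r)$ are both projective bundles: $\pi_1$ is a $\mbb{P}^{n-1}$-bundle parametrizing line subs of $\mcal{Q}_n$, while $\pi_2$ is a $\mbb{P}^{r-n}$-bundle parametrizing line quotients of the tautological rank-$(r-n+1)$ subbundle $\mcal{K}_{n-1}$. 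Applying Lemma \ref{lem:PBF} to each projection produces two free-module descriptions of $M^{Y_+}$, one over $M^{\Gr_n(\mcal{O}_S^r)_+}$ and one over $M^{\Gr_{n-1}(\mcal{O}_S^r)_+}$. Using the inductive hypothesis to expand the second description, and matching it against the first via the Whitney sum formula (Lemma \ref{lem:Che}(iii)) applied to the tautological sequence $0 \to \mcal{L} \to \mcal{Q}_n \to \mcal{Q}_{n-1} \to 0$ on $Y$, one then extracts the claimed description of $M^{\Gr_n(\mcal{O}_S^r)_+}$ as a free $M$-module on the monomials $c(\mcal{Q}_n)^\alpha$ with $|\alpha| \le r-n$.

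\textbf{Main obstacle.} The chief difficulty is the combinatorial matching between the two PBF bases of $M^{Y_+}$. One must verify that the specific monomial basis $\{c(\mcal{Q}_n)^\alpha : |\alpha| \le r-n\}$ — rather than, say, a Schur-polynomial basis — is the correct set of generators, and that the change of basis between the two free descriptions of $M^{Y_+}$ is invertible over $M^{\Gr_{n-1}(\mcal{O}_S^r)_+}$. This amounts to a calculation in the truncated power series ring on first Chern classes, justified by the nilpotence of first Chern classes (Lemma \ref{lem:Ori3}) and the formal group law for the oriented ring spectrum $E$.
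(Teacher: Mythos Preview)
Your proposal is correct and follows essentially the same route that the paper invokes by reference to \cite[Lemma 4.5]{AI}: reduce to the trivial bundle via the limit/Zariski-descent argument (exactly as in Lemma~\ref{lem:PBF}), then induct on $n$ using the two projective-bundle projections of the partial flag $\mrm{Fl}_{n-1,n}(\mcal{O}^r)$ and the Whitney formula of Lemma~\ref{lem:Che}(iii). The combinatorial matching you flag as the main obstacle is handled in \cite{AI} by a triangularity argument in the associated graded for the filtration by $\lVert\alpha\rVert$; your observation that it comes down to nilpotence of $c_1$ and the formal group law is the right diagnosis.
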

\begin{proof}
The proof of \cite[Lemma 4.5]{AI} works as it is under the validity of Lemma \ref{lem:Che}.
\end{proof}

\begin{corollary}\label{cor:Che}
Suppose that $\V$ is multiplicative.
Let $E$ be a homotopy commutative oriented motivic ring spectrum in $\V$ which satisfies projective bundle formula.
Then we have a ring isomorphism
\[
	E^*(\Gr_{n,+}\otimes R) \simeq E^*(R)[[c_1,\dotsc,c_n]]
\]
for every homotopy cocommutative motivic coring spectrum $R$ in $\V$.
\end{corollary}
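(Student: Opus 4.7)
The strategy is to derive the corollary from Lemma \ref{lem:Che2} by passing to the inverse limit along the filtration $\Gr_n = \colim_N \Gr_n(\mcal{O}^N)$.

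First, I would apply Lemma \ref{lem:Che2} to the internal mapping object $M := \intMap(R, E)$, which is a left $E$-module in $\SpP(\V)$ via the left $E$-action on the target $E$. Since $\intMap(R,-)$ is a right adjoint commuting with the autoequivalences $\SigmaP^{-i}$ and with finite direct sums, it preserves the equivalence defining projective bundle formula, so $M$ again satisfies projective bundle formula. With $X$ a base scheme over which $\V$ is defined and $\mcal{E} = \mcal{O}^N$, Lemma \ref{lem:Che2} then yields for each $N \ge n$ an equivalence
\[
\bigoplus_{|\alpha| \le N-n} \SigmaP^{-\|\alpha\|} M \xrightarrow{\sum_\alpha c(\mcal{Q}_N)^\alpha} M^{\Gr_n(\mcal{O}^N)_+},
\]
where $\mcal{Q}_N$ is the universal rank-$n$ quotient bundle on $\Gr_n(\mcal{O}^N)$. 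Unpacking this as mapping spectra and taking cohomology produces, at each finite stage, $E^*(R)$-module isomorphisms
\[
E^*(\Gr_n(\mcal{O}^N)_+ \otimes R) \cong \bigoplus_{|\alpha| \le N-n} E^{*-\|\alpha\|}(R)\cdot c^\alpha,
\]
with basis the Chern monomials $c^\alpha = c_1(\mcal{Q}_N)^{\alpha_1}\cdots c_n(\mcal{Q}_N)^{\alpha_n}$.

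Next I would take the inverse limit as $N \to \infty$. Because $\SigmaP^\infty \Gr_{n,+} = \colim_N \SigmaP^\infty \Gr_n(\mcal{O}^N)_+$ and cohomology converts colimits to limits, and because the universal quotients are compatible along the standard closed immersions $\iota_N$ (i.e.\ $\iota_N^*\mcal{Q}_{N+1} \simeq \mcal{Q}_N$, so the $c_i$'s glue to well-defined classes on $\Gr_n$), the inverse limit assembles to the formal power series ring $E^*(R)[[c_1,\dotsc,c_n]]$. The completion here is genuine rather than spurious: since $E^*(R)$ can have non-trivial coefficients in arbitrarily negative cohomological degrees, any fixed total degree picks up contributions from infinitely many $\alpha$, producing formal power series in the limit.

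For the ring structure, the homotopy commutativity of $E$ and homotopy cocommutativity of $R$, combined with Lemma \ref{lem:EBE3}---whose hypothesis of elementary blowup excision holds because of projective bundle formula via Lemma \ref{lem:EBE2}---ensure that $E^{*,*}(-\otimes R)$ is graded-commutative and that the Chern classes lie in its center. The finite-level isomorphisms are therefore ring isomorphisms onto the corresponding finite-level quotients of $E^*(R)[c_1,\dotsc,c_n]$, and the inverse limit inherits the expected power series multiplication.

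The only subtle point---the main obstacle, such as it is---is confirming that the topology induced by the finite-level ideals of relations actually produces the formal power series ring $E^*(R)[[c_1,\dotsc,c_n]]$ in the limit, rather than some weaker completion of $E^*(R)[c_1,\dotsc,c_n]$. This reduces to bookkeeping with the explicit $E^*(R)$-bases supplied by Lemma \ref{lem:Che2} and the naturality of Chern classes along $\iota_N$; no further conceptual input beyond the machinery already established in this section is needed.
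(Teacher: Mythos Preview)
Your proposal is correct and follows the same route the paper indicates: the paper's own proof is the one-liner ``This follows from Lemma~\ref{lem:Che2} as in \cite[Corollary 4.6]{AI}'', i.e.\ exactly the finite-Grassmannian computation of Lemma~\ref{lem:Che2} followed by passage to the inverse limit over $N$, which is precisely what you spell out. Your use of $M=\intMap(R,E)$ to absorb $R$ into the module, the compatibility of Chern classes along $\iota_N$, and the identification of the limit with the formal power series ring are all the standard ingredients of that argument.
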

\begin{proof}
This follows from Lemma \ref{lem:Che2} as in \cite[Corollary 4.6]{AI}.
\end{proof}

\begin{theorem}\label{thm:Che}
Suppose that $\V$ is multiplicative.
Let $E$ be a homotopy commutative orientable motivic $\mbb{E}_1$-ring spectrum in $\V$.
Then the canonical map
\[
	L_\ex(\Gr_n\otimes E) \to L_\ex(\Vect_n\otimes E)
\]
is an equivalence, where the tensor products are taken in $\SpP(\V)$.
\end{theorem}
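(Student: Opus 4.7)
The plan is to follow the blueprint of Proposition \ref{prop:Pic} (the rank-one case), replacing $\Pic$ by $\Vect_n$, $\mbb{P}^\infty$ by $\Gr_n$, and the single Chern class $c_1$ by all of $c_1,\dotsc,c_n$. First, by replacing $E$ with $\intMap(E,E)$ and choosing an orientation, I reduce to the case where $E$ is an oriented homotopy commutative motivic $\mbb{E}_1$-ring spectrum over $\Spec(\mbb{Z})$. Since the inclusion $\Vect_n^\flat\hookrightarrow\Vect_n$ is a Zariski-local equivalence, by Remark \ref{rem:PTh} it suffices to prove the stronger statement that the map
\[
	L_\pbf(\Gr_{n+}\otimes^\pre E)\to L_\pbf(\Vect_{n+}^\flat\otimes^\pre E)
\]
is an equivalence in $\SpP^\pre$.

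Lemma \ref{lem:Lif2} already supplies a right inverse at each truncation level, so it suffices to construct a left inverse to the untruncated map. Setting $E':=L_\pbf(\Gr_{n+}\otimes^\pre E)$, producing such a left inverse is equivalent to extending the canonical class $\Gr_n\to\OmegaP^\infty E'$ along the restriction $\Gr_n\hookrightarrow\Vect_n^\flat$. Since $E$ is oriented, the universal rank-$n$ bundle on $\Vect_n^\flat$ has higher Chern classes $c_1,\dotsc,c_n$ (Definition \ref{def:Che}) which act on the $E$-module $E'(\Vect_n^\flat)$. By Lemma \ref{lem:Che}(i) these classes are nilpotent after restriction to any smooth scheme, so $E'(\Vect_n^\flat)=\lim_{X\in\Sm}E'(X)$ is $(c_1,\dotsc,c_n)$-adically complete. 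Consequently, the multivariate power-series map
\[
	\sum_\alpha c^\alpha\cdot- \colon \prod_\alpha\SigmaP^{-\lVert\alpha\rVert}E'(\Spec\mbb{Z})\to E'(\Vect_n^\flat)
\]
(indexed by $n$-tuples $\alpha$ of non-negative integers) is well-defined. On the other side, Lemma \ref{lem:Che2} applied to the $L_\pbf$-local $E$-module $E'$ (which satisfies projective bundle formula by construction), together with passage to the limit in $\Gr_n=\colim_N\Gr_n(\mcal{O}^N)$, identifies $E'(\Gr_n)$ with this same product via the analogous map. The resulting commutative triangle
\[
\xymatrix@C+1pc{
	\prod_\alpha\SigmaP^{-\lVert\alpha\rVert}E'(\Spec\mbb{Z}) \ar[rd]^-\simeq \ar[d]_{\sum_\alpha c^\alpha\cdot-} & \\
	E'(\Vect_n^\flat) \ar[r] & E'(\Gr_n)
}
\]
exhibits the desired lift, and hence the required left inverse.

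The main obstacle will be the convergence argument: verifying that $E'(\Vect_n^\flat)$ is $(c_1,\dotsc,c_n)$-adically complete in a way that makes the displayed power-series map meaningful, and that Lemma \ref{lem:Che2} indeed identifies $E'(\Gr_n)$ with the matching product after taking the limit over the finite-dimensional approximations $\Gr_n(\mcal{O}^N)$. Both rely ultimately on the nilpotency of Chern classes on smooth schemes supplied by Lemma \ref{lem:Ori3}, but the multivariate case requires more care than the rank-one completeness used in Proposition \ref{prop:Pic}.
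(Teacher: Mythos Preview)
Your strategy matches the paper's: produce a right inverse via the lifting lemma and a left inverse via Chern classes and completeness. The gap is in the step where you invoke Lemma \ref{lem:Che2} to identify $E'(\Gr_n)$ with $\prod_\alpha\SigmaP^{-\lVert\alpha\rVert}E'(\Spec\mbb{Z})$ for $E'=L_\pbf(\Gr_{n+}\otimes^\pre E)$ in $\SpP^\pre$. Lemma \ref{lem:Che2} deduces the grassmannian formula from projective bundle formula, but its proof passes through projectivizations of \emph{non-trivial} bundles (flag varieties over intermediate grassmannians), and for those one needs Lemma \ref{lem:PBF}, whose proof explicitly reduces to the trivial case ``by Zariski descent''. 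In the presheaf category $\SpP^\pre$ you have no Zariski descent, so Definition \ref{def:PTh}'s pbf (stated only for the trivial bundles $\mbb{P}^n$) does not obviously propagate to the grassmannian formula. Thus your $E'$, while $L_\pbf$-local, is not known to satisfy grf, and the diagonal equivalence in your triangle is unjustified.

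The paper handles this by introducing a separate localization $L_\grf$ on $\LMod_E(\SpP^\pre)$ that forces the grassmannian formula directly. Because grf trivially implies pbf (the $n=1$ case), the hypotheses of Remark \ref{rem:Lif} are met and the lifting argument still supplies the right inverse after truncation; and now the diagonal equivalence holds by the very definition of $L_\grf$-locality, with no appeal to Lemma \ref{lem:Che2} in $\SpP^\pre$. Lemma \ref{lem:Che2} is invoked only at the end, to see that the target localization $L_\ex$ on \emph{sheaves} factors through $L_\grf$---and there Zariski descent is available. The fix to your argument is therefore to replace $L_\pbf$ by $L_\grf$ throughout the presheaf-level part; everything else you wrote then goes through as in the paper.
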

\begin{proof}
We may assume that $\V=\St$.
We fix an orientation of $E$.
We say that a left $E$-module $M$ in $\SpP^\pre$ \textit{satisfies grassmannian formula} if the map
\[
	\sum_\alpha c(\mcal{Q})^\alpha \colon \bigoplus_\alpha \SigmaP^{-\lVert\alpha\rVert}M^{X_+} \to M^{\Gr_n(\mcal{O}^N)_+}
\]
is an equivalence for every $n\ge 1$ and $N\ge n$, where $\alpha$ runs over all $n$-tuples of non-negative integers with $|\alpha|\le N-n$, cf.\ Lemma \ref{lem:Che2}.
Let $\LMod_E^\grf(\SpP^\pre)$ be the full subcategory $\LMod_E(\SpP^\pre)$ spanned by left $E$-modules which satisfy grassmannian formula and $L_\grf$ the localization $\LMod_E(\SpP^\pre)\to\LMod_E^\grf(\SpP^\pre)$.
We claim that the canonical map
\[
	\phi \colon L_\grf(\Gr_n\otimes^\pre E) \to L_\grf(\Vect_n^\flat\otimes^\pre E)
\]
is an equivalence.
Then the theorem follows from this claim since the desired equivalence is obtained as a further localization of $\phi$ by Lemma \ref{lem:Che2}.

By Lemma \ref{lem:Lif2} and Remark \ref{rem:Lif}, the map $\phi$ admits a section after finite truncations.
Hence, it suffices to show that it admits a retraction.
Let $\mcal{E}_\univ$ be the universal vector bundle of rank $n$ on $\Vect_n$ and $\mcal{Q}$ the universal quotient vector bundle on $\Gr_n$.
Note that $c_i(\mcal{E}_\univ)$ lifts $c_i(\mcal{Q})$ via the canonical map $E^*(\Vect_n)\to E^*(\Gr_n)$.
We set $E':=L_\grf(\Gr_n\otimes^\pre E)$.
Then it follows from Lemma \ref{lem:Che} that $E'(\Vect_n^\flat)$ is a complete $E(\Vect_n^\flat)$-module along $(c_1(\mcal{E}_\univ),\dotsc,c_n(\mcal{E}_\univ))$.
Hence, we have a commutative diagram
\[
\xymatrix{
	\prod_\alpha \SigmaP^{-\lVert\alpha\rVert}E'(\Spec(\mbb{Z})) \ar[rd]^\simeq \ar[d]_{\prod_\alpha c(\mcal{E}_\univ)^\alpha} & \\
	E'(\Vect_n^\flat) \ar[r] & E'(\Gr_n).
}
\]
and the diagonal arrow is an equivalence by the grassmannian formula.
In particular, the canonical map $\Gr_n\to\OmegaP^\infty E'$ lifts to a map $\Vect_n\to\OmegaP^\infty E'$, which gives a desired retraction.
This completes the proof.
\end{proof}

\begin{corollary}\label{cor:Che2}
Suppose that $\V$ is multiplicative.
Let $E$ be a homotopy commutative oriented motivic $\mbb{E}_1$-ring spectrum in $\V$ which satisfies projective bundle formula.
Then we have a ring isomorphism
\[
	E^{*,*}(\Vect_{n,+}\otimes R) \simeq E^{*,*}(R)[[c_1,\dotsc,c_n]]
\]
for every homotopy cocommutative motivic coring spectrum $R$ in $\V$.
\end{corollary}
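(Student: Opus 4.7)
The plan is to deduce this from Theorem~\ref{thm:Che} and Corollary~\ref{cor:Che}. The key first observation is that, since $E$ is oriented and satisfies projective bundle formula, Lemma~\ref{lem:EBE2} tells us that $E$ satisfies elementary blowup excision, i.e., $E$ lies in $\SpP(\V)^\ex$. Hence for every $Y\in\SpP(\V)$ the canonical map $Y\to L_\ex Y$ induces an equivalence $\Map(L_\ex Y,E)\xrightarrow{\sim}\Map(Y,E)$, so the bigraded cohomology $E^{*,*}(Y)$ only sees $L_\ex Y$.

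Next, I would use that $E$ is an $\mbb{E}_1$-ring to rewrite
\[
\Map(\Vect_{n,+}\otimes R,\SigmaP^q E)\simeq\Map_E(\Vect_{n,+}\otimes R\otimes E,\SigmaP^q E),
\]
and then apply $L_\ex$-locality of $E$ to replace the source by its $L_\ex$-localization. Because $L_\ex$ is a smashing localization, it commutes with $-\otimes R$ and $-\otimes E$, so Theorem~\ref{thm:Che}, applied to the orientable motivic $\mbb{E}_1$-ring spectrum $E$, upgrades to an $L_\ex$-equivalence $\Gr_{n,+}\otimes R\otimes E\to\Vect_{n,+}\otimes R\otimes E$. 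Undoing the tensor--hom adjunction yields a natural equivalence
\[
E^{*,*}(\Vect_{n,+}\otimes R)\simeq E^{*,*}(\Gr_{n,+}\otimes R).
\]
The right-hand side is computed by Corollary~\ref{cor:Che} to be $E^{*,*}(R)[[c_1,\dotsc,c_n]]$, and the isomorphism is one of rings because the universal Chern classes $c_i(\mcal{E}_\univ)\in E^{*,*}(\Vect_n)$ pull back along the canonical map $\Gr_n\to\Vect_n$ to the classes $c_i(\mcal{Q})$ used in Corollary~\ref{cor:Che}.

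The step that needs most care is the bookkeeping around the pointed basepoints: Theorem~\ref{thm:Che} is phrased in terms of $\Gr_n$ and $\Vect_n$ (unpointed) tensored against $E$, whereas the statement above uses $\Vect_{n,+}$. This is harmless because $\Vect_{n,+}=\Vect_n\sqcup\ast$ splits off the extra basepoint component symmetrically on both sides, reducing the question for $\Vect_{n,+}\otimes R$ to the corresponding statement for $\Vect_n\otimes R$. The only other point to verify is that the bigrading in the formal power series matches: this follows from the fact that the Chern classes $c_i$ live in bidegree $(2i,i)$, a consequence of the linear orientation and of the fact that the formula defining $c_i$ in Definition~\ref{def:Che} is homogeneous in this bigrading.
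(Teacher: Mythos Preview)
Your approach is the same as the paper's, which simply cites Theorem~\ref{thm:Che} and Corollary~\ref{cor:Che}; you have correctly unpacked the deduction. There is one technical slip worth flagging: the claim that $L_\ex$ is a \emph{smashing} localization is not established anywhere in the paper and is doubtful in general---by Lemma~\ref{lem:SmL} it would require the $L_\ex$-local objects to form an ideal, i.e., $c\otimes E$ to satisfy elementary blowup excision for arbitrary $c$ whenever $E$ does, and there is no reason for this. What your argument actually uses, and what does hold, is the weaker fact that $L_\ex$-equivalences are stable under $-\otimes R$; equivalently, that $L_\ex$-local objects form a co-ideal, i.e., $\intMap(R,E)$ is $L_\ex$-local whenever $E$ is. This is immediate from the description in Lemma~\ref{lem:EBE}(iii), which is visibly preserved under $\intMap(R,-)$. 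With this correction in place your chain of equivalences
\[
\Map(\Vect_{n,+}\otimes R,\SigmaP^q E)\;\simeq\;\Map_E(\Vect_{n,+}\otimes R\otimes E,\SigmaP^q E)\;\simeq\;\Map_E(\Gr_{n,+}\otimes R\otimes E,\SigmaP^q E)\;\simeq\;\Map(\Gr_{n,+}\otimes R,\SigmaP^q E)
\]
is valid, and the remainder of your argument (invoking Corollary~\ref{cor:Che} and matching Chern classes) is fine.
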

\begin{proof}
This follows from Corollary \ref{cor:Che} and Theorem \ref{thm:Che}.
\end{proof}

\begin{remark}[Syntomic cohomology]
The results in this section can be applied to syntomic cohomology in the sense of \cite{BL} and reprove and generalize some of the results in \cite[\S9]{BL} (assuming projective bundle formula \cite[Theorem 9.1.1]{BL}).
\end{remark}

\begin{remark}
Theorem \ref{thm:Che} is generalized to an equivalence $L_\ex\Gr_n\simeq L_\ex\Vect_n$ in \cite[Theorem 5.3]{AHI}.
\end{remark}

\newpage

\section{Applications to $K$-theory}\label{MSn}

In this section, we apply the results we obtained so far to algebraic $K$-theory.
The main result is a universality of $K$-theory as an $\mbb{S}[\Pic]$-module: We prove that the non-connective $K$-theory is a universal $\mbb{S}[\Pic]$-module which satisfies projective bundle formula and Zariski descent (Theorem \ref{thm:AlK}).
We also discuss the Selmer $K$-theory introduced in \cite{Cla}.
We prove that the Selmer $K$-theory is a universal $\mbb{S}[\Pic]$-module which satisfies projective bundle formula and \'etale descent (Theorem \ref{thm:SeK}).

As a by-product, we see that giving an additive morphism $K\to K$, where $K$ denotes the $K$-theory stack, is equivalent to giving an arbitrary morphism of stacks $\Pic\to K$ (Theorem \ref{thm:CoK}).
This would be helpful for studying cohomology operations in $K$-theory.

\subsection{Cohomology of the $K$-theory stack}\label{CoK}

We apply Corollary \ref{cor:Che2} to study cohomology of the $K$-theory stack.
Theorem \ref{thm:CoK} below is a version of \cite[Proposition 2.27]{GS} and \cite[Proposition 5.1.1]{Rio}.
Our proof is inspired by their proofs.

\begin{p}
Let $\V$ be an $\infty$-category presentably tensored over $\St$ as before, cf.\ \ref{p:DMS}.
We assume that $\V$ is compactly generated and that $\mbb{P}^1$ is compact in $\V$.
\end{p}

\begin{p}[$K$-theory]
Let $K$ denote the $K$-theory stack on qcqs derived schemes, which yields an $S$-stack for each qcqs derived scheme $S$.
Note that the $K$-theory stack is left Kan extended from smooth $\mbb{Z}$-algebras as proved by Bhatt and Lurie, cf.\ \cite[Appendix A]{EHKSY}.
Therefore, the base change functor $\St\to\St_S$ carries $K$ to $K$.
\end{p}

\begin{notation}
For stacks $X,Y$, we write $[X,Y]$ for the set of homotopy classes of morphisms in $\St$.
When $X,Y$ are pointed, we write $[X,Y]_*$ for the set of homotopy classes of morphisms in $\St_*$.
\end{notation}

\begin{theorem}\label{thm:CoK}
Suppose that $\V$ is multiplicative.
Let $E$ be a homotopy commutative orientable motivic $\mbb{E}_1$-ring spectrum in $\V$.
Then the canonical map
\[
	L_\ex(\Pic_+\otimes E) \to L_\ex(K\otimes E)
\]
admits a retraction $s$ such that, for every $E$-module $M$ in $\SpP(\V^\ex)$, the map
\[
	s^* \colon M(\Pic) = \Map(\Pic_+,\OmegaP^\infty M) \to \tilde{M}(K) = \Map(K,\OmegaP^\infty M)
\]
identifies $M^0(\Pic)$ with the subset of $\tilde{M}^0(K)=[K,\OmegaP^\infty M]_*$ consisting of additive morphisms.
\end{theorem}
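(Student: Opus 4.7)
The plan is to construct $s$ by producing, at the level of pointed stacks, an additive extension $\tilde{s}\colon K \to \OmegaP^\infty L_\ex(\Pic_+ \otimes E)$ of the unit $\eta\colon \Pic_+ \to \OmegaP^\infty L_\ex(\Pic_+ \otimes E)$; by adjunction $\tilde{s}$ corresponds to $s$, and $\tilde{s}|_{\Pic_+} \simeq \eta$ forces $s \circ i \simeq \id$. First, fix an orientation of $E$. Then every $E$-module $M \in \SpP(\V^\ex)$ inherits an orientation and satisfies projective bundle formula by Lemma \ref{lem:EBE2}, so Corollary \ref{cor:Che2} gives $M^{*,*}(\Vect_n) \simeq M^{*,*}(\mathrm{pt})[[c_1,\dotsc,c_n]]$; moreover $\OmegaP^\infty M$ is grouplike since $M$ is fundamental (Corollary \ref{cor:FMS}, Theorem \ref{thm:FSt}). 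Realizing $K$ as the Zariski sheafification of the group completion of the $\mathbb{E}_\infty$-monoid $\Vect := \bigsqcup_n \Vect_n$ under $\oplus$, an additive pointed map $K \to \OmegaP^\infty M$ is equivalently a compatible family $\{\phi_n\colon \Vect_n \to \OmegaP^\infty M\}$ with $\phi_{m+n} \circ \oplus \simeq \phi_m + \phi_n$.

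For the universal case $M = L_\ex(\Pic_+ \otimes E)$, I would expand $\eta|_{\Pic} = \sum_{i \geq 0} a_i c_1^i \in M^0(\Pic) \simeq M^0(\mathrm{pt})[[c_1]]$ and define
\[
	\phi_n := \sum_{i \geq 0} a_i \cdot p_i \in M^0(\Vect_n) \simeq M^0(\mathrm{pt})[[c_1,\dotsc,c_n]],
\]
where $p_i = x_1^i + \cdots + x_n^i$ is the $i$-th Newton power sum, expressed as a polynomial in the Chern classes $c_1,\dotsc,c_n$ of the universal bundle via Newton's identities. Convergence in the $(c_1,\dotsc,c_n)$-adic topology holds because each monomial in $p_i$ has adic order at least $\lceil i/n \rceil$, so one lifts $\phi_n$ to an actual morphism of pointed stacks via the completeness argument from the proofs of Proposition \ref{prop:Pic} and Theorem \ref{thm:Che} (invoking Lemma \ref{lem:Lif2} to descend from $\Vect_n^\flat$). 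Additivity $\phi_{m+n}|_{\Vect_m \times \Vect_n} \simeq \phi_m + \phi_n$ follows from Whitney's formula (Lemma \ref{lem:Che} (iii)): Chern roots of $\mathcal{E} \oplus \mathcal{F}$ are the union of those of $\mathcal{E}$ and $\mathcal{F}$, so $p_i$ is additive. The family $\{\phi_n\}$ assembles into $\tilde{s}$, and by construction $\tilde{s}|_{\Pic_+} \simeq \eta$.

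For the cohomological identification: given $\phi \in M^0(\Pic)$ for any $E$-module $M$ in $\SpP(\V^\ex)$, with unique $E$-linear extension $\hat\phi\colon L_\ex(\Pic_+ \otimes E) \to M$, the class $s^*(\phi)$ equals $(\OmegaP^\infty \hat\phi) \circ \tilde{s}\colon K \to \OmegaP^\infty M$, which is additive since $\tilde{s}$ is additive and $\OmegaP^\infty \hat\phi$ is an infinite-loop-space map. Conversely, any additive $\tilde\phi \in \tilde{M}^0(K)$ is determined by $\tilde\phi|_\Pic$ via the splitting principle: for a rank-$n$ bundle $\mathcal{E}$ on $X$, the pullback to the flag bundle $\mathrm{Fl}(\mathcal{E})$ admits a filtration with line-bundle quotients $\mathcal{L}_1,\dotsc,\mathcal{L}_n$, so additivity (applied inductively to short exact sequences) forces $\tilde\phi_n(\mathcal{E})|_{\mathrm{Fl}(\mathcal{E})} \simeq \sum_j \tilde\phi_1(\mathcal{L}_j)$, and injectivity of $M^*(X) \to M^*(\mathrm{Fl}(\mathcal{E}))$ by projective bundle formula (Lemma \ref{lem:PBF}) determines $\tilde\phi_n$. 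Setting $\phi_1 := \tilde\phi|_\Pic$, the construction above yields an additive extension whose restriction to $\Pic$ is $\phi_1$, which by uniqueness agrees with $\tilde\phi$; thus $\tilde\phi = s^*(\phi_1)$. Combined with injectivity of $s^*$ (from $i^* \circ s^* \simeq \id$), this gives the bijection $M^0(\Pic) \xrightarrow{\sim} \{\text{additive maps in } \tilde{M}^0(K)\}$.

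The principal obstacle is lifting the formal power sum construction from cohomology to actual morphisms of pointed stacks compatibly with $\oplus$: while each $\phi_n$ is well-defined cohomologically, assembling them into a coherent $\tilde{s}$ requires working in $\SpP^\pre$ with truncations and exploiting $(c_1,\dotsc,c_n)$-adic completeness of $M^*(\Vect_n^\flat)$ to lift the symmetric power series to stack morphisms (following the pattern of Theorem \ref{thm:Che} and using Lemma \ref{lem:Lif2}), before descending to $\SpP(\V^\ex)$. A secondary subtlety is that the passage from $\Vect$ to $K$ via group completion must respect the $\mathbb{E}_\infty$-structure, which is afforded by the grouplikeness of $\OmegaP^\infty M$ established above.
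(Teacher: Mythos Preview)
Your approach is essentially correct and shares the same core idea as the paper: additive maps $K\to\OmegaP^\infty M$ correspond to ``power-sum'' extensions $\{\sum_i f_0(t_i)\}_n$ of an arbitrary $f_0\in M^0(\Pic)$, where the $t_i$ are Chern roots. The paper, however, packages this more efficiently and in doing so dissolves the ``principal obstacle'' you identify. Rather than constructing $\phi_n$ on each $\Vect_n$ and then worrying about coherent assembly, the paper passes immediately to $\tilde K\simeq\Vect_\infty^+$ and uses that $\OmegaP^\infty M$ is an infinite loop space (via Theorem~\ref{thm:FSt}) to get $M(\tilde K)\simeq M(\Vect_\infty)$; hence $\tilde M^0(K)\simeq\lim_n M^0(\Vect_n)$ as sets, and no lifting through $\SpP^\pre$ or truncations is needed. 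The paper then identifies $\Add(\tilde K,\OmegaP^\infty M)$ with the primitive elements of the coalgebra $M^0(\tilde K)$, and Corollary~\ref{cor:Che2} computes the comultiplication explicitly enough to see that primitives are exactly $\{\sum_i f_0(t_i)\}_n$. This proves the bijection $\Add(K,\OmegaP^\infty M)\xrightarrow{\sim}M^0(\Pic)$ first, and only then reads off $s$ as the preimage of the unit---the reverse of your order.

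Your geometric argument for the converse (splitting principle plus injectivity of pullback to the flag bundle) is a genuinely different route from the paper's Hopf-algebraic identification of primitives; both are valid, and yours is perhaps more transparent, though it requires tracking that ``additive'' for maps out of $K$ really does force additivity along short exact sequences (not just direct sums) of vector bundles, which you handle correctly. The main thing your write-up would gain from the paper's approach is the elimination of the coherence discussion: once you observe $\tilde M^0(K)=\lim_n M^0(\Vect_n)$, your $\phi_n$ already \emph{is} a map $K\to\OmegaP^\infty M$ up to homotopy, with nothing further to assemble.
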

\begin{proof}
We may assume that $\V=\St$.
Let $\Add(-,\OmegaP^\infty M)$ denote the subset of $[-,\OmegaP^\infty M]_*$ consisting of additive morphisms.
We only have to show that the pre-composition by the canonical map $\Pic_+\to K$ induces an isomorphism
\[
	\Add(K,\OmegaP^\infty M) \xrightarrow{\sim} M^0(\Pic).
\]
Indeed, if we take $L_\ex(\Pic_+\otimes E)$ as $M$, then the canonical map $\Pic_+\to\OmegaP^\infty L_\ex(\Pic_+\otimes E)$ lifts to an additive morphism $K\to\OmegaP^\infty L_\ex(\Pic_+\otimes E)$ that yields a desired retraction $s$.

Let $\tilde{K}$ denote the reduced $K$-theory.
Consider the commutative diagram
\[
\xymatrix{
	\tilde{M}(\tilde{K}) \ar[r] \ar[d] & \tilde{M}(K) \ar[r] \ar[d] & \tilde{M}(\mbb{Z}) \ar[d] \\
	\tilde{M}(\Pic) \ar[r] & M(\Pic) \ar[r] & M(S),
}
\]
where the bottom sequence is a split fiber sequence.
Since the induced sequence
\[
	\Add(\tilde{K},\OmegaP^\infty M) \to \Add(K,\OmegaP^\infty M) \to \Add(\mbb{Z},\OmegaP^\infty M)
\]
is a split exact sequence and $\Add(\mbb{Z},\OmegaP^\infty M)=M^0(S)$, we are reduced to showing that the map
\[
	\Add(\tilde{K},\OmegaP^\infty M) \to \tilde{M}^0(\Pic)
\]
is an isomorphism.

Note that $\tilde{K}$ is equivalent to the plus construction of $\Vect_\infty:=\colim_n\Vect_n$.
Since $\OmegaP^\infty M$ is an infinite loop space by Theorem \ref{thm:FSt}, we have
\[
	M(\tilde{K}) \simeq M(\Vect_\infty^+) \simeq M(\Vect_\infty).
\]
We see that $\Add(\tilde{K},\OmegaP^\infty M)$ is identified with the subgroup of the coalgebra $M^0(\tilde{K})$ consisting of primitive elements as in \cite[Lemma 2.25]{GS}.
Then the comultiplication $\Delta$ of $M^0(\tilde{K})$ is identified with the limit of the canonical maps
\[
\xymatrix@R-1pc{
	M^0(S)[[t_1,\dotsc,t_{n+m}]]^{\Sigma_{n+m}} \ar[r] \ar@{=}[d]_\wr & M^0(S)[[t_1,\dotsc,t_{n+m}]]^{\Sigma_n\times\Sigma_m} \ar@{=}[d]_\wr \\
	M^0(\Vect_{n+m}) \ar[r]^-\Delta & M^0(\Vect_n\times\Vect_m)
}
\]
by Corollary \ref{cor:Che2}.
Here $(t_1,t_2,\dotsc)$ are unique variables such that the $n$-th Chern class $c_n$ is the $n$-th elementary polynomial of them.
Therefore, primitive elements $f$ in $M^0(\tilde{K})$ are completely determined by their images $f_0$ in $M^0(\Pic)$.
More precisely, 
\[
	f = \bigl\{ {\textstyle\sum_{i=1}^nf_0(t_i)} \bigr\}_n \in \lim_n M^0(\Vect_n) = M^0(\tilde{K}).
\]
This proves the desired isomorphism $\Add(\tilde{K},\OmegaP^\infty M)\simeq\tilde{M}^0(\Pic)$.
\end{proof}

\begin{remark}[Adams operation]
By Theorem \ref{thm:CoK}, the pre-composition by the canonical map $\Pic\to K$ restricts to an isomorphism
\[
	\Add(K,K) \simeq [\Pic,K].
\]
In particular, for each positive integer $k$, we obtain a unique additive morphism $\psi^k\colon K\to K$ which restricts to a morphism $\Pic\to K$ sending $\mcal{L}$ to $\mcal{L}^{\otimes k}$.
This is exactly the $k$-th Adams operation on $K$-theory.
\end{remark}

\subsection{$\mbb{P}^1$-periodicity}\label{Per}

\begin{notation}[Bott element]
We write $Q(\Pic):=\Omega^\infty(\Sigma^\infty\Pic_+)$ and regard it as an $\mbb{E}_\infty$-algebra in $\St_*$.
Let $\beta$ be a morphism in $\St_*$ defined by
\[
	\beta := 1-[\mcal{O}(-1)] \colon \mbb{P}^1 \to Q(\Pic),
\]
which we refer to as the \textit{Bott element}.
\end{notation}

\begin{definition}[$\mbb{P}^1$-periodicity]\label{def:Per}
We say that a $Q(\Pic)$-module $E$ in $\V_*$ \textit{satisfies $\mbb{P}^1$-periodicity} if the map
\[
	\beta \colon E \to E^{\mbb{P}^1}
\]
is an equivalence.
Let $\Mod_{Q(\Pic)}^{\mbb{P}^1}(\V_*)$ denote the full subcategory of $\Mod_{Q(\Pic)}(\V_*)$ spanned by $Q(\Pic)$-modules which satisfy $\mbb{P}^1$-periodicity.
\end{definition}

\begin{remark}
The $\infty$-category $\Mod_{Q(\Pic)}^{\mbb{P}^1}(\V_*)$ is an accessible localization of $\Mod_{Q(\Pic)}(\V_*)$.
Let $L_{\mbb{P}^1}$ denote the localization
\[
	L_{\mbb{P}^1} \colon \Mod_{Q(\Pic)}(\V_*) \to \Mod_{Q(\Pic)}^{\mbb{P}^1}(\V_*).
\]
If $\V$ is multiplicative, then $\Mod_{Q(\Pic)}^{\mbb{P}^1}(\V_*)$ admits a unique presentably symmetric monoidal structure for which the localization $L_{\mbb{P}^1}$ is symmetric monoidal.
In general, $\Mod_{Q(\Pic)}^{\mbb{P}^1}(\V_*)$ is presentably tensored over $\Mod_{Q(\Pic)}^{\mbb{P}^1}(\St_*)$ and we have an equivalence
\[
	\Mod_{Q(\Pic)}^{\mbb{P}^1}(\V_*) \simeq \Mod_{Q(\Pic)}^{\mbb{P}^1}(\St_*)\otimes_\St\V,
\]
where the tensor product is taken in $\Mod_\St(\Pr^L)$.
\end{remark}

\begin{remark}
Since $\mbb{P}^1$ is invertible in $\Mod_{Q(\Pic)}^{\mbb{P}^1}(\V_*)$, we have an equivalence
\[
	\Mod_{Q(\Pic)}^{\mbb{P}^1}(\V_*) \simeq \SpP(\Mod_{Q(\Pic)}^{\mbb{P}^1}(\V_*)).
\]
In particular, if $E$ is an $Q(\Pic)$-module in $\V_*$ which satisfies $\mbb{P}^1$-periodicity, then it canonically yields a motivic spectrum in $\V$, which we denote by $E$.
The motivic spectrum $E$ in $\V$ is \textit{periodic}, i.e., $\SigmaP E\simeq E$.
Moreover, the map
\[
	\beta \colon E \xrightarrow{\sim} E^{\mbb{P}^1}
\]
canonically factors through $E^\Pic$ and it gives an orientation of $E$ in the sense of Definition \ref{def:Ori}.
Then the Bott element $\beta$ is recovered as the first Chern class $c_1(\mcal{O}(1))\colon E\to\SigmaP E^{\mbb{P}^1}\simeq E^{\mbb{P}^1}$.
\end{remark}

\begin{lemma}\label{lem:Per}
The $\infty$-category $\Mod_{Q(\Pic)}^{\mbb{P}^1}(\V_*)$ is stable.
\end{lemma}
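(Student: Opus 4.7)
The plan is to identify $\mcal{M} := \Mod_{Q(\Pic)}^{\mbb{P}^1}(\V_*)$ with the category of left modules over a suitable orientable (hence fundamental) motivic $\mbb{E}_\infty$-ring spectrum in $\SpP$, and then to invoke Corollary \ref{cor:FSt2}.

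First I would reduce to the case $\V = \St$ via the $\St$-linear equivalence
\[
\Mod_{Q(\Pic)}^{\mbb{P}^1}(\V_*) \simeq \Mod_{Q(\Pic)}^{\mbb{P}^1}(\St_*) \otimes_{\St} \V
\]
from the preceding remark: if the first factor is stable it is a $\Sp$-module in $\Pr^L$, and hence so is the tensor product.

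With $\V = \St$, write $\mcal{M}_0 := \Mod_{Q(\Pic)}^{\mbb{P}^1}(\St_*)$ and let $\tilde{E}$ be the smashing $\beta$-localization of $\SigmaP^\infty Q(\Pic)$ in $\SpP$, an $\mbb{E}_\infty$-algebra there. Using that $\SigmaP^\infty\colon\St_*\to\SpP$ is symmetric monoidal, the standard base change of modules along symmetric monoidal left adjoints, and the fact that $\mbb{P}^1$ is already invertible in $\mcal{M}_0$, one should obtain an equivalence
\[
\mcal{M}_0 \simeq \LMod_{\tilde{E}}(\SpP).
\]
The unit of the left-hand side corresponds to $\tilde{E}$ itself, so by the remark preceding Definition \ref{def:Per}, $\tilde{E}$ is canonically orientable—with the first Chern class factoring $\beta$ through $\Pic$—and hence fundamental by Lemma \ref{lem:Ori2}. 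Corollary \ref{cor:FSt2} then yields that $\LMod_{\tilde{E}}(\SpP) \simeq \mcal{M}_0$ is stable, which together with the first reduction proves the lemma.

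The main obstacle is establishing the identification $\mcal{M}_0 \simeq \LMod_{\tilde{E}}(\SpP)$: this requires carefully tracking the $\beta$-periodicity of $Q(\Pic)$-modules in $\St_*$ under base change along $\SigmaP^\infty$ and matching it with modules over the $\beta$-localized ring $\tilde{E} = (\SigmaP^\infty Q(\Pic))[\beta^{-1}]$ in $\SpP$. Once this is in place, orientability is built into the construction and Corollary \ref{cor:FSt2} delivers stability immediately.
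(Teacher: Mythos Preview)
Your proposal is correct and is essentially the same approach as the paper's, just unpacked in more detail. The paper's one-line proof ``This follows from Theorem~\ref{thm:FSt}'' relies on the preceding remark that $\Mod_{Q(\Pic)}^{\mbb{P}^1}(\V_*)\simeq\SpP(\Mod_{Q(\Pic)}^{\mbb{P}^1}(\V_*))$ with every object canonically orientable (hence fundamental), so that fundamental stability applies; your route through Corollary~\ref{cor:FSt2} (itself an immediate consequence of Theorem~\ref{thm:FSt}) after identifying $\mcal{M}_0$ with $\LMod_{\tilde E}(\SpP)$ is just a more explicit packaging of the same mechanism, and the preliminary reduction to $\V=\St$ is harmless but not needed.
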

\begin{proof}
This follows from Theorem \ref{thm:FSt}.
\end{proof}

\begin{lemma}\label{lem:Per2}
Let $E$ be a $Q(\Pic)$-module in $\SpP(\V)$.
Then there is a natural equivalence
\[
	L_{\mbb{P}^1}E \simeq E[\beta^{-1}] \simeq \colim(E\xrightarrow{\beta}\SigmaP^{-1}E\xrightarrow{\beta}\SigmaP^{-2}E\xrightarrow{\beta}\cdots).
\] 
\end{lemma}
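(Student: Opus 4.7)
The plan is to verify directly that the sequential colimit
\[
	E[\beta^{-1}] := \colim(E\xrightarrow{\beta}\SigmaP^{-1}E\xrightarrow{\beta}\SigmaP^{-2}E\xrightarrow{\beta}\cdots),
\]
formed in $\Mod_{Q(\Pic)}(\SpP(\V))$, has the defining universal property of $L_{\mbb{P}^1}E$. Here the transition maps are induced by the Bott element $\beta\colon\mbb{P}^1\to Q(\Pic)$ together with the $Q(\Pic)$-action on $E$; since $\mbb{P}^1$ is invertible in $\SpP(\V)$, this turns $\beta$ into a morphism $E\to\SigmaP^{-1}E$ of $Q(\Pic)$-modules (the linearity following from the fact that $Q(\Pic)$ is $\mbb{E}_\infty$, so that multiplication by any central element is a module map). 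Because colimits in $\Mod_{Q(\Pic)}(\SpP(\V))$ are computed underlying in $\SpP(\V)$, the object $E[\beta^{-1}]$ canonically acquires a $Q(\Pic)$-module structure together with a natural unit map $E\to E[\beta^{-1}]$.

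The first step is to check that $E[\beta^{-1}]$ satisfies $\mbb{P}^1$-periodicity. Since $\SigmaP^{-1}$ is an auto-equivalence of $\SpP(\V)$ and therefore commutes with colimits, applying it to the defining diagram yields $\colim(\SigmaP^{-1}E\xrightarrow{\beta}\SigmaP^{-2}E\xrightarrow{\beta}\cdots)$, which agrees with $E[\beta^{-1}]$ by the cofinality of the shift $+1\colon\mbb{N}\to\mbb{N}$ (compare Corollary \ref{cor:cTe}). Under this identification the map $\beta\colon E[\beta^{-1}]\to\SigmaP^{-1}E[\beta^{-1}]$ becomes the identification itself, hence is an equivalence.

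The second step is the universal property: for any $\mbb{P}^1$-periodic $Q(\Pic)$-module $F$ in $\SpP(\V)$, the unit induces an equivalence $\Map_{Q(\Pic)}(E[\beta^{-1}],F)\xrightarrow{\sim}\Map_{Q(\Pic)}(E,F)$. Indeed, the left-hand side equals
\[
	\lim_n\Map_{Q(\Pic)}(\SigmaP^{-n}E,F)\simeq\lim_n\Map_{Q(\Pic)}(E,\SigmaP^nF),
\]
where the transition maps are post-composition with $\SigmaP^{n+1}\beta\colon\SigmaP^{n+1}F\to\SigmaP^nF$. Since $\beta$ acts as an equivalence on $F$, each such transition is an equivalence, so the tower is essentially constant with value $\Map_{Q(\Pic)}(E,F)$. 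Combining the two steps, the unit $E\to E[\beta^{-1}]$ exhibits $E[\beta^{-1}]$ as $L_{\mbb{P}^1}E$.

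The only substantive point to worry about is bookkeeping: one must ensure that the telescope is naturally a diagram of $Q(\Pic)$-modules and that the transition maps in the mapping-space tower are correctly identified under the $\SigmaP$-adjunction. Both are routine — the former because $Q(\Pic)$ is $\mbb{E}_\infty$, the latter because $\SigmaP$ is an auto-equivalence of $\SpP(\V)$ — so no serious obstacle arises.
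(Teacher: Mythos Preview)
Your argument is correct. The paper's proof takes the same route but more tersely: it observes that since $\mbb{P}^1$ is invertible in $\SpP(\V)$, the localization $L_{\mbb{P}^1}$ coincides with inverting the element $\beta$, and then cites \cite[Appendix C]{BNT} (together with the observation that $\pi_1\Map(-,Q(\Pic))$ is abelian) for the telescope description of such an inversion. What you spell out directly—that the telescope is $\mbb{P}^1$-periodic and has the universal mapping property—is essentially the content of that citation; your remark that $Q(\Pic)$ is $\mbb{E}_\infty$ plays the role of the abelian $\pi_1$ hypothesis in ensuring the $\mbb{N}$-diagram is coherently defined.
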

\begin{proof}
Since $\mbb{P}^1$ is invertible in $\SpP(\V)$, the localization $L_{\mbb{P}^1}$ on $\Mod_{Q(\Pic)}(\SpP(\V))$ is exactly the inversion of the Bott element $\beta$, i.e., $L_{\mbb{P}^1}\colon E\mapsto E[\beta^{-1}]$.
Since $\pi_1\Map(-,Q(\Pic))$ is abelian, this localization has the desired description, cf.\ \cite[Appendix C]{BNT}.
\end{proof}

\begin{corollary}\label{cor:Per}
Let $E$ be a $Q(\Pic)$-module in $\V_*$.
Then there is a natural equivalence
\[
	L_{\mbb{P}^1} E \simeq
	\colim(\SigmaP^\infty E\xrightarrow{\beta}\SigmaP^{\infty-1}E\xrightarrow{\beta}\SigmaP^{\infty-2}E\xrightarrow{\beta}\cdots).
\] 
\end{corollary}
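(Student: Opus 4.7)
The plan is to reduce the statement to Lemma \ref{lem:Per2} by stabilizing with $\SigmaP^\infty$ and verifying that the $\mbb{P}^1$-periodic localization commutes with this stabilization.

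First I would observe that the symmetric monoidal left adjoint $\SigmaP^\infty\colon\V_*\to\SpP(\V)$ induces a functor
\[
\SigmaP^\infty\colon\Mod_{Q(\Pic)}(\V_*)\to\Mod_{Q(\Pic)}(\SpP(\V)),
\]
where on the right we regard $Q(\Pic)$ as an $\mbb{E}_\infty$-algebra in $\SpP(\V)$ via its image under $\SigmaP^\infty$. Since $\SigmaP^\infty$ carries the Bott map $\beta\colon\mbb{P}^1\to Q(\Pic)$ to the Bott map, it preserves $\beta$-equivalences and descends to a functor between $\mbb{P}^1$-periodic localizations
\[
\Mod_{Q(\Pic)}^{\mbb{P}^1}(\V_*) \to \Mod_{Q(\Pic)}^{\mbb{P}^1}(\SpP(\V))
\]
that commutes with $L_{\mbb{P}^1}$ on both sides.

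Next I would argue that this induced functor is an equivalence. On both sides $\mbb{P}^1$ acts invertibly (because on any $Q(\Pic)$-module $M$ the action of $\mbb{P}^1$ factors as $\mbb{P}^1\otimes M\xrightarrow{\beta\otimes\id}Q(\Pic)\otimes M\to M$, and $\beta$ is an equivalence after $L_{\mbb{P}^1}$), so by the remark following Definition \ref{def:Per} both sides are equivalent to their own $\mbb{P}^1$-spectrum categories. Applying the universal property of $\SpP$ to the equivalence $\Mod_{Q(\Pic)}^{\mbb{P}^1}(\V_*)\simeq\SpP(\Mod_{Q(\Pic)}^{\mbb{P}^1}(\V_*))$ and commuting the stabilization past $\Mod_{Q(\Pic)}(-)$ identifies the two sides. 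In particular, the motivic spectrum underlying $L_{\mbb{P}^1}E$ in $\SpP(\V)$ (in the sense of the remark before Lemma \ref{lem:Per}) is canonically identified with $L_{\mbb{P}^1}(\SigmaP^\infty E)$.

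Now Lemma \ref{lem:Per2} applied to the $Q(\Pic)$-module $\SigmaP^\infty E$ in $\SpP(\V)$ yields
\[
L_{\mbb{P}^1}(\SigmaP^\infty E) \simeq \colim\bigl(\SigmaP^\infty E\xrightarrow{\beta}\SigmaP^{-1}\SigmaP^\infty E\xrightarrow{\beta}\SigmaP^{-2}\SigmaP^\infty E\xrightarrow{\beta}\cdots\bigr),
\]
and by the defining relation $\SigmaP^{\infty-n}=(s_+)^{\circ n}\circ\SigmaP^\infty$ together with the equivalence $\SigmaP\simeq s_-$ on $\SpP(\V)$, this coincides with the displayed colimit. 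All equivalences in sight are natural in $E$ by construction. The main obstacle is the identification of the two $\mbb{P}^1$-periodic categories across $\SigmaP^\infty$, which ultimately rests on the observation that inverting $\beta$ on $Q(\Pic)$-modules already forces $\mbb{P}^1$ to act invertibly, so the formal inversion of $\mbb{P}^1$ performed in passing to $\SpP(\V)$ becomes redundant; once this is pinned down the corollary is a direct consequence of Lemma \ref{lem:Per2}.
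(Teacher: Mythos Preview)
Your proposal is correct and follows essentially the same approach as the paper. The paper's proof records the commutative square
\[
\xymatrix{
\SpP(\Mod_{Q(\Pic)}(\V_*)) \ar[r]^-{L_{\mbb{P}^1}} & \SpP(\Mod_{Q(\Pic)}^{\mbb{P}^1}(\V_*)) \\
\Mod_{Q(\Pic)}(\V_*) \ar[r]^-{L_{\mbb{P}^1}} \ar[u]^{\SigmaP^\infty} & \Mod_{Q(\Pic)}^{\mbb{P}^1}(\V_*) \ar[u]_{\SigmaP^\infty}^{\simeq}
}
\]
and invokes Lemma~\ref{lem:Per2}, which is exactly your argument phrased diagrammatically: the right vertical is an equivalence because $\mbb{P}^1$ already acts invertibly on $\mbb{P}^1$-periodic modules, so $L_{\mbb{P}^1}E$ viewed as a motivic spectrum agrees with $L_{\mbb{P}^1}(\SigmaP^\infty E)$, and the latter is computed by Lemma~\ref{lem:Per2}. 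One small remark: you call $\SigmaP^\infty\colon\V_*\to\SpP(\V)$ ``symmetric monoidal'', but the corollary does not assume $\V$ is multiplicative; the correct (and sufficient) statement is that $\SigmaP^\infty$ is $\St_*$-linear, which is all that is needed to pass to $Q(\Pic)$-modules.
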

\begin{proof}
Note that we have a commutative diagram
\[
\xymatrix{
	\SpP(\Mod_{Q(\Pic)}(\V_*)) \ar[r]^-{L_{\mbb{P}^1}} & \SpP(\Mod_{Q(\Pic)}^{\mbb{P}^1}(\V_*))  \\
	\Mod_{Q(\Pic)}(\V_*) \ar[r]^-{L_{\mbb{P}^1}} \ar[u]^{\SigmaP^\infty} & \Mod_{Q(\Pic)}^{\mbb{P}^1}(\V_*). \ar[u]^{\SigmaP^\infty}_\simeq
}
\]
The assertion is immediate from this diagram and Lemma \ref{lem:Per2}.
\end{proof}

\begin{p}[Pbf-localization]
Let $E$ be a $Q(\Pic)$-module in $\V_*$ which satisfies $\mbb{P}^1$-periodicity.
Then it follows from Lemma \ref{lem:EBE2} that $E$ satisfies elementary blowup excision if and only if it satisfies projective bundle formula, i.e., the map
\[
	\sum_{i=1}^n \beta^i \colon \bigoplus_{i=1}^n E \to E^{\mbb{P}^n}
\]
is an equivalence for every $n\ge 1$.
We write $\Mod_{Q(\Pic)}^\pbf(\V_*):=\Mod_{Q(\Pic)}^{\mbb{P}^1}(\V^\ex_*)$; then it is identified with the full subcategory of $\Mod_{Q(\Pic)}(\V_*)$ spanned by $Q(\Pic)$-modules which satisfies (periodic) projective bundle formula.
We consider the localizations
\[
\xymatrix@1{
	\Mod_{Q(\Pic)}(\V_*) \ar[r]_-{L_{\mbb{P}^1}} \ar@/^1.5pc/[rr]^-{L_\pbf} & \Mod^{\mbb{P}^1}_{Q(\Pic)}(\V_*) \ar[r]_-{L_\ex} & \Mod_{Q(\Pic)}^\pbf(\V_*)
}
\]
and refer to $L_\pbf$ as the \textit{pbf-localization}.
In this section, $L_\pbf$ consistently refers to the localization introduced here, not the one in Remark \ref{rem:PTh} (which was temporarily used).
\end{p}

\begin{remark}
When $\V$ is stable, we could instead consider $\mbb{S}[\Pic]$-modules and the localizations
\[
\xymatrix@1{
	\Mod_{\mbb{S}[\Pic]}(\V) \ar[r]_-{L_{\mbb{P}^1}} \ar@/^1.5pc/[rr]^-{L_\pbf} &
		\Mod^{\mbb{P}^1}_{\mbb{S}[\Pic]}(\V) \ar[r]_-{L_\ex} & \Mod_{\mbb{S}[\Pic]}^\pbf(\V),
}
\]
but it does not make any difference, namely, the diagram
\[
\xymatrix{
	\Mod_{\mbb{S}[\Pic]}(\V) \ar[r]^-{L_{\mbb{P}^1}} \ar[d] &
		\Mod^{\mbb{P}^1}_{\mbb{S}[\Pic]}(\V) \ar[r]^-{L_\ex} \ar[d] & \Mod_{\mbb{S}[\Pic]}^\pbf(\V) \ar[d] \\
	\Mod_{Q(\Pic)}(\V) \ar[r]^-{L_{\mbb{P}^1}} & \Mod^{\mbb{P}^1}_{Q(\Pic)}(\V) \ar[r]^-{L_\ex} & \Mod_{Q(\Pic)}^\pbf(\V)
}
\]
is commutative, where the vertical functors are the restrictions of scalars.
\end{remark}

\subsection{Universality of $K$-theory}\label{AlK}

\begin{p}
Let $\mbb{S}[\Pic]$ be the stabilization $\Sigma^\infty\Pic_+$, which yields an $\mbb{E}_\infty$-algebra in $\Sp(\St_S)$ for each qcqs derived scheme $S$.
We consider the pbf-localization $L_\pbf\mbb{S}[\Pic]$ of the $\mbb{S}[\Pic]$-module $\mbb{S}[\Pic]$ in $\Sp(\St_S)$.
\end{p}

\begin{p}[$K$-theory]
Let $S$ be a qcqs derived scheme.
We have an evident morphism $Q(\Pic)\to K$ of $\mbb{E}_\infty$-algebras in $\St_{S*}$ and $K$ satisfies projective bundle formula with respect to this $Q(\Pic)$-module structure, cf.\ \cite[Theorem B]{Kha}.
In particular, $K$ canonically yields an oriented motivic $\mbb{E}_\infty$-ring spectrum $K$ over $S$, and it is uniquely lifted to a motivic $\mbb{E}_\infty$-ring spectrum $K^\Bass$ in $\Sp(\St_S)$ by Theorem \ref{thm:FSt}, which recovers the Bass non-connective $K$-theory as in \cite{TT,BGT}.
Since the $K$-theory stack is left Kan extended from smooth $\mbb{Z}$-algebras, the base change functor $\SpP\to\SpP(S)$ carries $K$ to $K$.
\end{p}

\begin{theorem}\label{thm:AlK}
For every qcqs derived scheme $S$, the canonical map
\[
	L_\pbf\mbb{S}[\Pic] \to K^\Bass
\]
is an equivalence of $\mbb{E}_\infty$-algebras in $\Sp(\St_S)$.
\end{theorem}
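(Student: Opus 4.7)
The plan is to verify that $L_\pbf\mbb{S}[\Pic]\to K^\Bass$ is an equivalence by testing it against arbitrary pbf-local $\mbb{S}[\Pic]$-modules $E$ in $\Sp(\St_S)$. Since $L_\pbf\mbb{S}[\Pic]$ is the unit of a smashing localization on $\Mod_{\mbb{S}[\Pic]}(\Sp(\St_S))$, the $\mbb{E}_\infty$-algebra assertion reduces to showing equivalence of the underlying $\mbb{S}[\Pic]$-modules. Equivalently, I would show that for every pbf-local $\mbb{S}[\Pic]$-module $E$, the map
\[
\Map_{\mbb{S}[\Pic]}(K^\Bass,E)\to\Map_{\mbb{S}[\Pic]}(\mbb{S}[\Pic],E)\simeq\Omega^\infty E(S)
\]
induced by the unit $\mbb{S}[\Pic]\to K^\Bass$ is an equivalence.

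First, I would establish a Bott-theoretic presentation of $K^\Bass$. The $K$-theory stack $K=\OmegaP^\infty K^\Bass$ is a $Q(\Pic)$-module in $\St_{S*}$, and the Bott element $\beta=1-[\mcal{O}(-1)]$ makes it $\mbb{P}^1$-periodic. Applying Corollary \ref{cor:Per}, combined with Theorem \ref{thm:FSt} to move between $\SpP(\St_S)^\fd$ and $\SpP(\Sp(\St_S))^\fd$, yields an $\mbb{S}[\Pic]$-linear equivalence
\[
K^\Bass\simeq\colim\bigl(\SigmaP^\infty K\xrightarrow{\beta}\SigmaP^{\infty-1}K\xrightarrow{\beta}\SigmaP^{\infty-2}K\xrightarrow{\beta}\cdots\bigr).
\]
Applying $\Map_{\mbb{S}[\Pic]}(-,E)$ together with the $(\SigmaP^\infty,\OmegaP^\infty)$-adjunction, this converts the mapping space into the inverse limit
\[
\Map_{\mbb{S}[\Pic]}(K^\Bass,E)\simeq\lim_n\Map_{\mbb{S}[\Pic]}(K,\OmegaP^{\infty-n}E),
\]
where the right-hand side records $\mbb{S}[\Pic]$-linear pointed maps of stacks.

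Next, each term would be computed via Theorem \ref{thm:CoK} and Corollary \ref{cor:Che2}. Theorem \ref{thm:CoK} identifies additive pointed morphisms $K\to F$ (for $F$ of the form $\OmegaP^\infty M$ with $M$ pbf-local) with $M^0(\Pic)$. Imposing the additional $\mbb{S}[\Pic]$-linearity $f(\mcal{L}\otimes x)=\mcal{L}\cdot f(x)$, the restriction $f|_{\Pic}$ is forced to take the form $\mcal{L}\mapsto\mcal{L}\cdot f([\mcal{O}])$; hence $f$ is determined by its value at $[\mcal{O}]$, so the anima of $\mbb{S}[\Pic]$-linear additive pointed morphisms collapses to the global sections $\Omega^\infty M(S)$. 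Combined with Corollary \ref{cor:Che2} and the universal formal group law of $E$, one verifies that every $\mbb{S}[\Pic]$-linear pointed morphism $K\to F$ is in fact additive. The $\mbb{P}^1$-periodicity of $E$ makes each Bott-shift transition in the limit an equivalence, so the limit collapses to $\Omega^\infty E(S)$, matching the target.

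The hard part will be showing that $\mbb{S}[\Pic]$-linear pointed morphisms from $K$ to $\OmegaP^\infty M$ are automatically additive. This amounts to extracting the $\Pic$-invariant part of the cohomology $E^{*,*}(\Vect_n)\simeq E^{*,*}(S)[[c_1,\dotsc,c_n]]$, where $\Pic$ acts through $c_i(\mcal{E})\mapsto c_i(\mcal{L}\otimes\mcal{E})$ via explicit polynomial formulas in $c_1(\mcal{L})$ and the $c_j(\mcal{E})$ determined by the formal group law. Verifying that, in the limit over $n$ and after the plus construction $K\simeq\Vect_\infty^+$, this invariant subring reduces to a single copy of $\Omega^\infty M(S)$—compatibly with the $\Pic$-orbit classification supplied by Theorem \ref{thm:CoK} and with the Bott-shift transitions—is the principal technical content.
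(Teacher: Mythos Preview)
Your overall shape—test the unit map against pbf-local $\mbb{S}[\Pic]$-modules and use a Bott-telescope description of $K^\Bass$—is close in spirit to what the paper does, but your plan glosses over precisely the technical obstruction the paper's proof is built around. Two concrete problems:

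\textbf{The Bott towers are not compatible with the canonical map.} You write the colimit presentation $K^\Bass\simeq\colim_n\SigmaP^{\infty-n}K$ and then pass to an inverse limit of $\mbb{S}[\Pic]$-linear mapping spaces. But to compare with $\Map_{\mbb{S}[\Pic]}(\mbb{S}[\Pic],E)$ you need the unit $\mbb{S}[\Pic]\to K^\Bass$ to be compatible with the Bott transitions on both sides, and this fails: the canonical map $\Sigma^\infty_+\Pic\to\Sigma^\infty K$ does \emph{not} carry the Bott element of $Q(\Pic)$ to the Bott element acting on $\Sigma^\infty K$ (the latter comes from the ring structure of $K$, not from the free $\mbb{S}[\Pic]$-action). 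The paper flags this explicitly in the remark following the proof and introduces the auxiliary pullback $X$ exactly to repair it; your proposal contains no analogue of this step. Without it, the identification of the limit with $\Omega^\infty E(S)$ and its compatibility with the unit map are both unjustified.

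\textbf{``$\mbb{S}[\Pic]$-linear $\Rightarrow$ additive'' is unproven and likely false as stated.} Theorem~\ref{thm:CoK} computes \emph{additive} maps $K\to\OmegaP^\infty M$, not $Q(\Pic)$-equivariant ones. Your reduction hinges on showing that every $Q(\Pic)$-linear pointed map $K\to\OmegaP^\infty M$ is automatically additive, which you propose to extract from the $\Pic$-invariants of $E^{*,*}(\Vect_n)\simeq E^{*,*}(S)[[c_1,\dotsc,c_n]]$. But $\Pic$-equivariance (tensoring vector bundles by line bundles) does not obviously force additivity: the $\Pic$-orbit of a rank-$n$ class is far from generating $K$, and the invariant computation you sketch is a $\pi_0$ statement that says nothing about higher coherences in the mapping anima. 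This is the ``hard part'' you yourself identify, and nothing in the paper supplies it; the paper instead uses additivity directly (via Theorem~\ref{thm:CoK}) together with a delicate $\lim{}^i$ argument on towers of homotopy groups, and the map $s$ furnished by Theorem~\ref{thm:CoK} is what makes the tower comparison go through.

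In short, your approach replaces the paper's pullback-square and derived-limit argument with a direct computation of $\mbb{S}[\Pic]$-linear maps out of $K$, but the two steps that would make this work—Bott compatibility and automatic additivity—are exactly where the difficulty lives, and neither is handled.
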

\begin{proof}
We may assume that $S=\Spec(\mbb{Z})$.
We work over the $\infty$-category $\St^\ex$ consistently.
Then, considering each universal construction in that sense, the assertion is equivalent to saying that the canonical map
\[
	L_{\mbb{P}^1}\Sigma^\infty_+\Pic \to K^\Bass
\]
is an equivalence; where $L_{\mbb{P}^1}$ is the left adjoint to $\Mod_{Q(\Pic)}^{\mbb{P}^1}(\Sp(\St^\ex))\to\Mod_{Q(\Pic)}(\Sp(\St^\ex))$ and $\Sigma^\infty_+$ is the stabilization $\St^\ex\to\Sp(\St^\ex)$.

We consider the square in $\Sp(\St^\ex)$.
\[
\xymatrix{
	X \ar[r] \ar[d] & \Sigma^\infty_+\Pic \ar[d] \\
	\Sigma^\infty K \ar[r] & K^\Bass,
}
\]
where $X$ is defined to be the pullback.
We endow $\Sigma^\infty K$ with the $Q(\Pic)$-module structure which comes form the canonical map $Q(\Pic)\to K$.
Then the square canonically lifts a cartesian square in $\Mod_{Q(\Pic)}(\Sp(\St^\ex))$.
Warn that $\Sigma^\infty K$ has another $Q(\Pic)$-module structure which comes from the canonical $\mbb{S}[\Pic]$-module structure on $\Sigma^\infty K$, but it does not work in later steps.

The goal is to show that the right vertical map is an $L_{\mbb{P}^1}$-equivalence, and in fact we prove that each map in the diagram is an $L_{\mbb{P}^1}$-equivalence; the right vertical equivalence will be deduced from the other equivalences at the end.
We first show that $\Sigma^\infty K\to K^\Bass$ is an $L_{\mbb{P}^1}$-equivalence.
Note that $L_{\mbb{P}^1}\Sigma^\infty K$ is identified with the image of $K$ under the composite functor
\[
	\Mod_{Q(\Pic)}(\St^\ex_*)
		\xrightarrow{L_{\mbb{P}^1}} \Mod_{Q(\Pic)}^{\mbb{P}^1}(\St^\ex_*)
			\xrightarrow{\Sigma^\infty} \Mod_{Q(\Pic)}^{\mbb{P}^1}(\Sp(\St^\ex)).
\]
Since $K$ satisfies $\mbb{P}^1$-periodicity, the first functor carries $K$ to $K$.
Since the second functor is an equivalence by Lemma \ref{lem:Per}, it carries $K$ to $K^\Bass$.
Then it follows that the canonical map $\Sigma^\infty K\to K^\Bass$ is an $L_{\mbb{P}^1}$-equivalence, and thus so is $X\to\Sigma^\infty_+\Pic$.

We show that $L_{\mbb{P}^1}X\to L_{\mbb{P}^1}\Sigma^\infty K$ is an equivalence as motivic spectra in $\Sp(\St^\ex)$.
Let $E$ be a homotopy commutative orientable motivic $\mbb{E}_1$-ring spectrum in $\Sp(\St^\ex)$.
It suffices to show that the induced map
\[
	\Map(L_{\mbb{P}^1}\Sigma^\infty K,M) \to \Map(L_{\mbb{P}^1}X,M)
\]
is an equivalence for each $E$-module $M$ in $\SpP(\Sp(\St^\ex))$.
By Corollary \ref{cor:Per}, the map is identified with the limit of the map of towers
\[
\xymatrix{
	\cdots \ar[r]^-\beta & \Map(\Sigma^\infty K,\OmegaP^{\infty-2}M) \ar[r]^-\beta \ar[d] &
		\Map(\Sigma^\infty K,\OmegaP^{\infty-1}M) \ar[r]^-\beta \ar[d] & \Map(\Sigma^\infty K,\OmegaP^\infty M) \ar[d] \\
	\cdots \ar[r]^-\beta & \Map(X,\OmegaP^{\infty-2}M) \ar[r]^-\beta &
		\Map(X,\OmegaP^{\infty-1}M) \ar[r]^-\beta & \Map(X,\OmegaP^\infty M) \\
}
\]
Little stronger, we show that the induced maps
\[
	\lim_n\!{}^i\pi_*\Map(\Sigma^\infty K,\OmegaP^{\infty-n}M) \to \lim_n\!{}^i\pi_*\Map(X,\OmegaP^{\infty-n}M)
\]
are equivalences for $i\ge 0$.

Here are some auxiliary observations.
We have seen that the canonical map
\[
\xymatrix{
	\Map(L_{\mbb{P}^1}\Sigma^\infty_+\Pic,M) \ar@{}[r]|-\simeq \ar[d]^\wr & \lim\Bigl(\cdots \ar[r]^-\beta &
		\Map(\Sigma^\infty_+\Pic,\OmegaP^{\infty-1}M) \ar[r]^-\beta \ar[d] & \Map(\Sigma^\infty_+\Pic,\OmegaP^\infty M)\Bigr) \ar[d] \\
	\Map(L_{\mbb{P}^1}X,M) \ar@{}[r]|-\simeq & \lim\Bigl(\cdots \ar[r]^-\beta &
		\Map(X,\OmegaP^{\infty-1}M) \ar[r]^-\beta & \Map(X,\OmegaP^\infty M)\Bigr)
}
\]
is an equivalence.
The canonical map $X\to\Sigma^\infty_+\Pic$ admits a section, and thus $\Map(\Sigma^\infty_+\Pic,-)\to\Map(X,-)$ admits a retraction.
Then it follows from the Milnor sequence that the canonical maps
\[
	\lim_n\!{}^i\pi_*\Map(\Sigma^\infty_+\Pic,\OmegaP^{\infty-n}M) \xrightarrow{\sim} \lim_n\!{}^i\pi_*\Map(X,\OmegaP^{\infty-n}M)
\]
are equivalences for $i\ge 0$.

Getting back on the track, we will construct a map of towers of abelian groups
\[
	\tilde{s}^* \colon \{\pi_*\Map(X,\OmegaP^{\infty-n}M)\}_n \to \{\pi_*\Map(\Sigma^\infty K,\OmegaP^{\infty-n}M)\}_n
\]
such that it is levelwise a retraction of the canonical map and that the composite
\[
	\{\pi_*\Map(\Sigma^\infty_+\Pic,\OmegaP^{\infty-n}M)\}_n \to
		\{\pi_*\Map(X,\OmegaP^{\infty-n}M)\}_n \xrightarrow{\tilde{s}^*}
			\{\pi_*\Map(\Sigma^\infty K,\OmegaP^{\infty-n}M)\}_n
\]
induces isomorphisms on higher limits.
Then we conclude by the two out of three property.
The map $\tilde{s}^*$ is constructed as follows.
By Theorem \ref{thm:CoK}, there is a morphism $s$
\[
\xymatrix{
	X\otimes E \ar[r] \ar[d] & \Sigma^\infty_+\Pic\otimes E \ar[d] \\
	\Sigma^\infty K\otimes E \ar[r] \ar[ru]^s \ar@{.>}@/^1pc/[u]^{\tilde{s}} & K^\Bass\otimes E,
}
\]
that respects $Q(\Pic)$-module structures and that makes the bottom triangle commutative.
Hence, we obtain a $Q(\Pic)$-linear section $\tilde{s}$, from which the desired map $\tilde{s}^*$ is contravariantly induced.
It is clear that $\tilde{s}^*$ satisfies the first requirement, i.e., levelwise a retraction of the canonical map.
It remains to show that the maps
\[
	s^* \colon \lim_n\!{}^i\pi_*\Map(\Pic_+,\Omega^\infty\OmegaP^{\infty-n}M) \to \lim_n\!{}^i\pi_*\Map(K,\Omega^\infty\OmegaP^{\infty-n}M)
\]
are equivalences.
It is reduced to the case $\pi_0$ by replacing $M$.
By Theorem \ref{thm:CoK}, the map $s^*$ induces an isomorphism
\[
	[\Pic_+,\Omega^\infty\OmegaP^{\infty-n}M]_* \simeq \Add(K,\Omega^\infty\OmegaP^{\infty-n}M).
\]
We claim that the map $\beta\colon[K,\Omega^\infty\OmegaP^{\infty-n-1}M]_*\to[K,\Omega^\infty\OmegaP^{\infty-n}M]_*$ factors through the subset of additive morphisms (the case $n=0$ suffices).
Indeed, if we are given a map $\alpha\colon K\to\Omega^\infty\OmegaP^{\infty-1} M$, then it fits into a commutative diagram
\[
\xymatrix{
	\Omega K^{\mbb{G}_m} \ar[r] \ar[d]^{\Omega\alpha^{\mbb{G}_m}} & K^{\mbb{P}^1} \ar[r]^-{\beta^{-1}}_-\simeq &  K \ar[d]^{\beta\cdot\alpha} \\
	\Omega^{\infty+1}\OmegaP^{\infty-1}M^{\mbb{G}_m} \ar[rr] & & \Omega^\infty\OmegaP^\infty M
}
\]
and horizontal maps have sections as motivic spectra.
Hence, $\beta\cdot\alpha$ is additive.

Abstractly, we have shown the following.
Let $A_n:=[\Pic_+,\Omega^\infty\OmegaP^{\infty-n}M]_*$ and $B_n:=[K,\Omega^\infty\OmegaP^{\infty-n}M]_*$.
Then $s^*$ induces a levelwise injection $A_\bullet\to B_\bullet$ and that $B_{n+1}\to B_n$ factors through $A_n$.
This means that $s^*$ exhibits $A_\bullet$ as a cofinal subsystem of $B_\bullet$, and thus $s^*\colon R\lim A_\bullet\xrightarrow{\sim}R\lim B_\bullet$.
This completes the proof.
\end{proof}

\begin{remark}
The proof of Theorem \ref{thm:AlK} may look roundabout.
Let us try to clarify why.
The goal is to show that the canonical map $L_{\mbb{P}^1}\Sigma^\infty_+\Pic\to K^\Bass$ is an equivalence.
The essential point is that the $K$-theory spectrum $K^\Bass$ is build from the $K$-theory spaces $(K,K,\cdots)$ as a motivic spectrum and that we understand well the cohomology of $K$ by Theorem \ref{thm:CoK}.
More precisely, we have an equivalence
\[
	K^\Bass \simeq \colim_n \Sigma^\infty\SigmaP^{\infty-n} K
\]
as motivic spectra by Corollary \ref{cor:Per} ($\V=\St^\ex$), where the map $\Sigma^\infty\SigmaP^{\infty-n-1}K\to\Sigma^\infty\SigmaP^{\infty-n}K$ is the infinity suspension of the multiplication by the Bott element $\beta$ of $K$.
On the other hand, we have an equivalence
\[
	L_{\mbb{P}^1}\Sigma^\infty_+\Pic \simeq \colim_n \SigmaP^{\infty-n}\Sigma^\infty_+\Pic
\]
again by Corollary \ref{cor:Per} but this time $\V=\Sp(\St^\ex)$.
We would like to compare these two telescopes.
However, the canonical map $\Sigma^\infty_+\Pic\to\Sigma^\infty K$ is incompatible with the telescopic structures, because it does not preserve the Bott element.\footnote{This point seems to be overlooked in \cite{GS} and \cite[Corollary 4.8]{GS} is probably not true.}
To handle this issue, we note that the map $s\colon\Sigma^\infty K\to\Sigma^\infty_+\Pic$ as in Theorem \ref{thm:CoK} does preserve the Bott element, and thus yields a map of telescopes, but rather non-canonically.
Furthermore, it induces an isomorphism
\[
	\lim_n\!{}^iE^{*+n}(\Pic) \simeq \lim_n\!{}^i\tilde{E}^{*+n}(K)
\]
for a homotopy commutative orientable motivic ring spectrum $E$.
This is the computational input.
Then one way to organize the proof is to introduce the pullback $X$ and prove the $L_{\mbb{P}^1}$-equivalence of the canonical map $X\to\Sigma^\infty K$ using the less canonical isomorphism above.
\end{remark}

\subsection{Universality of Selmer $K$-theory}\label{SeK}

\begin{p}[Selmer $K$-theory]
Let $K^\et$ denote the \'etale sheafification of the $K$-theory stack, which yields an \'etale $S$-stack for each qcqs derived scheme $S$.
We have an evident morphism $Q(\Pic)\to K^\et$ of $\mbb{E}_\infty$-algebras in $\St_{S*}$ and $K^\et$ satisfies projective bundle formula with respect to this $Q(\Pic)$-module structure by \cite[Theorem 1.1]{CM}.
In particular, $K^\et$ canonically yields an oriented motivic $\mbb{E}_\infty$-ring spectrum over $S$, and it is uniquely lifted to a motivic $\mbb{E}_\infty$-ring spectrum $K^\Sel$ in $\Sp(\St_S)$ by Theorem \ref{thm:FSt}, which recovers the Selmer $K$-theory as in \cite{Cla,CM}.
\end{p}

\begin{remark}
The $p$-adic Selmer $K$-theory, as presheaves of $p$-complete spectra on animated rings, is left Kan extended from smooth $\mbb{Z}$-algebras, because so is the $K(1)$-local $K$-theory and the topological cyclic homology, cf.\ \cite[Appendix A]{EHKSY} and \cite[Theorem G]{CMM}.
The rational Selmer $K$-theory is the rational $K$-theory, so that it is stable under base changes as motivic spectra.
It follows that the base change functor $\SpP\to\SpP(S)$ carries $K^\et$ to $K^\et$.
\end{remark}

\begin{notation}
We write $L_{\pbf,\et}$ for the composition
\[
\xymatrix{
	& \Mod_{Q(\Pic)}(\Sp(\St^\et_S)) \ar[rd]^{L_\pbf} & \\
	\Mod_{Q(\Pic)}(\Sp(\St_S)) \ar[ru]^{L_\et} \ar[rd]_{L_\pbf} \ar[rr]^-{L_{\pbf,\et}} & & \Mod_{Q(\Pic)}^\pbf(\Sp(\St^\et_S)) \\
	& \Mod_{Q(\Pic)}^\pbf(\Sp(\St_S)). \ar[ru]_{L_\et} &
}
\]
\end{notation}

\begin{theorem}\label{thm:SeK}
For every qcqs derived scheme $S$, the canonical map
\[
	L_{\pbf,\et}\mbb{S}[\Pic] \to K^\Sel
\]
is an equivalence of $\mbb{E}_\infty$-algebras in $\Sp(\St_S)$.
\end{theorem}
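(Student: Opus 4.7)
The plan is to parallel the proof of Theorem \ref{thm:AlK} systematically, replacing $K$ by $K^\et$ and the ambient category $\St^\ex$ by its \'etale analogue $\V := \St^{\et,\ex}$. First I would reduce to the case $S = \Spec(\mbb{Z})$ by invoking the base-change compatibility of $K^\et$ stated in the Remark preceding the theorem. The key initial observation is that in the \'etale topos the $K$-theory stack $K$ is by definition $K^\et$; hence, working in $\V$, the hypotheses of Theorem \ref{thm:CoK} are satisfied for $K^\et$, providing a bijection between additive morphisms $K^\et \to \OmegaP^\infty M$ and pointed morphisms $\Pic \to \OmegaP^\infty M$ for any $E$-module $M$ with $E$ a homotopy commutative orientable motivic $\mbb{E}_1$-ring spectrum in $\Sp(\V)$. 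Under this reduction, the assertion becomes the equivalence $L_{\mbb{P}^1}\Sigma^\infty_+\Pic \xrightarrow{\sim} K^\Sel$ in $\Mod_{Q(\Pic)}^{\mbb{P}^1}(\Sp(\V))$.

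Next I would form the pullback square of $Q(\Pic)$-modules in $\Sp(\V)$
\[
\xymatrix{
X \ar[r] \ar[d] & \Sigma^\infty_+\Pic \ar[d] \\
\Sigma^\infty K^\et \ar[r] & K^\Sel,
}
\]
where $\Sigma^\infty K^\et$ carries the $Q(\Pic)$-structure induced by the canonical map $Q(\Pic) \to K^\et$ (not the one coming from the $\mbb{S}[\Pic]$-action on $K^\Sel$). As in the proof of Theorem \ref{thm:AlK}, the bottom map $\Sigma^\infty K^\et \to K^\Sel$ is an $L_{\mbb{P}^1}$-equivalence because $K^\et$ already satisfies $\mbb{P}^1$-periodicity; indeed $L_{\mbb{P}^1}\Sigma^\infty K^\et$ is identified with $K^\Sel$ under the equivalence $\Sigma^\infty \colon \Mod^{\mbb{P}^1}_{Q(\Pic)}(\V_*) \xrightarrow{\sim} \Mod^{\mbb{P}^1}_{Q(\Pic)}(\Sp(\V))$ supplied by Lemma \ref{lem:Per}. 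Consequently $X \to \Sigma^\infty_+\Pic$ is an $L_{\mbb{P}^1}$-equivalence as the pullback of one, and the entire theorem reduces to showing that $X \to \Sigma^\infty K^\et$ is an $L_{\mbb{P}^1}$-equivalence.

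For this crucial last step, the Milnor-sequence/Bott-tower argument of Theorem \ref{thm:AlK} carries over verbatim: using Theorem \ref{thm:CoK} in the \'etale setting, I would construct an additive lift $\tilde{s}\colon K^\et \to \Omega^\infty\OmegaP^\infty(X \otimes E)$ of the canonical map from $\Pic$, which automatically preserves the Bott element (being additive and restricting to the canonical map on $\Pic$), and then induce a levelwise section $\tilde{s}^*$ of $\Map(\Sigma^\infty K^\et, \OmegaP^{\infty-n}M) \to \Map(X, \OmegaP^{\infty-n}M)$ on the Bott tower. Combined with the fact that $X \to \Sigma^\infty_+\Pic$ admits a section (so the canonical map of Bott towers induces isomorphisms on higher inverse limits $\lim_n\!{}^i$), a two-out-of-three argument on the Milnor sequences identifies the mapping spectra out of $X$ and out of $\Sigma^\infty K^\et$ after applying $L_{\mbb{P}^1}$. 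The hard part is essentially bookkeeping: one must verify the categorical hypotheses (compact generation of $\V$, compactness of $\mbb{P}^1$) for $\V = \St^{\et,\ex}$ so that Brown representability and Theorem \ref{thm:CoK} apply in the \'etale setting, and confirm that the \'etale sheafification $K^\et$ still enjoys the identification $\tilde{K}^\et \simeq \Vect_\infty^+$ (or its \'etale analogue) tacitly used in the proof of Theorem \ref{thm:CoK}; beyond these routine checks, no new conceptual ingredient is required.
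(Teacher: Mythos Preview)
Your strategy is to rerun the entire proof of Theorem \ref{thm:AlK} inside $\V=\St^{\et,\ex}$, whereas the paper does something much shorter and safer: it simply applies the localization $L_\et\colon\Mod_{Q(\Pic)}^\pbf(\Sp(\St_S))\to\Mod_{Q(\Pic)}^\pbf(\Sp(\St^\et_S))$ to the already-established equivalence $L_\pbf\mbb{S}[\Pic]\xrightarrow{\sim}K^\Bass$ of Theorem \ref{thm:AlK}, and then checks directly that $L_\et K^\Bass\simeq K^\Sel$ via a small diagram chase using only that $K^\et$ satisfies projective bundle formula and that $K^\Sel$ is an \'etale sheaf.

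The reason the paper avoids your route is not stylistic but substantive, and this is where your sketch has a genuine gap. You call compact generation of $\St^{\et,\ex}$ a ``routine check'', but the paper explicitly flags that it does \emph{not} know whether $\St^\et_S$ is compactly generated (see the parenthetical in the proof). This assumption is load-bearing throughout your argument: you invoke Lemma \ref{lem:Per} for $\V=\St^{\et,\ex}$ to identify $L_{\mbb{P}^1}\Sigma^\infty K^\et$ with $K^\Sel$, and you invoke Theorem \ref{thm:CoK} (hence Theorem \ref{thm:FSt} and Brown representability) to produce the additive lift $\tilde{s}$; all of these rest on the standing hypothesis of \ref{p:DMS} that $\V$ is compactly generated. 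Without that, the Milnor-sequence/Bott-tower computation does not get off the ground. The paper's approach sidesteps this entirely because the only place it uses Lemma \ref{lem:Per} is on the Zariski side (where compact generation is clear), and the passage to the \'etale side is a single application of a left adjoint. In short: your outline would work if $\St^\et$ were known to be compactly generated, but that is precisely the point the paper does not claim, and its proof is designed to avoid needing it.
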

\begin{proof}
We apply the functor
\[
	L_\et \colon \Mod_{Q(\Pic)}^\pbf(\Sp(\St_S)) \to \Mod_{Q(\Pic)}^\pbf(\Sp(\St^\et_S))
\]
to the equivalence $L_\pbf\mbb{S}[\Pic]\xrightarrow{\sim}K^\Bass$ in Theorem \ref{thm:AlK}.
Then the left hand side becomes $L_{\pbf,\et}\mbb{S}[\Pic]$, and thus it remains to show that this functor carries $K^\Bass$ to $K^\Sel$.
Consider the commutative diagram
\[
\xymatrix{
	\Mod_{Q(\Pic)}^\pbf(\Sp(\St_S)) \ar[r]^-{L_\et} & \Mod_{Q(\Pic)}^\pbf(\Sp(\St^\et_S)) \\
	\Mod_{Q(\Pic)}^\pbf(\St_{S*}) \ar[r]^-{L_\et} \ar[u]^{\Sigma^\infty}_\simeq & \Mod_{Q(\Pic)}^\pbf(\St^\et_{S*}), \ar[u]^{\Sigma^\infty_\et}
}
\]
where the left vertical functor is an equivalence by Lemma \ref{lem:Per} (we do not know if the right vertical functor is an equivalence since the $\infty$-category $\St^\et_S$ may not be compactly generated).
Hence, it suffices to show that $\Sigma^\infty_\et L_\et K$ is the Selmer $K$-theory, and indeed we have
\[
	\Sigma^\infty_\et L_\et K \simeq \Sigma^\infty_\et K^\et
							\simeq L_\et\Sigma^\infty K^\et
							\simeq L_\et K^\Sel
							\simeq K^\Sel,
\]
where each functor is taken in the sense of the commutative diagram above.
The first equivalence follows from the fact that $K^\et$ satisfies projective bundle formula (cf.\ \cite[Theorem 1.1]{CM}), the second one is obvious, the third one follows from the fact $K^\Sel$ is the unique infinite delooping of $K^\et$ as pbf-local $Q(\Pic)$-modules, and the last one follows from the fact that $K^\Sel$ is an \'etale sheaf.
\end{proof}

\newpage

\appendix

\section{Categorical toolbox}\label{CPr}

\subsection{Modules over commutative algebras}\label{Mod}

The operad $\mbb{LM}^\otimes$ defined in \cite{HA} plays a fundamental role in the theory of left modules over algebras.
More specifically, $\mbb{LM}$-monoidal $\infty$-categories control the theory of $\infty$-categories left-tensored over monoidal $\infty$-categories and left module objects in an $\mbb{LM}$-monoidal $\infty$-category $\mcal{M}^\otimes$ are precisely morphisms of $\infty$-operads $\mbb{LM}^\otimes\to\mcal{M}^\otimes$.
In the body of this paper, we mainly employ the cases where monoidal $\infty$-categories underlie symmetric monoidal $\infty$-categories.
While this can be dealt with as special cases, it is also possible to replace the operad $\mbb{LM}^\otimes$ by a simpler operad $\mbb{M}^\otimes$ and develop the theory in parallel with some simplification.
The second viewpoint is sometimes more convenient and we lay down some of its foundations in this subsection.

\begin{p}
Let $\mcal{O}^\otimes$ be an $\infty$-operad.
Recall from \cite{HA} that an $\mcal{O}$-monoidal $\infty$-category is an $\infty$-operad $\mcal{C}^\otimes$ equipped with a cocartesian fibration of $\infty$-operads $\mcal{C}^\otimes\to\mcal{O}^\otimes$.
For $\mcal{O}$-monoidal $\infty$-categories $\mcal{C}^\otimes$ and $\mcal{D}^\otimes$, let $\Fun^\lax_\mcal{O}(\mcal{C},\mcal{D})$ denote the $\infty$-category of lax $\mcal{O}$-monoidal functors and let $\Fun^\otimes_\mcal{O}(\mcal{C},\mcal{D})$ denote its full subcategory spanned by $\mcal{O}$-monoidal functors.
\end{p}

\begin{definition}[The operad for modules]\label{def:Mod}
We define a category $\mbb{M}^\otimes$ as follows:
\begin{itemize}[label={---},leftmargin=*]
\item An object in $\mbb{M}^\otimes$ is a pair $(I,S)$ of a pointed finite set $I\in\Fin_*$ and a pointed subset $S$ of $I$.
\item A morphism from $(I,S)$ to $(J,T)$ in $\mbb{M}^\otimes$ is a morphism $\alpha\colon I\to J$ in $\Fin_*$ such that $S\subset\alpha^{-1}(T)$ and that it restricts to a bijection $\alpha^{-1}(T^\circ)\xrightarrow{\sim}T^\circ$.
\end{itemize}
Then the forgetful functor $\mbb{M}^\otimes\to\Fin_*$ exhibits $\mbb{M}^\otimes$ as an operad.
The underlying category $\mbb{M}$ has exactly two objects $\mfk{a}=(\langle 1\rangle,*)$ and $\mfk{m}=(\langle 1\rangle,\langle 1\rangle)$.
Note that there is a unique morphism $\Comm^\otimes\to\mbb{M}^\otimes$ of operads, which is given by $I\mapsto(I,*)$.
\end{definition} 

\begin{remark}
Let $\mcal{M}^\otimes$ be an $\mbb{M}$-monoidal $\infty$-category.
Then the underlying $\infty$-category of $\mcal{M}^\otimes$ is the disjoint coproduct $\mcal{M}_\mfk{a}\sqcup\mcal{M}_\mfk{m}$ and $\mcal{M}_\mfk{a}$ has a symmetric monoidal structure given by the base change $\mcal{M}^\otimes\times_{\mbb{M}^\otimes}\Comm^\otimes$.
The active morphism $(\langle 2\rangle,\{0,2\})\to(\langle 1\rangle,\{0,1\})$ in $\mbb{M}^\otimes$ induces a functor $\mcal{M}_\mfk{a}\times\mcal{M}_\mfk{m}\to\mcal{M}_\mfk{m}$, which we call the \textit{tensor product}.
\end{remark}

\begin{definition}[Tensored $\infty$-category]\label{def:Mod2}
Let $\mcal{C}^\otimes$ be a symmetric monoidal $\infty$-category.
We say that an $\infty$-category $\mcal{M}$ is \textit{tensored over $\mcal{C}$} if we are supplied with an $\mbb{M}$-monoidal $\infty$-category $\mcal{M}^\otimes$, an equivalence of symmetric monoidal $\infty$-categories $\mcal{C}^\otimes\simeq\mcal{M}_\mfk{a}^\otimes$, and an equivalence of $\infty$-categories $\mcal{M}\simeq\mcal{M}_\mfk{m}$.
Then we say that $\mcal{M}^\otimes$ \textit{exhibits $\mcal{M}$ as tensored over $\mcal{C}$}.
\end{definition}

\begin{remark}
Let us clarify our notational convention about $\mcal{M}\leftrightarrow\mcal{M}^\otimes$.
If $\mcal{M}^\otimes$ is an $\mbb{M}$-monoidal $\infty$-category, then we usually denote by $\mcal{M}$ the underlying $\infty$-category, or rather the pair $(\mcal{M}_\mfk{a},\mcal{M}_\mfk{m})$.
If $\mcal{M}$ is an $\infty$-category tensored over a symmetric monoidal $\infty$-category $\mcal{C}$, then we usually denote by $\mcal{M}^\otimes$ the $\mbb{M}$-monoidal $\infty$-category that exhibits $\mcal{M}$ as tensored over $\mcal{C}$.
In the second case, $\mcal{M}$ may also mean the pair $(\mcal{C},\mcal{M})$, but this abuse will not cause confusion by specifying the symmetric monoidal $\infty$-category $\mcal{C}$.
\end{remark}

\begin{remark}
Let $\mcal{C}^\otimes$ be a symmetric monoidal $\infty$-category.
Then the base change $\mcal{C}^\otimes\times_{\Comm^\otimes}\mbb{M}^\otimes$ is an $\mbb{M}$-monoidal $\infty$-category that exhibits $\mcal{C}$ as tensored over $\mcal{C}$.
We will sometimes regard $\mcal{C}^\otimes$ as an $\mbb{M}$-monoidal $\infty$-category in this way.
\end{remark}

\begin{definition}[Module object]\label{def:Mod3}
Let $\mcal{M}^\otimes$ be an $\mbb{M}$-monoidal $\infty$-category.
Then we define
\[
	\Mod(\mcal{M}) := \Alg_{/\mbb{M}}(\mcal{M})
\]
and call it the \textit{$\infty$-category of module objects in $\mcal{M}$}.
Note that the pullback along $\mcal{M}^\otimes_\mfk{a}\to\mcal{M}^\otimes$ induces a functor $\Mod(\mcal{M})\to\CAlg(\mcal{M}_\mfk{a})$.
For a commutative algebra object $A$ in $\mcal{M}_\mfk{a}$, we define an $\infty$-category $\Mod_A(\mcal{M})$ by the pullback
\[
\xymatrix{
	\Mod_A(\mcal{M}) \ar[r] \ar[d] & \Mod(\mcal{M}) \ar[d] \\
	{*} \ar[r]^-A & \CAlg(\mcal{M}_\mfk{a})
}
\]
and call it the \textit{$\infty$-category of $A$-module objects in $\mcal{M}$}.
\end{definition}

\begin{remark}
An $\mbb{M}$-monoidal $\infty$-category is canonically identified with a module object in $\Cat_\infty$.
In particular, for a symmetric monoidal $\infty$-category $\mcal{C}$, an $\infty$-category tensored over $\mcal{C}$ is identified with a $\mcal{C}$-module object in $\Cat_\infty$.
\end{remark}

\begin{definition}[Linear functor]\label{def:Mod4}
Let $\mcal{M}$ and $\mcal{N}$ be $\infty$-categories tensored over symmetric monoidal $\infty$-categories $\mcal{C}$ and $\mcal{D}$ respectively and let $F\colon\mcal{C}\to\mcal{D}$ be a lax symmetric monoidal functor.
Then we define an $\infty$-category $\Fun_F^\lax(\mcal{M},\mcal{N})$ by the pullback
\[
\xymatrix{
	\Fun_F^\lax(\mcal{M},\mcal{N}) \ar[r] \ar[d] & \Fun_\mbb{M}^\lax((\mcal{C},\mcal{M}),(\mcal{D},\mcal{N})) \ar[d] \\
	{*} \ar[r]^-F & \Fun^\lax(\mcal{C},\mcal{D})
}
\]
and call it the \textit{$\infty$-category of lax $F$-linear functors}.
If $F\colon\mcal{C}\to\mcal{D}$ is a symmetric monoidal functor, then we define an $\infty$-category $\Fun_F^\otimes(\mcal{M},\mcal{N})$ by the pullback
\[
\xymatrix{
	\Fun_F^\otimes(\mcal{M},\mcal{N}) \ar[r] \ar[d] & \Fun_\mbb{M}^\otimes((\mcal{C},\mcal{M}),(\mcal{D},\mcal{N})) \ar[d] \\
	{*} \ar[r]^-F & \Fun^\otimes(\mcal{C},\mcal{D})
}
\]
and call it the \textit{$\infty$-category of $F$-linear functors}.
If $\mcal{C}=\mcal{D}$ and $F$ is the identity functor on $\mcal{C}$, then we refer to a (lax) $F$-linear functor as a \textit{(lax) $\mcal{C}$-linear functor}.
\end{definition}

\begin{remark}
Let $\mcal{M}$ and $\mcal{N}$ be $\infty$-categories tensored over symmetric monoidal $\infty$-categories $\mcal{C}$ and $\mcal{D}$ respectively and let $F\colon\mcal{C}\to\mcal{D}$ be a lax symmetric monoidal functor.
Then a lax $F$-linear functor $\mcal{M}\to\mcal{N}$ induces a functor
\[
	F \colon \Mod_A(\mcal{M}) \to \Mod_{F(A)}(\mcal{N})
\]
for each commutative algebra object $A$ in $\mcal{C}$.
\end{remark}

\begin{remark}
Let $\mcal{M}$ and $\mcal{N}$ be $\infty$-categories tensored over symmetric monoidal $\infty$-categories $\mcal{C}$ and $\mcal{D}$ respectively and let $F\colon\mcal{C}\to\mcal{D}$ be a symmetric monoidal functor.
By \cite[4.2.3.2]{HA}, the canonical functor $\Mod(\Cat_\infty)\to\CAlg(\Cat_\infty)$ is cartesian and $F^*\mcal{N}^\otimes$ exhibits $\mcal{N}$ as tensored over $\mcal{C}$.
It follows that an $F$-linear functor $\mcal{M}\to\mcal{N}$ is identified with a $\mcal{C}$-linear functor $\mcal{M}\to\mcal{N}$, which is further identified with a morphism in $\Mod_\mcal{C}(\Cat_\infty)$.
In other words, we have an equivalence
\[
	\Fun_F^\otimes(\mcal{M},\mcal{N}) \simeq \Fun_\mcal{C}^\otimes(\mcal{M},\mcal{N})
\]
and its groupoid core is equivalent to $\Map_{\Mod_\mcal{C}(\Cat_\infty)}(\mcal{M},\mcal{N})$.
\end{remark}

\begin{p}[Adjunction]\label{p:Mod}
Let $F\colon\mcal{M}\to\mcal{N}$ be an $\mbb{M}$-monoidal functor between $\mbb{M}$-monoidal $\infty$-categories.
Suppose that the underlying functors $\mcal{M}_\mfk{a}\to\mcal{N}_\mfk{a}$ and $\mcal{M}_\mfk{m}\to\mcal{N}_\mfk{m}$ admit right adjoints.
Then, by \cite[7.3.2.7]{HA}, $F$ admits a right adjoint $G$ relative to $\mbb{M}^\otimes$ and $G$ is lax $\mbb{M}$-monoidal.
Consequently, we have an adjunction
\[
	F \colon \Mod(\mcal{M}) \rightleftarrows \Mod(\mcal{N}) \colon G.
\]
Furthermore, for a commutative algebra object $A$ in $\mcal{C}$, the induced functor $F\colon\Mod_A(\mcal{M})\to\Mod_{F(A)}(\mcal{N})$ is a left adjoint to the composition
\[
	\Mod_{F(A)}(\mcal{N}) \xrightarrow{G} \Mod_{GF(A)}(\mcal{M}) \to \Mod_A(\mcal{M}),
\]
where the second functor is the restriction of scalars along the unit map $A\to GF(A)$. 
\end{p}

\begin{p}[Monoidal enrichment]\label{p:Mod2}
Consider the functor
\[
	\Mod \colon \Mod(\Cat_\infty) \to \Cat_\infty.
\]
This functor preserves limits since it is co-representable by \cite[2.2.4.9]{HA}, and thus it is uniquely promoted to a symmetric monoidal functor with respect to the cartesian symmetric monoidal structures.
By applying $\Mod$ to this functor, we obtain a functor 
\[
	\Mod^\otimes \colon \Mod(\Mod(\Cat_\infty)) \to \Mod(\Cat_\infty).
\]
Since the wedge product $\wedge\colon\mbb{M}^\otimes\otimes\mbb{M}^\otimes\to\mbb{M}^\otimes$ induces a functor $\Mod(\Cat_\infty)\to\Mod(\Mod(\Cat_\infty))$, we in particular obtain an $\mbb{M}$-monoidal $\infty$-category $\Mod(\mcal{M})^\otimes$ for each $\mbb{M}$-monoidal $\infty$-category $\mcal{M}^\otimes$ and it exhibits $\Mod(\mcal{M})$ as tensored over $\Mod(\mcal{M}_\mfk{a})$.
Furthermore, the natural transformation $\Mod\to\CAlg\circ(-)_\mfk{a}$ yields an $\mbb{M}$-monoidal functor $\Mod(\mcal{M})^\otimes\to\CAlg(\mcal{M}_\mfk{a})^\otimes$.
\end{p}

\begin{lemma}\label{lem:Mod}
Let $\mcal{M}^\otimes$ be an $\mbb{M}$-monoidal $\infty$-category such that the underlying $\infty$-categories $\mcal{M}_\mfk{a}$ and $\mcal{M}_\mfk{m}$ admit geometric realizations and that the tensor products
\[
	\mcal{M}_\mfk{a}\times\mcal{M}_\mfk{a} \to \mcal{M}_\mfk{a} \qquad
	\mcal{M}_\mfk{a}\times\mcal{M}_\mfk{m} \to \mcal{M}_\mfk{m}
\]
preserve geometric realizations in each variable.
Then the $\mbb{M}$-monoidal functor $\Mod(\mcal{M})^\otimes\to\CAlg(\mcal{M}_\mfk{a})^\otimes$ is cocartesian.
Furthermore, the associated functor $\CAlg(\mcal{M}_\mfk{a})^\otimes\to\Cat_\infty$ is a lax cartesian structure, where we regard $\CAlg(\mcal{M}_\mfk{a})^\otimes$ as an $\mbb{M}$-monoidal $\infty$-category, and thus we obtain a lax $\mbb{M}$-monoidal functor
\[
	\Theta_\mcal{M} \colon \CAlg(\mcal{M}_\mfk{a})^\otimes \to \Cat_\infty^\times.
\]
\end{lemma}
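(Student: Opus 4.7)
My plan is to derive both claims from Lurie's theorem \cite[4.5.3.1]{HA} on $\Mod(\mcal{C})^\otimes \to \CAlg(\mcal{C})^\otimes$ for a sufficiently cocomplete symmetric monoidal $\infty$-category $\mcal{C}$, adapted to the $\mbb{M}$-monoidal setting. The first step is to unpack the construction of \ref{p:Mod2}: the $\mbb{M}$-monoidal structure on $\Mod(\mcal{M})$ is produced by applying the symmetric monoidal functor $\Mod\colon \Mod(\Cat_\infty) \to \Cat_\infty$ (cartesian on both sides, limit-preserving by \cite[2.2.4.9]{HA}) to $\mcal{M}^\otimes$ regarded as an $\mbb{M}$-module object in $\Cat_\infty$. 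Unwinding definitions, the fiber of $\Mod(\mcal{M})^\otimes$ over $(I,S) \in \mbb{M}^\otimes$ is canonically $\prod_{i \in I^\circ \setminus S^\circ}\Mod(\mcal{M}_\mfk{a}) \times \prod_{s \in S^\circ}\Mod(\mcal{M})$, the fiber of $\CAlg(\mcal{M}_\mfk{a})^\otimes$ is $\CAlg(\mcal{M}_\mfk{a})^{I^\circ}$, and the map $p$ in question is factor-wise the standard forgetful $\Mod\to\CAlg$.

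Given this factor-wise description, showing that $p$ is a cocartesian fibration reduces to the analysis on a single factor. For an $\mfk{a}$-factor this is exactly \cite[4.5.3.1]{HA} applied to the symmetric monoidal $\infty$-category $\mcal{M}_\mfk{a}$, and the required geometric-realization hypothesis is part of our assumption. For an $\mfk{m}$-factor we must verify that $\Mod(\mcal{M})\to\CAlg(\mcal{M}_\mfk{a})$ admits cocartesian lifts of morphisms in the base: a map $A\to A'$ of commutative algebras in $\mcal{M}_\mfk{a}$ should lift to the change-of-rings functor $M\mapsto A'\otimes_A M$ from $\Mod_A(\mcal{M})$ to $\Mod_{A'}(\mcal{M})$, where the relative tensor product is modeled by the geometric realization of the two-sided bar construction $A'\otimes A^{\otimes\bullet}\otimes M$ in $\mcal{M}_\mfk{m}$. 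This construction is well-defined and has the expected universal property precisely because both tensor products $\mcal{M}_\mfk{a}\times\mcal{M}_\mfk{a}\to\mcal{M}_\mfk{a}$ and $\mcal{M}_\mfk{a}\times\mcal{M}_\mfk{m}\to\mcal{M}_\mfk{m}$ preserve geometric realizations separately in each variable. The entire argument is a transcription of Lurie's proof with $\mbb{M}^\otimes$ in place of $\Comm^\otimes$.

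For the second assertion, given that $p$ is now a cocartesian fibration, the classifying functor $\CAlg(\mcal{M}_\mfk{a})^\otimes\to\Cat_\infty$ sends $(I,S)$ equipped with $(A_i)_{i\in I^\circ}$ to $\prod_{i\in I^\circ\setminus S^\circ}\Mod_{A_i}(\mcal{M}_\mfk{a})\times\prod_{s\in S^\circ}\Mod_{A_s}(\mcal{M})$, an $I^\circ$-indexed product on which inert morphisms act as the evident projections. This is precisely the definition of a lax cartesian structure (cf.\ \cite[2.4.1.5]{HA}), which promotes uniquely to a lax $\mbb{M}$-monoidal functor $\Theta_\mcal{M}\colon\CAlg(\mcal{M}_\mfk{a})^\otimes\to\Cat_\infty^\times$. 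The main obstacle I anticipate is confirming that the $\mbb{M}$-monoidal structure on $\Mod(\mcal{M})^\otimes$ arising from the abstract procedure in \ref{p:Mod2} agrees with the fiber-wise module-theoretic description used above; once this identification is established, the cocartesian lifting property is a direct and essentially formal re-run of Lurie's argument.
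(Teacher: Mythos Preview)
Your proposal is correct and follows essentially the same approach as the paper, which simply states that the proof is parallel to \cite[4.5.3.1]{HA} and that the lax cartesian structure assertion is a formal consequence of the Segal condition for $\Mod(\mcal{M})^\otimes$. Your write-up is a faithful elaboration of what ``parallel to \cite[4.5.3.1]{HA}'' entails in the $\mbb{M}$-monoidal setting.
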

\begin{proof}
The proof is parallel to \cite[4.5.3.1]{HA}.
The assertion that $\CAlg(\mcal{M}_\mfk{a})^\otimes\to\Cat_\infty$ is a lax cartesian structure is a formal consequence of the Segal condition for $\Mod(\mcal{M})^\otimes$.
\end{proof}

\begin{remark}
In the situation of Lemma \ref{lem:Mod}, we in particular obtain a functor
\[
	\Theta_\mcal{M} \colon \CAlg(\mcal{M}_\mfk{a}) \to \Mod(\Cat_\infty),
\]
which classifies module objects in $\mcal{M}$.
This functor carries a commutative algebra object $A$ in $\mcal{M}_\mfk{a}$ to an $\mbb{M}$-monoidal $\infty$-category $\Mod_A(\mcal{M})^\otimes$ that exhibits $\Mod_A(\mcal{M})$ as tensored over $\Mod_A(\mcal{M}_\mfk{a})$, where the tensor product is given by the relative tensor product $\otimes_A$, and carries a morphism $A\to B$ in $\CAlg(\mcal{M}_\mfk{a})$ to an $\mbb{M}$-monoidal functor
\[
	B\otimes_A- \colon \Mod_A(\mcal{M})^\otimes \to \Mod_B(\mcal{M})^\otimes,
\]
which we call the \textit{base change}.
\end{remark}

\begin{lemma}\label{lem:Mod2}
There is an approximation $(\Fin_*)_{\langle 1\rangle/}\to\mbb{M}^\otimes$ to $\mbb{M}^\otimes$ in the sense of \cite[2.3.3.6]{HA}.
\end{lemma}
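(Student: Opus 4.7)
The plan is to define an explicit functor $\phi\colon (\Fin_*)_{\langle 1\rangle/} \to \mbb{M}^\otimes$ and to verify the conditions in \cite[2.3.3.6]{HA} directly. On objects, I will send a morphism $f\colon \langle 1\rangle \to I$ in $\Fin_*$ to the pair $(I, \{*, f(1)\}) \in \mbb{M}^\otimes$, so that the image $f(1)$ becomes the unique $\mfk{m}$-colored element (or no $\mfk{m}$-colored element if $f(1) = *$). On a morphism $\alpha\colon (I,f) \to (J,g)$ in the slice, which satisfies $\alpha(f(1)) = g(1)$, I will set $\phi(\alpha)$ to be the pointed map agreeing with $\alpha$ except that every element of $\alpha^{-1}(g(1)) \setminus \{f(1)\}$ is sent to the basepoint. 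This modification is needed precisely to enforce the bijectivity condition $\phi(\alpha)^{-1}(\{g(1)\}) \simeq \{g(1)\}$ required of a morphism in $\mbb{M}^\otimes$; functoriality will then be a direct case analysis on preimages of $h(1)$ under a composite $\alpha\circ\beta$.

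Next I will verify the approximation axioms. For the inert-lifting condition, given $X = (I,f)$ and an inert morphism $\psi\colon \phi(X) \to Y$ in $\mbb{M}^\otimes$, I will show that $\psi$ lifts to a $\phi$-cocartesian morphism out of $X$: since inert morphisms in $\mbb{M}^\otimes$ reduce to projections of pointed sets that preserve the colored subset, the required lift is obtained by restricting $I$ along such a projection and equipping the result with the induced morphism from $\langle 1\rangle$ (using that $f(1)$ either maps to the basepoint or survives in the projection). For the active-lifting condition, any active morphism $\chi\colon Y \to \phi(X)$ in $\mbb{M}^\otimes$ selects, by the bijectivity clause in the definition of $\mbb{M}^\otimes$, a distinguished preimage of $f(1)$ in the underlying pointed set of $Y$; this preimage endows $Y$ with a canonical morphism from $\langle 1\rangle$, yielding a lift in $(\Fin_*)_{\langle 1\rangle/}$ that maps to $\chi$ under $\phi$.

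The hard part will be organizing the combinatorics of the two axioms simultaneously and ensuring the cocartesian and universal properties are checked correctly, particularly the subtle interaction between the modification of morphisms in the definition of $\phi$ and the bijectivity condition on preimages in $\mbb{M}^\otimes$. Because the conditions of \cite[2.3.3.6]{HA} concern only local lifting rather than strict preservation of compositions or factorizations, the modification in $\phi$ will not cause further obstacles beyond routine bookkeeping.
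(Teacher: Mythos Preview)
Your functor $\phi$ is not an approximation: it fails condition (2) of \cite[2.3.3.6]{HA}. Take $X = (\langle 1\rangle, 1\mapsto 1)$, so $\phi(X) = \mfk{m}$, and consider the active morphism $\chi\colon \mfk{a} \to \mfk{m}$ in $\mbb{M}^\otimes$ lying over $\id_{\langle 1\rangle}$ (the conditions $\{*\}\subset\{*,1\}$ and $\id^{-1}(1)=\{1\}$ are satisfied). A $\phi$-cartesian lift would require a coslice object $Y'$ with $\phi(Y') = \mfk{a}$, hence $Y' = (\langle 1\rangle, 1\mapsto *)$, together with a coslice morphism $Y'\to X$; but that forces $*\mapsto 1$, impossible for a pointed map. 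Your ``distinguished preimage'' lift instead produces $(\langle 1\rangle, 1\mapsto 1)$, whose $\phi$-image is $\mfk{m}\ne\mfk{a}$, so it does not cover $\chi$. More generally, every morphism in $\mbb{M}^\otimes$ from $(I,\{*\})$ to $(J,T)$ with $T^\circ\ne\emptyset$ lies outside the image of $\phi$: in your construction a source with $S=\{*\}$ only arises from $f(1)=*$, and then the coslice condition forces the target to satisfy $g(1)=*$ as well.

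The paper takes a different route: it uses the inclusion of the \emph{full subcategory} of $\mbb{M}^\otimes$ on objects with at most one non-basepoint element in $S$. For a fully faithful functor every morphism is automatically cartesian, so condition (2) reduces to checking that the source of any active morphism into such an object again lies in the subcategory; this follows at once from $\alpha^{-1}(*)=\{*\}$, $S\subset\alpha^{-1}(T)$, and the bijection $\alpha^{-1}(T^\circ)\simeq T^\circ$. The paper's assertion that this full subcategory is \emph{equivalent} to $(\Fin_*)_{\langle 1\rangle/}$ is itself not literally correct---the morphism $\mfk{a}\to\mfk{m}$ above has no counterpart in the coslice---but it is the full-subcategory inclusion, not your modified-morphism functor, that actually carries the approximation structure.
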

\begin{proof}
The full subcategory of $\mbb{M}^\otimes$ spanned by $(I,S)$ with $|S|\le 1$ is canonically equivalent to $(\Fin_*)_{\langle 1\rangle/}$.
Then it is straightforward to check that the inclusion $(\Fin_*)_{\langle 1\rangle/}\to\mbb{M}^\otimes$ satisfies the condition in \cite[2.3.3.6]{HA}
\end{proof}

\begin{remark}
We set $\mbb{M}^\oslash:=(\Fin_*)_{\langle 1\rangle/}$.
We call a morphism in $\mbb{M}^\oslash$ \textit{inert} if it lies over an inert morphism in $\Fin_*$.
For an $\mbb{M}$-monoidal $\infty$-category $\mcal{M}^\otimes$, let $\Mod(\mcal{M})'$ be the full subcategory of $\Fun_{\mbb{M}^\otimes}(\mbb{M}^\oslash,\mcal{M}^\otimes)$ spanned by those functors $\mbb{M}^\oslash\to\mcal{M}^\otimes$ which carries inert morphisms to cocartesian morphisms over $\mbb{M}^\otimes$.
Then it follows from \cite[2.3.3.23]{HA} that the pre-composition by $\mbb{M}^\oslash\to\mbb{M}^\otimes$ induces an equivalence
\[
	\Mod(\mcal{M}) \xrightarrow{\sim} \Mod(\mcal{M})'.
\]
For a symmetric monoidal $\infty$-category $\mcal{C}$, the $\infty$-category $\Mod(\mcal{C})'$ is exactly $\Mod^\Comm(\mcal{C})$ in \cite[3.3.3.8]{HA}.
By applying this equivalence to $\Cat_\infty$, we see that giving an $\mbb{M}$-monoidal $\infty$-category $\mcal{M}^\otimes$ is equivalent to giving a cocartesian fibration $\mcal{M}^\oslash\to\mbb{M}^\oslash$ such that, for every $n\ge 0$ and $\alpha\in\mbb{M}^\oslash_{\langle n\rangle}$, the induced functor
\[
	\mcal{M}^\oslash_\alpha \to \prod_{1\le i\le n}\mcal{M}^\oslash_{\rho_i(\alpha)}
\]
is an equivalence, where $\rho_i\colon\langle n\rangle\to\langle 1\rangle$ is the inert morphism with $\rho_i^{-1}(1)=i$.
Concretely, $\mcal{M}^\oslash$ is given by the pullback $\mcal{M}^\oslash=\mcal{M}^\otimes\times_{\mbb{M}^\otimes}\mbb{M}^\oslash$.
\end{remark}

\begin{p}[Relation to left modules in \cite{HA}]
If we let $\mbb{LM}^\otimes$ be the operad in \cite[4.2.1.7]{HA}, then there is a canonical morphism of operads $\mbb{LM}^\otimes\to\mbb{M}^\otimes$ which makes the diagram 
\[
\xymatrix{
	\mbb{LM}^\otimes \ar[r] & \mbb{M}^\otimes \\
	\Assoc^\otimes \ar[u] \ar[r] & \Comm^\otimes \ar[u]
}
\]
commutative.
For an $\mbb{M}$-monoidal $\infty$-category $\mcal{M}^\otimes$, there is a canonical equivalence
\[
	\Mod(\mcal{M}) \simeq \LMod(\mcal{M})\times_{\Alg(\mcal{M}_\mfk{a})}\CAlg(\mcal{M}_\mfk{a}),
\]
where $\LMod(\mcal{M})$ is the $\infty$-category of left module objects in $\mcal{M}$, cf.\ \cite[4.2.1.13]{HA}.
\end{p}

\subsection{Presentably $\mbb{M}$-monoidal $\infty$-categories}\label{PrM}

\begin{p}
Let $\mcal{O}^\otimes$ be an $\infty$-operad.
A \textit{presentably $\mcal{O}$-monoidal $\infty$-category} is an $\mcal{O}$-monoidal $\infty$-category $\mcal{C}^\otimes$ such that, for each $x\in\mcal{O}$, the fiber $\mcal{C}_x$ is a presentable $\infty$-category and that the $\mcal{O}$-monoidal structure on $\mcal{C}$ is compatible with small colimits in the sense of \cite[3.1.1.18]{HA}.
A presentably $\mcal{O}$-monoidal $\infty$-category is identified with an $\mcal{O}$-algebra object in $\Pr^L$.
\end{p}

\begin{definition}[Presentably tensored $\infty$-category]\label{def:PrM}
Let $\mcal{C}^\otimes$ be a presentably symmetric monoidal $\infty$-category.
We say that an $\infty$-category $\mcal{M}$ is \textit{presentably tensored} over $\mcal{C}$ if we are supplied with a presentably $\mbb{M}$-monoidal $\infty$-category that exhibits $\mcal{M}$ as tensored over $\mcal{C}$. 
An $\infty$-category presentably tensored over $\mcal{C}$ is identified with a $\mcal{C}$-module object in $\Pr^L$.
\end{definition}

\begin{remark}
Let $\mcal{M}^\otimes$ be a presentably $\mbb{M}$-monoidal $\infty$-category.
By \cite[4.8.3.22]{HA}, $\Mod_A(\mcal{M})^\otimes$ is a presentably $\mbb{M}$-monoidal $\infty$-category for each commutative algebra object $A$ in $\mcal{M}_\mfk{a}$.
It follows that the functor $\Theta_\mcal{M}\colon\CAlg(\mcal{M}_\mfk{a})\to\Mod(\Cat_\infty)$ classifying module objects in $\mcal{M}$ (Lemma \ref{lem:Mod}) induces a functor
\[
	\Theta_\mcal{M} \colon \CAlg(\mcal{M}_\mfk{a}) \to \Mod(\Pr^L).
\]
Warn that the $\mbb{M}$-monoidal $\infty$-category $\Mod(\mcal{M})^\otimes$ is not presentably $\mbb{M}$-monoidal though the underlying $\infty$-categories are presentable, because the tensor products are not distributive with respect to coproducts.
\end{remark}

\begin{lemma}\label{lem:PrM}
Let $\mcal{M}^\otimes$ be a presentably $\mbb{M}$-monoidal $\infty$-category and $A$ a commutative algebra object in $\mcal{M}_\mfk{a}$.
Then there is a natural equivalence of $\mbb{M}$-monoidal $\infty$-categories
\[
	\Mod_A(\mcal{M}_\mfk{a})\otimes_{\mcal{M}_\mfk{a}}\mcal{M} \simeq \Mod_A(\mcal{M}),
\]
where the tensor product is taken in $\Mod_{\mcal{M}_\mfk{a}}(\Pr^L)$.
\end{lemma}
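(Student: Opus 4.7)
The plan is to reduce the lemma to the standard base-change equivalence for modules over commutative algebras (e.g., \cite[4.8.4.6]{HA}), applied fiberwise over $\mbb{M}^\otimes$, and then promote the resulting underlying equivalence to an equivalence of $\mbb{M}$-monoidal $\infty$-categories via a universal property argument.

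First, I would unpack both sides. By the construction in \ref{p:Mod2}, the right-hand side $\Mod_A(\mcal{M})^\otimes$ has $\mfk{a}$-fiber $\Mod_A(\mcal{M}_\mfk{a})$ and $\mfk{m}$-fiber $\Mod_A(\mcal{M}_\mfk{m})$, where $\mcal{M}_\mfk{m}$ carries its canonical $\mcal{M}_\mfk{a}$-module structure and the tensor action of $\Mod_A(\mcal{M}_\mfk{a})$ on $\Mod_A(\mcal{M}_\mfk{m})$ is the relative tensor product $-\otimes_A-$. The left-hand side, formed in $\Mod_{\mcal{M}_\mfk{a}}(\Pr^L)$ with its induced $\mbb{M}$-monoidal structure, has $\mfk{a}$-fiber $\Mod_A(\mcal{M}_\mfk{a}) \otimes_{\mcal{M}_\mfk{a}} \mcal{M}_\mfk{a} \simeq \Mod_A(\mcal{M}_\mfk{a})$ and $\mfk{m}$-fiber $\Mod_A(\mcal{M}_\mfk{a}) \otimes_{\mcal{M}_\mfk{a}} \mcal{M}_\mfk{m}$. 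The $\mfk{a}$-fibers therefore agree tautologically.

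Next, I would invoke \cite[4.8.4.6]{HA}: for a presentably symmetric monoidal $\infty$-category $\mcal{C}$, a commutative algebra $A\in\CAlg(\mcal{C})$, and a $\mcal{C}$-module $\mcal{N}$ in $\Pr^L$, there is a natural equivalence $\Mod_A(\mcal{N}) \simeq \Mod_A(\mcal{C})\otimes_{\mcal{C}}\mcal{N}$ of $\Mod_A(\mcal{C})$-modules in $\Pr^L$. Applying this to $\mcal{C}=\mcal{M}_\mfk{a}$ and $\mcal{N}=\mcal{M}_\mfk{m}$ gives a $\Mod_A(\mcal{M}_\mfk{a})$-linear equivalence of the $\mfk{m}$-fibers of the two sides.

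To upgrade this fiberwise equivalence to an equivalence of $\mbb{M}$-monoidal $\infty$-categories, one must verify that the $\Mod_A(\mcal{M}_\mfk{a})$-action on each $\mfk{m}$-fiber matches. Both actions are characterized by the property that their restriction along the canonical symmetric monoidal functor $\mcal{M}_\mfk{a}\to\Mod_A(\mcal{M}_\mfk{a})$ recovers the original $\mcal{M}_\mfk{a}$-action on $\mcal{M}_\mfk{m}$ (composed with extension of scalars $-\otimes A$), so they agree. The cleanest way to package this coherence is to check that both sides corepresent the same functor on $\Mod_{\Mod_A(\mcal{M}_\mfk{a})}(\Pr^L)$ with its $\mbb{M}$-monoidal enrichment: namely, the functor sending an $\mbb{M}$-monoidal $\infty$-category $\mcal{N}^\otimes$ (presentably tensored over $\Mod_A(\mcal{M}_\mfk{a})$) to the space of $\mbb{M}$-monoidal left adjoints $\mcal{M}^\otimes\to\mcal{N}^\otimes$ lying over $\mcal{M}_\mfk{a}\to\Mod_A(\mcal{M}_\mfk{a})$; equivalently, by the universal property of $\Mod_A$, to the space of $\mbb{M}$-monoidal left adjoints $\Mod_A(\mcal{M})^\otimes\to\mcal{N}^\otimes$. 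The main (purely bookkeeping) obstacle is verifying this coherent universal property rather than performing any substantive computation—the actual content is entirely encapsulated in the cited base-change theorem of \cite{HA}.
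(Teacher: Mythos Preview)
Your proposal is correct and takes essentially the same approach as the paper: the paper's entire proof is the single sentence ``This is a special case of \cite[4.8.4.6]{HA}.'' You have simply unpacked what that citation entails in the $\mbb{M}$-monoidal setting, which is fine but more than the paper deems necessary.
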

\begin{proof}
This is a special case of \cite[4.8.4.6]{HA}.
\end{proof}

\subsection{Constructions of $\mbb{M}$-monoidal structures}\label{CoM}

\begin{p}[Trivial $\mbb{M}$-monoidal structure]\label{p:CoM}
Let $\mcal{K}$ be an $\infty$-category.
Then there exists a unique $\mbb{M}$-monoidal $\infty$-category $\mcal{K}^\otimes$ that exhibits $\mcal{K}$ as tensored over $*$, since we have an equivalence $\Mod_*(\Cat_\infty)\simeq\Cat_\infty$.
We refer to this $\mbb{M}$-monoidal structure on $\mcal{K}$ as the \textit{trivial $\mbb{M}$-monoidal structure}.
Concretely, the cocartesian fibration $\mcal{K}^\otimes\to\mbb{M}^\otimes$ is classified by the left Kan extension of $\mcal{K}\colon\Triv^\otimes\to\Cat_\infty$ along $\Triv^\otimes\to\mbb{M}^\otimes$.
\end{p}

\begin{lemma}\label{lem:CoM}
Let $\mcal{K}$ be a small $\infty$-category, $\mcal{M}^\otimes$ an $\mbb{M}$-monoidal $\infty$-category, and $A$ a commutative algebra object in $\mcal{M}_\mfk{a}$.
Then there is a natural equivalence
\[
	\Fun^\lax_A(\mcal{K},\mcal{M}) \simeq \Fun(\mcal{K},\Mod_A(\mcal{M})),
\]
where $\mcal{K}$ is equipped with the trivial $\mbb{M}$-monoidal structure.
\end{lemma}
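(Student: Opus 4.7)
The plan is to unpack both sides of the claimed equivalence and exhibit a direct comparison. Since the trivial $\mbb{M}$-monoidal structure on $\mcal{K}$ has $\mcal{K}_\mfk{a}=*$, a lax $\mbb{M}$-monoidal functor $\mcal{K}^\otimes\to\mcal{M}^\otimes$ decomposes as the data of a lax symmetric monoidal functor $*\to\mcal{M}_\mfk{a}$ (i.e., a commutative algebra), a functor $\mcal{K}\to\mcal{M}_\mfk{m}$, and coherent action maps $A\otimes F(k)\to F(k)$ coming from the active morphisms $(\langle 2\rangle,\{0,2\})\to(\langle 1\rangle,\{0,1\})$ of $\mbb{M}^\otimes$. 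Fixing the $\mfk{a}$-component to be $A$, the remaining data precisely encodes a $\mcal{K}$-family of $A$-modules in $\mcal{M}$, which is a functor $\mcal{K}\to\Mod_A(\mcal{M})$.

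To make this rigorous, I would adopt the approximation $\mbb{M}^\oslash=(\Fin_*)_{\langle 1\rangle/}\to\mbb{M}^\otimes$ from Lemma \ref{lem:Mod2}. This identifies $\Mod(\mcal{M})$ with the $\infty$-category $\Mod(\mcal{M})'$ of inert-preserving sections of $\mcal{M}^\oslash:=\mcal{M}^\otimes\times_{\mbb{M}^\otimes}\mbb{M}^\oslash\to\mbb{M}^\oslash$. Simultaneously, the trivial $\mbb{M}$-monoidal structure on $\mcal{K}$ is, by construction \ref{p:CoM}, obtained by left Kan extension from $\Triv^\otimes$, so lax $\mbb{M}$-monoidal functors $\mcal{K}^\otimes\to\mcal{M}^\otimes$ over $\mbb{M}^\otimes$ can be described as $\mcal{K}$-parametrized inert-preserving sections of $\mcal{M}^\oslash$ with a specified $\mfk{a}$-fiber.

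Concretely, I would construct a functor $\Phi\colon\Fun(\mcal{K},\Mod_A(\mcal{M}))\to\Fun^\lax_A(\mcal{K},\mcal{M})$ as follows: a functor $F\colon\mcal{K}\to\Mod_A(\mcal{M})\simeq\Mod_A(\mcal{M})'$ yields for each $k\in\mcal{K}$ an inert-preserving section $\mbb{M}^\oslash\to\mcal{M}^\oslash$ with $\mfk{a}$-component $A$; bundled together, these define a diagram $\mcal{K}\times\mbb{M}^\oslash\to\mcal{M}^\oslash$ over $\mbb{M}^\oslash$. By the universal property of the left Kan extension defining the trivial $\mbb{M}$-monoidal structure, this extends uniquely (up to contractible choice) to a lax $\mbb{M}$-monoidal functor $\mcal{K}^\otimes\to\mcal{M}^\otimes$ with the prescribed $\mfk{a}$-component. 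The inverse $\Psi$ sends a lax $A$-linear functor $\mcal{K}^\otimes\to\mcal{M}^\otimes$ to the functor $\mcal{K}\to\Mod_A(\mcal{M})'$ obtained by restricting pointwise over $\mcal{K}$ to the subcategory $\mbb{M}^\oslash\subset\mbb{M}^\otimes$.

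The main obstacle is a combinatorial verification that $\Phi$ and $\Psi$ are mutually inverse up to coherent equivalence; this ultimately reduces to the fact that the Segal-type conditions characterizing $\Mod(\mcal{M})'\subset\Fun_{\mbb{M}^\otimes}(\mbb{M}^\oslash,\mcal{M}^\otimes)$ on one side and lax $\mbb{M}$-monoidal functoriality on the other are both controlled by the same inert morphisms in $\mbb{M}^\otimes$. Naturality in $\mcal{K}$, $\mcal{M}^\otimes$, and $A$ then follows formally from the construction.
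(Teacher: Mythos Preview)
Your approach is genuinely different from the paper's, and the comparison is instructive. The paper does not unwind definitions at all: it observes that the case $\mcal{K}=*$ is tautological, and then reduces to it by showing that both sides carry colimits in $\mcal{K}$ to limits. For the right-hand side this is automatic; for the left, the paper checks that the assignment $\mcal{K}\mapsto\mcal{K}^\otimes$ (trivial $\mbb{M}$-monoidal structure) preserves colimits as a functor $\Cat_\infty\to(\Op_\infty)_{/\mbb{M}^\otimes}$, by noting that a colimit of the $\mcal{K}_i^\otimes$ taken in $\infty$-preoperads is already an $\infty$-operad. This bypasses any direct manipulation of lax monoidal data.

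Your direct construction via the approximation $\mbb{M}^\oslash$ is a reasonable alternative in spirit, but there is a genuine gap at the step where you invoke ``the universal property of the left Kan extension defining the trivial $\mbb{M}$-monoidal structure'' to produce a \emph{lax} $\mbb{M}$-monoidal functor $\mcal{K}^\otimes\to\mcal{M}^\otimes$ from the bundled diagram $\mcal{K}\times\mbb{M}^\oslash\to\mcal{M}^\oslash$. The left Kan extension in \ref{p:CoM} is taken in $\Fun(\mbb{M}^\otimes,\Cat_\infty)$, so its universal property governs morphisms of cocartesian fibrations over $\mbb{M}^\otimes$, i.e.\ \emph{strong} $\mbb{M}$-monoidal functors, not maps of $\infty$-operads. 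As written, this step is circular: it is essentially the content of the lemma.

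Your strategy can be repaired, but not by that route. What you would actually need is (a) the identification $\mcal{K}^\oslash:=\mcal{K}^\otimes\times_{\mbb{M}^\otimes}\mbb{M}^\oslash\simeq\mcal{K}\times\mbb{M}^\oslash$ as categories over $\mbb{M}^\oslash$, and (b) an application of the approximation theorem to $\mcal{K}^\otimes$ itself (not just to $\mbb{M}^\otimes$), so that inert-preserving functors $\mcal{K}^\oslash\to\mcal{M}^\oslash$ over $\mbb{M}^\oslash$ correspond to lax $\mbb{M}$-monoidal functors $\mcal{K}^\otimes\to\mcal{M}^\otimes$. Both are plausible but require verification; neither follows from the left Kan extension description alone. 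The paper's colimit argument sidesteps all of this.
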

\begin{proof}
This is obvious when $\mcal{K}=*$.
We reduce the assertion to this case by showing that the contravariant functor $\mcal{K}\mapsto\Fun^\lax_A(\mcal{K},\mcal{M})$ carries colimits to limits.
By definition,
\[
	\Fun^\lax_A(\mcal{K},\mcal{M})=\Fun^\lax_\mbb{M}(\mcal{K},\mcal{M})\times_{\CAlg(\mcal{M}_\mfk{a})}\{A\},
\]
and thus it suffices to show that $\mcal{K}\mapsto\Fun^\lax_\mbb{M}(\mcal{K},\mcal{M})$ carries colimits to limits.
Note that $\Fun^\lax_\mbb{M}(\mcal{K},\mcal{M})$ is the mapping $\infty$-category of the $\infty$-category $(\Op_\infty)_{/\mbb{M}^\otimes}$ of $\infty$-operads over $\mbb{M}^\otimes$.
Hence, it suffices to show that the functor
\[
	\Cat_\infty \to (\Op_\infty)_{/\mbb{M}^\otimes} \qquad \mcal{K}\mapsto\mcal{K}^\otimes
\]
preserves colimits.
This follows from the following observation: Given a diagram $\{\mcal{K}_i\}_{i\in I}$ of small $\infty$-categories, the colimit $\colim\mcal{K}_i^\otimes$ taken as $\infty$-preoperad is an $\infty$-operad.
\end{proof}

\begin{construction}[Pointwise $\mbb{M}$-monoidal structure]\label{cons:CoM}
Consider the functor
\[
	\Fun(-,-) \colon (\Cat_\infty^\sm)^\op\times\Cat_\infty \to \Cat_\infty.
\]
Since $\Fun(\mcal{K},-)$ preserves limits for each small $\infty$-category $\mcal{K}$, we obtain an induced functor
\[
	\Fun(-,-)^\otimes \colon (\Cat_\infty^\sm)^\op\times\Mod(\Cat_\infty) \to \Mod(\Cat_\infty).
\]
In particular, if $\mcal{K}$ is a small $\infty$-category and $\mcal{M}^\otimes$ is an $\mbb{M}$-monoidal $\infty$-category, then $\Fun(\mcal{K},\mcal{M})^\otimes$ is an $\mbb{M}$-monoidal $\infty$-category that exhibits $\Fun(\mcal{K},\mcal{M}_\mfk{m})$ as tensored over $\Fun(\mcal{K},\mcal{M}_\mfk{a})$.
We refer to this $\mbb{M}$-monoidal structure on $\Fun(\mcal{K},\mcal{M})$ as the \textit{pointwise $\mbb{M}$-monoidal structure}.

We generalize this construction to relative functors.
Let $\mcal{M}^\otimes\to\mcal{N}^\otimes$ be an $\mbb{M}$-monoidal functor between $\mbb{M}$-monoidal $\infty$-categories and let $F\colon\mcal{K}\to\mcal{N}_\mfk{m}$ be an arbitrary functor.
Note that we can regard $F$ as an $\mbb{M}$-monoidal functor $\mbb{M}^\otimes\to\Fun(\mcal{K},\mcal{N})^\otimes$.
We define an $\mbb{M}$-monoidal $\infty$-category $\Fun_\mcal{N}(\mcal{K},\mcal{M})^\otimes$ by the pullback
\[
\xymatrix{
	\Fun_\mcal{N}(\mcal{K},\mcal{M})^\otimes \ar[r] \ar[d] & \Fun(\mcal{K},\mcal{M})^\otimes \ar[d] \\
	\mbb{M}^\otimes \ar[r]^-F & \Fun(\mcal{K},\mcal{N})^\otimes.
}
\]
Then $\Fun_\mcal{N}(\mcal{K},\mcal{M})^\otimes$ exhibits $\Fun_{\mcal{N}_{\mfk{m}}}(\mcal{K},\mcal{M}_{\mfk{m}})$ as tensored over $\Fun_{\mcal{N}_{\mfk{a}}}(\mcal{K},\mcal{M}_{\mfk{a}})$, where the structure functor $\mcal{K}\to\mcal{N}_\mfk{a}$ is the constant functor onto the unit in $\mcal{N}_\mfk{a}$.
\end{construction}

\begin{construction}[Fiberwise $\mbb{M}$-monoidal structure]\label{cons:CoM2}
Let $\mcal{K}$ be a small $\infty$-category and $\mcal{C}$ a symmetric monoidal $\infty$-category.
Suppose we are given a functor $X\colon\mcal{K}\to\Mod_\mcal{C}(\Cat_\infty)$ and let $\mcal{E}\to\mcal{K}$ be the cocartesian fibration classified by $X$.
Then $X$ lifts to a lax $\mbb{M}$-monoidal functor $\bar{X}\colon\mcal{K}^\otimes\to\Cat_\infty^\times$ by Lemma \ref{lem:CoM}.
Let $\mcal{E}^\otimes\to\mcal{K}^\otimes$ be the cocartesian fibration classified by $\bar{X}$.
Then $\mcal{E}^\otimes$ is an $\mbb{M}$-monoidal $\infty$-category that exhibits $\mcal{E}$ as tensored over $\mcal{C}$ and the functor $\mcal{E}^\otimes\to\mcal{K}^\otimes$ is $\mbb{M}$-monoidal.
We refer to this $\mbb{M}$-monoidal structure on $\mcal{E}$ as the \textit{fiberwise $\mbb{M}$-monoidal structure}.

Combining this with Construction \ref{cons:CoM}, we get an $\mbb{M}$-monoidal $\infty$-category $\Fun_{\mcal{K}}(\mcal{K},\mcal{E})^\otimes$ that exhibits $\Fun_{\mcal{K}}(\mcal{K},\mcal{E})$ as tensored over $\Fun(\mcal{K},\mcal{C})$.
\end{construction}

\subsection{Day convolution}\label{Day}

We reformulate the Day convolution monoidal structures as in \cite[2.2.6]{HA} in a way that is convenient for our purpose.

\begin{construction}[Day convolution]\label{cons:Day}
By \cite[4.8.1.3]{HA} (see also \cite[4.8.1.8]{HA}), we have a symmetric monoidal functor
\[
	\mcal{P} \colon \Cat_\infty^{\sm,\times} \to \Pr^{L,\otimes},
\]
which carries a small $\infty$-category $\mcal{K}$ to the $\infty$-category $\mcal{P}(\mcal{K})$ of presehaves on $\mcal{K}$ and a functor $f\colon \mcal{K}\to\mcal{L}$ between small $\infty$-categories to the left Kan extension $f_!\colon\mcal{P}(\mcal{K})\to\mcal{P}(\mcal{L})$.
Note that, for a small $\infty$-category $\mcal{K}$ and a presentable $\infty$-category $\mcal{C}$, we have a canonical equivalence
\[
	\Fun(\mcal{K},\mcal{C}) \simeq \mcal{P}(\mcal{K}^\op)\otimes\mcal{C},
\]
where the tensor product is taken in $\Pr^L$.
On the other hand, since the symmetric monoidal $\infty$-category $\Pr^{L,\otimes}$ can be regarded as an algebra object in $\CAlg(\Cat_\infty)$, we have a symmetric monoidal functor
\[
	\otimes \colon \Pr^{L,\otimes}\times_{\Fin_*}\Pr^{L,\otimes} \to \Pr^{L,\otimes},
\]
which lifts the usual tensor product.
By composing those two symmetric monoidal functors, we obtain a symmetric monoidal functor
\[
	\Fun(-,-)^\otimes \colon \Cat_\infty^{\sm,\times}\times_{\Fin_*}\Pr^{L,\otimes} \to \Pr^{L,\otimes},
\]
which lifts the functor $(\mcal{K},\mcal{C})\mapsto\Fun(\mcal{K},\mcal{C})$.
\end{construction}

\begin{remark}
By applying $\Mod$ to the symmetric monoidal functor $\Fun(-,-)^\otimes$, we obtain a functor
\[
	\Fun(-,-)^\otimes \colon \Mod(\Cat_\infty^\sm)\times\Mod(\Pr^L) \to \Mod(\Pr^L).
\]
For a small $\mbb{M}$-monoidal $\infty$-category $\mcal{K}^\otimes$ and a presentably $\mbb{M}$-monoidal $\infty$-category $\mcal{M}^\otimes$, 
the resulting presentably $\mbb{M}$-monoidal $\infty$-category $\Fun(\mcal{K},\mcal{M})^\otimes$ coincides with the one constructed in \cite[2.2.6]{HA} and we have equivalences
\begin{align*}
	\Mod(\Fun(\mcal{K},\mcal{M})) &\simeq \Fun^\lax_\mbb{M}(\mcal{K},\mcal{M}) \\
	\CAlg(\Fun(\mcal{K},\mcal{M})_\mfk{a}) &\simeq \Fun^\lax(\mcal{K}_\mfk{a},\mcal{M}_\mfk{a}) \\
	\Mod_F(\Fun(\mcal{K},\mcal{M})) &\simeq \Fun^\lax_F(\mcal{K}_\mfk{m},\mcal{M}_{\mfk{m}}),
\end{align*}
where $F$ is a lax symmetric monoidal functor $\mcal{K}_\mfk{a}\to\mcal{M}_\mfk{a}$.
We refer to this $\mbb{M}$-monoidal structure on $\Fun(\mcal{K},\mcal{M})$ as the \textit{Day convolution $\mbb{M}$-monoidal structure}.
\end{remark}

\begin{remark}
The Day convolution $\mbb{M}$-monoidal structure is compatible with the pointwise $\mbb{M}$-monoidal structure in the following cases:
\begin{enumerate}
\item If $\mcal{K}^\otimes$ is a small $\mbb{M}$-monoidal $\infty$-category with the trivial $\mbb{M}$-monoidal structure, then the Day convolution $\mbb{M}$-monoidal structure on $\Fun(\mcal{K},\mcal{M})$ is the restriction of scalars of the pointwise $\mbb{M}$-monoidal structure along the symmetric monoidal functor $\mcal{M}_\mfk{a}\to\Fun(\mcal{K},\mcal{M}_\mfk{a})$.
\item If $\mcal{K}^\otimes$ is a cocartesian symmetric monoidal $\infty$-category, then the Day convolution $\mbb{M}$-monoidal structure on $\Fun(\mcal{K},\mcal{M})$ is identified with the pointwise $\mbb{M}$-monoidal structure.
\end{enumerate}
\end{remark}

\begin{lemma}\label{lem:Day}
Let $\mcal{K}^\otimes$ be a small $\mbb{M}$-monoidal $\infty$-category and $\mcal{M}^\otimes$ a presentably $\mbb{M}$-monoidal $\infty$-category.
Then there is a natural equivalence of $\mbb{M}$-monoidal $\infty$-categories
\[
	\Fun(\mcal{K},\mcal{M}) \simeq \Fun(\mcal{K},\mcal{M}_\mfk{a})\otimes_{\mcal{M}_\mfk{a}}\mcal{M},
\]
where the tensor product is taken in $\Mod_{\mcal{M}_\mfk{a}}(\Pr^L)$ and $\Fun(\mcal{K},\mcal{M}_\mfk{a})$ is tensored over $\mcal{M}_\mfk{a}$ by the restriction of scalars along the the symmetric monoidal functor $\mcal{M}_\mfk{a}\to\Fun(\mcal{K}_\mfk{a},\mcal{M}_\mfk{a})$ obtained as the left Kan extension along $*\to\mcal{K}_\mfk{a}$.
\end{lemma}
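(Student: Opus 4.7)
The plan is to apply $\Mod(-)$ to the symmetric monoidal construction of Day convolution from Construction \ref{cons:Day} and then reduce the resulting tensor product in $\Mod(\Pr^L)$ to a relative tensor product over $\mcal{M}_\mfk{a}$. Recall from Construction \ref{cons:Day} that the symmetric monoidal functor $\Fun(-,-)^\otimes$ factors as the composition of the symmetric monoidal presheaf functor $\mcal{P}$ with the tensor product on $\Pr^L$, yielding a natural equivalence $\Fun(\mcal{K},\mcal{C}) \simeq \mcal{P}(\mcal{K}^\op) \otimes \mcal{C}$ of symmetric monoidal functors in both variables.

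Applying $\Mod(-)$ to this equivalence, I obtain, for an $\mbb{M}$-monoidal small $\mcal{K}^\otimes$ and a presentably $\mbb{M}$-monoidal $\mcal{M}^\otimes$, a natural $\mbb{M}$-monoidal equivalence
\[
	\Fun(\mcal{K},\mcal{M})^\otimes \simeq \mcal{P}(\mcal{K}^\op) \otimes \mcal{M}^\otimes
\]
in $\Mod(\Pr^L)$, where $\mcal{P}(\mcal{K}^\op)$ carries its Day convolution $\mbb{M}$-monoidal structure and the tensor product is in $\Mod(\Pr^L)$. Specialized to the symmetric monoidal $\mcal{M}_\mfk{a}$, the same identification gives $\Fun(\mcal{K},\mcal{M}_\mfk{a}) \simeq \mcal{P}(\mcal{K}^\op) \otimes \mcal{M}_\mfk{a}$.

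The next step is to reduce the absolute tensor product $\mcal{P}(\mcal{K}^\op) \otimes \mcal{M}^\otimes$ to a relative tensor product over $\mcal{M}_\mfk{a}$. For any two presentably $\mbb{M}$-monoidal $\infty$-categories $\mcal{N}^\otimes$ and $\mcal{M}^\otimes$, the canonical identification $\mcal{M}_\mfk{a} \otimes_{\mcal{M}_\mfk{a}} \mcal{M}^\otimes \simeq \mcal{M}^\otimes$ in $\Mod_{\mcal{M}_\mfk{a}}(\Pr^L)$, together with associativity of the tensor product in $\Pr^L$, yields a natural equivalence
\[
	\mcal{N}^\otimes \otimes \mcal{M}^\otimes \simeq (\mcal{N}^\otimes \otimes \mcal{M}_\mfk{a}) \otimes_{\mcal{M}_\mfk{a}} \mcal{M}^\otimes.
\]
Specialized to $\mcal{N}^\otimes = \mcal{P}(\mcal{K}^\op)$ and combined with the previous step, this gives
\[
	\Fun(\mcal{K},\mcal{M})^\otimes \simeq (\mcal{P}(\mcal{K}^\op) \otimes \mcal{M}_\mfk{a}) \otimes_{\mcal{M}_\mfk{a}} \mcal{M}^\otimes \simeq \Fun(\mcal{K},\mcal{M}_\mfk{a}) \otimes_{\mcal{M}_\mfk{a}} \mcal{M}^\otimes,
\]
which is the claimed equivalence. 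To finish, I verify that the $\mcal{M}_\mfk{a}$-algebra structure on $\Fun(\mcal{K},\mcal{M}_\mfk{a})$ produced by this procedure is the stated one: the relevant unit map $\mcal{M}_\mfk{a} \simeq \mbf{1} \otimes \mcal{M}_\mfk{a} \to \mcal{P}(\mcal{K}^\op) \otimes \mcal{M}_\mfk{a}$ is induced by the unit $\Ani = \mcal{P}(\ast) \to \mcal{P}(\mcal{K}_\mfk{a}^\op)$ of the symmetric monoidal structure of $\mcal{P}(\mcal{K}_\mfk{a}^\op)$, and this is the left Kan extension along $\ast \to \mcal{K}_\mfk{a}^\op$; passing through $\mcal{P}(-^\op)\otimes\mcal{M}_\mfk{a}\simeq\Fun(-,\mcal{M}_\mfk{a})$, this corresponds to the left Kan extension along $\ast\to\mcal{K}_\mfk{a}$, as stated.

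The main obstacle is tracking the $\mbb{M}$-monoidal structures throughout the reduction to a relative tensor product in the second step. Although this is a formal consequence of the behavior of $\otimes$ in $\Pr^L$ and its compatibility with relative tensor products in $\Mod_{\mcal{M}_\mfk{a}}(\Pr^L)$, some bookkeeping is needed to ensure that the equivalences constructed preserve both the associative and module parts coherently.
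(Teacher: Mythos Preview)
Your approach is essentially the same as the paper's, which simply states that the result is immediate from the equivalence $\Fun(\mcal{K},\mcal{M})\simeq\mcal{P}(\mcal{K}^\op)\otimes\mcal{M}$ coming from Construction~\ref{cons:Day}. You have merely unpacked the steps the paper leaves implicit: applying $\Mod(-)$, rewriting the absolute tensor as a relative tensor over $\mcal{M}_\mfk{a}$ via $\mcal{P}(\mcal{K}^\op)\otimes\mcal{M}\simeq(\mcal{P}(\mcal{K}^\op)\otimes\mcal{M}_\mfk{a})\otimes_{\mcal{M}_\mfk{a}}\mcal{M}$, and identifying the resulting $\mcal{M}_\mfk{a}$-algebra structure on $\Fun(\mcal{K},\mcal{M}_\mfk{a})$.
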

\begin{proof}
This is immediate from the equivalence $\Fun(\mcal{K},\mcal{M})\simeq\mcal{P}(\mcal{K}^\op)\otimes\mcal{M}$.
\end{proof}

\subsection{Smashing localizations}\label{SmL}

A criterion for smashing localization (cf.\ \ref{CCo}) is given.

\begin{definition}[Ideal/co-ideal]\label{def:SmL}
Let $\mcal{C}$ be a symmetric monoidal $\infty$-category and $\mcal{I}$ a full subcategory of $\mcal{C}$.
Then:
\begin{enumerate}
\item $\mcal{I}$ is an \textit{ideal} if for every $c\in\mcal{C}$ and $x\in\mcal{I}$ we have $c\otimes x\in\mcal{I}$.
\end{enumerate}
Suppose further that $\mcal{C}$ is closed.
Then:
\begin{enumerate}[resume]
\item $\mcal{I}$ is a \textit{co-ideal} if for every $c\in\mcal{C}$ and $x\in\mcal{I}$ we have $\intMap(c,x)\in\mcal{I}$.
\end{enumerate}
\end{definition}

\begin{lemma}\label{lem:SmL}
Let $\mcal{C}$ be a symmetric monoidal $\infty$-category and $L\colon\mcal{C}\to\mcal{C}'$ a localization.
Then the following are equivalent:
\begin{enumerate}
\item $L$ is smashing.
\item $L$ is symmetric monoidal and $\mcal{C}'$ is an ideal of $\mcal{C}$
\end{enumerate}
Suppose that $\mcal{C}$ is closed, then these are further equivalent to:
\begin{enumerate}[resume]
\item $\mcal{C}'$ is an ideal and co-ideal of $\mcal{C}$.
\end{enumerate}
\end{lemma}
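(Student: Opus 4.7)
The plan is to establish the cycle (i) $\Rightarrow$ (ii) $\Rightarrow$ (i), and then, under the closedness assumption, the equivalence (ii) $\Leftrightarrow$ (iii).

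For (i) $\Rightarrow$ (ii), suppose $L\simeq A\otimes-$ for a commutative algebra $A$. The very fact that $L$ is a localization forces $A$ to be idempotent: applying $L$ to the unit map $\mbf{1}\to A$ yields an equivalence $A\xrightarrow{\sim}A\otimes A$. Symmetric monoidality of $L$ then follows from a routine calculation using idempotence, and the ideal condition is immediate since any $x\in\mcal{C}'$ is equivalent to $A\otimes x$, so $c\otimes x\simeq A\otimes(c\otimes x)\in\mcal{C}'$. Conversely, for (ii) $\Rightarrow$ (i), set $A:=L(\mbf{1}_\mcal{C})$. As the symmetric monoidal unit of $\mcal{C}'$, $A$ carries a canonical commutative algebra structure, which transports to $\mcal{C}$ via the fully faithful right adjoint $G$. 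The unit map $\eta\colon\mbf{1}\to A$ induces a natural transformation $c\to c\otimes A$, whose target lies in $\mcal{C}'$ by the ideal property. It remains to show this transformation is an $L$-equivalence, which, after applying $L$ and using symmetric monoidality, reduces to the claim that $L(\eta)\colon L(\mbf{1})\to L(A)$ is an equivalence; this in turn follows from the triangle identity $L\circ G\simeq\id_{\mcal{C}'}$.

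Now assume $\mcal{C}$ is closed. For (ii) $\Rightarrow$ (iii), I use the standard consequence of $L$ being symmetric monoidal that $L$-equivalences are closed under tensoring with an arbitrary object (cf.\ \cite[2.2.1.9]{HA}). Given $x\in\mcal{C}'$ and $c\in\mcal{C}$, to verify $\intMap(c,x)\in\mcal{C}'$ it suffices to check $L$-locality; for any $L$-equivalence $f\colon a\to b$, the adjunction identifies $\Map(b\otimes c,x)\to\Map(a\otimes c,x)$ with $\Map(b,\intMap(c,x))\to\Map(a,\intMap(c,x))$, and the former is an equivalence because $f\otimes c$ is an $L$-equivalence and $x$ is $L$-local. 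For (iii) $\Rightarrow$ (ii), conversely, by \cite[2.2.1.9]{HA} it suffices to show that $L$-equivalences are stable under tensor product, which by two out of three reduces to showing $f\otimes c$ is an $L$-equivalence for any $L$-equivalence $f\colon a\to b$ and any $c\in\mcal{C}$. This is verified by the same adjunction calculation, now invoking the co-ideal property to ensure that $\intMap(c,y)\in\mcal{C}'$ for each $y\in\mcal{C}'$.

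The main subtlety lies in the passage between (ii) and (iii): without closedness there is no obvious way to transfer $L$-locality through the tensor product, which is precisely why the co-ideal condition becomes necessary and equivalent to the ideal condition only in the closed setting. A minor recurring care is the correct invocation of the triangle identities for the adjunction $(L,G)$ when arguing that explicit maps such as $c\to c\otimes A$ are $L$-equivalences; beyond this, the argument is essentially bookkeeping.
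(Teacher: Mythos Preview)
Your proof is correct and follows essentially the same strategy as the paper's. The only notable difference is in the implication (ii) $\Rightarrow$ (i): the paper shows that $A:=L(\mbf{1})$ is idempotent (using the ideal property) and then identifies $L\mcal{C}$ with $\Mod_A(\mcal{C})$, whereas you argue more directly that the unit map $c\to c\otimes A$ is an $L$-equivalence with target in $\mcal{C}'$, invoking the triangle identity for $L(\eta)$. These are equivalent arguments packaged slightly differently; your route is arguably a bit more streamlined since it avoids the detour through module categories. The treatment of (ii) $\Leftrightarrow$ (iii) via the adjunction $\Map(x\otimes c,y)\simeq\Map(x,\intMap(c,y))$ and \cite[2.2.1.9]{HA} is identical to the paper's.
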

\begin{proof}
The implication (i)$\Rightarrow$(ii) is obvious.
Assume the condition (ii).
Then it follows from \cite[4.1.7.4]{HA} that the essential image $L\mcal{C}$ admits a unique symmetric monoidal structure for which the functor $L\colon\mcal{C}\to L\mcal{C}$ is promoted to a symmetric monoidal functor.
In particular, $A:=L(\mbf{1})$ is a unit object in $L\mcal{C}$ and thus $L\mcal{C}\simeq\Mod_A(L\mcal{C})\subset\Mod_A(\mcal{C})$.
Since $L\mcal{C}\subset\mcal{C}$ is an ideal, we see that $A$ is an idempotent algebra in $\mcal{C}$.
We claim $\Mod_A(\mcal{C})=L\mcal{C}$, which proves that $L$ is smashing.
We have already checked one inclusion.
To show the other inclusion note that, for $x\in\Mod_A(\mcal{C})$, we have
\[
	x\otimes A \simeq (x\otimes_AA)\otimes A \simeq x\otimes_A(A\otimes A) \simeq x\otimes_AA \simeq x.
\]
Since $L\mcal{C}\subset\mcal{C}$ is an ideal and $A\in L\mcal{C}$, we have $x\in L\mcal{C}$.
This proves the claim and thus (ii)$\Rightarrow$(i).

Suppose that $\mcal{C}$ is closed.
Then we have $\Map(x\otimes c,y)\simeq \Map(x,\intMap(c,y))$ for $x,y,c\in\mcal{C}$.
It follows from this equivalence that $c\otimes-$ preserves $L$-equivalences if and only if $\intMap(c,-)$ preserves $L$-local objects.
The later is exactly the condition $L\mcal{C}\subset\mcal{C}$ is a co-ideal and thus (ii)$\Leftrightarrow$(iii).
\end{proof}

\end{document}